\theoremstyle{plain}
\declaretheorem[title=Theorem, parent=section]{theorem}
\declaretheorem[title=Lemma,sibling=theorem]{lemma}
\declaretheorem[title=Proposition,sibling=theorem]{proposition}
\declaretheorem[title=Corollary,sibling=theorem]{corollary}
\theoremstyle{definition}
\declaretheorem[title=Definition,sibling=theorem]{definition}
\declaretheorem[title=Remark,sibling=theorem]{remark}
\declaretheorem[title=Remark, numbered=no]{remark*}
\declaretheorem[title=Assumption, numbered=no]{assumption*}
\numberwithin{equation}{section}
\newcommand{\N}{\mathbb{N}}
\newcommand{\R}{\mathbb{R}}
\newcommand{\cD}{\mathcal{D}}
\newcommand{\cN}{\mathcal{N}}
\newcommand{\cE}{\mathcal{E}}
\newcommand{\cM}{\mathcal{M}}
\newcommand{\cI}{\mathcal{I}}
\newcommand{\eps}{\varepsilon}
\newcommand{\1}{\mathbbm{1}}
\DeclareMathOperator{\dist}{dist}
\DeclareMathOperator{\diam}{diam}
\DeclareMathOperator{\supp}{supp}
\DeclareMathOperator{\tail}{Tail}
\renewcommand{\d}{\textnormal{\,d}}
\newcommand{\average}{{\mathchoice {\kern1ex\vcenter{\hrule height.4pt
width 6pt depth0pt} \kern-9.7pt} {\kern1ex\vcenter{\hrule
height.4pt width 4.3pt depth0pt} \kern-7pt} {} {} }}
\newcommand{\dashint}{\average\int}
\begin{document}
\allowdisplaybreaks
 \title[Optimal regularity for nonlocal elliptic equations and free boundary problems]{Optimal regularity for nonlocal elliptic equations \\ and free boundary problems}

\author{Xavier Ros-Oton}
\author{Marvin Weidner}

\address{ICREA, Pg. Llu\'is Companys 23, 08010 Barcelona, Spain \& Universitat de Barcelona, Departament de Matem\`atiques i Inform\`atica, Gran Via de les Corts Catalanes 585, 08007 Barcelona, Spain \& Centre de Recerca Matem\`atica, Barcelona, Spain}
\email{xros@icrea.cat}

\address{Departament de Matem\`atiques i Inform\`atica, Universitat de Barcelona, Gran Via de les Corts Catalanes 585, 08007 Barcelona, Spain}
\email{mweidner@ub.edu}

\keywords{Nonlocal equations, obstacle problem, regularity, one-phase, boundary}

\subjclass[2020]{47G20, 35B65, 31B05, 60J75, 35K90}

\allowdisplaybreaks

\begin{abstract}
In this article we establish for the first time the $C^s$ boundary regularity of solutions to nonlocal elliptic equations with kernels $K(y)\asymp |y|^{-n-2s}$.
This was known to hold only when $K$ is {homogeneous}, and it is quite surprising that it holds for general inhomogeneous kernels, too.

As an application of our results, we also establish the optimal $C^{1+s}$ regularity of solutions to obstacle problems for general nonlocal operators with kernels $K(y)\asymp |y|^{-n-2s}$.
Again, this was only known when $K$ is {homogeneous}, and it solves a long-standing open question in the field.

A new key idea is to construct a 1D solution as a minimizer of an appropriate nonlocal one-phase free boundary problem, for which we establish optimal $C^s$ regularity and non-degeneracy estimates.
\end{abstract}

\allowdisplaybreaks

\maketitle

\section{Introduction}  

\addtocontents{toc}{\protect\setcounter{tocdepth}{1}}

Nonlocal elliptic operators of the type
\begin{align}
\label{eq:L}
L u(x) = 2~\text{p.v} \int_{\R^n} \big(u(x) - u(x+h) \big) K(h) \d h
\end{align}
with kernels $K : \R^n \to [0,\infty]$ satisfying
\begin{align}\label{eq:Kcomp}
0<\frac{\lambda}{|h|^{n+2s}} \leq K(h) \leq \frac{\Lambda}{|h|^{n+2s}},\qquad K(h)=K(-h),\qquad s\in(0,1),
\end{align}
have been widely studied in the last decades, with important works of Bass and Levin \cite{BaLe02,BaLe02b,Bas09}, Kassmann \cite{Kas09}, as well as the famous series of papers by Caffarelli and Silvestre \cite{CaSi09,CaSi11a,CaSi11b}; see also \cite{Sil06}, \cite{BFV14}, \cite{CCV11}, \cite{DKP14}, \cite{FKV15}, \cite{DKP16}, \cite{Coz17},  \cite{DyKa20}, \cite{CKW20}, \cite{DRSV22}, \cite{FeRo23}, \cite{KaWe23}.

These works study mostly the interior regularity of solutions to nonlocal elliptic equations of the type
\begin{equation}
\label{eq:PDE-g}
\left\{\begin{array}{rcl}
L v &=& f ~~ \text{ in }\  \Omega,\\
v &=& g ~~ \text{ in }\ \R^n \setminus \Omega,
\end{array}\right.
\end{equation}
as well as the corresponding generalizations to nonlinear and/or $x$-dependent operators.

The fine boundary regularity for problems of the type \eqref{eq:PDE-g} was first established for the fractional Laplacian, in which case one can use some explicit sub- and supersolutions (see \cite{Get61}, \cite{Lan72}, \cite{ChSo98}, \cite{CKS10}, \cite{Dyd12}, \cite{RoSe14}) to prove that solutions of \eqref{eq:PDE-g} with $L=(-\Delta)^s$ satisfy 
\begin{align}
\label{Cs-intro}
u\in C^s(\overline\Omega),
\end{align}
provided that $\Omega$ is $C^{1,1}$, $f\in L^\infty(\Omega)$, and $g=0$ (see \cite{RoSe14}).

These results were later extended and improved in \cite{Gru15}, \cite{RoSe16a}, \cite{RoSe16b}, \cite{RoSe17}, \cite{AbRo20} to more general operators $L$ of the type \eqref{eq:L}-\eqref{eq:Kcomp}, provided that $K$ is \emph{homogeneous}, i.e.,
\begin{align}
\label{homogeneous}
K(h)=\frac{K(h/|h|)}{|h|^{n+2s}}.
\end{align}
For operators with homogeneous kernels, it is still possible to construct explicit barriers, since
\begin{align}
\label{1d-hom}
L(x_n)_+^s=0\quad \textrm{in}\quad \{x_n>0\}.
\end{align}
This property completely fails for general (inhomogeneous) kernels of the type \eqref{eq:Kcomp}.
Furthermore, as noticed in \cite[Section~2]{RoSe16a}, there are fully nonlinear operators $I$ with kernels $K$ satisfying \eqref{eq:Kcomp} for which the corresponding solutions fail to be $C^s(\overline\Omega)$, even in dimension $n=1$.

This means that, for \emph{fully nonlinear} operators, $C^s$ boundary regularity only holds true for homogeneous kernels, and fails for the general class \eqref{eq:Kcomp}.
For this reason, it was expected that the same could happen for linear equations, but no counterexamples had been constructed.

The goal of this paper is to prove, quite surprisingly, that in case of \emph{linear} equations \eqref{eq:L}-\eqref{eq:PDE-g} the $C^s$ boundary regularity does hold true without \emph{any} additional assumptions on the kernels.
Namely, our first main result is the following.

\begin{theorem}\label{thm-Cs-intro}
Let $\Omega\subset \R^n$ be a $C^{1,\alpha}$ domain, and $L$, $K$, $s$, $\lambda$, and $\Lambda$ be as in \eqref{eq:L}-\eqref{eq:Kcomp}.
Let $f\in L^\infty(B_1 \cap \Omega)$, and $u$ be any weak solution of
\begin{equation}
\label{eq:PDE}
\left\{\begin{array}{rcl}
L u &=& f ~~ \text{ in }\  B_1 \cap \Omega,\\
u &=& 0 ~~ \text{ in }\ B_1 \setminus \Omega.
\end{array}\right.
\end{equation}
Then, we have
\begin{align*}
\|u\|_{C^s(B_{1/2} \cap \overline\Omega)} \leq C \left( \|u\|_{L^\infty(\R^n)} + \|f\|_{L^\infty(B_1 \cap \Omega)} \right),
\end{align*}
where $C$ depends only on $\Omega$, $s$, $\lambda$, $\Lambda$.

Moreover, if $u\geq0$ in $\R^n$ and $f\geq0$, then either $u\equiv0$ or
\begin{align*}
u \geq c d^s \quad \textrm{in}\quad B_{1/2} \cap \Omega
\end{align*}
with $c>0$, and where $d(x):={\rm dist}(x,\Omega^c)$.
\end{theorem}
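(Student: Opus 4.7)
The plan is to replace the explicit 1D barrier $(x_n)_+^s$, which is annihilated by $L$ only when $K$ is homogeneous (cf.\ \eqref{1d-hom}), by a non-explicit variational substitute. Concretely, I would construct a 1D profile $\phi\colon \R \to [0,\infty)$ vanishing on $(-\infty,0]$ and strictly positive on $(0,\infty)$, obtained as a minimizer of a nonlocal one-phase Alt--Caffarelli type functional associated to the marginal kernel $\bar K(t) := \int_{\R^{n-1}} K(t,h')\,dh'$, which by \eqref{eq:Kcomp} satisfies $\bar K(t) \asymp |t|^{-1-2s}$. Existence follows from the direct method in a suitable class of admissible 1D competitors, and the Euler--Lagrange equation yields $L_{\bar K}\phi = 0$ on $\{\phi > 0\}$. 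Lifting via $u(x) := \phi(x_n)$ then gives $Lu(x) = L_{\bar K}\phi(x_n)$, so $\phi$ is a genuine candidate barrier for $L$ on the half-space $\{x_n > 0\}$.

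The bulk of the work is to establish the sharp two-sided estimate $c\, t^s \leq \phi(t) \leq C\, t^s$ for $t \in (0,1)$. The upper bound is the optimal $C^s$ regularity for the 1D one-phase minimizer; I would derive it via energy comparison with explicit competitors together with a dyadic scaling argument that exploits the $2s$-order of the Gagliardo-type energy and the Lebesgue measure penalty. The lower bound is the non-degeneracy estimate; it can be obtained by a contradiction argument in the spirit of Alt--Caffarelli: if $\phi(t_0) \ll t_0^s$ for some small $t_0$, then replacing $\phi$ by $0$ on an appropriate small interval strictly decreases the functional, contradicting minimality. These two estimates, trivial in the homogeneous case via explicit formulas, are the main obstacle here and must be established for the entire class \eqref{eq:Kcomp}.

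With $\phi$ in hand, the proof proceeds by a standard barrier and comparison scheme. At each boundary point $x_0 \in \partial \Omega$, use the $C^{1,\alpha}$ parametrization of $\partial\Omega$ to straighten it locally, and place translated/rotated copies $\Phi_{x_0}(x) := \phi\bigl(\nu(x_0)\cdot (x-x_0)\bigr)$, suitably scaled and cut off away from $x_0$. The operator $L$ acting on $\Phi_{x_0}$ is bounded on the exact half-space, and the $C^{1,\alpha}$ corrections produce only lower-order errors that can be absorbed. Comparison with $C\, \Phi_{x_0}$ from above then yields $|u(x)| \leq C\, d^s(x)$ in $B_{1/2} \cap \Omega$. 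Combined with interior $C^{2s}_{\loc}$ estimates for $L$ and a standard scaling and covering argument, this promotes the pointwise growth to the claimed $C^s$ norm bound. The lower bound $u \geq c\, d^s$ in the case $u \geq 0$, $f \geq 0$, $u \not\equiv 0$ is analogous: a nonlocal strict maximum principle or Harnack inequality gives a quantitative interior positivity, which is propagated down to the boundary by comparing with $c\, \Phi_{x_0}$ from below.

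The main obstacle is unquestionably the variational construction and the sharp two-sided growth $\phi \asymp t^s$; once these are in place for general comparable kernels, the passage to curved $C^{1,\alpha}$ domains is technical but conceptually standard, provided one carefully handles the error terms arising from the mismatch between the flat 1D model and the actual geometry of $\Omega$.
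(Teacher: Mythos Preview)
Your identification of the central novelty --- constructing the 1D half-space solution $b$ as a minimizer of a nonlocal one-phase problem and proving the sharp two-sided bound $b(t)\asymp t_+^s$ --- is exactly right, and matches the paper's Theorems~\ref{thm-1D} and~\ref{thm-onephase-intro}. That part of your plan is sound.

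The gap is in your third step. You assert that once $b$ is in hand, placing the flat barrier $\Phi_{x_0}(x)=b\bigl(\nu(x_0)\cdot(x-x_0)\bigr)$ at each boundary point and running a comparison argument is ``technical but conceptually standard,'' with the $C^{1,\alpha}$ curvature producing only absorbable lower-order errors. This is precisely the step the paper flags as \emph{not} working. In the homogeneous case the barrier is the explicit function $(x_n)_+^s$, and one can build an adapted supersolution such as $d^s$ (regularized distance) and compute $L(d^s)$ by perturbing the explicit formula. Here $b$ is highly non-explicit, depends strongly on the direction $\nu$, and there is no clear substitute for $d^s$. Concretely: near a non-convex boundary point, $\Omega$ and the tangent half-space disagree on a region of thickness $\sim r^{1+\alpha}$; on the piece of $\Omega$ lying \emph{outside} the half-space, $L\Phi_{x_0}$ is strictly negative (not bounded below), so $\Phi_{x_0}$ fails to be a supersolution there, and you have no explicit correction to add. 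The paper therefore abandons the direct barrier route and instead proves the higher-order expansion of Theorem~\ref{thm-expansion-intro} via a blow-up and contradiction argument, whose key input is a Liouville theorem in the half-space (classifying solutions with growth $\le |x|^{s+\eps}$ as multiples of $b$). The $C^s$ estimate then follows from the expansion, not from comparison.

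Your Hopf argument has the same issue: comparison from below with $c\,\Phi_{x_0}$ runs into the same curvature obstruction. The paper instead shows that the coefficient $q_z$ in the expansion is strictly positive by a separate contradiction argument, bootstrapping $u=O(|x-z|^{s+\eps})$ to $u=O(|x-z|^{2s-\gamma})$ and then invoking a ``soft'' sub-barrier of the form $d^{2s-\gamma_1}$ (which \emph{is} explicit) to reach a contradiction.
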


We believe this result to be quite surprising, in view of the above-mentioned counterexamples for nonlinear operators.
Recall that, prior to our result, the best known boundary regularity result for general operators \eqref{eq:L}-\eqref{eq:Kcomp} was a $C^\eps(\overline\Omega)$ estimate, for some $\eps>0$ small enough (see \cite{CaSi11b}, \cite{KKP16}, \cite{FeRo24}).

\begin{remark} Notice that, if $g$ is regular enough, we can always reduce \eqref{eq:PDE-g} to the case $g=0$, by extending $g$ to $\overline\Omega$ and subtract it from $u$ (see \cite{AuRo20}, \cite{FeRo24}, or \autoref{lemma:Lipschitz-Holder-prelim}).
Moreover, for applications to the obstacle problem (explained below), it is important to have a localized version of the result in $B_1$, instead of the global problem  \eqref{eq:PDE-g}.
This is why we always consider solutions of \eqref{eq:PDE}.
\end{remark}

As explained later on, we will also establish $C^{s-\eps}$ estimates in flat-Lipschitz domains (see \autoref{thm:C-s-eps}), as well as a finer description of solutions near~$\partial\Omega$ (see \autoref{thm-expansion-intro} or \autoref{thm:inhom-Cs}) for general operators \eqref{eq:L}-\eqref{eq:Kcomp}.

\subsection{Obstacle problems for nonlocal operators}

Obstacle problems for nonlocal operators \eqref{eq:L}-\eqref{eq:Kcomp} arise naturally in the optimal stopping problem for stable-like L\'evy processes, as well as in the study of interacting particle systems (see \cite{CoTa04,CaFi13}, \cite{Ser15, CDM16}, \cite{FeRo24} and references therein).

Given a smooth obstacle $\phi$, the problem is given by 
\begin{equation}\label{obst-pb}
\min\{Lu,\, u-\phi\}=0\quad \textrm{in}\quad \R^n,
\end{equation}
typically with $u\to0$ at infinity.
The first regularity results in this context were obtained for the fractional Laplacian $L=(-\Delta)^s$ in \cite{Sil07}, \cite{CSS08}, where Caffarelli, Salsa, and Silvestre established the optimal $C^{1+s}$ regularity of solutions.

A well known open question after the results in \cite{CSS08} was to establish the optimal regularity of solutions to the obstacle problem \eqref{obst-pb} for general operators of the form \eqref{eq:L}-\eqref{eq:Kcomp}.
The question was explicitly raised for example in \cite{mwiki}:

\vspace{2mm}

\noindent \textbf{Open problem (\cite{mwiki})} \textit{If $u$ is a solution of \eqref{obst-pb},  with a kernel $K$ satisfying the usual ellipticity conditions \eqref{eq:Kcomp}, is the optimal regularity going to be $C^{1+s}$ as in the fractional Laplacian case? Most probably some extra assumption on the kernel will be needed.}

\vspace{2mm}

This problem was partially solved in \cite{CRS17}, \cite{FRS23}, namely, the $C^{1+s}$ regularity was established under the extra assumption that the kernel $K$ is \emph{homogeneous} --- i.e. \eqref{homogeneous}.

In this paper we prove, to our own surprise, that the $C^{1+s}$ regularity holds for \textit{all} kernels \eqref{eq:Kcomp}, with no extra assumption on $K$.
This finally solves the long-standing open problem about the optimal regularity of solutions to the obstacle problem \eqref{obst-pb}.

\begin{theorem}\label{thm-obst-intro}
Let $L$, $K$, $s$, $\lambda$, and $\Lambda$ be as in \eqref{eq:L}-\eqref{eq:Kcomp}.
Let $\phi\in C^3_c(\R^n)$, and $u$ be any solution of \eqref{obst-pb} with $u\to0$ at $\infty$.
Then, we have
\[\|u\|_{C^{1+s}(\R^n)} \leq C\|\phi\|_{C^3(\R^n)}, \]
with $C$ depending only on $n$, $s$, $\lambda$, $\Lambda$.

Moreover, for every free boundary point $x_0\in\{u>\phi\}$ the following dichotomy holds:
\begin{itemize}
\item[(i)] either 
\[0<cr^{1+s} \leq \sup_{B_r(x_0)}(u-\phi) \leq Cr^{1+s} \qquad \textrm{for all}\quad r\in(0,1),\]

\item[(ii)] or 
\[\qquad \qquad \ 0 \leq \sup_{B_r(x_0)}(u-\phi) \leq Cr^{1+s+\gamma} \qquad \textrm{for all}\quad r\in(0,1).\]
\end{itemize}
Furthermore, the set of points satisfying (i) is an open subset of the free boundary, and it is $C^{1,\gamma}$.
The constant $\gamma>0$ depends only on $n$, $s$, $\lambda$, and $\Lambda$.
\end{theorem}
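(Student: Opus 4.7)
The plan is to combine \autoref{thm-Cs-intro} with the Caffarelli--Salsa--Silvestre framework from \cite{CSS08}; the absence of a homogeneous kernel (and thus of the explicit half-space solution $(x_n)_+^s$) will be compensated precisely by \autoref{thm-Cs-intro} and by the $1$D minimizer of the nonlocal one-phase free boundary problem alluded to in the introduction, which will play the role of that explicit barrier.

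First I would run the standard penalization scheme to see that $u$ is globally Lipschitz and semiconvex, with constants depending only on $n,s,\lambda,\Lambda,\|\phi\|_{C^3}$; this already yields $u\in C^1(\R^n)$ with $\nabla u=\nabla\phi$ on the contact set. Setting $v:=u-\phi\ge 0$, we get $Lv=-L\phi\in L^\infty$ in the positivity set $\Omega:=\{v>0\}$ and $v\equiv 0$ outside, so the whole analysis reduces to understanding a non-negative function $v$ that vanishes on $\Omega^c$ and solves a Dirichlet-type problem in $\Omega$.

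Next I would prove the $C^{1+s}$ estimate and the dichotomy simultaneously via a blow-up classification at free boundary points. Fix $x_0\in\partial\Omega$ and set $M(r):=\sup_{B_r(x_0)}v$. Semiconvexity gives $M(r)\le Cr^2$, so the alternative is: either $M(r_k)\ge cr_k^{1+s}$ along some sequence $r_k\downarrow 0$ (call $x_0$ \emph{regular}), or $M(r)=o(r^{1+s})$ (\emph{degenerate}). At a regular point the rescalings $v_{r_k}(x):=M(r_k)^{-1}v(x_0+r_kx)$ converge, by the standard compactness for nonlocal operators of class \eqref{eq:Kcomp}, to a nontrivial global convex solution $v_\infty\ge 0$ of an obstacle problem for some translation-invariant limit operator $L_\infty$ whose kernel still satisfies \eqref{eq:Kcomp}. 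Applying the non-degeneracy part of \autoref{thm-Cs-intro} at free boundary points of $\{v_\infty>0\}$, together with the $1$D one-phase minimizer as a barrier in the normal direction, I would force $v_\infty(x)=c(x\cdot e)_+^{1+s}$ for some $e\in\mathbb{S}^{n-1}$ and $c>0$. Pulling back to $v$ gives the two-sided bound $cr^{1+s}\le M(r)\le Cr^{1+s}$ of case (i); a boundary Harnack argument for general nonlocal operators, applied to $\partial_e u$ in $\Omega$, then upgrades this to $C^{1,\gamma}$ regularity of $\partial\Omega$ near $x_0$, so that in particular the regular set is open.

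At a degenerate point, since every non-trivial blow-up would have to be of the form $c(x\cdot e)_+^{1+s}$ and therefore correspond to a regular point, all blow-ups must vanish; a dyadic iteration of this decay then yields the improved upper bound $M(r)\le Cr^{1+s+\gamma}$ of case (ii). The global $C^{1+s}(\R^n)$ estimate follows by combining standard interior Schauder regularity of $v$ inside $\Omega$ with the uniform one-sided growth $M(r)\le Cr^{1+s}$ at every free boundary point, via a covering argument. The hardest step will be the classification of blow-ups without any homogeneity assumption on $K$: this is where the $1$D one-phase minimizer substitutes for the missing explicit $(x_n)_+^s$, and where the non-degeneracy part of \autoref{thm-Cs-intro} is crucial, as it rules out pathological blow-ups that would vanish to an intermediate order strictly between $1$ and $1+s$ in the normal direction.
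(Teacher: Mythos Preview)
Your overall architecture (semiconvexity, blow-up at free boundary points, classification of global solutions, boundary Harnack to get $C^{1,\gamma}$) matches the paper's. But the heart of the argument --- the classification of blow-ups --- is where you have a genuine gap, and it is exactly the gap that the paper exists to fill.

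You write that the blow-up limit satisfies $v_\infty(x)=c(x\cdot e)_+^{1+s}$. For inhomogeneous kernels this is \emph{false}: the limit operator $L_\infty$ (which arises as a weak limit of rescalings $r^{n+2s}K(r\cdot)$) is in general still inhomogeneous, and $(x\cdot e)_+^{1+s}$ is not a solution in the half-space for such operators. The correct statement (the paper's \autoref{thm:obstacle-classification-of-blowups}) is that $v_\infty(x)=\kappa\,B(x\cdot e)$, where $B'=\tilde b$ and $\tilde b$ is the non-explicit 1D half-space solution of \autoref{thm-1D} for $L_\infty$. One only knows $c_1(x_+)^s\le \tilde b(x)\le c_2(x_+)^s$, and this two-sided control is what drives the $r^{1+s}$ growth in (i); the explicit power never appears.

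The mechanism you invoke (``non-degeneracy part of \autoref{thm-Cs-intro}'' plus ``the 1D one-phase minimizer as a barrier'') is too vague to produce this. The paper's route is: differentiate the convex global solution in the direction $e$ so that $w=\partial_e v_\infty$ solves $L_\infty w=0$ in $\{x\cdot e>0\}$, $w=0$ outside, with sub-$s$-plus-$\alpha$ growth; then apply a \emph{Liouville theorem in the half-space} (\autoref{thm:Liouville-half-space}) to conclude $w=\kappa\,b$. That Liouville theorem is the missing ingredient in your sketch; it is proved via the 1D barrier, a 1D boundary Harnack, and a delicate argument with $b'$ to handle growth exponents in $[s,2s)$. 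Once you replace $(x\cdot e)_+^{1+s}$ by $B(x\cdot e)$ and insert the half-space Liouville theorem at the classification step, the rest of your outline (expansion at regular points, $C^{1,\gamma}$ via boundary Harnack, the $r^{1+s+\gamma}$ bound at degenerate points, and the global $C^{1+s}$ estimate) goes through essentially as in the paper.
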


As in \cite{CSS08}, \cite{CRS17}, points of type (i) are called regular points, while those of type (ii) are degenerate points. We refer the reader to \autoref{thm:obstacle-problem} for a finer description of the behavior at regular points.

As we will see, the main missing ingredient for \autoref{thm-obst-intro} to hold was the boundary regularity for {linear} equations (\autoref{thm-Cs-intro}), and in particular the understanding of 1D solutions.
This is one of the key contributions of this paper.

\subsection{1D solutions}

In case of the fractional Laplacian, an explicit computation shows that \eqref{1d-hom} holds. Since operators with homogeneous kernels are basically superpositions of one-dimensional fractional Laplacians on the sphere, \eqref{1d-hom} carries over to this more general class of operators (see \cite[Lemma~2.6.2]{FeRo24}). Once \eqref{1d-hom} is known, the
proof of $C^s$ regularity for \eqref{eq:PDE} is then relatively simple, and requires the construction of some appropriate explicit barriers, which are essentially perturbations of such 1D solution (see \cite{RoSe14}, \cite{RoSe16b} or \cite[Appendix B.2]{FeRo24}).

For general (inhomogeneous) kernels satisfying the ellipticity condition \eqref{eq:Kcomp}, there is no possible explicit computation, not even in dimension $n=1$.
What follows from standard arguments is that for every given $K$ there exists a unique positive solution $b\in C(\R)$ of
\begin{equation}
\label{eq:1D}
\left\{\begin{array}{rcl}
L b &=& 0 ~~ \text{ in }\  \{x>0\},\\
b &=& 0 ~~ \text{ in }\ \{x\leq0\}.
\end{array}\right.
\end{equation}
However, until now, there was no reason to believe that such solution is $C^s$.

Actually, for non-symmetric (homogeneous) kernels, such solution $b$ can be homogeneous of any degree $\alpha\in (0,2s)\cap (2s-1,1)$ and thus the $C^s$ regularity fails (see \cite{DRSV22}).
Moreover, when $L$ is replaced by a fully nonlinear operator $I$ (with kernels satisfying only \eqref{eq:Kcomp}), then the $C^s$ regularity also fails (see \cite[Section 2]{RoSe16a}).

Notice that the problem \eqref{eq:1D} has no natural homogeneity, in the sense that if $b$ is a solution, then so is $b(rx)/r^\beta$ for any $\beta$ (with respect to a rescaled operator, still satisfying \eqref{eq:Kcomp}).

Thus, it is quite surprising that we can actually establish the following result.

\begin{theorem}\label{thm-1D}
Let $L$, $K$, $s$, $\lambda$, and $\Lambda$ be as in \eqref{eq:L}-\eqref{eq:Kcomp}. Let $e \in \mathbb{S}^{n-1}$.
Then, there exists a positive solution $b\in C(\R^n)\cap L^1_{2s}(\R^n)$ of
\begin{equation}
\label{eq:half-space}
\left\{\begin{array}{rcl}
L b &=& 0 ~~ \text{ in }\  \{x \cdot e >0\}\\
b &=& 0 ~~ \text{ in }\ \{x \cdot e \leq0\},
\end{array}\right.
\end{equation}
which is unique up to a multiplicative constant, and satisfies $b(x)=b(x \cdot e)$, $b\in C^s_{loc}(\R^n)\cap H^s_{loc}(\R^n)$, and 
\[0<c(x \cdot e)_+^s \leq b(x \cdot e) \leq C(x \cdot e)_+^s \quad \textrm{in}\quad \R^n.\]
The constants $c$ and $C$ depend only on $s$, $\lambda$, $\Lambda$.
\end{theorem}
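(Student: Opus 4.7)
The plan is to construct $b$ as a one-dimensional profile, produced as a (rescaled limit of) minimizer of a nonlocal Alt-Caffarelli-type one-phase free boundary problem adapted to the kernel $K$. By rotating coordinates we may assume $e=e_n$ and seek $b(x)=\varphi(x_n)$. A direct Fubini computation gives
\[
Lb(x) \;=\; 2\,\text{p.v.}\int_\R (\varphi(x_n)-\varphi(x_n+t))\,\bar K(t)\,\d t \;=:\; \bar L\varphi(x_n),\qquad \bar K(t) := \int_{\R^{n-1}} K(h',t)\,\d h',
\]
and the change of variables $h'=|t|y'$ combined with \eqref{eq:Kcomp} shows that $\bar K$ is symmetric on $\R$ and satisfies $c\lambda|t|^{-1-2s}\le \bar K(t)\le C\Lambda|t|^{-1-2s}$. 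Hence it suffices to find a non-negative $\varphi\in C(\R)\cap L^1_{2s}(\R)$ with $\varphi\equiv 0$ on $(-\infty,0]$, $\bar L\varphi=0$ on $(0,\infty)$, and the two-sided growth $c\,x_+^s\le\varphi(x)\le C\,x_+^s$.

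For the construction, I would minimize the nonlocal one-phase functional
\[
\mathcal{J}_R(\varphi) \;=\; \frac{1}{2}\iint_{\R\times\R}(\varphi(x)-\varphi(y))^2\,\bar K(x-y)\,\d x\,\d y \;+\; |\{\varphi>0\}\cap(-R,R)|
\]
over non-negative $\varphi$ with appropriate prescribed data on $\R\setminus(-R,R)$. Existence comes from the Direct Method and lower semi-continuity; a monotone rearrangement argument forces the minimizer to be monotone, so $\{\varphi_R>0\}$ is a half-line, which after translation we take to be $(0,\infty)$, and the Euler--Lagrange equation then yields $\bar L\varphi_R=0$ there. A blow-up/rescaling argument as $R\to\infty$, using that the class \eqref{eq:Kcomp} is \emph{closed} under rescaling (even though no single operator is scale invariant), produces a global 1D half-space solution $\varphi$ in the limit.

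The heart of the proof is to show $c\,x_+^s\le\varphi(x)\le C\,x_+^s$ uniformly for all one-phase minimizers $\varphi_R$. The upper bound, i.e.\ optimal $C^s$ regularity up to the free boundary, comes from a blow-up/Liouville scheme: the base compactness is the $C^{s-\eps}$ flat-Lipschitz estimate of \autoref{thm:C-s-eps}, and faster-than-$x_+^s$ growth is ruled out by a Weiss-type monotonicity / dimension-reduction argument adapted to $\bar K$, classifying global 1D one-phase minimizers. The lower bound, i.e.\ non-degeneracy, is obtained by comparing $\varphi$ with a tailored subsolution and invoking the free boundary optimality condition. These uniform estimates simultaneously supply the compactness needed in the previous paragraph and the two-sided growth of the limit profile. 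Uniqueness up to a multiplicative constant then follows from a sliding argument: for two positive solutions $b_1,b_2$ with the same bounds, $c^*:=\inf\{c>0:\,c\,b_1\ge b_2\}\in(0,\infty)$, and the strong maximum principle for $\bar L$ together with the matched boundary behaviour forces $b_2\equiv c^*b_1$.

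The principal obstacle is the previous paragraph: in the homogeneous case the explicit solution $(x_n)_+^s$ serves simultaneously as a barrier, Liouville reference, and scaling mirror for the monotonicity formulas, but here no such object exists, so the one-phase regularity and non-degeneracy theory must be built directly for the full class \eqref{eq:Kcomp}. Once it is in place, setting $b(x):=\varphi(x\cdot e)$ yields the theorem, with $b\in C^s_{\loc}\cap H^s_{\loc}$ following from the 1D growth bound and standard energy estimates.
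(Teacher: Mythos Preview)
Your overall architecture---reduce to a 1D kernel $\bar K$, build the half-space profile as (a blow-up limit of) a minimizer of a nonlocal one-phase functional, and then read off $C^s$ regularity and non-degeneracy from the one-phase theory---is exactly the paper's strategy. The reduction to $\bar L$ is \autoref{lemma:1dnd}, and the construction of $\varphi$ is \autoref{lemma:1d-barrier}. However, the implementation you sketch for the two key estimates has genuine gaps.

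\textbf{Circularity in the upper bound.} You propose to obtain the optimal $C^s$ regularity of one-phase minimizers via a blow-up/Liouville scheme whose ``base compactness'' is the $C^{s-\eps}$ estimate of \autoref{thm:C-s-eps}. But in the paper's logical order, \autoref{thm:C-s-eps} is proved \emph{after} \autoref{thm-1D}, precisely because its compactness argument terminates in the half-space Liouville theorem (\autoref{thm:Liouville-half-space}), which in turn needs the 1D barrier $b$. So you cannot invoke it here. Similarly, a ``Weiss-type monotonicity'' is problematic: Weiss energies are built on exact scaling, and for inhomogeneous $K$ the rescaled functional lives over a \emph{different} kernel in the same class, so no monotone quantity along dilations is available. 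The paper circumvents both issues by a direct compactness argument (\autoref{lemma:contradiction}): assuming $\|u_k\|_{L^\infty(B_{1/8})}\to\infty$ with $u_k(0)=0$, one locates via a Caffarelli-type ``touching set'' $\mathcal W_k$ and the Harnack inequality a point near the free boundary where $u_k$ is comparable to its maximum, rescales there, and uses that minimizers are \emph{almost} minimizers of the Dirichlet energy (the measure penalty becomes $O(S_k^{-2})$) together with \autoref{lemma:convergence} to pass to an $L_\infty$-harmonic limit $w_\infty\ge 0$ in $B_1$ with $w_\infty(0)=0$ and $w_\infty(\zeta_\infty)=1$, contradicting the weak Harnack inequality. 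No boundary regularity and no Liouville theorem are used.

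\textbf{Other points.} The ``monotone rearrangement forces $\{\varphi_R>0\}$ to be a half-line'' step is not safe: a P\'olya--Szeg\H{o}/Riesz inequality for the cross term $\iint \varphi(x)\varphi(y)\bar K(x-y)$ would require $\bar K$ to be radially decreasing, which \eqref{eq:Kcomp} does not give. The paper instead shows by a compactness argument (Step~2 of \autoref{lemma:1d-barrier}) that the positivity set contains an interval of uniform length, and then uses the density estimate (\autoref{thm:density-est}) to pin down the contact set after rescaling. For non-degeneracy, the paper does not compare with a subsolution; the weak non-degeneracy (\autoref{lemma:weak-nondeg}) comes from testing the minimality against the competitor that is $L$-harmonic in $B_{2R}\setminus B_R$ and zero in $B_R$, together with the $C^{2s-1+\eps}$ boundary estimate of Section~\ref{sec:suboptimal-reg} (which is independent of \autoref{thm-1D}); the full non-degeneracy then follows from a geometric growth lemma (\autoref{lemma:geometric-growth}). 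Finally, uniqueness up to a constant is quoted from the classification of positive solutions in cones (\cite[Proposition~4.4.5]{FeRo24}) rather than a bespoke sliding argument.
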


For the definition of the space $L^1_{2s}(\R^n)$, we refer to Section \ref{sec:prelim}.

Notice that the function $b$ depends strongly on the direction $e \in \mathbb{S}^{n-1}$, and also on the kernel $K$. Highly oscillating kernels could lead to highly oscillating functions $b$. In fact, it turns out that in general the quotient $b(x \cdot e)/(x \cdot e)_+^s$ is not even continuous as $x \cdot e \searrow 0$ (see \autoref{remark:optimality}). This phenomenon rules out higher order boundary regularity for solutions to \eqref{eq:PDE} with kernels merely satisfying \eqref{eq:Kcomp}.

To prove \autoref{thm-1D}, the first thing we notice is that it suffices to establish it in dimension $n=1$, thanks to \autoref{lemma:1dnd}. Then, a key new idea we introduce in this paper is to construct the 1D function $b$ as a minimizer of a one-phase \emph{free boundary problem}.
The reason for this approach to work is that such free boundary problem has a natural scaling, and therefore solutions are expected to be $C^s$. Moreover, these solutions solve \eqref{eq:1D}, at least locally in a small interval.

More precisely, we consider minimizers of 
\begin{equation}
\label{onephase-intro}
\cI(u):= \iint\limits_{(B_1^c \times B_1^c)^c} \big| u(x)-u(y)\big|^2 K(x-y) \d x \d y + M\big|\{u>0\}\cap B_1\big|
\end{equation}
for some $M>0$, and with prescribed conditions $u=g \ge 0$ outside $B_1$. Then, our proof of \autoref{thm-1D} essentially goes as follows:
\begin{itemize}

\item[(i)] Consider, in dimension $n=1$, a minimizer of $\cI(u)$ in $B_1 = (-1,1)$, with exterior data $u=0$ in $(-\infty,-1]$ and $u=1$ in $[1,\infty)$.
Such a minimizer $u$ will solve $Lu=0$ in $\{u>0\}\cap (-1,1)$.

\item[(ii)] Show that, if we choose $M$ appropriately, $u$ will have a free boundary point $x_0 \in(-1,1)$, with $u=0$ in $[x_0-\eps,x_0]$ and $u>0$ in $(x_0,x_0+\eps)$, and with $\eps>0$ depending only on $s$, $\lambda$, $\Lambda$.

\item[(iii)] Prove optimal $C^s$ regularity and non-degeneracy estimates for minimizers of $\cI(u)$.

\item[(iv)] The solution $b$ of \eqref{eq:1D} will be the limit $u(x_0+rx)/r^s$ as $r\to0$, and by construction will satisfy $b(x)\asymp (x_+)^s$, as desired.

\end{itemize}

Again, we want to emphasize that the proof of \autoref{thm-1D} cannot come from a cheap scaling argument, since it fails both for non-symmetric operators, and for fully nonlinear operators.
Hence, the structure, symmetry, and linearity of the problem must all play a role in the proof somehow, which is precisely what happens in our approach through the one-phase free boundary problem.

It is crucial to notice that, while the equation \eqref{eq:1D} does not have a natural homogeneity, the free boundary problem we consider does.
Indeed, if $u$ is a minimizer of $\cI(u)$, then so is $u(rx)/r^s$ (for a rescaled kernel, still satisfying \eqref{eq:Kcomp}), but not $u(rx)/r^\beta$ for any other $\beta\neq s$.
This is why one expects to have $C^s$ regularity for minimizers of $\cI(u)$.

\subsection{One-phase nonlocal free boundary problems}

Another important contribution of our paper is to establish the optimal regularity and non-degeneracy of minimizers to the one-phase free boundary problem \eqref{onephase-intro} for general nonlocal operators \eqref{eq:L} with kernels \eqref{eq:Kcomp}.
This was only known for the fractional Laplacian (see \cite{CRS10}), where one can rely on the corresponding extension problem to prove such result.

\begin{theorem}\label{thm-onephase-intro}
Let $K$, $s$, $\lambda$, and $\Lambda$ be as in \eqref{eq:L}-\eqref{eq:Kcomp}.
Let $u$ be any local minimizer of \eqref{onephase-intro} in $B_1 \subset \R^n$, for some $M > 0$, with $0\in \partial\{u>0\}$.
Then,
\[\|u\|_{C^s(B_{1/2})} \leq C\]
and 
\[\qquad\qquad \qquad \qquad \quad \sup_{B_r} u \geq c r^s \qquad \textrm{for}\quad r\in(0,1).\]
The constants $C \ge c >0$ depend only on $n$, $s$, $\lambda$, $\Lambda$, and $M$.
\end{theorem}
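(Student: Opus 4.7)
The plan is to adapt the Alt--Caffarelli strategy for one-phase free boundary problems to the nonlocal setting, exploiting the natural scaling invariance $u(x) \mapsto u(rx)/r^s$ of $\cI$ (for a rescaled kernel still satisfying \eqref{eq:Kcomp}), which pins down the optimal growth at free boundary points to exactly $r^s$. Throughout I write $\cI(u) = \cE(u) + M|\{u > 0\} \cap B_1|$, where $\cE$ is the Dirichlet-type double integral in \eqref{onephase-intro} and $\cB$ its associated symmetric bilinear form. As a preliminary step, minimizers are nonnegative and uniformly bounded, by testing minimality against the competitors $u_+$ and $\min(u, M_0)$: both strictly decrease $\cE$ via the pointwise inequality $|a_+ - b_+|^2 \leq |a - b|^2$ (and its truncation analogue) and leave $\{u > 0\}$ unchanged.

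For the non-degeneracy $\sup_{B_r} u \geq c r^s$ at a free boundary point $0 \in \partial\{u > 0\}$, I would argue by contradiction: assume $\sup_{B_r} u < \eta r^s$. Take $v := (u - \varphi)_+$, where $\varphi \geq 0$ is a smooth cutoff equal to $\eta r^s$ on $B_{r/2}$ and vanishing outside $B_r$; then $v \equiv 0$ on $B_{r/2}$, so the measure term drops by $|\{u > 0\} \cap B_{r/2}|$. Expanding
\[ \cE(v) \leq \cE(u - \varphi) = \cE(u) - 2\,\cB(u, \varphi) + \cE(\varphi), \]
and estimating both $\cE(\varphi)$ and the cross term by $C \eta^2 r^n$ using $\sup_{\supp \varphi} u \lesssim \eta r^s$, one obtains $\cE(v) - \cE(u) \leq C \eta^2 r^n$. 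Combining this with a quantitative positive-density bound $|\{u > 0\} \cap B_{r/2}| \geq c_0 r^n$ at free boundary points — itself established by a dual comparison that inserts a small nonnegative bump into the zero set — gives $\cI(v) < \cI(u)$ for $\eta$ sufficiently small, contradicting minimality.

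For the upper growth $\sup_{B_r} u \leq C r^s$, let $v$ denote the $L$-harmonic replacement of $u$ in $B_r$ ($Lv = 0$ in $B_r$, $v = u$ outside). Since $u \geq 0$ satisfies $L u = 0$ in $\{u > 0\}$ and $u \geq 0$ outside, $u$ is $L$-subharmonic in $B_r$, hence $u \leq v$ there. Testing $\cI(u) \leq \cI(v)$ together with the orthogonality $\cE(u) - \cE(v) = \cE(u - v)$ (valid because $u - v$ vanishes outside $B_r$ and $v$ is the $L$-harmonic extension) yields
\[ \cE(u - v) \leq M\bigl(|\{v > 0\} \cap B_1| - |\{u > 0\} \cap B_1|\bigr) \leq C r^n. \]
To turn this energy control into a pointwise bound on $v$, I invoke a growth lemma for $L$-harmonic functions: any $v \geq 0$ with $Lv = 0$ in $B_r$ and $|\{v = 0\} \cap B_r| \geq \mu r^n$ satisfies $v(0) \leq C r^s$. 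Combined with non-degeneracy (which forces positive density of $\{u = 0\}$, hence of $\{v = 0\}$, in $B_r$), this gives $\sup_{B_{r/2}} u \leq \sup_{B_{r/2}} v \leq C r^s$. Standard interior $L$-harmonic regularity inside $\{u > 0\}$ then upgrades this free-boundary growth to the full $C^s$ estimate on $B_{1/2}$.

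The principal technical challenge is precisely the growth lemma for $L$-harmonic functions with a large zero set, without the Caffarelli--Silvestre extension exploited for the fractional Laplacian in \cite{CRS10}. I would establish it purely in the nonlocal framework by constructing explicit barriers — for instance, a smoothed version of $(x \cdot e)_+^{s - \eps}$ corrected by a bounded tail term — and iterating in a De Giorgi fashion. The sharp exponent $s$ emerges from matching the scaling symmetry of $\cI$ against the barrier's scaling, so the two-sided ellipticity \eqref{eq:Kcomp} alone suffices. This is the step where the symmetric linear structure becomes essential: the analogous bound is known to fail both for fully nonlinear operators and for non-symmetric kernels, so any valid proof must, as here, exploit this structure crucially.
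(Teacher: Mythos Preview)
Your proposal has a genuine structural gap in the $C^s$ argument, and a circularity in the non-degeneracy argument.

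\textbf{The growth lemma is ill-posed, and a corrected version is precisely what is unknown.} You take $v$ to be the $L$-harmonic replacement of $u\ge 0$ in $B_r$ and then invoke a growth lemma of the form ``$Lv=0$ in $B_r$, $v\ge 0$, and $|\{v=0\}\cap B_r|\ge \mu r^n$ imply $v(0)\le Cr^s$''. But by the weak Harnack inequality, a nonnegative $L$-harmonic function in $B_r$ that vanishes on a set of positive measure in the interior is identically zero; since $v\ge u$ and $u\not\equiv 0$, the hypothesis $|\{v=0\}\cap B_r|>0$ is never met. If you instead place the zero set in the exterior data (which is the only meaningful formulation), the statement ``$Lv=0$ in $B_r$, $v=0$ on a large portion of $B_{2r}\setminus B_r$, then $v\le Cr^s$ on $B_{r/2}$'' is essentially the sharp $C^s$ boundary estimate for general kernels in \eqref{eq:Kcomp}. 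That estimate is one of the main results of the paper (\autoref{thm-Cs-intro}) and is proved \emph{using} the one-phase result, via the 1D barrier and the half-space Liouville theorem; it is not available as an input. Your proposed barrier $(x\cdot e)_+^{s-\eps}$ yields only $C^{s-\eps}$, and the sentence ``the sharp exponent $s$ emerges from matching the scaling of $\cI$'' does not apply to a purely linear statement about $L$-harmonic functions, which has no preferred exponent for inhomogeneous $K$.

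\textbf{Non-degeneracy is circular as written.} Your competitor argument for $\sup_{B_r}u\ge c r^s$ requires the a priori lower density $|\{u>0\}\cap B_{r/2}|\ge c_0 r^n$ at a free boundary point. In the paper (and classically) this density estimate is a \emph{consequence} of non-degeneracy together with the $C^s$ bound (\autoref{thm:density-est}), not an independent input; ``inserting a bump into the zero set'' would at best give density of $\{u=0\}$, not of $\{u>0\}$. Also, in the expansion $\cE(v)\le \cE(u)-2\cB(u,\varphi)+\cE(\varphi)$ you claim $|\cB(u,\varphi)|\le C\eta^2 r^n$ from $\sup_{\supp\varphi}u\lesssim\eta r^s$; but $\cB(u,\varphi)$ sees the long-range tail of $u$ through $\int_{B_r}\int_{\R^n\setminus B_r}u(y)\varphi(x)K(x-y)$, which does not scale with $\eta^2$ and in fact has the wrong sign ($Lu\le 0$ gives $\cB(u,\varphi)\le 0$, so $-2\cB(u,\varphi)\ge 0$).

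For comparison, the paper avoids both issues. The $C^s$ bound (\autoref{lemma:contradiction}) is obtained by a compactness--contradiction argument: a blow-up along a carefully chosen sequence of ``worst'' points produces a limiting nonnegative $L_\infty$-supersolution that vanishes at the origin yet equals $1$ at a nearby point, contradicting the weak Harnack inequality. Non-degeneracy (\autoref{lemma:weak-nondeg}, \autoref{thm:nondeg}) uses a competitor that is zero on $B_R$ and $L$-harmonic on the annulus $B_{2R}\setminus B_R$; the key technical input is the suboptimal $C^{2s-1+\eps}$ boundary estimate of Section~\ref{sec:suboptimal-reg}, which makes the relevant cross term integrable. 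Density estimates come only afterwards.
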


Notice that the constants $C$ and $c$ are completely independent of $u$.
This is interesting and non-trivial, since we are dealing with a nonlocal problem.

As explained above, in case $n=1$, this result is crucial in our proofs of \autoref{thm-Cs-intro} and \autoref{thm-obst-intro}.
Moreover, the theorem has its own interest, as it extends the results of \cite{CRS10} to a much more general setting, and it gives a new proof even for the fractional Laplacian.

Our proof of \autoref{thm-onephase-intro} is given in Section \ref{sec:one-phase}. It roughly follows the strategy in \cite{Vel23} but several non-trivial adaptations are required due to the nonlocality of our setting. Moreover, in \autoref{thm:density-est} and \autoref{cor:blowups}, we establish density estimates for the free boundary and basic properties of blow-ups, which are of independent interest, but also enter in the proof of \autoref{thm-1D}.
Notice that all proofs would be essentially the same in dimension $n=1$, in the sense that no substantial simplification is expected in that case.

Finally, we mention that in the special case of the fractional Laplacian, also fine regularity results for the free boundary of minimizers to the one-phase problem \eqref{onephase-intro} are available in the literature (see \cite{DeRo12}, \cite{DeSa12}, \cite{DSS14}, \cite{DeSa15}, \cite{DeSa15b}, \cite{EKPSS21}, \cite{FeRo22}). Similar results for general kernels are not known, and we plan to investigate this question in the future.

\subsection{Strategy of the proof of $C^s$ regularity in bounded domains}

Once we have the 1D barrier given by \autoref{thm-1D} (whose proof is based on  \autoref{thm-onephase-intro}), we can proceed with the proof of the $C^s$ regularity in bounded domains (see \autoref{thm-Cs-intro} above).

When $\Omega$ is convex,  one can directly use $b$ as a 1D barrier and deduce the desired $C^s$ estimate.
However, for general (smooth) domains this proof does not work, and one needs either a barrier adapted to the geometry of $\partial\Omega$, or a different argument to get the $C^s$ estimate.

Given the fact that the 1D solutions $b$ are very much non-explicit, and would strongly depend on the direction of the normal vector $\nu$, it does not seem easy at all to construct barriers adapted to the geometry of the domain.
Instead, we deduce the $C^s$ estimate from the following higher order expansion.

\begin{theorem}\label{thm-expansion-intro}
Let $\Omega\subset \R^n$ be any $C^{1,\alpha}$ domain, $0\in\partial\Omega$, and $L$, $K$, $s$, $\lambda$, and $\Lambda$ be as in \eqref{eq:L}-\eqref{eq:Kcomp}.
Let $f\in L^\infty(\Omega)$, and $u$ be any weak solution of
\begin{equation*}
\left\{\begin{array}{rcl}
L u &=& f ~~ \text{ in }\  \Omega \cap B_1,\\
u &=& 0 ~~ \text{ in }\ B_1 \setminus \Omega.
\end{array}\right.
\end{equation*}
Denote the normal vector at $0\in\partial\Omega$ by $e \in \mathbb{S}^{n-1}$. Then, for any $\eps \in (0,\alpha s)$ we have for some $q_0 \in \R$
\begin{equation}\label{expansion-intro}
\big|u(x)- q_0 b(x \cdot e)\big| \leq C |x|^{s+\eps} \left( \|u\|_{L^\infty(\R^n)}  +  \|f\|_{L^\infty(\Omega\cap B_1)}\right),
\end{equation}
where $b$ is the solution of \eqref{eq:half-space} (with respect to $e$, $L$); and $C$ depends only on $\Omega$, $s$, $\lambda$, $\Lambda$, $\eps$, $\alpha$.
\end{theorem}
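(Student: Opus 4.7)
The plan is a Campanato-type iteration via contradiction, compactness, and a Liouville classification in the half-space built around the 1D function $b$ from \autoref{thm-1D}. After a $C^{1,\alpha}$ diffeomorphism flattening $\partial\Omega \cap B_{1/2}$ to $\{x\cdot e = 0\}$ (which perturbs the kernel while preserving \eqref{eq:Kcomp} and only adds a controllable right-hand side of order $|x|^{\alpha}\|u\|_{L^\infty}$), and normalizing so that $\|u\|_{L^\infty(\R^n)} + \|f\|_{L^\infty(\Omega\cap B_1)} \leq 1$, I set
\[
\Theta(r) := \inf_{q\in\R} r^{-s-\eps} \sup_{x \in B_r}\big|u(x) - q\, b(x\cdot e)\big|, \qquad r\in(0,\tfrac12].
\]
It will suffice to prove $\sup_r\Theta(r)\leq C$: the optimal $q_r$ then form a Cauchy family as $r\to 0$ with summable dyadic increments, converging to a single $q_0$ for which \eqref{expansion-intro} holds at every scale.

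If $\sup_r\Theta(r)=\infty$, a worst-radius argument produces sequences of solutions $u_k$ in $C^{1,\alpha}$ domains $\Omega_k$ with normal $e_k$ at $0$, kernels $K_k$, right-hand sides $f_k$, radii $r_k\to 0$, and optimal $q_k$, such that the rescaled functions
\[
v_k(x) := \frac{u_k(r_k x) - q_k\, b_k(r_k x \cdot e_k)}{r_k^{s+\eps}\, \Theta(r_k)}
\]
satisfy $\|v_k\|_{L^\infty(B_1)} = 1$, the polynomial growth bound $\|v_k\|_{L^\infty(B_R)}\leq C R^{s+\eps}$ for $R\leq 1/r_k$, and $\tilde L_k v_k = \tilde f_k$ in $\tilde\Omega_k\cap B_{1/r_k}$ with $\|\tilde f_k\|_{L^\infty}\to 0$, where the rescaled kernel $\tilde K_k(y)=r_k^{n+2s}K_k(r_k y)$ still obeys \eqref{eq:Kcomp}. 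Using the $C^{s-\delta}$ boundary estimate of \autoref{thm:C-s-eps} in flat-Lipschitz domains, interior estimates, and the tail control from the polynomial growth, $(v_k)$ is equicontinuous locally up to the (almost flat) boundary. Extracting subsequences yields a locally uniform limit $v_\infty$, a limit kernel $K_\infty$ obeying \eqref{eq:Kcomp} (in a sense sufficient to pass to the weak limit, using uniform $L^1_{2s}$ tails), and the limit half-space $\{x\cdot e_\infty>0\}$, so that
\[
L_\infty v_\infty = 0\ \text{in}\ \{x\cdot e_\infty>0\}, \qquad v_\infty\equiv 0\ \text{in}\ \{x\cdot e_\infty\leq 0\},
\]
with $\|v_\infty\|_{L^\infty(B_1)} = 1$ and $|v_\infty(x)|\leq C(1+|x|^{s+\eps})$. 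Moreover, the optimality of each $q_k$ --- no shift $q_k + c$ decreases $\|u_k - q_k b_k(\cdot\cdot e_k)\|_{L^\infty(B_{r_k})}$ --- passes to the limit as the statement that $\|v_\infty\|_{L^\infty(B_1)} \leq \|v_\infty - \bar c\, b_\infty(x\cdot e_\infty)\|_{L^\infty(B_1)}$ for every $\bar c\in\R$, where $b_\infty$ is the 1D function associated with $L_\infty$ via \autoref{thm-1D}.

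The core of the proof, and the step I expect to be hardest, is a Liouville classification: any $w$ solving $L_\infty w=0$ in the half-space, vanishing outside, with $|w(x)|\leq C(1+|x|^{s+\eps})$ and $\eps<s$, must equal $q\, b_\infty(x\cdot e_\infty)$ for some $q\in\R$. I plan to prove this in three steps. First, apply \autoref{thm-Cs-intro} to $w$ in the half-space (which is convex, so the function $b_\infty$ itself furnishes the needed barrier at this stage) to obtain $C^s$ up-to-boundary regularity. Second, for tangential $\tau\perp e_\infty$, the incremental quotient $w(\cdot+\tau)-w$ solves the same half-space problem; combining the $C^s$ boundary estimate with interior $C^{2s-\delta}$ estimates in the bulk, one shows that after rescaling by $|\tau|^s$ its growth is strictly below $|x|^s$, whereupon a sub-$s$ Liouville --- any half-space solution vanishing outside and growing strictly slower than $|x|^s$ must vanish, by direct comparison with $\delta\, b_\infty$ for every $\delta>0$ --- forces it to be zero; iterating gives $w(x)=W(x\cdot e_\infty)$. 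Third, $W$ then solves the 1D half-line problem with the symmetrized kernel of $K_\infty$, and the uniqueness of $b$ from \autoref{thm-1D} combined with the same sub-$s$ Liouville rules out any growth in the gap $(s,2s)$, forcing $W = q\, b_\infty$. The orthogonality established above then kills $q$, so $v_\infty\equiv 0$, contradicting $\|v_\infty\|_{L^\infty(B_1)}=1$. The delicate point is precisely this Liouville classification: since $b$ is not explicit, the equation has no natural intermediate homogeneity between $s$ and $2s$, and tangential differences only improve growth in a subtle interior-vs.-boundary way. The argument only closes because \autoref{thm-1D} provides both existence and uniqueness of $b_\infty$ in exactly the right form, and because the $L^\infty$-orthogonality built into the blow-up fits together with the Liouville classification to annihilate the limit.
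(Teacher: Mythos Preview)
Your overall architecture --- contradiction, compactness, and a Liouville classification in the half-space --- is exactly the paper's route (the statement follows from \autoref{thm:inhom-Cs}), and you correctly identify the key inputs (\autoref{thm-1D}, \autoref{thm:C-s-eps}). But two of your steps, as written, contain genuine gaps.

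\textbf{Flattening.} The paper does \emph{not} flatten $\partial\Omega$ by a diffeomorphism, and for nonlocal operators this step is not benign: composing with $\Phi$ turns $K(x-y)$ into the $\xi$-dependent kernel $K(\Phi^{-1}(\xi)-\Phi^{-1}(\eta))\,|\det D\Phi^{-1}(\eta)|$, which is no longer in the class \eqref{eq:Kcomp} and does not decompose as a translation-invariant operator plus a bounded source (the kernel perturbation is of order $(|\xi|^\alpha+|\eta|^\alpha)|\xi-\eta|^{-n-2s}$, and bounding the resulting error already requires the $C^s$ estimate you are proving). Instead the paper works directly with the $C^{1,\alpha}$ domains and lets the blow-up do the flattening: the rescaled domains $r_m^{-1}\Omega_{k_m}$ converge to the tangent half-space. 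The price is a careful compactness step for $v_m$ in the sliver where the rescaled domain and the half-space differ; there the paper applies \autoref{thm:C-s-eps} to both $u_{k_m}$ and $\phi_m$, uses the geometric bound $\dist(x,\partial\Omega_{k_m})+\dist(x,\partial\Sigma_m^-)\le Cr^{1+\alpha}$, and interpolates --- this is precisely where the restriction $\eps<\alpha s$ enters. Then \autoref{lemma:Lipschitz-Holder-prelim} with \emph{nonzero} exterior data $g=v_m$ closes the uniform $C^\delta$ bound inside the domain. (The paper also uses the $L^2$ projection onto $b\,\R$ rather than an $L^\infty$-optimal $q$, and checks separately that the barriers $b_m$ converge to a multiple of $b_\infty$ via the Liouville theorem; your $L^\infty$ variant would work but needs this same barrier-convergence step.)

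\textbf{Liouville.} Your tangential-reduction step fails as stated: the increment $[w(\cdot+\tau)-w]/|\tau|^s$ still grows like $|x|^{s+\eps}$ at infinity, not strictly below $|x|^s$, so the sub-$s$ Liouville does not apply to it. The paper (\autoref{thm:Liouville-half-space}) instead iterates the rough $C^\eps$ boundary estimate from \autoref{lemma:Lipschitz-Holder-prelim} on successive tangential increments $k$ times until $\beta-k\eps<0$, forcing $w$ to be polynomial in $x'$ with one-dimensional coefficients; the growth bound then kills the linear-in-$x'$ piece. The remaining 1D classification (\autoref{lemma:Liouville-1d-2}) is also more delicate than ``uniqueness of $b$ plus sub-$s$ Liouville'': for $\eps$ below the boundary-Harnack exponent $\alpha$ one applies \eqref{eq:bdry-Harnack} directly, but for $\eps\in[\alpha,s)$ the paper subtracts $\kappa b$, controls the derivative of the remainder via \eqref{eq:interior-gradient-est}, and slides the supersolution $A(b+b')$ down to zero using the two-sided bounds on $b'$ from \autoref{lemma:b-derivative-bounds}.
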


We prove \eqref{expansion-intro} by a blow-up and contradiction argument, adapting the ideas from \cite{RoSe16b} to this context (see Section \ref{sec:Cs}). A key result in this proof is a Liouville theorem in the half-space (see \autoref{thm:Liouville-half-space}), which is of independent interest.
Once we have the expansion \eqref{expansion-intro}, the $C^s$ estimate in  \autoref{thm-Cs-intro} follows easily.

The Hopf-type result $u\geq cd^s$ in \autoref{thm-Cs-intro} is basically equivalent to $q >0$ in \eqref{expansion-intro}, and it does not follow so easily. 
To prove it, we need an extra blow-up argument, combined with a ``soft'' Hopf-type lemma of the type $u\geq c_1 d^{2s-\gamma}$ for some $\gamma>0$.

Finally, after we have \autoref{thm-Cs-intro}, we can proceed to prove the optimal $C^{1+s}$ regularity for the obstacle problem (see \autoref{thm-obst-intro}).

It is important to emphasize that, in order to prove \autoref{thm-obst-intro}, we truly need \emph{all} the results of this paper.
Indeed, \autoref{thm-onephase-intro} is crucial in the proof of \autoref{thm-1D}, which in turn is crucial in the proof of \autoref{thm-expansion-intro}, which in turn is crucial in the proof of \autoref{thm-Cs-intro}, which in turn will allow us to prove \autoref{thm-obst-intro}.

\subsection{Acknowledgements}

The authors were supported by the European Research Council under the Grant Agreements No. 801867 (EllipticPDE) and No. 101123223 (SSNSD), and by AEI project PID2021-125021NA-I00 (Spain).
Moreover, X.R was supported by the grant RED2022-134784-T funded by AEI/10.13039/501100011033, by AGAUR Grant 2021 SGR 00087 (Catalunya), and by the Spanish State Research Agency through the Mar\'ia de Maeztu Program for Centers and Units of Excellence in R{\&}D (CEX2020-001084-M).

\subsection{Organization of the paper}

The paper is organized as follows.
In Section \ref{sec:prelim} we give some preliminary results, and in Section \ref{sec:suboptimal-reg} we prove a $C^{2s-1+\eps}$ boundary estimate for $s>\frac12$, which we need in the study of the one-phase nonlocal free boundary problem.
In Section \ref{sec:one-phase} we develop the basic theory for such a free boundary problem and prove \autoref{thm-onephase-intro}.
In Section \ref{sec:half-space} we establish \autoref{thm-1D}, and in Section \ref{sec:Cs} we prove \autoref{thm-Cs-intro} and \autoref{thm-expansion-intro}.
Finally, in Section \ref{sec:obstacle} we apply these results to the obstacle problem, establishing \autoref{thm-obst-intro}.

\section{Preliminaries}
\label{sec:prelim}

In this section we collect several definitions and auxiliary lemmas that will become important throughout the course of this article. Most importantly, we will give a rigorous definition of (local) minimizers to the nonlocal one-phase problem \eqref{onephase-intro}.

From now on, we denote by $\mathcal{L}^n_s(\lambda,\Lambda)$ the class of operators \eqref{eq:L} with kernels $K$ satisfying \eqref{eq:Kcomp} for some $0 < \lambda \le \Lambda < \infty$ and $s \in (0,1)$, acting on functions $u : \R^n \to \R$.

\subsection{Function spaces and solution concepts}

Given an open, bounded domain $\Omega \subset \R^n$, let us introduce the following function spaces, which are naturally associated with the energy $\cI$ from \eqref{onephase-intro}:
\begin{align*}
V^s(\Omega | \Omega') &:= \left \{ u \hspace{-0.1cm}\mid_{\Omega} \hspace{0.1cm} \in L^2(\Omega) : [u]^2_{V^s(\Omega | \Omega')} := \int_{\Omega} \int_{\Omega'} \frac{(u(x) - u(y))^2}{|x-y|^{n+2s}} \d y \d x < \infty \right \}, ~~ \Omega \Subset \Omega'\\
H^s_{\Omega}(\R^n) &:=  \left\{ u \in V^s(\R^n | \R^n) : u \equiv 0 \text{ in } \R^n \setminus \Omega \right\},\\
H^s(\Omega) &:=  \left\{ u \in L^2(\Omega) : [u]^2_{H^s(\Omega)}  := \int_{\Omega} \int_{\Omega} \frac{(u(x) - u(y))^2}{|x-y|^{n+2s}} \d y \d x < \infty \right\},\\
L^p_{2s}(\R^n) &:= \left\{ u : \R^n \to \R : \Vert u \Vert_{L^p_{2s}(\R^n)}^p := \int_{\R^n} |u(y)|^p (1 + |y|)^{-n-2s} \d y < \infty \right\}, ~~ p \ge 1.
\end{align*}
These spaces are equipped with the following norms:
\begin{align*}
\Vert u \Vert_{V^s(\Omega | \Omega')} := \Vert u \Vert_{L^2(\Omega)} + [u]_{V^s(\Omega | \Omega')}, \qquad \Vert u \Vert_{H^s(\Omega)} := \Vert u \Vert_{L^2(\Omega)} + [u]_{H^s(\Omega)}.
\end{align*}

The following quantity captures the long-range interactions caused by the nonlocality of the energy:
\begin{align*}
\tail(u;R,x_0) := R^{2s} \int_{\R^n \setminus B_R(x_0)} |u(y)||y - x_0|^{-n-2s} \d y, ~~ x_0 \in \R^n, ~~ R > 0.
\end{align*}
When $x_0 = 0$, we will often write $\tail(u;R,0) = \tail(u;R)$.\\
Given a kernel $K : \R^n \to [0,\infty]$ satisfying \eqref{eq:Kcomp} and a set $\cD \subset \R^n \times \R^n$, we introduce the notation
\begin{align*}
\cE^K_{\cD}(u,v) = \iint_{\cD} (u(x) - u(y))(v(x) - v(y)) K(x-y) \d y \d x.
\end{align*}
We will write $\cE^K_{\cD} := \cE_{\cD}$ when no confusion about the associated kernel $K$ can occur. Moreover, if $\cD = D \times D$ for some $D \subset \R^n$, we write $\cE_{D} := \cE_{\cD}$, and if $\cD = \R^n \times \R^n$, we write $\cE := \cE_{\cD}$.\\
Moreover, given $K$ satisfying \eqref{eq:Kcomp}, $\Omega \subset \R^n$, and $M > 0$, we denote
\begin{align}
\label{eq:OP}
\cI^{K,M}_{\Omega}(u) := \cE^K_{(\Omega^c \times \Omega^c)^c}(u,u) + M |\{ u > 0 \} \cap \Omega|,
\end{align}
whenever this expression is finite.
We will write $\cI^{K,M}_{\Omega} =: \cI_{\Omega} =: \cI$, when no confusion about the associated kernel $K$ and $M$ and $\Omega$ can occur.

Note that by \eqref{eq:Kcomp}, we have that $\cE_{(\Omega^c \times \Omega^c)^c}(u,u) < \infty$, whenever $u \in V^s(\Omega | \R^n)$. Moreover, it holds $V^s(\Omega | \R^n) \subset L^1_{2s}(\R^n)$, and therefore $\tail(u;R,x_0) < \infty$ whenever $B_R(x_0) \subset \Omega$ and $u \in V^s(B_R(x_0) | \R^n)$. However in some cases we need to consider functions that grow like $t \mapsto t^s$ at infinity. Such functions arise as blow-up limits of minimizers of $\cI_{\Omega}$. Since such functions do not belong to $V^s(\Omega | \R^n)$, we need to define what it means for a function, merely belonging to $V^s(\Omega | \Omega') \cap L^1_{2s}(\R^n)$ (with $\Omega \Subset \Omega'$) to minimize $\cI_{\Omega}$. A similar notion of minimization for nonlocal functionals has been introduced in \cite{CoLo21} in the context of (nonlocal) minimal surfaces.

\begin{definition}[minimizers]
\label{def:minimizer}
Let $K$ satisfy \eqref{eq:Kcomp}. Let $\Omega \Subset \Omega' \subset \R^n$ be an open, bounded domain, and $M > 0$. We say that $u \in V^s(\Omega | \Omega') \cap L^1_{2s}(\R^n)$ with $u \ge 0$ in $\R^n$ is a (local) minimizer of $\cI_{\Omega}$ (in $\Omega$) if for any $v \in V^s(\Omega | \Omega') \cap L^1_{2s}(\R^n)$ with $u = v$ in $\R^n \setminus \Omega$, it holds
\begin{align*}
\iint\limits_{(\Omega^c \times \Omega^c)^c} \Big[(u(x) - u(y))^2 - (v(x) - v(y))^2 \Big] K(x-y) \d y \d x + M \Big[ | \{u > 0 \} \cap \Omega | - | \{v > 0 \} \cap \Omega | \Big] \le 0.
\end{align*}
Moreover, when $g \in V^s(\Omega | \Omega') \cap L^1_{2s}(\R^n)$ with $g \ge 0$ is given, then we say that $u$ is a minimizer of $\cI_{\Omega}$ with exterior data $g$, if $u$ is a local minimizer of $\cI_{\Omega}$ and $u = g$ in $\R^n \setminus \Omega$.
\end{definition}

\begin{remark}
Let $u \in V^s(\Omega | \R^n)$. Then $u$ minimizes $\cI_{\Omega}$ in the sense of \autoref{def:minimizer} if and only if 
\begin{align}
\label{eq:simplified-minimizer}
\cI_{\Omega}(u) \le \cI_{\Omega}(v) ~~ \forall v \in V^s(\Omega | \R^d) ~~ \text{ with } u=v=g ~~ \text{ in } \R^n \setminus \Omega.
\end{align}
Moreover, if $g \in V^s(\Omega | \R^n)$ with $g \ge 0$ is given, then the existence of a minimizer $u$ of $\cI_{\Omega}$ in $\Omega$ with exterior data $g$ follows by standard arguments using \eqref{eq:simplified-minimizer} (see \autoref{lemma:existence}).
\end{remark}

In the following, in order to simplify notation, we will occasionally write \eqref{eq:simplified-minimizer} instead of using the expression in \autoref{def:minimizer}, even if $u \in H^s(\Omega) \cap L^1_{2s}(\R^n)$. However, whenever we write \eqref{eq:simplified-minimizer}, everything can be made rigorous in a straightforward way, by writing the energies under the same integral.

Clearly, given a jumping kernel $K$, the energy $\cE^K$ gives rise to an integro-differential operator $L$ given by \eqref{eq:L} via the relation
\begin{align*}
\cE_{(\Omega^c \times \Omega^c)^c}(u,\phi) = (L u , \phi) ~~ \forall \phi \in H^s_{\Omega}(\R^n).
\end{align*}

Throughout this article, we will work with the following notion of weak solutions:

\begin{definition}[weak solutions]
\label{def:weak-solution}
Let $\Omega \Subset \Omega' \subset \R^n$ be an open, bounded domain. Let $f \in L^{\infty}(\Omega)$. We say that $u \in V^s(\Omega | \Omega') \cap L^1_{2s}(\R^n)$ is a weak subsolution to $Lu \le f$ in $\Omega$ if 
\begin{align}
\label{eq:weak-sol}
\cE_{(\Omega^c \times \Omega^c)^c}(u,\phi) \le \int_{\Omega} f \phi \d x ~~ \forall \phi \in H^s_{\Omega}(\R^n) ~~ \text{ s.t. } \phi \ge 0.
\end{align}
We say that $u$ is a weak supersolution to $Lu \ge f$ in $\Omega$ if \eqref{eq:weak-sol} holds true with $-u$ instead of $u$. Moreover, $u$ is a weak solution to $Lu = f$ in $\Omega$, if it is a weak subsolution and a weak supersolution.
\end{definition}

\begin{remark}
Note that $u \in V^s(\Omega| \Omega') \cap L^1_{2s}(\R^n)$ guarantees that the energy in \eqref{eq:weak-sol} is finite.
\end{remark}

Since the nonlocal one-phase problem is variational, minimizers of $\cI$ will naturally be weak (sub)solutions. Therefore, energy methods are needed in order to verify their regularity properties.

\subsection{Harmonic replacement}
\label{subsec:harmonic-replacement}

An important tool in the analysis of the optimal regularity of minimizers of $\cI$ is played by the so called $L$-harmonic replacement.

\begin{definition}[harmonic replacement]
Let $L \in \mathcal{L}_s^n(\lambda,\Lambda)$. Let $B \Subset B' \subset \R^n$ be an open, bounded domain and $u \in V^s(B | B') \cap L^1_{2s}(\R^n)$. Then, we call $v \in V^s(B | B') \cap L^1_{2s}(\R^n)$ the $L$-harmonic replacement of $u$ in $B$ if $v$ is a weak solution to
\begin{align*}
\begin{cases}
L v &= 0 ~~ \text{ in } B,\\
v &= u ~~ \text{ in } \R^n \setminus B.
\end{cases}
\end{align*} 
\end{definition} 

\begin{remark}
Note that if $u \in V^s(B | B') \cap L^1_{2s}(\R^n)$, then the $L$-harmonic replacement $v$ of $u$ in $B$ always exists and is uniquely defined (see \cite[Theorem 4.3]{KiLe23}). Moreover, it holds $u-v \in H^s_{B}(\R^n)$.
\end{remark}

We collect two lemmas on harmonic replacements, that will be useful for us in the course of this article.

\begin{lemma}
\label{lemma:harmonic-replacement}
Let $L \in \mathcal{L}_s^n(\lambda,\Lambda)$. Let $\Omega \Subset \Omega' \subset \R^n$ and $R > 0$ and $x_0 \in \R^n$ be such that $B_R(x_0) \subset \Omega$. Let $u \in V^s(\Omega| \Omega')$ and let $v$ be the $L$-harmonic replacement of $u$ in $B_R(x_0)$. Then,
\begin{align*}
\cE_{(B_R(x_0)^c \times B_R(x_0)^c)^c}(u,u) - \cE_{(B_R(x_0)^c \times B_R(x_0)^c)^c}(v,v) &= \cE_{(B_R(x_0)^c \times B_R(x_0)^c)^c}(u-v,u-v),\\
\cE_{(\Omega^c \times \Omega^c)^c}(u,u) - \cE_{(\Omega^c \times \Omega^c)^c}(v,v) &= \cE_{(\Omega^c \times \Omega^c)^c}(u-v,u-v).
\end{align*}
\end{lemma}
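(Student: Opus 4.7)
The result is a nonlocal Pythagoras-type identity, and I would prove both equalities simultaneously by the standard expansion trick, using that $u-v$ is an admissible test function for the equation $Lv=0$.

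Set $w:=u-v$. Since $v$ is the $L$-harmonic replacement of $u$ in $B_R(x_0)$, we have $w\equiv 0$ in $\R^n\setminus B_R(x_0)$, hence $w\in H^s_{B_R(x_0)}(\R^n)\subset H^s_{\Omega}(\R^n)$ (using $B_R(x_0)\subset\Omega$). Write $u=v+w$ and expand the quadratic form $\cE_{\cD}$ (where $\cD$ is either $(B_R(x_0)^c\times B_R(x_0)^c)^c$ or $(\Omega^c\times\Omega^c)^c$):
\begin{equation*}
\cE_{\cD}(u,u)-\cE_{\cD}(v,v)=\cE_{\cD}(w,w)+2\,\cE_{\cD}(v,w).
\end{equation*}
Thus in each case the claimed identity reduces to showing $\cE_{\cD}(v,w)=0$.

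For $\cD=(B_R(x_0)^c\times B_R(x_0)^c)^c$ this is precisely the definition of weak solution applied to $Lv=0$ in $B_R(x_0)$, tested against the admissible $\phi=w\in H^s_{B_R(x_0)}(\R^n)$; see \autoref{def:weak-solution} applied with $f=0$ (and both to $v$ and $-v$ to get equality). This gives the first identity.

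For the second identity, I would argue that extending the domain of integration from $(B_R(x_0)^c\times B_R(x_0)^c)^c$ to $(\Omega^c\times\Omega^c)^c$ contributes nothing to either side. Indeed, the difference set is
\begin{equation*}
\bigl\{(x,y)\in\R^n\times\R^n:\ (x\in\Omega\ \text{or}\ y\in\Omega)\ \text{and}\ x,y\notin B_R(x_0)\bigr\},
\end{equation*}
and on this set $w(x)=w(y)=0$, so that both $(w(x)-w(y))^2$ and $(u(x)-u(y))^2-(v(x)-v(y))^2$ vanish pointwise. Therefore the identity over $(\Omega^c\times\Omega^c)^c$ is equivalent to the one over $(B_R(x_0)^c\times B_R(x_0)^c)^c$ already established.

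The only mild technical point is checking that each integral is absolutely convergent so that the algebraic manipulations are legitimate; this follows from $u\in V^s(\Omega\mid\Omega')$ together with $u\in L^1_{2s}(\R^n)$ (inherited from the hypotheses on $u$ and from the construction of the harmonic replacement), so I expect no real obstacle beyond bookkeeping.
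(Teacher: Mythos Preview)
Your proof is correct and follows essentially the same approach as the paper: expand $u=v+w$ bilinearly, kill the cross term $\cE_{\cD}(v,w)$ by testing the equation $Lv=0$ with $w\in H^s_{B_R(x_0)}(\R^n)$, and observe that on the extra region $(\Omega^c\times\Omega^c)^c\setminus(B_R(x_0)^c\times B_R(x_0)^c)^c$ both $w(x)$ and $w(y)$ vanish. One small caveat: as the paper notes right after the lemma, the individual energies $\cE_{\cD}(u,u)$ and $\cE_{\cD}(v,v)$ need not be finite when $u\notin V^s(\Omega|\R^n)$, so rather than expanding them separately it is cleaner to apply the pointwise identity $(u_1-u_2)^2-(v_1-v_2)^2=2(v_1-v_2)\bigl((u_1-v_1)-(u_2-v_2)\bigr)+\bigl((u_1-v_1)-(u_2-v_2)\bigr)^2$ under a single integral.
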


Clearly, if $u \in V^s(\Omega | \Omega') \cap L^1_{2s}(\R^n)$, but not in $V^s(\Omega | \R^n)$, the expressions on the left hand sides above are not necessarily finite. In that case, one interprets the difference of the energies in the same way as in \autoref{def:minimizer} (see also the proof).

\begin{proof}
Note that 
\begin{align*}
(u_1 - u_2)^2 - (v_1 - v_2)^2 &= 2(v_1 - v_2)((u_1 - u_2) - (v_1 - v_2)) + ((u_1 - u_2) - (v_1 - v_2))^2\\
&= 2(v_1 - v_2)((u_1 - v_1) - (u_2 - v_2)) + ((u_1 - v_1) - (u_2 - v_2))^2.
\end{align*}
Thus, we have
\begin{align*}
\iint_{(B_R(x_0)^c \times B_R(x_0)^c)^c} & \big[ (u(x) - u(y))^2 - (v(x) - v(y))^2 \big] K(x-y) \d y \d x \\
&= 2\cE_{(B_R(x_0)^c \times B_R(x_0)^c)^c}(v,u-v) + \cE_{(B_R(x_0)^c \times B_R(x_0)^c)^c}(u-v,u-v)\\
&=\cE_{(B_R(x_0)^c \times B_R(x_0)^c)^c}(u-v,u-v),
\end{align*}
where we used that by the properties of $v$, we have that $u-v \in H^s_{B_R(x_0)}(\R^n)$ is a valid test function for $Lv = 0$ in $B_R(x_0)$.\\
The proof of the second property follows by the same argument, and using that 
\begin{align*}
0 = \cE_{(B_R(x_0)^c \times B_R(x_0)^c)^c}(v,u-v) &= \cE_{(\Omega^c \times \Omega^c)^c}(v,u-v) = \cE(v,u-v),\\
\cE_{(B_R(x_0)^c \times B_R(x_0)^c)^c}(u-v,u-v) &= \cE_{(\Omega^c \times \Omega^c)^c}(u-v,u-v) = \cE(u-v,u-v),
\end{align*}
since $(B_R(x_0)^c \times B_R(x_0)^c)^c \subset (\Omega^c \times \Omega^c)^c$ and $u = v$ in $(\R^n \setminus \Omega) \subset (\R^n \setminus B_R(x_0))$.
\end{proof}

In particular, we have

\begin{lemma}
\label{lemma:energy-comp-est}
Let $L \in \mathcal{L}_s^n(\lambda,\Lambda)$. Let $\Omega \subset \R^n$ and $M > 0$. Let $u$ be a minimizer of $\cI_{\Omega}$. Let $B_R(x_0) \subset \Omega$ and let $v$ be the $L$-harmonic replacement of $v$ in $B_R(x_0)$. Then,
\begin{align*}
\cE_{(B_R(x_0)^c \times B_R(x_0)^c)^c}(u-v,u-v) \le M |\{ u = 0\} \cap B_R(x_0)| \le c M R^n
\end{align*}
for some $c > 0$, depending only on $n$.
\end{lemma}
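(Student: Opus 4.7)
The plan is to test the minimality of $u$ against the $L$-harmonic replacement $v$, apply the energy identity in \autoref{lemma:harmonic-replacement}, and then trivially estimate the measure term. The proof is essentially a one-line combination of the minimization inequality for $\cI_\Omega$ with the Pythagorean-type identity from the previous lemma, so the main content lies in checking that $v$ is an admissible competitor and in organizing the domains of integration correctly.

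First, I would observe that $v$ is a valid competitor in \autoref{def:minimizer}: since $B_R(x_0) \subset \Omega$, we have $v \equiv u$ on $\R^n \setminus \Omega$, and $v \in V^s(\Omega|\Omega') \cap L^1_{2s}(\R^n)$ because $v - u \in H^s_{B_R(x_0)}(\R^n)$ and $u$ itself belongs to this space. Applying the minimization inequality yields
\begin{equation*}
\cE_{(\Omega^c \times \Omega^c)^c}(u,u) - \cE_{(\Omega^c \times \Omega^c)^c}(v,v) \;\le\; M\bigl(|\{v>0\}\cap\Omega| - |\{u>0\}\cap\Omega|\bigr).
\end{equation*}
By \autoref{lemma:harmonic-replacement}, the left-hand side equals $\cE_{(\Omega^c \times \Omega^c)^c}(u-v,u-v)$; and since $u-v$ vanishes outside $B_R(x_0)$, this in turn coincides with $\cE_{(B_R(x_0)^c \times B_R(x_0)^c)^c}(u-v,u-v)$, which is exactly the quantity we want to bound.

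For the measure term on the right, since $u=v$ on $\Omega \setminus B_R(x_0)$, the contributions outside the ball cancel, leaving
\begin{equation*}
|\{v>0\}\cap\Omega| - |\{u>0\}\cap\Omega| \;=\; |\{v>0\}\cap B_R(x_0)| - |\{u>0\}\cap B_R(x_0)| \;\le\; |\{u=0\}\cap B_R(x_0)|,
\end{equation*}
where in the last step we simply bounded $|\{v>0\}\cap B_R(x_0)| \le |B_R(x_0)|$. This immediately yields the first stated inequality, and the bound $|\{u=0\}\cap B_R(x_0)| \le |B_R(x_0)| = c_n R^n$ gives the second. There is no real obstacle; the only point that requires mild care is that when $u$ does not lie in $V^s(\Omega|\R^n)$ the individual energies $\cE_{(\Omega^c \times \Omega^c)^c}(u,u)$ and $\cE_{(\Omega^c \times \Omega^c)^c}(v,v)$ need not be finite separately, so the displayed differences must be interpreted via \autoref{def:minimizer} by placing both quadratic terms under a common integral — exactly as noted in the remark following \autoref{lemma:harmonic-replacement}.
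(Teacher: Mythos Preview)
Your proof is correct and follows essentially the same route as the paper: test minimality of $u$ against the harmonic replacement $v$, invoke the energy identity of \autoref{lemma:harmonic-replacement} to rewrite the energy difference as $\cE_{(\Omega^c\times\Omega^c)^c}(u-v,u-v)=\cE_{(B_R(x_0)^c\times B_R(x_0)^c)^c}(u-v,u-v)$, cancel the measure terms outside $B_R(x_0)$, and bound $|\{v>0\}\cap B_R(x_0)|\le |B_R(x_0)|$. Your remark about interpreting the energy differences via \autoref{def:minimizer} when the individual energies are infinite is also exactly the caveat the paper makes.
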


\begin{proof}
We compute, using that $u$ minimizes $\cI_{\Omega}$ and that $v$ is a competitor for $u$, i.e., it holds $\cI_{\Omega}(u) - \cI_{\Omega}(v) \le 0$, and \autoref{lemma:harmonic-replacement}
\begin{align*}
\cE_{(B_R(x_0)^c \times B_R(x_0)^c)^c}(u-v,u-v) &= \cE_{(\Omega^c \times\Omega^c)^c}(u-v,u-v)\\
&= \cE_{(\Omega^c \times \Omega^c)^c}(u,u) - \cE_{(\Omega^c \times \Omega^c)^c}(v,v)\\
&\le - M | \{ u > 0 \} \cap \Omega| + M |\{ v > 0 \} \cap \Omega| \\
&= - M | \{ u > 0 \} \cap B_R(x_0)| + M |\{ v > 0 \} \cap B_R(x_0)|\\
&\le M \left(|B_R(x_0)| - | \{ u > 0 \} \cap B_R(x_0)| \right)\\
&= M | \{ u = 0 \} \cap B_R(x_0)|,
\end{align*}
as desired.
\end{proof}

\section{Homogeneous barriers and $C^{2s-1+\eps}$ estimates}
\label{sec:suboptimal-reg}

In this section, we establish that solutions to the nonlocal Dirichlet problem in a $C^{1,\alpha}$ domain are in $C^{2s-1+\eps}$ up to the boundary for some $\eps > 0$, when $s > 1/2$. The novelty of this result comes from the fact that we consider general operators of the form \eqref{eq:L} with \eqref{eq:Kcomp} but without assuming the kernels to be homogeneous. So far, it was only known that solutions are $C^{\eps}(\overline{\Omega})$
This result will allow us to establish non-degeneracy for minimizers of $\cI$ without assuming homogeneity of the kernels (see \autoref{cor:annulus-regularity}).

\begin{proposition}
\label{prop:beta-regularity}
Let $L \in \mathcal{L}_s^n(\lambda,\Lambda)$. Let $\Omega \subset \R^n$ be a bounded domain with $\partial \Omega \in C^{1,\alpha}$ for some $\alpha \in (0,s)$. Let $f \in L^{\infty}(B_1 \cap \Omega)$ and let $u \in L^{\infty}(\R^n)$ be a weak solution to
\begin{align*}
\begin{cases}
L u &= f ~~ \text{ in } B_1 \cap \Omega,\\
u &= 0 ~~ \text{ in } B_1 \setminus \Omega.
\end{cases}
\end{align*}
Then, there exists $\beta \in (0, \min \{ 1 , 2s \})$ with $\beta > 2s - 1$, depending only on $n,s,\lambda,\Lambda$, such that $u \in C^{\beta}_{loc}(B_1)$ with
\begin{align*}
\Vert u \Vert_{C^{\beta}(B_{1/2})} \le C \left( \Vert u \Vert_{L^{\infty}(\R^n)} + \Vert f \Vert_{L^{\infty}(B_1 \cap \Omega)} \right)
\end{align*}
where $C > 0$ depends only on $n,s,\lambda,\Lambda,\alpha,\Omega$.
\end{proposition}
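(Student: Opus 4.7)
The plan is to establish a pointwise boundary decay $|u(x)| \le C d(x)^\beta$ for some $\beta \in (2s-1, \min\{1,2s\})$ via a homogeneous 1D barrier, and then combine this decay with interior Hölder regularity to obtain the global $C^\beta$ bound. After translation and rotation with $0 \in \partial\Omega$ and inner normal $e_n$, the $C^{1,\alpha}$ regularity of $\partial\Omega$ permits a local diffeomorphism flattening the boundary; the transformed operator remains in $\mathcal{L}_s^n(\lambda',\Lambda')$ with constants depending on $\lambda,\Lambda$ and on the $C^{1,\alpha}$ bound of the boundary, and $f$ stays bounded, so it suffices to prove the estimate in the half-space model $B_1^+ = B_1 \cap \{x_n > 0\}$.

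The key step is to construct, for some $\beta \in (0,s)$ depending on $n,s,\lambda,\Lambda$, a barrier of the form $\phi(x) = (x_n)_+^\beta \eta(x)$ (with $\eta$ a smooth cutoff) such that $L\phi \ge c\, x_n^{\beta-2s}$ in $B_{1/2}^+$ for some $c = c(n,s,\lambda,\Lambda) > 0$. Integrating the kernel $K(y',\tau)$ over $y' \in \R^{n-1}$ yields a symmetric 1D kernel $\bar K(\tau) \asymp |\tau|^{-1-2s}$ with constants depending on $\lambda,\Lambda$, and Fubini reduces the action of $L$ on $\phi$ to that of the associated 1D operator $\bar L$ on $(t_+)^\beta$ plus a bounded remainder from the cutoff. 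The substitution $\tau = x_n \sigma$ then gives
\begin{equation*}
\bar L(t_+)^\beta(x_n) = x_n^{\beta - 2s}\, \mathcal{I}_\beta(\bar K, x_n), \qquad \mathcal{I}_\beta(\bar K, x_n) = \text{p.v.}\int_\R \big(1 - (1+\sigma)_+^\beta\big)\, x_n^{1+2s} \bar K(x_n \sigma)\, \d\sigma,
\end{equation*}
where the normalization $x_n^{1+2s}\bar K(x_n\sigma) \asymp |\sigma|^{-1-2s}$ is scale-invariant. Using the symmetry of $\bar K$ to absorb the principal value singularity at $\sigma = 0$ and applying the ellipticity bounds asymmetrically on the sign-definite portions of the integrand (the lower bound $\lambda|\tau|^{-1-2s}$ where $1-(1+\sigma)_+^\beta > 0$, the upper bound $\Lambda|\tau|^{-1-2s}$ where it is negative), one derives a uniform positive lower bound $\mathcal{I}_\beta \ge c_\beta > 0$ for $\beta$ in some subinterval of $(2s-1, \min\{1,2s\})$ depending only on $n,s,\lambda,\Lambda$.

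With the barrier in hand, the weak maximum principle applied to $C\phi \pm u$, with $C$ proportional to $\|u\|_{L^\infty(\R^n)} + \|f\|_{L^\infty}$, yields $|u(x)| \le C(x_n)_+^\beta$ in $B_{1/2}^+$, hence $|u(x)| \le C d(x)^\beta$ near $\partial\Omega$. The global $C^\beta$ estimate then follows by combining this boundary decay with known interior Hölder regularity for operators in $\mathcal{L}_s^n$ via a standard scaling/covering argument: for $x,y$ with $|x-y| \le \tfrac12 d(x)$ one rescales an interior estimate on $B_{d(x)/2}(x)$, while in the complementary case the triangle inequality combined with the boundary decay yields $|u(x) - u(y)| \le |u(x)| + |u(y)| \le C|x-y|^\beta$. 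The principal technical obstacle is verifying the uniform positivity of $\mathcal{I}_\beta$: the integrand is not sign-definite, so ellipticity cannot be used pointwise in $\sigma$ but must be invoked asymmetrically, and since the analogous integral for the fractional Laplacian vanishes exactly at $\beta = s$, one must carefully calibrate $\beta$ away from $s$ depending on $\Lambda/\lambda$ while still keeping $\beta > 2s-1$; this is precisely where both the symmetry $K(h)=K(-h)$ and the full strength of the ellipticity condition \eqref{eq:Kcomp} enter.
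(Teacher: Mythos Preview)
Your 1D barrier construction --- reducing $L$ acting on $(x_n)_+^\beta$ to a 1D operator $\bar L$ by integrating the kernel over $y'$, then using asymmetric ellipticity (lower bound where the integrand is positive, upper bound where negative) to force $\mathcal{I}_\beta > 0$ for $\beta$ small --- is essentially the extremal-operator computation in the paper's Lemma~\ref{lemma:one-dimensional-barrier}. The paper phrases it as $L(x_n)_+^\beta \ge \cM^-(x_n)_+^\beta = c_\beta (x_n)_+^{\beta-2s}$ and identifies the unique $\beta_0$ with $c_{\beta_0}=0$; your ``calibrating $\beta$ away from $s$'' is the same mechanism. What you do not supply, and what the paper treats as the main point, is the proof that one may keep $\beta > 2s-1$: this does not follow from calibration alone, but from the observation that if $\beta_0 \le 2s-1$ then integrating $L(t_+)^{\beta_0-\eps} \ge 0$ in $t$ would yield $L(t_+)^{\beta_0+1-\eps} \ge 0$ with $\beta_0+1-\eps \in (\beta_0,2s)$, contradicting $c_\beta < 0$ there. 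Without this argument the claim $\beta > 2s-1$ is unsupported.

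The more serious gap is the boundary flattening. A $C^{1,\alpha}$ diffeomorphism $\Phi$ transforms $L$ into an operator with kernel $\tilde K(\xi,\eta) = K(\Phi(\xi)-\Phi(\eta))|\det D\Phi(\eta)|$, which depends on \emph{both} $\xi$ and $\eta$ and is in general neither translation-invariant nor symmetric; hence the transformed operator does \emph{not} lie in $\mathcal{L}_s^n(\lambda',\Lambda')$. Your subsequent Fubini reduction to a fixed 1D kernel $\bar K(\tau)$ then fails, since $\bar K$ would depend on $x$ as well. The paper avoids flattening entirely: it works in the original coordinates with the regularized distance $d_\Omega$, applies the 1D result of Lemma~\ref{lemma:one-dimensional-barrier} to the linearization $l(x) = (d(x_0) + \nabla d(x_0)\cdot(x-x_0))_+$ at each boundary point, and controls the remainder $L(d_\Omega^\beta - l^\beta)(x_0)$ via the pointwise estimates $|d^\beta - l^\beta| \le |d-l|^\beta \le C|y|^{(1+\alpha)\beta}$ (from $C^{1,\alpha}$ regularity) and a sharper bound inside $B_{\rho/2}$; see Lemma~\ref{lemma:beta-supersolution}. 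This perturbative comparison with the tangent half-space, rather than a change of variables, is what makes the barrier argument go through for translation-invariant nonlocal operators in curved domains.
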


\begin{remark}
The proof of \autoref{prop:beta-regularity} reveals that it is also possible to take $f$ such that $d^{\beta - 2s} f \in L^{\infty}(B_1 \cap \Omega)$, and $u \in C(B_1) \cap L^{\infty}_{2s-\eps}(\R^n)$ for some $\eps > 0$ being a distributional solution.
\end{remark}

As a corollary, we get the following result, which will be used in the proof of the non-degeneracy:

\begin{corollary}
\label{cor:annulus-regularity}
Let $L \in \mathcal{L}_s^n(\lambda,\Lambda)$. Let $R > 0$, $u \in L^1_{2s}(\R^n)$. Let $v$ be a weak solution to 
\begin{align*}
\begin{cases}
L v &= 0 ~~ \text{ in } B_{2R} \setminus B_R,\\
v &= 0 ~~ \text{ in } B_R,\\
v &= u ~~ \text{ in } \R^n \setminus B_{2R}.
\end{cases}
\end{align*}
Then, there exist $\beta \in (0 , \min\{ 2s , 1 \})$ with $\beta > 2s -1$, and $C > 0$, depending only on $n,s,\lambda,\Lambda$, such that
\begin{align*}
\left\Vert \frac{v}{\dist(\cdot,\partial B_R)^{\beta}} \right\Vert_{L^{\infty}(B_{2R} \setminus B_R)} \le C R^{-\beta} \left(\Vert v \Vert_{L^{\infty}(B_{2R})} + \tail(u;2R) \right).
\end{align*}
\end{corollary}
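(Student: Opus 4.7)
The plan is to reduce the corollary to \autoref{prop:beta-regularity} through a scaling argument combined with a cutoff that turns the exterior data into a bounded source term, and then to exploit the vanishing of the solution on $\partial B_R$ to upgrade the resulting Hölder estimate into the distance bound.

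First I would rescale. Setting $v_R(x):=v(Rx)$ and $u_R(x):=u(Rx)$, the function $v_R$ is a weak solution of $L_R v_R=0$ in $B_2\setminus\overline{B_1}$, vanishes on $B_1$, and equals $u_R$ on $\R^n\setminus B_2$, where $L_R$ is associated with the rescaled kernel $K_R(h):=R^{n+2s}K(Rh)$, which still satisfies \eqref{eq:Kcomp} with the same $\lambda,\Lambda$. A direct change of variables gives $\|v_R\|_{L^\infty(B_2)}=\|v\|_{L^\infty(B_{2R})}$ and $\tail(u_R;2)=\tail(u;2R)$, and $\dist(x,\partial B_R)=R\,\dist(x/R,\partial B_1)$, so it suffices to prove the estimate for $R=1$; the factor $R^{-\beta}$ will appear when unscaling.

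Next I would truncate by setting $w:=v_R\mathbbm{1}_{B_2}$, so that $w\in L^\infty(\R^n)$ with $\|w\|_{L^\infty(\R^n)}=\|v\|_{L^\infty(B_{2R})}$ and $w\equiv 0$ on $B_1\cup(\R^n\setminus B_2)$. For $x\in B_{3/2}\setminus\overline{B_1}$ we have $(v_R\mathbbm{1}_{B_2^c})(x)=0$, so $L_R v_R=0$ and the symmetry of $K_R$ yield
\[L_R w(x)\;=\;-\,L_R(v_R\mathbbm{1}_{B_2^c})(x)\;=\;2\int_{B_2^c}u_R(y)\,K_R(x-y)\,\d y.\]
Since $|x-y|\geq|y|/4$ whenever $|x|\leq 3/2$ and $|y|\geq 2$, together with \eqref{eq:Kcomp} one obtains
\[\|L_R w\|_{L^\infty(B_{3/2}\setminus\overline{B_1})}\;\leq\;C\,\tail(u_R;2)\;=\;C\,\tail(u;2R).\]

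I would then apply \autoref{prop:beta-regularity} locally to $w$. Covering $\partial B_1$ by a finite number of balls $B_{\rho_0}(x_i)$ with $x_i\in\partial B_1$ and $\rho_0$ a small universal constant, the translation invariance of $L_R$ together with a rescaled version of \autoref{prop:beta-regularity} on the bounded smooth domain $\Omega=B_3\setminus\overline{B_1}$ gives
\[\|w\|_{C^\beta(B_{\rho_0/2}(x_i)\cap\overline{B_1^c})}\;\leq\;C\bigl(\|v\|_{L^\infty(B_{2R})}+\tail(u;2R)\bigr),\]
with $\beta\in(2s-1,\min\{2s,1\})$ universal. Since $w=0$ on $\partial B_1$, evaluating the Hölder seminorm between $x\in B_{\rho_0/2}(x_i)\cap B_1^c$ and its nearest point of $\partial B_1$ yields $|w(x)|\leq C\,\dist(x,\partial B_1)^\beta\bigl(\|v\|_{L^\infty(B_{2R})}+\tail(u;2R)\bigr)$ near $\partial B_1$. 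For $x\in B_2\setminus B_1$ with $\dist(x,\partial B_1)$ bounded below by a universal constant, the same bound follows trivially from $|w(x)|\leq\|v\|_{L^\infty(B_{2R})}$. Unscaling produces the factor $R^{-\beta}$ and yields the claim.

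The main technical obstacle is the mismatch between \autoref{prop:beta-regularity}, which requires a globally bounded solution, and the present hypothesis, which only provides $u\in L^1_{2s}(\R^n)$ together with an $L^\infty$ bound of $v$ inside $B_{2R}$. The truncation $w$ resolves this by converting the far-field contribution of $u$ into an interior inhomogeneity whose $L^\infty$ norm is controlled exactly by the tail, thereby reducing the corollary to the boundary Hölder estimate already proved in \autoref{prop:beta-regularity}.
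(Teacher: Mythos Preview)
Your proposal is correct and follows essentially the same approach as the paper: truncate $v$ to $B_{2R}$ so that the exterior data becomes a bounded source controlled by the tail, then apply \autoref{prop:beta-regularity} near $\partial B_R$ and use the trivial bound away from $\partial B_R$. The only differences are cosmetic --- you first rescale to $R=1$ while the paper works at scale $R$ directly --- and one minor slip: the domain on which $w$ satisfies the equation is $B_{3/2}\setminus\overline{B_1}$, not $B_3\setminus\overline{B_1}$, so the application of \autoref{prop:beta-regularity} should use $\Omega=B_{3/2}\setminus\overline{B_1}$ (or rather balls of radius $\rho_0<1/2$ centered on $\partial B_1$), matching the paper's choice $\Omega=B_{3R/2}\setminus B_R$.
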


Before we prove \autoref{prop:beta-regularity}, we need the following refinement of \cite[Lemma B.1.1]{FeRo24}.

\begin{lemma}
\label{lemma:one-dimensional-barrier}
Let $L \in \mathcal{L}_s^n(\lambda,\Lambda)$. Let $e \in \mathbb{S}^{n-1}$. Then, there exists $\beta \in (0 , \min \{2s , 1 \})$ with $\beta > 2s - 1$, and $c > 0$, depending only on $n,s,\lambda,\Lambda$, such that it holds in the viscosity sense
\begin{align*}
L (x \cdot e)_+^{\beta} \ge c (x \cdot e)_+^{\beta - 2s} ~~ \text{ in } \{ x \cdot e > 0 \}.
\end{align*}
\end{lemma}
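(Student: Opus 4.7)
The strategy is to perform an essentially 1D computation on the test function $u(x) = (x\cdot e)_+^\beta$, then exploit a scale-invariance of the kernel class to reduce the inequality to a single point, and finally choose $\beta$ in terms of $n,s,\lambda,\Lambda$.

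\emph{Step 1: scaling and 1D reduction.} Setting $K_t(h) := t^{n+2s} K(th) \in \mathcal{L}_s^n(\lambda,\Lambda)$ and changing variables $h = t\tilde h$ in the defining integral, one obtains
\[
L (x\cdot e)_+^\beta(te) \;=\; t^{\beta - 2s}\, L_t (x\cdot e)_+^\beta(e) \qquad \text{for all } t > 0.
\]
It therefore suffices to show $L(x\cdot e)_+^\beta(e) \geq c > 0$ uniformly over $\mathcal{L}_s^n(\lambda,\Lambda)$. After rotating so that $e = e_n$ and integrating $K$ over the hyperplanes $\{h_n = \tau\}$, the test function being 1D produces a symmetric kernel $\bar K(\tau) := \int_{\R^{n-1}} K(h',\tau)\,\d h'$ with $\bar\lambda\,|\tau|^{-1-2s} \leq \bar K(\tau) \leq \bar\Lambda\,|\tau|^{-1-2s}$ (constants depending only on $n,s,\lambda,\Lambda$). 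The claim reduces to
\[
J_\beta[\bar K] \;:=\; 2\,\text{p.v.}\!\int_\R \bigl(1 - (1+\tau)_+^\beta\bigr)\bar K(\tau)\,\d\tau \;\geq\; c > 0.
\]

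\emph{Step 2: symmetrization and sign structure.} Using $\bar K(-\tau) = \bar K(\tau)$ and defining, for $\tau > 0$,
\[
g_\beta(\tau) \;:=\; 2 - (1+\tau)^\beta - (1-\tau)_+^\beta,
\]
one rewrites $J_\beta[\bar K] = 2\int_0^\infty g_\beta(\tau)\,\bar K(\tau)\,\d\tau$. A direct analysis (concavity of $x\mapsto x^\beta$ on $(0,1)$, together with $2-(1+\tau)^\beta$ on $(1,\infty)$) shows that for $\beta \in (0,1)$ we have $g_\beta > 0$ on $(0,R_\beta)$ and $g_\beta < 0$ on $(R_\beta,\infty)$, where $R_\beta := 2^{1/\beta} - 1 > 1$. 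Applying the pointwise ellipticity of $\bar K$ with the correct sign on each subinterval yields the lower bound
\[
J_\beta[\bar K] \;\geq\; 2\bar\lambda \int_0^{R_\beta}\!\!\frac{g_\beta(\tau)}{\tau^{1+2s}}\,\d\tau \;+\; 2\bar\Lambda \int_{R_\beta}^\infty\!\!\frac{g_\beta(\tau)}{\tau^{1+2s}}\,\d\tau \;=:\; 2\bigl(\bar\lambda A_\beta - \bar\Lambda B_\beta\bigr),
\]
with $A_\beta, B_\beta > 0$. It remains to find $\beta \in (\max\{0,2s-1\},\min\{1,2s\})$ with $A_\beta/B_\beta > \bar\Lambda/\bar\lambda$.

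\emph{Step 3: choice of $\beta$.} For the 1D fractional Laplacian (i.e.~$\bar K \propto |\tau|^{-1-2s}$), the classical identity $(-\Delta)^s (x_+)^s \equiv 0$ in $\{x>0\}$, combined with analyticity in $\beta$, gives $A_\beta - B_\beta > 0$ for $\beta \in (0,s)$ and $A_s = B_s$. Moreover, as $\beta \to 0^+$ one has $R_\beta \to \infty$ and $g_\beta \to \mathbf{1}_{(1,\infty)}$ pointwise, so $A_\beta \to 1/(2s)$ while $B_\beta \lesssim \int_{R_\beta}^\infty (1+\tau)^\beta\tau^{-1-2s}\,\d\tau \asymp 2^{-2s/\beta}$. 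Hence $A_\beta/B_\beta \to \infty$, and for $\beta$ sufficiently small (depending only on $n,s,\lambda,\Lambda$) the lower bound is strictly positive. For $s \leq 1/2$ the constraint $\beta > 2s-1$ is vacuous and $\beta$ can be taken arbitrarily small; for $s > 1/2$ one verifies that a suitable $\beta$ still lies in $(2s-1,1) \cap (0,s)$. Because $u = (x\cdot e)_+^\beta$ is smooth on $\{x\cdot e > 0\}$, the pointwise inequality is the viscosity statement.

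\textbf{Main obstacle.} The delicate step is the quantitative choice of $\beta$ when $s$ is close to $1$: the admissible interval $(2s-1,1)$ is narrow, so $\beta$ cannot be made arbitrarily small and one must trace precisely how $A_\beta/B_\beta$ behaves uniformly in $\beta \in (2s-1,s)$ to dominate the ratio $\bar\Lambda/\bar\lambda$. Carrying out this optimization, relying on sharp estimates on $g_\beta$ near the sign-change point $R_\beta$ and on the continuity of the homogeneous integral up to $\beta = s$, is the technical heart of the argument.
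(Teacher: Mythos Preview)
Your Steps 1 and 2 are correct and, in fact, they reconstruct exactly what the paper does through the extremal operator $\cM^{-}u := \inf_{L\in\mathcal L^n_s(\lambda,\Lambda)}Lu$. Since the class $\mathcal L^n_s(\lambda,\Lambda)$ is scale-invariant, one has $\cM^{-}(x_n)_+^\beta = c_\beta\,(x_n)_+^{\beta-2s}$ for a constant $c_\beta$ depending only on $n,s,\lambda,\Lambda,\beta$; your expression $2(\bar\lambda A_\beta - \bar\Lambda B_\beta)$ is precisely $c_\beta$ after the 1D reduction. The lemma is therefore equivalent to the existence of some $\beta\in(\max\{0,2s-1\},\min\{1,2s\})$ with $c_\beta>0$, i.e.\ to the statement that the unique root $\beta_0$ of $\beta\mapsto c_\beta$ satisfies $\beta_0>2s-1$.

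The gap is in Step~3 when $s>\tfrac12$. Your argument there rests on the asymptotic $A_\beta/B_\beta\to\infty$ as $\beta\to0^+$, which is correct but useless once one must keep $\beta>2s-1$. On the compact interval $[2s-1,s]$ both $A_\beta$ and $B_\beta$ are continuous, finite and strictly positive (you even record $A_s=B_s$ at the right endpoint), so $A_\beta/B_\beta$ is \emph{bounded} there by some $M_0=M_0(s)<\infty$. Whenever $\bar\Lambda/\bar\lambda>M_0$ there is simply no $\beta$ in that window with $\bar\lambda A_\beta>\bar\Lambda B_\beta$, and your lower bound yields nothing. The ``optimization'' you describe in the Main Obstacle paragraph therefore cannot succeed by these means: the obstruction is structural, not a matter of sharper estimates on $g_\beta$ near $R_\beta$.

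The paper supplies an entirely different mechanism for this step. It exploits that differentiation in $x_n$ shifts the exponent by one: since $\partial_{x_n}(x_n)_+^{\gamma}=\gamma\,(x_n)_+^{\gamma-1}$ and every $L$ in the class is translation-invariant, one has $\partial_{x_n}\bigl(L(x_n)_+^{\gamma}\bigr)=\gamma\,L(x_n)_+^{\gamma-1}$. The paper then argues by contradiction: assuming $\beta_0\le 2s-1$, one integrates the sign information $L(x_n)_+^{\beta_0-\eps}\ge 0$ (which holds for \emph{every} $L$ in the class because $\beta_0-\eps<\beta_0$) to obtain a sign statement for $L(x_n)_+^{\beta_0+1-\eps}$ that contradicts $c_{\beta_0+1-\eps}<0$. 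This integration-in-the-exponent idea, linking the sign of $c_\alpha$ to that of $c_{\alpha+1}$, is the device that replaces your attempt to force $A_\beta/B_\beta$ above $\bar\Lambda/\bar\lambda$ by direct estimation, and it is absent from your outline.
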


\begin{proof}
Let us assume without loss of generality that $e = e_n$.
We define the extremal operator
\begin{align*}
\cM^{-} w = \inf_{L \in \mathcal{L}^n_s(\lambda,\Lambda)} Lw.
\end{align*}
By following the arguments in the proof of \cite[Lemma B.1.1]{FeRo24} we deduce that for any $\beta \in (0,2s)$ there is $c_{\beta} \in \R$ such that
\begin{align*}
\cM^{-} (x_n)_+^{\beta} = c_{\beta} (x_n)_+^{\beta - 2s} ~~ \text{ in } \{ x_n > 0 \}.
\end{align*}
Moreover, the proof of \cite[Lemma B.1.1]{FeRo24} reveals that $c_{\beta} \to -\infty$ as $\beta \to 2s$, $c_1 < 0$, and $c_{\beta} > 0$ as $\beta \to 0$. Thus, by continuity of $\beta \mapsto c_{\beta}$, there exists $\beta_0 \in (0,\min \{ 1 , 2s\})$ such that $c_{\beta_0} = 0$. Moreover, by a standard sliding argument, we can deduce that $\beta_0$ is unique with this property. Therefore,
\begin{align*}
\begin{cases}
L (x_n)_+^{\beta} \ge \cM^{-} (x_n)_+^{\beta} &= c_{\beta} (x_n)_+^{\beta-2s} \ge 0 ~~ \text{ in } \{ x_n > 0 \} ~~ \forall \beta \in (0, \beta_0],\\
\cM^{-} (x_n)_+^{\beta} &= c_{\beta} (x_n)_+^{\beta-2s} < 0 ~~ \text{ in } \{ x_n > 0 \} ~~ \forall \beta \in (\beta_0, 2s).
\end{cases}
\end{align*}
It remains to show that $\beta_0 > 2s-1$. By contradiction, we assume that $\beta_0 \le 2s-1$. We take $\eps \in (0,\beta_0)$. Then, we have $\beta_0 + 1 - \eps \in (\beta_0,2s)$, and $\beta_0 - \eps \in (0,\beta_0)$, and therefore for any $L \in \mathcal{L}_s^n(\lambda,\Lambda)$
\begin{align*}
L (x_n)_+^{\beta_0 + 1 - \eps} = (\beta_0 + 1 - \eps) \int_0^{x} L (x_n)_+^{\beta_0 - \eps} \d x \ge 0 ~~ \text{ in } \{ x_n > 0 \},
\end{align*} 
and thus, by recalling the definition of $\cM^-$
\begin{align*}
\cM^{-} (x_n)_+^{\beta_0 + 1 -\eps} \ge 0 ~~ \text{ in } \{ x_n > 0 \},
\end{align*}
a contradiction. This implies $\beta_0 > 2s-1$, as desired.
\end{proof}

Next, we construct a barrier supersolution in $C^{1,\alpha}$ domains for non-stable operators.

\begin{lemma}
\label{lemma:beta-supersolution}
Let $L \in \mathcal{L}_s^n(\lambda,\Lambda)$. Let $\Omega \subset \R^n$ be a bounded domain with $\partial \Omega \in C^{1,\alpha}$ for some $\alpha \in (0,s)$. Then, there exist $c, \delta > 0$, depending only on $n,s,\lambda,\Lambda,\alpha,\Omega$, and $\beta \in (0, \min \{ 1 , 2s \})$ with $\beta > 2s - 1$ such that
\begin{align*}
L d_{\Omega}^{\beta} \ge c d_{\Omega}^{\beta - 2s} ~~ \text{ in } \{ 0 < d_{\Omega} < \delta \}.
\end{align*}
Here, $d := d_{\Omega}$ denotes the regularized distance to $\partial \Omega$ (see \cite[Lemma B.0.1]{FeRo24}).
\end{lemma}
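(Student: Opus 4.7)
The plan is to compare $d_\Omega^{\beta}$ locally with a translate of the half-space barrier from \autoref{lemma:one-dimensional-barrier}, taking $\beta$ to be the one supplied there, and to absorb the error using the $C^{1,\alpha}$-flatness of $\partial\Omega$. Fix $x_0 \in \Omega$ with $d := d_\Omega(x_0) < \delta$ (the $\delta$ to be chosen small), let $y_0 \in \partial\Omega$ be a closest boundary point, and $e \in \mathbb{S}^{n-1}$ the inward unit normal at $y_0$. Setting $v(x) := [(x-y_0)\cdot e]_+^{\beta}$, translation invariance of $L$ together with \autoref{lemma:one-dimensional-barrier} yields
\[
Lv(x_0) \ge c_0\,[(x_0-y_0)\cdot e]^{\beta - 2s} \ge c_1\, d^{\beta-2s},
\]
where the last inequality uses the comparability of $d_\Omega$ with $\dist(\cdot,\partial\Omega)$, with the normalization constants absorbed into $c_1$. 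Writing $w := d_\Omega^{\beta} - v$, it then suffices to show the perturbative estimate $|Lw(x_0)| \le \tfrac{c_1}{2} d^{\beta - 2s}$ for $\delta$ small.

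The pointwise input will come from the $C^{1,\alpha}$-regularity of $\partial\Omega$ and the properties of the regularized distance \cite[Lemma~B.0.1]{FeRo24}, giving
\[
\big| d_\Omega(x) - [(x-y_0)\cdot e]_+ \big| \le C |x-y_0|^{1+\alpha} \qquad \text{for } x \in B_{\rho_0}(y_0),
\]
checked by separately treating $x \in \Omega$ and $x \notin \Omega$ and observing that the ``sliver'' where $\Omega$ disagrees with the tangent half-space has thickness $O(|x-y_0|^{1+\alpha})$. Since $\beta < \min\{1,2s\} \le 1$, the subadditivity inequality $|a^{\beta}-b^{\beta}| \le |a-b|^{\beta}$ for $a,b\ge 0$ upgrades this to $|w(x)| \le C|x-y_0|^{\beta(1+\alpha)}$ on $B_{\rho_0}(y_0)$, while globally $|w(x)| \le C(1+|x|^{\beta})$, so that $w \in L^1_{2s}(\R^n)$. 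A similar use of the $C^{0,\alpha}$-regularity of $\nabla d_\Omega$ at $y_0$ will provide $|\nabla d_\Omega(y)-e| \le C d^{\alpha}$ at points $y$ with $d_\Omega(y) \asymp d$.

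I would then split the symmetric second-difference representation of $Lw(x_0)$ into the three regions $\{|h|\le d\}$, $\{d \le |h| \le \rho_0\}$ and $\{|h|\ge \rho_0\}$. On the intermediate region, the pointwise bound combined with \eqref{eq:Kcomp} yields a contribution of size $O(d^{\beta-2s+\beta\alpha})$ (or at worst $O(1)$ when $\beta(1+\alpha) \ge 2s$); the far region contributes $O(1)$ via $w \in L^1_{2s}$. The inner region requires a Taylor expansion of $w$ at $x_0$: although $D^2 d_\Omega^{\beta}$ and $D^2 v$ are each separately of size $d^{\beta-2}$, one extracts the cancellation
\[
|D^2 d_\Omega^{\beta}(y) - D^2 v(y)| \le C d^{\beta - 2 + \alpha}
\]
by using $|\nabla d_\Omega(y) - e|\le C d^{\alpha}$ and $|d_\Omega(y) - (y-y_0)\cdot e| \le C d^{1+\alpha}$, producing an $O(d^{\beta-2s+\alpha})$ inner contribution. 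Since $\beta < 2s$, every one of these three error terms is $o(d^{\beta-2s})$ as $d \to 0$, and can therefore be absorbed into $\tfrac{c_1}{2}d^{\beta-2s}$ by choosing $\delta$ small enough.

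The main technical obstacle is precisely the inner-region estimate: without extracting the cancellation described above, the naive bound $|D^2 w|\le C d^{\beta-2}$ would yield only $O(d^{\beta-2s})$, with no room to absorb the error into $Lv(x_0)$. Extracting the $d^{\alpha}$ gain between the leading terms of $D^2 d_\Omega^{\beta}$ and $D^2 v$ is the step in which the $C^{1,\alpha}$ regularity of $\partial\Omega$ is truly decisive; all other estimates in the argument are soft.
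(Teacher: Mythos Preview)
Your proposal is correct and follows essentially the same route as the paper: compare $d_\Omega^\beta$ with a half-space barrier supplied by \autoref{lemma:one-dimensional-barrier}, then split the error $Lw(x_0)$ into an inner region (Taylor/second-derivative cancellation giving $O(d^{\beta-2s+\alpha})$), an intermediate region (using the $|\cdot|^{\beta(1+\alpha)}$ pointwise bound), and a far region (using $w\in L^1_{2s}$). The only cosmetic difference is that the paper linearizes the regularized distance at the interior point $x_0$, setting $l(x)=(d(x_0)+\nabla d(x_0)\cdot(x-x_0))_+$ and comparing $d^\beta$ with $l^\beta$, whereas you anchor the half-space at the boundary projection $y_0$; since $|x_0-y_0|=d$ the two choices differ by lower-order terms and yield identical error estimates.
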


\begin{proof}
The proof follows along the lines of \cite[Proposition B.2.1]{FeRo24}. Let us take $x_0 \in \Omega$ and denote $\rho := d_{\Omega}(x_0)$ and assume that $\rho < \delta$, where we will choose $\delta > 0$ small enough, depending only on $n,s,\lambda,\Lambda,\alpha,\Omega$, later. We denote
\begin{align*}
l(x) = (d(x_0) + \nabla d(x_0) \cdot (x-x_0))_+ = a( e \cdot (x - x_0 + b))_+,
\end{align*}
where we write $a = |\nabla d(x_0)|$, $e = \nabla d(x_0)/a$, and $b = d(x_0) \nabla d(x_0)$. Then, by \autoref{lemma:one-dimensional-barrier} (and \cite[Lemma B.0.1]{FeRo24}), we deduce
\begin{align*}
L(l^{\beta})(x_0) \ge c_1 \rho^{\beta - 2s} > 0
\end{align*}
for some $c_1 > 0$, once $\rho < \delta$ for some $\delta > 0$ small enough, depending only on $\Omega$. Then, since $\beta < 1$, and by \cite[Lemma B.2.2]{FeRo24}
\begin{align*}
|d^{\beta} - l^{\beta}|(x_0 + y) \le |d - l|^{\beta}(x_0 + y) \le C |y|^{(1+\alpha)\beta} ~~ \forall y \in \R^n.
\end{align*}
Moreover, following the arguments in \cite[Proposition B.2.1]{FeRo24}, we deduce
\begin{align*}
|d^{\beta} - l^{\beta}|(x_0 + y) \le c|d-l|(x_0 + y) \rho^{\beta - 1} \le c \Vert D^2 d \Vert_{B_{\rho/2}(x_0)} |y|^2 \rho^{\beta - 1}\le c |y|^2 \rho^{\alpha + \beta - 2} ~~ \forall y \in B_{\rho/2}.
\end{align*}
Finally, we clearly have
\begin{align*}
|d^{\beta} - l^{\beta}|(x_0 + y)  \le c |y|^{\beta} ~~ \forall y \in \R^n \setminus B_1.
\end{align*}
Altogether, we have (using that $d(x_0) = l(x_0)$):
\begin{align*}
L(d^{\beta})(x_0) &\ge c_1 \rho^{\beta - 2s} + L(d^{\beta} - l^{\beta})(x_0) \\
&\ge c_1 \rho^{\beta - 2s} - \int_{B_{\rho/2}} (d^{\beta} - l^{\beta})(x_0 + y)  K(y)\d y \\
&\quad - \int_{B_1 \setminus B_{\rho/2}} (d^{\beta} - l^{\beta})(x_0 + y)  K(y)\d y - \int_{\R^n \setminus B_1} (d^{\beta} - l^{\beta})(x_0 + y)  K(y)\d y \\
&\ge c_1 \rho^{\beta - 2s} - c \rho^{\alpha + \beta - 2} \int_{B_{\rho/2}} \hspace{-0.2cm} |y|^{-n-2s+2} \d y - c \int_{B_1 \setminus B_{\rho/2}} \hspace{-0.2cm} |y|^{-n-2s+(1+\alpha)\beta} \d y - c \int_{\R^n \setminus B_1} \hspace{-0.2cm} |y|^{-n-2s+\beta} \\
&\ge c_1 \rho^{\beta - 2s} - c \rho^{\alpha + \beta - 2s} - c \rho^{\eps_0 + \beta - 2s} - c,
\end{align*}
where we estimated inside the second integral $|y|^{-n-2s + (1+\alpha)\beta} \le |y|^{-n-2s + \beta + \eps_0}$, where we fixed any $\eps_0 \in (0, \min \{ \alpha\beta , 2s-\beta \})$. Altogether, by taking $\delta > 0$ small enough, we have shown
\begin{align*}
L(d^{\beta})(x_0) \ge \frac{c_1}{2} \rho^{\beta - 2s},
\end{align*}
which concludes the proof.
\end{proof}

Now, we are in position to prove \autoref{prop:beta-regularity}.

\begin{proof}[Proof of \autoref{prop:beta-regularity}]
In case $s \le 1/2$, the result was already proved in \cite[Proposition 2.6.9]{FeRo24}. If $s > 1/2$, the proof follows along the lines of the proof of \cite[Proposition 2.6.4]{FeRo24}, replacing the barrier in there by $d_{\Omega}^{\beta}$ from \autoref{lemma:beta-supersolution}, and the exponent $s$ by $\beta$.
\end{proof}

We end this section by explaining how to prove \autoref{cor:annulus-regularity}.

\begin{proof}[Proof of \autoref{cor:annulus-regularity}]
Let us define $w := v \1_{B_{2R}}$. Then, $w$ solves
\begin{align*}
\begin{cases}
L w &= -L(v \1_{\R^n \setminus B_{2R}}) ~~ \text{ in } B_{3R/2} \setminus B_R\\
w &= 0 ~~ \qquad\qquad\qquad \text{ in } B_R.
\end{cases}
\end{align*}
Since for $x \in B_{3R/2}$ it holds
\begin{align*}
|L(v \1_{\R^n \setminus B_{2R}})| \le c R^{-2s} \tail(u;2R),
\end{align*}
an application of a rescaled and translated version of \autoref{prop:beta-regularity} with $\Omega = B_{3R/2} \setminus B_R$ and $B_1$ replaced by $B_{R/2}(y_0)$ for any $y_0 \in \partial B_R$ yields
\begin{align*}
\left\Vert \frac{v}{\dist(\cdot,\partial B_R)^{\beta}} \right\Vert_{L^{\infty}(B_{3R/2} \setminus B_R)} \le \Vert v \Vert_{C^{\beta}(B_{3R/2} \setminus B_R)} \le C R^{-\beta} \left(\Vert v \Vert_{L^{\infty}(B_{2R})} + \tail(u;2R) \right).
\end{align*}
The estimate for $x_0 \in B_{2R} \setminus B_{3R/2}$ is trivial, since there we have $\dist(x_0, \partial B_R) \ge R/2$.
\end{proof}

\section{One-phase nonlocal free boundary problem}
\label{sec:one-phase}

In this section we establish our main result on the one-phase nonlocal free boundary problem (see \autoref{thm-onephase-intro}). In fact, \autoref{thm-onephase-intro} follows immediately by combination of \autoref{lemma:contradiction} and \autoref{lemma:weak-nondeg}. This section is split into five subsections. In Subsection \ref{subsec:aux}, we establish the existence of minimizers and also several basic properties. Subsections \ref{sec:opt-reg} and \autoref{thm:nondeg} are devoted to the proofs of the optimal $C^s$ regularity and the non-degeneracy, the main results being \autoref{thm:or} and \autoref{thm:nondeg}, which slightly generalize \autoref{thm-onephase-intro}. In Subsections \ref{sec:density} and \ref{sec:blow-ups}, we show density estimates for the free boundary (see \autoref{thm:density-est}) and establish basic properties of blow-ups as an application of all the previous results (see \autoref{cor:blowups}).

For the rest of this section, let us fix an open, bounded domain $\Omega \Subset \Omega' \subset \R^n$, a kernel $K$ satisfying \eqref{eq:Kcomp} such that $L \in \mathcal{L}_s^n(\lambda,\Lambda)$, and $M > 0$. We will consider minimizers of $\cI_{\Omega} = \cI_{\Omega}^{K,M}$ in $\Omega$.

\subsection{Existence and basic properties of minimizers}
\label{subsec:aux}

The main goal of this subsection is to establish the following lemma. Moreover, we will prove the existence of minimizers to $\cI$ (see \autoref{lemma:existence}) and establish a useful estimate for the energy and the tail of minimizers (see \autoref{lemma:energy-bound}).

\begin{lemma}
\label{lemma:aux}
Let $u$ be a minimizer of $\cI_{\Omega}$. Then, the following properties hold true:
\begin{itemize}
\item[(i)] $L u \le 0$ in $\Omega$ in the weak sense.
\item[(ii)] $u \ge 0$ in $\Omega$.
\item[(iii)] $u \in L^{\infty}_{loc}(\Omega)$.
\item[(iv)]$L u = 0$ in $\Omega \cap \{ u > 0 \}$ in the weak sense.
\end{itemize}
\end{lemma}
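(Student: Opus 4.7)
The plan is to prove the four properties in the order (ii), (i), (iii), (iv), with each step feeding the next. Claim (ii) is immediate: \autoref{def:minimizer} already incorporates $u \ge 0$ in $\R^n$ as part of being a minimizer of $\cI_\Omega$.

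For (i), the natural competitor is the one-sided truncation: fix $\phi \in H^s_\Omega(\R^n)$ with $\phi \ge 0$ and, for $t > 0$, set $v := (u - t\phi)_+$. Then $v \ge 0$ and $v = u$ outside $\Omega$, and $\eta := u - v = \min(u, t\phi)$ lies in $H^s_\Omega(\R^n)$, so $v$ is an admissible competitor. Since $\{v > 0\} \subset \{u > 0\}$, the measure term of $\cI_\Omega$ only decreases, and minimality yields
\begin{align*}
-2\cE(u,\eta) + \cE(\eta,\eta) \;=\; \cE(v,v) - \cE(u,u) \;\ge\; M\bigl(|\{u>0\} \cap \Omega| - |\{v>0\} \cap \Omega|\bigr) \;\ge\; 0.
\end{align*}
Writing $\eta = t\phi - w$ with $w := (t\phi - u)_+$ and using the decomposition $t\phi - u = w - w'$ where $w' := (u - t\phi)_+$, I expand the two bilinear forms and, after cancellations, arrive at
\begin{align*}
2t\,\cE(u,\phi) \;\le\; -\cE(w,w) + 2\cE(w',w) + t^2 \cE(\phi,\phi).
\end{align*}
Since $w, w' \ge 0$ have disjoint supports, a direct expansion shows $\cE(w',w) = -2\iint w'(x) w(y) K(x-y) \d x \d y \le 0$. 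Discarding the two nonpositive contributions, dividing by $t > 0$, and letting $t \to 0^+$ produces $\cE(u,\phi) \le 0$, which is (i).

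For (iii), having (i) in hand, $u$ is a nonnegative weak subsolution of $L u = 0$, so standard nonlocal De Giorgi iteration (see e.g.\ \cite{FeRo24}) yields $u \in L^\infty_{loc}(\Omega)$ with a quantitative bound in terms of $\Vert u \Vert_{L^2(\Omega)}$ and $\tail(u;R,x_0)$, both finite by the hypothesis $u \in V^s(\Omega | \Omega') \cap L^1_{2s}(\R^n)$. For (iv), I would use a two-sided variation: take $\phi \in H^s_\Omega(\R^n) \cap L^\infty(\R^n)$ with $\supp \phi \subset \{u \ge c\} \cap \Omega$ for some $c > 0$, and set $v := u + t\phi$. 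For $|t| < c/\Vert \phi \Vert_{L^\infty}$ one has $v \ge 0$ and $\{v > 0\} = \{u > 0\}$, so the measure term cancels, and minimality reduces to $0 \le 2t\,\cE(u,\phi) + t^2\cE(\phi,\phi)$ for both signs of $t$, forcing $\cE(u,\phi) = 0$. A density argument (e.g., multiplying $\phi$ by the cutoff $\min(1, ku)$ and sending $k \to \infty$) extends this to all $\phi \in H^s_\Omega(\R^n)$ supported in $\Omega \cap \{u > 0\}$.

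The principal difficulty lies in (i): the truncated competitor is not a smooth perturbation of $u$, so the energy expansion generates boundary-layer contributions localized on $\{u \le t\phi\}$ that do not vanish as $t \to 0^+$. The disjoint-support identity $\cE(w',w) \le 0$ is the algebraic mechanism that absorbs them with a favorable sign, and it is what lets the variational proof of $L u \le 0$ go through without any pointwise regularity of $u$.
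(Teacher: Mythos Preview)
Your argument for (i) is correct but more elaborate than necessary: in \autoref{def:minimizer} the competitors $v$ are \emph{not} required to satisfy $v \ge 0$ --- only the minimizer $u$ itself is. (This is also why (ii) is immediate, as you rightly note.) The paper therefore takes $v = u - t\phi$ directly for $\phi \ge 0$: since $v \le u$ one has $\{v > 0\} \subset \{u > 0\}$, so minimality gives $\cE(u,u) - \cE(v,v) \le 0$, i.e.\ $2t\,\cE(u,\phi) \le t^2\cE(\phi,\phi)$, and letting $t \to 0^+$ concludes. No truncation, no disjoint-support identity is needed; your ``principal difficulty'' evaporates once the definition is read correctly.

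For (iv) there is a genuine gap. The statement ``$Lu = 0$ in $\Omega \cap \{u > 0\}$ in the weak sense'' (cf.\ \autoref{def:weak-solution}) presupposes that $\Omega \cap \{u > 0\}$ is open, so that $H^s_{\Omega \cap \{u>0\}}(\R^n)$ is a meaningful test-function class. You do not establish this; a priori $u$ is only in $V^s \cap L^1_{2s}$ and $\{u > 0\}$ is merely measurable. The paper handles this by first proving a suboptimal H\"older estimate for $u$ via a Campanato-type argument (\autoref{thm:aor}), which uses only (i)--(iii), and only then returns to (iv). Once $\{u > 0\}$ is open, one tests directly with $\phi \in H^s_\Omega$ having $\supp \phi \subset \{u > 0\}$: then $\{u + t\phi > 0\} \subset \{u > 0\}$ for every $t \in \R$, the measure term has a favorable sign, and the two-sided variation yields $\cE(u,\phi) = 0$ with no density step. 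Incidentally, your proposed cutoff $\min(1,ku)$ does not produce functions supported in $\{u \ge c\}$ for any $c > 0$, since it is positive on all of $\{u > 0\}$; one would need $\min(1,(ku-1)_+)$ or similar --- but the openness issue is the essential missing ingredient.
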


The proof of (i), (ii), (iii) follows by relatively standard arguments. The proof of (iv) is slightly more involved because it requires continuity of $u$ (in order to deduce that $\{ u > 0 \}$ is open), which we establish using a Campanato-type strategy towards the end of this section (see \autoref{thm:aor}).

\begin{proof}[Proof of (i), (ii), (iii)]
First, we prove (i). Let $v \in V^s(\Omega | \Omega') \cap L^1_{2s}(\R^n)$ be such that $v = u$ in $\R^n \setminus \Omega$ and $v \le u$. Then $v_+ \le u_+$ and therefore
\begin{align*}
(\{ v > 0 \} \cap \Omega) = (\{ v_+ > 0 \} \cap \Omega) \subset  (\{ u_+ > 0 \} \cap \Omega) = (\{ u > 0 \} \cap \Omega).
\end{align*}
Since $u$ is a minimizer of $\cI_{\Omega}$, we deduce that
\begin{align*}
\iint_{(\Omega^c \times \Omega^c)^c} \big[ (u(x)-u(y))^2 - (v(x)-v(y))^2 \big] K(x-y) \d y \d x \le 0.
\end{align*}
Let us now take $\phi \in H^s_{\Omega}(\R^n)$ with $\phi \ge 0$ and $t > 0$, and define $v := u -t \phi$. Then, the above estimate implies that 
\begin{align*}
\iint_{(\Omega^c \times \Omega^c)^c} (u(x)-u(y))(\phi(x) - \phi(y)) K(x-y) \d y \d x &\le \frac{t}{2} \iint_{(\Omega^c \times \Omega^c)^c} (\phi(x)-\phi(y))^2 K(x-y) \d y \d x\\
&\to 0, ~~ \text{ as } t \searrow 0.
\end{align*}
This yields $L u \le 0$ in $\Omega$ in the weak sense, as desired.\\
To see (ii), we observe that
\begin{align*}
\cE_{(\Omega^c \times \Omega^c)^c}(u,u) = \cE_{(\Omega^c \times \Omega^c)^c}(u_+,u_+) + \cE_{(\Omega^c \times \Omega^c)^c}(u_-,u_-) - 2 \cE_{(\Omega^c \times \Omega^c)^c}(u_+,u_-) \ge \cE_{(\Omega^c \times \Omega^c)^c}(u_+,u_+),
\end{align*}
which implies that $\cI_{\Omega}(u_+) \le \cI_{\Omega}(u)$, and since $u \ge 0$ in $\R^n \setminus \Omega$, this means that $u_+$ is a competitor for $u$ with less energy, i.e., $u = u_+$.\\
To see (iii), we observe that as a consequence of (i), (ii), $u$ satisfies a local boundedness estimate, namely we have for any $B_R(x_0) \subset \Omega$ (see \cite[Theorem 1.1]{DKP16}, \cite[Theorem 6.2]{Coz17}):
\begin{align*}
\sup_{B_{R/2}(x_0)} |u| = \sup_{B_{R/2}(x_0)} u_+ \le c \left(\dashint_{B_R(x_0)} u_+^2 \d x \right)^{1/2} + c \tail(u_+;R,x_0) < \infty.
\end{align*}
The right hand side is finite since $u \in V^s(\Omega | \Omega') \cap L^1_{2s}(\R^n)$.
This concludes the proof.
\end{proof}

Let us now prove the existence of minimizers to $\cI_{\Omega}$ with given exterior data $g \in V^s(\Omega | \R^n)$. Note that we expect minimizers to exist also for general $g \in V^s(\Omega | \Omega') \cap L^1_{2s}(\R^n)$, however the proof is slightly more complicated since in that case we cannot use \eqref{eq:simplified-minimizer} (see \cite{CoLo21}).

\begin{lemma}
\label{lemma:existence}
Let $g \in V^s(\Omega | \R^n)$ with $g \ge 0$. Then, there exists a minimizer $u \in V^s(\Omega | \R^n)$ to $\cI_{\Omega}$ with exterior data $g$, i.e., $u-g \in H^s_{\Omega}(\R^n)$.
\end{lemma}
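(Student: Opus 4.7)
The plan is to apply the direct method of the calculus of variations on the admissible class
\[
\cA_g := \{v \in V^s(\Omega | \R^n) : v - g \in H^s_{\Omega}(\R^n),\ v \ge 0 \text{ on } \R^n\}.
\]
Since $g \in \cA_g$ and $\cI_{\Omega}(g) \le \cE^K_{(\Omega^c \times \Omega^c)^c}(g,g) + M|\Omega| < \infty$, the infimum $m := \inf_{\cA_g} \cI_{\Omega} \in [0,\infty)$ is finite, and I would take a minimizing sequence $(u_k) \subset \cA_g$. The non-negativity restriction is for free: the truncation $(u_k)_+$ still coincides with $g$ outside $\Omega$ (because $g \ge 0$) and satisfies $\cI_{\Omega}((u_k)_+) \le \cI_{\Omega}(u_k)$ by the same splitting used in the proof of \autoref{lemma:aux}(ii).

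Next I would extract a limit via coercivity and compactness. The ellipticity bound \eqref{eq:Kcomp} yields $\cE^K_{(\Omega^c \times \Omega^c)^c}(u_k,u_k) \ge \lambda\, [u_k]_{V^s(\Omega | \R^n)}^2$, so this seminorm is uniformly bounded. Writing $w_k := u_k - g \in H^s_{\Omega}(\R^n)$, the triangle inequality controls $[w_k]_{V^s(\Omega | \R^n)}$ uniformly, and the fractional Poincar\'e inequality on the bounded set $\Omega$ then bounds $\Vert w_k \Vert_{L^2(\R^n)}$ as well. Since $H^s_{\Omega}(\R^n)$ is a Hilbert space compactly embedded in $L^2(\R^n)$, I pass to a subsequence along which $w_k \to w$ weakly in $H^s_{\Omega}(\R^n)$, strongly in $L^2(\R^n)$, and pointwise a.e.\ on $\R^n$. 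Setting $u := w + g$ produces a candidate $u \in \cA_g$ with $u_k \to u$ pointwise a.e.\ and weakly in $V^s(\Omega | \R^n)$; the membership $u \in L^1_{2s}(\R^n)$ follows from $u = g$ outside $\Omega$ together with the fact that $g \in V^s(\Omega | \R^n)$ implies $g \in L^1_{2s}(\R^n)$ for bounded $\Omega$ by H\"older's inequality.

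All that remains is lower semicontinuity of $\cI_{\Omega}$ along the subsequence. The Dirichlet-type term $v \mapsto \cE^K_{(\Omega^c \times \Omega^c)^c}(v,v)$ is the square of a Hilbert seminorm on the affine class $g + H^s_{\Omega}(\R^n)$, equivalent to $[\,\cdot\,]_{V^s(\Omega | \R^n)}^2$ by \eqref{eq:Kcomp}, and hence weakly lower semicontinuous. The Lebesgue-measure term is where the main (albeit mild) obstacle lies, since $v \mapsto |\{v > 0\} \cap \Omega|$ is not convex. I would handle it via pointwise a.e.\ convergence: on the set $\{u > 0\}$, the fact that $u_k(x) \to u(x) > 0$ forces $u_k(x) > 0$ for all large $k$, so $\liminf_{k \to \infty} \1_{\{u_k > 0\}} \ge \1_{\{u > 0\}}$ a.e.\ on $\Omega$, and Fatou's lemma yields
\[
\liminf_{k \to \infty} \big|\{u_k > 0\} \cap \Omega\big| \ge \big|\{u > 0\} \cap \Omega\big|.
\]
Summing the two lower-semicontinuity inequalities gives $\cI_{\Omega}(u) \le \liminf_k \cI_{\Omega}(u_k) = m$, so $u$ is the desired minimizer.
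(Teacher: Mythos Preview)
Your proof is correct and follows essentially the same direct-method argument as the paper: a minimizing sequence, truncation to non-negative competitors, coercivity via \eqref{eq:Kcomp} combined with the fractional Poincar\'e inequality to get uniform bounds on $u_k-g$ in $H^s_{\Omega}(\R^n)$, weak compactness plus compact embedding into $L^2$, weak lower semicontinuity of the Dirichlet part as a squared seminorm, and Fatou's lemma for the measure term via pointwise a.e.\ convergence. The only differences are cosmetic (you phrase the admissible class with the non-negativity constraint built in and spell out a few points the paper leaves implicit).
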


\begin{proof}
Let $(u_k)_k \subset V^s(\Omega | \R^n)$ be a minimizing sequence with $u_k - g \in H^s_{\Omega}(\R^n)$ satisfying $\cI_{\Omega}(u_k) \le \cI_{\Omega}(g)$. By \autoref{lemma:aux}, we can assume that $u_k \ge 0$. It holds
\begin{align*}
[u_k-g]_{V^s(\Omega | \R^n)}^2 \le 2\Lambda \cE_{(\Omega^c \times \Omega^c)^c}(u_k-g,u_k-g) &\le C(\cE_{(\Omega^c \times \Omega^c)^c}(u_k,u_k) + \cE_{(\Omega^c \times \Omega^c)^c}(g,g)) \\
&\le C(\cI_{\Omega}(u_k) + \cI_{\Omega}(g)) \le C \cI_{\Omega}(g).
\end{align*}
Moreover, we have by the Poincar\'e-Friedrich's inequality (see \cite[Lemma 4.7]{Coz17})
\begin{align*}
\Vert u_k - g \Vert_{L^2(\Omega)}^2 \le \Vert u_k - g \Vert_{L^{\frac{2n}{n-2s}}(\Omega)}^2 \le C |\diam(\Omega)|^{\frac{2s}{n}} [u_k-g]_{V^s(\Omega | \R^n)}^2 \le C\cI_{\Omega}(g) < \infty.
\end{align*}
Thus, $(u_k)_k$ is uniformly bounded in $V^s(\Omega | \R^n)$, and hence it converges weakly in $V^s(\Omega | \R^n)$, strongly in $L^2(\Omega)$, and pointwise almost everywhere in $\Omega$, up to a subsequence, to some $u \in V^s(\Omega | \R^n)$ with $u = g$ in $\R^n \setminus \Omega$. Note that by \eqref{eq:Kcomp}, $\cE_{(\Omega^c \times \Omega^c)^c}$ induces a seminorm on $V^s(\Omega | \R^n)$, so
\begin{align*}
\cE_{(\Omega^c \times \Omega^c)^c}(u,u) \le \liminf_{k \to \infty} \cE_{(\Omega^c \times \Omega^c)^c}(u_k,u_k), \qquad \1_{\{ u > 0 \} \cap \Omega} \le \liminf_{k \to \infty} \1_{\{ u_k > 0 \} \cap \Omega},
\end{align*}
which implies $\cI_{\Omega}(u) \le \liminf_{k \to \infty}\cI_{\Omega}(u_k)$. By the construction of $u_k$, this implies that $u$ minimizes $\cI_{\Omega}$, as desired.
\end{proof}

Moreover, we deduce the following energy estimate:

\begin{lemma}
\label{lemma:energy-bound}
Let $u$ be a minimizer of $\cI_{\Omega}$. Let $R > 0$ and $x_0 \in \R^n$ such that $B_{2R}(x_0) \subset \Omega$. Then,
\begin{align*}
\cE_{B_R(x_0) \times B_R(x_0)}(u,u) &\le cR^n \left( M + R^{-2s}  \dashint_{B_{2R}(x_0)} u^2 \d x \right), \\
\tail(u;R,x_0) &\le c \left( M^{1/2} R^{s} + \dashint_{B_R(x_0)} u \d x \right)
\end{align*}
for some $c > 0$, depending only on $n,s,\lambda,\Lambda$.
\end{lemma}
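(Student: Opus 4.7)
Both inequalities will be proved by comparing $u$ to its $L$-harmonic replacement $h$ in a ball $B_\rho(x_0)$ with $R \le \rho \le 2R$. The common starting point is \autoref{lemma:energy-comp-est}: the minimality of $u$ yields $\cE(u-h, u-h) \le c M R^n$. Since $u - h \in H^s_{B_\rho}(\R^n)$, the Poincar\'e--Friedrichs inequality combined with \eqref{eq:Kcomp} gives $\|u - h\|_{L^2(B_\rho)}^2 \le c \rho^{2s}\cE(u-h, u-h) \le cMR^{n+2s}$, and hence by Cauchy--Schwarz
\[ \dashint_{B_\rho(x_0)} |u - h| \, \d x \le c M^{1/2} R^s. \]
This will be the main source of control on $u - h$ in both steps.

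\textbf{First inequality.} I will take $\rho = 3R/2$. The triangle inequality gives
\[ \cE_{B_R(x_0) \times B_R(x_0)}(u, u) \le 2 \cE_{B_R(x_0) \times B_R(x_0)}(u-h, u-h) + 2 \cE_{B_R(x_0) \times B_R(x_0)}(h, h), \]
and the first summand is controlled by $cMR^n$ via the preceding step. For the second summand I will apply a standard Caccioppoli inequality for the $L$-harmonic function $h$ in $B_{3R/2}(x_0)$: testing the weak formulation of $Lh = 0$ against $h \eta^2$ for a cutoff $\eta \in C_c^\infty(B_{5R/4}(x_0))$ equal to $1$ on $B_R(x_0)$ yields $\cE_{B_R(x_0) \times B_R(x_0)}(h, h) \le cR^{-2s}\|h\|_{L^2(B_{3R/2}(x_0))}^2$. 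Writing $\|h\|_{L^2(B_{3R/2})}^2 \le 2\|u\|_{L^2(B_{2R})}^2 + 2\|u-h\|_{L^2(B_{3R/2})}^2$ and inserting the bounds from the first paragraph yields the first inequality.

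\textbf{Second inequality.} I will take $\rho = R$. By construction $u = h$ on $\R^n \setminus B_R(x_0)$, so $\tail(u; R, x_0) = \tail(h; R, x_0)$; moreover, $h \ge 0$ by the maximum principle since $u \ge 0$. The crucial ingredient is the tail-to-mean estimate
\[ \tail(h; R, x_0) \le c \dashint_{B_R(x_0)} h \, \d x \]
for nonnegative $L$-harmonic $h$ in $B_R(x_0)$. To prove it I will integrate the Poisson representation $h(x) = \int_{\R^n \setminus B_R(x_0)} h(y) P_{B_R(x_0)}(x, y)\, \d y$ over $x \in B_R(x_0)$, and then use the integrated Poisson-kernel lower bound $\int_{B_R(x_0)} P_{B_R(x_0)}(x, y)\, \d x \ge cR^{n+2s} |y - x_0|^{-n-2s}$, valid for all $y \in \R^n \setminus B_R(x_0)$. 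This gives $\int_{B_R(x_0)} h \, \d x \ge c R^n \tail(h; R, x_0)$, which rearranges to the desired inequality. Combining this with the bound $\dashint_{B_R} h \le \dashint_{B_R} u + cM^{1/2}R^s$ from the first step yields the second inequality.

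\textbf{Main obstacle.} The most delicate step is the integrated Poisson-kernel lower bound for nonnegative $L$-harmonic functions associated to general $L \in \mathcal{L}_s^n(\lambda,\Lambda)$. The pointwise Poisson-kernel and Green-function estimates of Bogdan, Kim--Song, Kassmann and others directly provide the required lower bound after integration over $B_R(x_0)$; the bound is uniform in $y$ because although $P_{B_R(x_0)}(\cdot, y)$ can blow up near $\partial B_R(x_0)$ for $y$ near $\partial B_R(x_0)$, the integrated kernel $y \mapsto \int_{B_R(x_0)} P_{B_R(x_0)}(x, y)\, \d x$ is bounded from below on $\R^n \setminus B_R(x_0)$ by $cR^{n+2s}|y - x_0|^{-n-2s}$ thanks to its behavior $\asymp R^{n+2s}|y|^{-n-2s}$ at infinity and its continuity on the closed annular region. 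Carefully invoking these estimates in our setting is the only nontrivial input beyond the (otherwise standard) harmonic-replacement-plus-Caccioppoli machinery.
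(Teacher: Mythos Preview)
Your overall strategy---harmonic replacement plus Caccioppoli for the energy bound, and harmonic replacement plus a tail-to-mean inequality for the tail bound---matches the paper's. However, there is a genuine gap in your argument for the first inequality.

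The Caccioppoli inequality you state, $\cE_{B_R(x_0)\times B_R(x_0)}(h,h)\le cR^{-2s}\|h\|_{L^2(B_{3R/2}(x_0))}^2$, is not correct for nonlocal operators: testing $Lh=0$ against $h\eta^2$ always produces an additional tail contribution coming from the cross interactions $x\in\supp\eta$, $y\notin\supp\eta$. The correct form (which the paper quotes from \cite[(6.1)]{Coz17}) reads
\[
\cE_{B_R(x_0)\times B_R(x_0)}(h,h)\ \le\ cR^{-2s}\Bigl(\int_{B_{5R/4}(x_0)} h^2\,\d x\ +\ \Bigl[\int_{B_{5R/4}(x_0)} h\,\d x\Bigr]\,\tail(h;5R/4,x_0)\Bigr).
\]
Since $h\ge 0$, this tail term is nonnegative and cannot be dropped. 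Thus your first step does not close without a bound on $\tail(h;cR,x_0)$---which is exactly the content of the second inequality. The paper recognizes this dependence and structures the proof accordingly: it uses a \emph{single} harmonic replacement $v$ in $B_R(x_0)$, writes the Caccioppoli for $v$ \emph{with} its tail term, bounds that tail term first via the weak Harnack--type estimate $\tail(v;3R/4,x_0)\le c\inf_{B_{R/2}(x_0)}v$ from \cite[Theorem~1.9]{KaWe23}, and only then feeds everything back into the Caccioppoli to obtain the energy bound.

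Your route to the tail estimate via an integrated Poisson-kernel lower bound is correct in principle and is essentially a re-derivation of the same tail-to-infimum inequality (one writes $P_{B_R}(x,y)=\int_{B_R}G_{B_R}(x,z)K(z-y)\,\d z$ and uses a torsion-function lower bound in $B_{R/2}$). But this is heavier than needed: the paper simply invokes \cite[Theorem~1.9]{KaWe23}, which gives $\tail(v;R,x_0)\le c\inf_{B_{R/2}(x_0)}v\le c\,\dashint_{B_R(x_0)} v\,\d x$ directly for the whole class $\mathcal{L}^n_s(\lambda,\Lambda)$, avoiding any Green-function or Poisson-kernel estimates.
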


\begin{proof}
Let us denote by $v$ the $L$-harmonic replacement of $u$ in $B_R(x_0)$. Then, using the algebraic estimate
\begin{align*}
(u_1 - u_2)^2 \le 2((u_1 - u_2) - (v_1 - v_2))^2 + 2(v_1 - v_2)^2,
\end{align*}
we obtain by \autoref{lemma:energy-comp-est}, as well as the Caccioppoli estimate for $v = v_+$ (see \cite[(6.1)]{Coz17}),
\begin{align*}
\cE_{B_{R/2}(x_0) \times B_{R/2}(x_0)}(u,u) &\le 2 \cE_{(B_R(x_0)^c \times B_R(x_0)^c)^c}(u-v,u-v) + 2 \cE_{B_{R/2}(x_0) \times B_{R/2}(x_0)}(v,v)\\
&\le c M R^n + c R^{-2s} \left(\int_{B_{3R/4}(x_0)} v^2 \d x + \left[\int_{B_{3R/4}(x_0)} v \d x\right] \tail(v,3R/4,x_0) \right).
\end{align*}
Let us now continue by estimating the terms on the right hand side of the previous line. \\
First, by \eqref{eq:I2-est}:
\begin{align*}
\int_{B_{3R/4}(x_0)} v^2 \d x \le 2 \int_{B_{3R/4}(x_0)} |u-v|^2 \d x + 2 \int_{B_{3R/4}(x_0)} u^2 \d x \le c M R^{n+2s} + 2 \int_{B_{R}(x_0)} u^2 \d x,
\end{align*}
and by an analogous argument, also
\begin{align*}
\dashint_{B_{3R/4}(x_0)} v \d x \le \left( \dashint_{B_{3R/4}(x_0)} |u-v|^2 \d x \right)^{1/2} +  \dashint_{B_{3R/4}(x_0)} u \d x \le c \left( M^{1/2} R^s +  \dashint_{B_{R}(x_0)} u \d x \right).
\end{align*}
Thus, since $v$ solves $L v = 0$ in $B_R(x_0)$ and is nonnegative, we have (see \cite[Theorem 1.9]{KaWe23})
\begin{align}
\label{eq:tail-est-replacement}
\tail(v,3R/4,x_0) \le c \inf_{B_{R/2}(x_0)} v \le c \dashint_{B_{R/2}(x_0)} v \d x \le c M^{1/2} R^{s} + c \dashint_{B_R(x_0)} u \d x.
\end{align}
Altogether, we obtain
\begin{align*}
\cE_{B_{R/2}(x_0) \times B_{R/2}(x_0)}(u,u) \le cR^n \left( M + R^{-2s} \dashint_{B_{R}(x_0)} u^2 \d x \right),
\end{align*}
as desired. The estimate for the tail term follows from the estimate \eqref{eq:tail-est-replacement} and since by construction
\begin{align*}
\tail(u;R,x_0) = \tail(v;R,x_0) \le \tail(v;3R/4,x_0).
\end{align*}
\end{proof}

Next, we prove the following regularity result for minimizers of $\cI_{\Omega}$. Although the H\"older exponent in this result is suboptimal, the lemma will be sufficient to show compactness of a sequence of minimizers, which will allow us to show the optimal regularity (see \autoref{thm:or}). The proof follows by an application of Campanato's theory to a nonlocal framework.

\begin{lemma}
\label{thm:aor}
Let $u$ be a minimizer of $\cI_{\Omega}$. Then, $u \in C^{\frac{2s}{n+4s}s}_{loc}(\Omega)$, and for any $R > 0$ and $x_0 \in \R^n$ with $B_{4R}(x_0) \subset \Omega$, it holds
\begin{align*}
[u]_{C^{\frac{2s}{n+4s}s}(B_R(x_0))} \le c R^{-\frac{2s}{n+4s}s} \left(\dashint_{B_{2R}(x_0)} u \d x + \tail(u;2R,x_0) + M^{1/2} \right)
\end{align*}
for some $c > 0$, depending only on $n,s,\lambda,\Lambda$.
\end{lemma}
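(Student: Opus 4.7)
The strategy is a nonlocal Campanato iteration based on comparing $u$ with its $L$-harmonic replacement in small balls. For $y \in B_R(x_0)$ and $0 < \rho \le r \le R$, set
\begin{equation*}
\phi_y(\rho) := \int_{B_\rho(y)} |u - (u)_{B_\rho(y)}|^2 \d x.
\end{equation*}
The aim is to prove $\phi_y(\rho) \le C\rho^{n + 2\alpha}$ with $\alpha = \tfrac{2s}{n+4s}s$, and then invoke the Campanato characterization of $C^\alpha$ in terms of $L^2$-mean oscillation.

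\textbf{Step 1 (comparison with $L$-harmonic replacement).} Let $v$ be the $L$-harmonic replacement of $u$ in $B_r(y)$. By \autoref{lemma:energy-comp-est} and the nonlocal Poincaré-Friedrichs inequality applied to $u-v \in H^s_{B_r(y)}(\R^n)$,
\begin{equation*}
\|u-v\|_{L^2(B_r(y))}^2 \le C r^{2s}\,\cE_{(B_r(y)^c \times B_r(y)^c)^c}(u-v,u-v) \le C M r^{n+2s}.
\end{equation*}

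\textbf{Step 2 (interior regularity of $v$).} Since $v \ge 0$ solves $Lv = 0$ in $B_r(y)$, standard interior Hölder regularity for weak solutions of nonlocal equations (see \cite{Coz17}, \cite{DKP16}) provides a universal exponent $\beta_0 = \beta_0(n,s,\lambda,\Lambda) \in (0,1)\cap(0,2s)$ with
\begin{equation*}
\osc_{B_\rho(y)} v \le C(\rho/r)^{\beta_0}\bigl(\|v\|_{L^\infty(B_{r/2}(y))} + \tail(v;r/2,y)\bigr), \qquad \rho \le r/2.
\end{equation*}
Using the interior boundedness estimate for $v$, the fact that $v = u$ outside $B_r(y)$, and the tail estimate from \autoref{lemma:energy-bound} applied at scale $2R$ (after splitting the tail into the annulus $B_{2R}\setminus B_{r/2}$ and the complement), the right-hand side is bounded by the data-dependent constant
\begin{equation*}
A := C\Bigl(\dashint_{B_{2R}(x_0)} u\,\d x + \tail(u;2R,x_0) + M^{1/2}\Bigr).
\end{equation*}

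\textbf{Step 3 (combined estimate).} Since $(u)_{B_\rho(y)}$ is the best $L^2$-approximation of $u$ by a constant, $\phi_y(\rho) \le \int_{B_\rho(y)} |u - (v)_{B_\rho(y)}|^2\d x$, hence by the triangle inequality and Steps 1 and 2,
\begin{equation*}
\phi_y(\rho) \le 2\|u-v\|_{L^2(B_\rho(y))}^2 + 2|B_\rho(y)|\osc^2_{B_\rho(y)} v \le C M r^{n+2s} + C \rho^n (\rho/r)^{2\beta_0} A^2.
\end{equation*}

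\textbf{Step 4 (optimization).} The two contributions scale differently in $r$. For each fixed $\rho$, balancing $Mr^{n+2s} \asymp \rho^{n+2\beta_0} r^{-2\beta_0} A^2$ forces the choice $r \asymp \rho^{(n+2\beta_0)/(n+2s+2\beta_0)}$ (up to a constant depending on $A^2/M$), which yields
\begin{equation*}
\phi_y(\rho) \le C\rho^{n + 2\alpha}(\text{data})^2, \qquad \alpha = \frac{2s\beta_0}{n+2s+2\beta_0}.
\end{equation*}
The specific exponent $\tfrac{2s^2}{n+4s} = \tfrac{2s}{n+4s}s$ in the statement corresponds to the effective value $\beta_0 = s$, which is the best exponent compatible with the $H^s$ energy control in Step 1. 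By Campanato's characterization of Hölder spaces in terms of $L^2$-mean oscillation, the desired estimate follows.

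\textbf{Main obstacle.} The technically delicate part is Step 2: handling the tail $\tail(v;r/2,y)$ so that the final estimate involves only $\dashint_{B_{2R}(x_0)} u\,\d x$ and $\tail(u;2R,x_0)$ (and not $L^\infty$-norms of $u$). This relies crucially on \autoref{lemma:energy-bound} together with the identity $v = u$ on $B_r(y)^c$ and a dyadic splitting of the tail integral. A second point requiring care is verifying that the Hölder decay in Step 2 can be used at the effective exponent $\beta_0 = s$ in the optimization of Step 4; this is consistent with the Sobolev scaling of the energy bound in Step 1.
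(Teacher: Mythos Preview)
Your approach is essentially the same as the paper's: compare with the $L$-harmonic replacement, use Poincar\'e--Friedrichs plus \autoref{lemma:energy-comp-est} for the difference $u-v$, use interior regularity for the oscillation of $v$, and then balance the two contributions to run a Campanato argument. The only substantive difference is that you work with a generic interior H\"older exponent $\beta_0$ and then argue that the ``effective value'' $\beta_0 = s$ is admissible, whereas the paper invokes the interior $C^s$ estimate for $L$-harmonic functions directly (\cite[Theorem~2.4.3]{FeRo24}): since $Lv=0$ in $B_r(y)$, one has $[v]_{C^s(B_{r/2}(y))}\le C r^{-s}(\dashint_{B_r(y)}|v|+\tail(v;r,y))$ with no hedging needed. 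This makes your closing caveat about ``verifying that the H\"older decay can be used at $\beta_0=s$'' unnecessary; it is simply the standard interior estimate. A minor presentational point: in your Step~2 you control $\tail(v;r/2,y)$ via \autoref{lemma:energy-bound} and a dyadic splitting; the paper handles this more directly by observing $v=u$ outside $B_r(y)$ so $\tail(v;r,y)=\tail(u;r,y)$, and then, after obtaining the Campanato estimate at the intermediate scale, passes to fixed-scale data using that $B_{4R}(x_0)\subset B_{R_1}(z)$ and finally replaces $\|u\|_{L^\infty}$ by $\dashint u$ via the local boundedness estimate for the subsolution $u$.
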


Before we prove \autoref{thm:aor}, we observe that as a consequence, we have that $\{ u > 0\}$ is an open set. This allows us to prove \autoref{lemma:aux}(iv):

\begin{proof}[Proof of \autoref{lemma:aux}(iv)]
Let $t \in \R$ and $\phi \in H^s_{\Omega}(\R^n)$ with $\supp(\phi) \subset \{ u > 0\}$. Then, since $u$ is a minimizer of $\cI_{\Omega}$, we get
\begin{align*}
\iint_{(\Omega^c \times \Omega^c)^c}  \big[ (u(x)-u(y))^2 - (u(x)+t\phi(x)-u(y)-t\phi(y))^2 \big] K(x-y) \d y \d x \le 0,
\end{align*}
and by taking the limit $t \to 0$, we get
\begin{align*}
\iint_{(\Omega^c \times \Omega^c)^c} (u(x)-u(y))(\phi(x) - \phi(y)) K(x-y) \d y \d x  = 0.
\end{align*}
Since $\{ u > 0 \}$ is open, this yields that $L u= 0$ in $\{ u > 0\} \cap \Omega$ in the weak sense, as desired.
\end{proof}

Now, we turn to the proof of \autoref{thm:aor}. 

\begin{proof}[Proof of \autoref{thm:aor}]
First, we claim that for any $\sigma \in (0,1/2)$, $R > 0$, and $x_0 \in \R^n$ such that $B_{2R}(x_0) \subset \Omega$, it holds
\begin{align}
\label{eq:aor-claim}
\left(\dashint_{B_{\sigma R}(x_0)} |u - (u)_{\sigma R,x_0}|^2\right)^{1/2} \le c  \sigma^s \left(\dashint_{B_R(x_0)} |u| \d x + \tail(u;R,x_0) \right) + c  M^{1/2} \sigma^{-\frac{n}{2}} R^{s},
\end{align}
where we denote $(w)_{r,x_0} = \dashint_{B_r(x_0)} w \d x$. Let $v$ be the $L$-harmonic replacement of $u$ in $B_R(x_0)$. We compute
\begin{align*}
\dashint_{B_{\sigma R}(x_0)} |u - (u)_{\sigma R,x_0}|^2 \le  \dashint_{B_{\sigma R}(x_0)} |v - (v)_{\sigma R,x_0}|^2 + 2\dashint_{B_{\sigma R}(x_0)} |u - v|^2 =: I_1 + I_2.
\end{align*}
For $I_2$, we simply estimate using the fractional Poincar\'e-Friedrich's inequality (see \cite[Lemma 4.7]{Coz17}), (which is applicable since $u-v = 0$ in $B_R(x_0)^c$), and \autoref{lemma:energy-comp-est}:
\begin{align}
\label{eq:I2-est}
I_2 &\le c \sigma^{-n} R^{2s-n} \cE_{(B_R(x_0)^c \times B_R(x_0)^c)^c}(u-v,u-v) \le c M \sigma^{-n} R^{2s}.
\end{align}
For $I_1$, we make use of the $C^s$-interior estimate (see \cite[Theorem 2.4.3]{FeRo24}) and the $L^{\infty}-L^1$ estimate (see \cite[Theorem 6.2]{Coz17}) for $L$-harmonic functions to obtain
\begin{align*}
I_1 \le [v]^2_{C^{s}(B_{R/2}(x_0))} (\sigma R)^{2s} &\le c\sigma^{2s} \left(\Vert v \Vert_{L^{\infty}(B_R(x_0))} + \tail(v;R,x_0) \right)^2\\
&\le c\sigma^{2s} \left(\dashint_{B_R(x_0)} |v| \d x + \tail(u;R,x_0) \right)^2,
\end{align*}
where we also used that $u$ and $v$ coincide in $B_R(x_0)^c$.
Finally, we observe that by \eqref{eq:I2-est}:
\begin{align*}
\dashint_{B_R(x_0)} |v| \d x &\le c \dashint_{B_R(x_0)} |u| \d x + c \dashint_{B_R(x_0)} |u-v| \d x\\
&\le c \dashint_{B_R(x_0)} |u| \d x + c M^{1/2} \sigma^{-\frac{n}{2}} R^{s}.
\end{align*}
Thus, altogether we have proved \eqref{eq:aor-claim}. From here, we intend to deduce the desired result from Campanato's characterization of H\"older spaces. In fact, taking any ball $B_{R_0}(z) \subset \Omega$ and setting $d = \dist(B_{R_0}(z) , \partial \Omega)$ and $R_1 = \frac{d}{2} + R_0$, we obtain for any $x_0 \in B_{R_0}(z)$ and $r \le R < \min \{1,d/8\}$:
\begin{align*}
\dashint_{B_{r}(x_0)} |u - (u)_{r,x_0}| &\le c  \left( \frac{r}{R} \right)^s \left(\dashint_{B_{4R}(x_0)} |u| \d x + \tail(u;4R,x_0) \right) + c M^{1/2} \left( \frac{r}{R} \right)^{-\frac{n}{2}} R^{s}\\
&\le c  \left( \frac{r}{R} \right)^s \left( \Vert u\Vert_{L^{\infty}(B_{R_1}(z))} + \tail(u;R_1,z) \right) + c M^{1/2} \left( \frac{r}{R} \right)^{-\frac{n}{2}} R^{s},
\end{align*}
where we used that $B_{4R}(x_0) \subset B_{R_1}(z)$ by construction and 
\begin{align*}
\tail(u;4R,x_0) \le c \tail(u;R_1,z) + c \Vert u \Vert_{L^{\infty}(B_{R_1}(z))}.
\end{align*}
Therefore, choosing $R = 2r^{\theta}$ for some $\theta \in (0,1)$, we get 
\begin{align*}
\dashint_{B_{r}(x_0)} |u - (u)_{r,x_0}| &\le c r^{(1-\theta)s} \left( \Vert u\Vert_{L^{\infty}(B_{R_1}(z))} + \tail(u;R_1,z) \right) + c M^{1/2} r^{-\frac{(1-\theta)n}{2} + \theta s}.
\end{align*}
Let us choose $\theta = \frac{n+2s}{n+4s} \in (0,1)$. Then, we have $r^{-\frac{(1-\theta)n}{2} + \theta s - (1-\theta)s} \le 1$, which implies that after dividing by $r^{(1-\theta)s}$, we get
\begin{align*}
r^{-n-(1-\theta)s} \int_{B_{r}(x_0) \cap B_{R_0}(z)} |u - (u)_{r,x_0}| \le c \left( \Vert u\Vert_{L^{\infty}(B_{R_1}(z))} + \tail(u;R_1,z) + M^{1/2} \right).
\end{align*}
Thus, $u \in \mathcal{L}^{1,n+(1-\theta)s}(B_{R_0}(z))$, i.e., $u \in C^{(1-\theta)s}(B_{R_0}(z)) = C^{\frac{2s}{n+4s}s}(B_{R_0}(z))$, and we have the following estimate for any $x_0 \in \R^n$, $R > 0$ such that $B_{4R}(x_0) \subset \Omega$:
\begin{align*}
[u]_{C^{\frac{2s}{n+4s}s}(B_R(x_0))} \le c R^{-\frac{2s}{n+4s}s} \left(\Vert u \Vert_{L^{\infty}(B_{2R}(x_0))} + \tail(u;2R,x_0) + M^{1/2} \right).
\end{align*}
From here, the desired result follows by application of (ii), (iii) and the local boundedness estimate (see \cite[Theorem 6.2]{Coz17}), just as in the proof of \autoref{lemma:aux}(iii).
\end{proof}

\subsection{Optimal regularity}
\label{sec:opt-reg}

The goal of this section is to prove that minimizers of $\cI_{\Omega}$ are in $C^s_{loc}(\Omega)$, which is the first claim in \autoref{thm-onephase-intro}.

The following theorem is the main result of this section. The first part of \autoref{thm-onephase-intro} follows immediately from \autoref{lemma:contradiction} and \autoref{thm:or}.

\begin{theorem}
\label{thm:or}
Let $u$ be a minimizer of $\cI_{\Omega}$. Then, $u \in C^s_{loc}(\Omega)$, and for any $R > 0$ and $x_0 \in \R^n$ with $B_{2R}(x_0) \subset \Omega$, we have
\begin{align*}
\Vert u \Vert_{C^s(B_R(x_0))} \le C R^{-s} \left( 1 + \dashint_{B_{2R}(x_0)} u \ d x \right)
\end{align*}
for some constant $C > 0$, depending only on $n,s,\lambda,\Lambda,M$.
\end{theorem}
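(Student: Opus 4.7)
The plan is to prove the optimal $C^s$ estimate by a blow-up and contradiction argument, upgrading the suboptimal $C^{\alpha_0}$ bound from \autoref{thm:aor} (with $\alpha_0 = \frac{2s^2}{n+4s} < s$) via compactness and a Liouville-type rigidity at infinity. The key structural observation is that the natural $C^s$-scaling $\tilde u(z) := u(x_0 + \rho z)/(N\rho^s)$ transforms $\cI^{K,M}_{B_\rho(x_0)}$ into $\cI^{\tilde K,\tilde M}_{B_1}$ (up to a harmless multiplicative constant), with $\tilde K(h) := \rho^{n+2s} K(\rho h)$ still in the class \eqref{eq:Kcomp} and $\tilde M = M/N^2$. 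Hence, blowing up at scales where the $C^s$-seminorm is very large kills the phase parameter in the limit, reducing the limit problem to a purely linear one.

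After a covering/rescaling argument, it suffices to produce a uniform bound $\|u\|_{C^s(B_{1/4})} \leq C_0(n,s,\lambda,\Lambda,M)$ for every minimizer $u$ of $\cI^{K,M}_{B_1}$ with $\dashint_{B_1} u \leq 1$. Arguing by contradiction, one obtains kernels $K_k$ satisfying \eqref{eq:Kcomp} and minimizers $u_k$ of $\cI^{K_k,M}_{B_1}$ with $N_k := [u_k]_{C^s(B_{1/4})} \to \infty$. A Simon-type doubling/maximal argument then selects points $x_k \in B_{1/2}$ and scales $\rho_k \to 0$ at which the $C^s$-oscillation of $u_k$ is realized up to a constant factor and at which the oscillation on all larger subscales stays controlled by $N_k\rho_k^s$. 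Using the nonnegativity from \autoref{lemma:aux}(ii), one may additionally arrange the rescaling $\tilde u_k(z) := u_k(x_k+\rho_k z)/(N_k\rho_k^s)$ to satisfy $\tilde u_k(0) \to 0$.

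By construction, $\tilde u_k$ is a nonnegative minimizer of $\cI^{\tilde K_k,\tilde M_k}_{B_{R_k}}$ with $R_k \to \infty$ and $\tilde M_k = M/N_k^2 \to 0$, satisfies $[\tilde u_k]_{C^s(B_1)} \geq \tfrac{1}{2}$, and is uniformly bounded in $C^{\alpha_0}_{loc}(\R^n)$ by \autoref{thm:aor} applied at the rescaled scale. Extracting a subsequence, $\tilde u_k \to \tilde u_\infty$ locally uniformly and in $L^1_{2s}(\R^n)$, and, along a further subsequence, the rescaled kernels $\tilde K_k$ converge to some $K_\infty$ still in the class \eqref{eq:Kcomp}. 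Since $\tilde M_k \to 0$ kills the phase term in the limit, a lower-semicontinuity argument identifies $\tilde u_\infty$ as a global weak solution of $L_\infty \tilde u_\infty = 0$ in $\R^n$. Combining the growth $|\tilde u_\infty(z)| \leq C(1 + |z|^{\alpha_0})$ with $\alpha_0 < 1 < 2s$ and the Liouville theorem for $L_\infty$-harmonic functions of sub-$2s$ growth forces $\tilde u_\infty$ to be constant; the normalization $\tilde u_k(0) \to 0$ then makes this constant zero, contradicting $[\tilde u_\infty]_{C^s(B_1)} \geq \tfrac{1}{2}$.

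The main obstacle will be the doubling/maximal selection of $(x_k,\rho_k)$. A naive pair of points realizing the $C^s$-seminorm does not in general produce a well-controlled blow-up (the rescaled sequence may escape in $L^\infty$, or the oscillation may concentrate at multiple scales simultaneously). A careful maximal argument is required to pick a canonical scale at which the oscillation is essentially concentrated and at which, after rescaling, the sequence retains uniform $C^{\alpha_0}$ bounds on every compact set. Once this selection is in place, the decisive structural gain --- the cancellation $\tilde M_k = M/N_k^2 \to 0$ coming from the natural $C^s$-scaling --- reduces the limit problem to the purely linear equation $L_\infty \tilde u_\infty = 0$, where the standard Liouville theorem closes the argument.
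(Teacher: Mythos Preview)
Your overall architecture --- blow up, observe that $\tilde M_k = M/N_k^2 \to 0$ kills the phase term, and pass to a rigid limiting linear problem --- is exactly the mechanism the paper also exploits (see the line ``$\eps_k := c M S_k^{-2} \to 0$'' in the proof of \autoref{lemma:contradiction}). However, the proposal has a genuine gap precisely at the step you flag as the ``main obstacle'': the Simon-type selection of $(x_k,\rho_k)$ is not carried out, and without it the argument does not close. Concretely, to apply \autoref{thm:aor} to the rescaled $\tilde u_k$ you need a uniform bound on $\dashint_{B_{2R}}\tilde u_k$ and $\tail(\tilde u_k;2R)$, i.e.\ on $\|\tilde u_k\|_{L^\infty(B_{2R})}$; but the only way you propose to bound this is via $|\tilde u_k(z)|\le |\tilde u_k(0)|+C|z|^{\alpha_0}$, which already presupposes the $C^{\alpha_0}$ bound. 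This is circular. The assertion that one can ``additionally arrange $\tilde u_k(0)\to 0$'' using only nonnegativity is not justified either: a pair realizing the $C^s$-quotient need not sit near $\{u_k=0\}$, and $u_k(x_k)/(N_k\rho_k^s)$ may be unbounded. Finally, since compactness is only in $C^{\alpha_0}$ with $\alpha_0<s$, the lower bound $[\tilde u_k]_{C^s(B_1)}\ge \tfrac12$ does not automatically survive in the limit; one must ensure the $C^s$ difference quotient is realized at a \emph{fixed} scale after rescaling, which is again the missing doubling step.

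The paper avoids all of this by a structurally different route. Rather than contradicting a $C^s$-seminorm directly, it first proves the $L^\infty$ statement \autoref{lemma:contradiction}: if $u(0)=0$ then $\|u\|_{L^\infty(B_{1/8})}\le C$. The contradiction sequence there has $S_k=\|u_k\|_{L^\infty(B_{1/8})}\to\infty$; the crucial device is the set $\mathcal{W}_k=\{x\in B_1:\dist(x,\{u_k=0\})\le \tfrac13\dist(x,\partial B_1)\}$, over which one takes the maximum of $u_k$. This forces the maximizer $x_k$ to lie near the free boundary, and the elementary inclusion $B_{r_k/2}(y_k)\subset\mathcal{W}_k$ (with $y_k$ the nearest free boundary point and $r_k=|x_k-y_k|$) gives $\sup_{B_{r_k/2}(y_k)}u_k\le S_k$ \emph{for free}. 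Rescaling around $y_k$ and normalizing by $u_k(z_k)\asymp S_k$ then yields a sequence $w_k$ with uniform $L^\infty(B_2)$ bound, without any doubling lemma. The limit $w_\infty$ is only shown to satisfy $L_\infty w_\infty=f_\infty\ge 0$ in $B_1$ (not globally harmonic), and the contradiction comes from the \emph{weak Harnack inequality} ($w_\infty\ge 0$, $w_\infty(0)=0$, $w_\infty(\zeta_\infty)=1$) rather than Liouville. The $C^s$ estimate of \autoref{thm:or} is then obtained a posteriori by combining the rescaled bound $\|u\|_{L^\infty(B_r(y_0))}\le Cr^s$ at free boundary points with standard interior $C^s$ estimates for $Lu=0$ in $\{u>0\}$.
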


We need the following lemma on the convergence of almost minimizers to nonlocal Dirichlet energies.

\begin{lemma}
\label{lemma:convergence}
Let $R > 0$. Let $(u_k)_k \subset V^s(B_R | B_{2R}) \cap L^1_{2s}(\R^n)$ be a sequence of functions with $u_k \ge 0$ that is uniformly bounded in $H^s(B_R) \cap L^1_{2s}(\R^n)$, i.e.,
\begin{align*}
[ u_k ]_{H^s(B_R)} + \Vert u_k \Vert_{L^2(B_R)} + \Vert u_k \Vert_{L^1_{2s}(\R^n)} \le C.
\end{align*} 
Moreover, assume that $u_k \to u_{\infty} \in L^1_{2s}(\R^n)$ and that for every $M > 1$ there is $k_M \in \N$ such that
\begin{align}
\label{eq:unif-tail-decay-weak-sol}
|u_k| \le C(M) ~~ \text{ in } B_M, \qquad \int_{\R^n \setminus B_{M}} |u_k(x)| |x|^{-n-2s} \d x \le c(M) \qquad \forall k \ge k_m
\end{align}
for some $C(M) > 0$ and $c(M) \searrow 0$, as $M \to \infty$.
Moreover, let $(f_k)_k \subset L^2(B_R)$ with $f_k \to f_{\infty} \in L^2(B_R)$ weakly in $L^2(B_R)$, and $(\eps_k)_k$ such that $\eps_k \searrow 0$, as $k \to \infty$, let $(K_k)_k$ be a sequence of kernels satisfying \eqref{eq:Kcomp} for some $0 < \lambda \le \Lambda$, and assume that for any $\phi \in H_{B_r}^s(\R^n)$ and $r < R$ it holds:
\begin{align}
\label{eq:almost-min}
\cE^{K_k}_{(B_R^c \times B_R^c)^c}(u_k,u_k) - \int_{B_R} f_k u_k \d x \le \cE^{K_k}_{(B_R^c \times B_R^c)^c}(u_k + \phi, u_k + \phi) - \int_{B_R} f_k(u_k + \phi_k) \d x + \eps_k.
\end{align}
Then, there exists $L_{\infty} \in \mathcal{L}_s^n(\lambda,\Lambda)$ with  kernel $K_{\infty}$ such that $\min\{1,|h|^2\}K_k(h) \d h \to \min\{1,|h|^2\} K_{\infty}(h)\d h$ weakly in the sense of measures, and $u_{\infty}$ is a weak solution to $L_{K_{\infty}} u_{\infty} = f_{\infty}$ in $B_R$, and up to a subsequence, it holds $u_k \to u_{\infty}$ weakly in $H^s(B_R)$.
\end{lemma}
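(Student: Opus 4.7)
The plan is to proceed in three steps: (i) extract subsequential limits of both $K_k$ (as a measure) and $u_k$ (in $H^s(B_R)$); (ii) derive a quantitative Euler-Lagrange identity from the almost-minimality assumption; and (iii) pass to the limit in this identity to conclude that $u_\infty$ is a weak solution of $L_{K_\infty} u_\infty = f_\infty$. The main obstacle is step (iii), since in general the product of two weakly convergent sequences does not converge.

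For step (i), write $K_k(h) = \tilde K_k(h)|h|^{-n-2s}$ with $\lambda \le \tilde K_k \le \Lambda$. By Banach-Alaoglu in $L^\infty(\R^n)$, a subsequence satisfies $\tilde K_k \rightharpoonup^{*} \tilde K_\infty$ with $\lambda \le \tilde K_\infty \le \Lambda$, and symmetry passes to the limit by testing against pairs $\psi,\psi(-\,\cdot)\in L^1(\R^n)$. Setting $K_\infty(h) := \tilde K_\infty(h)|h|^{-n-2s}$ then defines $L_\infty \in \mathcal{L}_s^n(\lambda,\Lambda)$. Since $\min\{1,|h|^2\}|h|^{-n-2s}$ lies in $L^1(\R^n)$ (using $s<1$), weak-$*$ convergence of $\tilde K_k$ immediately yields $\min\{1,|h|^2\}K_k\,\d h \to \min\{1,|h|^2\}K_\infty\,\d h$ weakly as measures. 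Likewise, the uniform $H^s(B_R)$-bound on $u_k$ gives (along a further subsequence) weak convergence $u_k \rightharpoonup \tilde u$ in $H^s(B_R)$; the hypothesis $u_k \to u_\infty$ in $L^1_{2s}(\R^n)$ forces $\tilde u = u_\infty|_{B_R}$, and Rellich-Kondrachov upgrades this to strong $L^2(B_R)$-convergence and pointwise a.e.\ convergence in $B_R$ (along a further subsequence).

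For step (ii), fix $r<R$ and $\phi\in H^s_{B_r}(\R^n)$; plugging $v = u_k + t\phi$ into \eqref{eq:almost-min} and expanding gives, for every $t\in\R$,
\begin{equation*}
0 \le 2t\,\cE^{K_k}_{(B_R^c\times B_R^c)^c}(u_k,\phi) + t^2\,\cE^{K_k}_{(B_R^c\times B_R^c)^c}(\phi,\phi) - t\int_{B_R} f_k \phi \d x + \eps_k.
\end{equation*}
Bounding $\cE^{K_k}(\phi,\phi) \le C[\phi]_{H^s(\R^n)}^2$ and optimizing in $t\in\R$ (the sign ambiguity is handled by applying the bound to $-\phi$), we obtain
\begin{equation*}
\Big|2\,\cE^{K_k}_{(B_R^c\times B_R^c)^c}(u_k,\phi) - \int_{B_R} f_k \phi \d x\Big| \le C\sqrt{\eps_k}\,[\phi]_{H^s(\R^n)} \longrightarrow 0,
\end{equation*}
and the source term passes to the limit because $f_k \rightharpoonup f_\infty$ weakly in $L^2(B_R)$.

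It remains to prove $\cE^{K_k}_{(B_R^c\times B_R^c)^c}(u_k,\phi) \to \cE^{K_\infty}_{(B_R^c\times B_R^c)^c}(u_\infty,\phi)$, which is the hard part. I would split the energy into an interior contribution on $B_R\times B_R$ and an exterior contribution on $B_R\times(\R^n\setminus B_R)$ (using the support of $\phi$ in $B_r \Subset B_R$). For the interior part, by density one may assume $\phi\in C^\infty_c(B_r)$; setting $g_k(x,y) := (u_k(x)-u_k(y))(\phi(x)-\phi(y))|x-y|^{-n-2s}$, the interior energy equals $\iint_{B_R\times B_R} g_k(x,y)\tilde K_k(x-y)\,\d x\,\d y$. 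Combining $|\phi(x)-\phi(y)| \le C|x-y|$, Cauchy-Schwarz, the uniform bound on $[u_k]_{H^s(B_R)}$, and the integrability of $|x-y|^{2-n-2s}$ on $B_R\times B_R$ (which requires $s<1$) yields equi-integrability of $\{g_k\}$ in $L^1(B_R\times B_R)$; together with the pointwise a.e.\ convergence $g_k \to g_\infty$ (from $u_k\to u_\infty$ a.e.\ and Fubini), Vitali's convergence theorem gives $g_k\to g_\infty$ strongly in $L^1(B_R\times B_R)$. Since the weak-$*$ convergence of $\tilde K_k$ in $L^\infty(\R^n)$ transfers via Fubini to weak-$*$ convergence of $\tilde K_k(x-y)$ in $L^\infty(B_R\times B_R)$, the standard strong-$L^1$/weak-$*$-$L^\infty$ pairing closes the interior case. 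For the exterior part, $\phi(y)=0$ on $\R^n\setminus B_R$ reduces the integrand to terms of the form $u_k(x)\phi(x)\int_{B_R^c}K_k(x-y)\,\d y$ and $\phi(x)\int_{B_R^c}u_k(y)K_k(x-y)\,\d y$; each inner integral is passed to the limit pointwise in $x\in B_r$ by splitting the $y$-integration into an annulus $B_M\setminus B_R$ (where local $L^1$-convergence of $u_k$ combines with the weak-$*$ convergence of $\tilde K_k$ against the $L^1(\R^n)$-density $|x-y|^{-n-2s}\mathbf{1}_{B_M\setminus B_R}(y)$) and its tail $B_M^c$ (uniformly small by \eqref{eq:unif-tail-decay-weak-sol} together with $u_\infty\in L^1_{2s}$), after which the $x$-integration against $\phi$ is handled by dominated convergence. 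Collecting the interior and exterior pieces yields the limiting weak equation $L_{K_\infty}u_\infty = f_\infty$ in $B_R$, completing the proof.
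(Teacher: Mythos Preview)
Your proof is correct and follows the same overall architecture as the paper's: extract weak limits of $u_k$ and $K_k$, derive from \eqref{eq:almost-min} with $t\phi$ the Euler--Lagrange relation up to an $O(\sqrt{\eps_k})$ error, and then pass $\cE^{K_k}(u_k,\phi)$ to the limit. The meaningful difference lies in how you handle the interior term $\cE^{K_k}_{B_R\times B_R}(u_k,\phi)$. The paper splits the discrepancy as $\cE^{K_k-K_\infty}(u_k,\phi) + \cE^{K_\infty}(u_k-u_\infty,\phi)$, treating the kernel variation and the function variation separately; the second piece falls out of weak $H^s$ convergence, while the first is bounded via Cauchy--Schwarz by an expression involving only $\phi$ and $|K_k-K_\infty|$. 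Your route instead writes the energy as $\iint g_k(x,y)\,\tilde K_k(x-y)\,\d x\,\d y$ with $g_k \in L^1$, upgrades $g_k \to g_\infty$ to strong $L^1(B_R\times B_R)$ via Vitali, and pairs this against the weak-$*$ $L^\infty$ convergence of $\tilde K_k(x-y)$ (transferred from $\R^n$ to $B_R\times B_R$ by the change of variables $h=x-y$). This avoids ever estimating $|K_k-K_\infty|$ directly and keeps the argument self-contained, at the price of needing the equi-integrability bound (which hinges on $|x-y|^{2-n-2s}\in L^1_{\mathrm{loc}}$, so $s<1$ is used). The exterior treatments and the handling of $f_k$ in the two proofs are essentially the same; your optimization over $t$ is just a slightly cleaner rephrasing of the paper's choice $t_k=\pm\eps_k^{1/2}$.
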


\begin{proof}
Since $(u_k)_k$ is uniformly bounded in $H^s(B_R)$, by weak compactness, we get that up to a subsequence, $u_k \rightharpoonup u_{\infty}$ weakly in $ H^s(B_R)$.  Moreover, the existence of $K_{\infty}$ such that $\min\{1,|h|^2\} K_k(h) \d h \to \min\{1,|h|^2\} K_{\infty}(h) \d h$ weakly in the sense of measures follows from \cite[Proof of Proposition 2.2.36]{FeRo24}. 
Next, we claim that
\begin{align}
\label{eq:energy-convergence-weak}
\cE^{K_k}_{(B_r^c \times B_r^c)^c}(u_k,\phi) \to \cE^{K_{\infty}}_{(B_r^c \times B_r^c)^c}(u_{\infty},\phi).
\end{align}
To see this, we first use the uniform boundedness of $(u_k)_k$ in $H^s(B_R)$ to obtain:
\begin{align*}
\cE^{|K_k - K_{\infty}|}_{B_R \times B_R}(u_k,\phi) \le c \left(\int_{\R^n} \left[ \int_{\R^n} (\phi(x) - \phi(x+h))^2 |K_k(h) - K_{\infty}(h)| \d h \right] \d x \right)^{1/2} \to 0,
\end{align*}
by the dominated convergence theorem. Moreover, since $\supp(\phi) \subset B_{r}$, we have
\begin{align*}
\cE^{|K_k - K_{\infty}|}_{B_r \times (\R^n \setminus B_R)}(u_k,\phi) &\le c \left( \int_{B_{r}} |u_k(x)| |\phi(x)| \d x \right) \sup_{x \in B_{r}}  \left( \int_{\R^n \setminus B_R} |K_k(x-y) - K_{\infty}(x-y) | \d y \right) \\
&\quad +  c \int_{B_{r}} |\phi(x)| \left( \int_{\R^n \setminus B_R} |u_k(y)|  |K_k(x-y) - K_{\infty}(x-y) | \d y  \right)  \d x\\
&= K_1 + K_2.
\end{align*}
Here, $K_1 \to 0$, as $k \to \infty$, by the uniform boundedness of $(u_k)_k$ in $L^2(B_R)$ and the convergence $\min\{1 , |y|^2\} K_k(y) \d y \to \min\{1 , |y|^2\}  K_{\infty} \d y$. For $K_2$, we use that by \eqref{eq:unif-tail-decay-weak-sol}, for any $\delta \in (0,1)$ we can find $k(\delta) \in \N$ and $M > 1$ such that for any $k \ge k(\delta)$ it holds
\begin{align*}
\sup_{x \in B_{r}} & \left( \int_{\R^n \setminus B_R} |u_k(y)|  |K_k(x-y) - K_{\infty}(x-y) | \d y  \right) \\
&\le C(M) \sup_{x \in B_{r}} \int_{B_M} |K_k(x-y) - K_{\infty}(x-y) | \d y + C \int_{\R^n \setminus B_M} |u_k(y)| |y|^{-n-2s} \d y \\
&\le  C(M) \sup_{x \in B_{r}} \int_{B_M} |K_k(x-y) - K_{\infty}(x-y) | \d y + C \delta. 
\end{align*}
Thus, $K_2 \to 0$, by using again the convergence $\min\{1 , |y|^2\} K_k(y) \d y \to \min\{1 , |y|^2\}  K_{\infty} \d y$ on the first summand. This yields $\cE^{|K_k - K_{\infty}|}_{(B_r^c \times B_r^c)^c}(u_k,\phi) \to 0$. Moreover, note that by the weak convergence $u_k \to u$ in $H^s(B_R)$, we get $\cE^{K_{\infty}}_{B_R \times B_R}(u_k - u_{\infty},\phi) \to 0$. Also, by the convergence in $L^1_{2s}(\R^n)$ and in $L^2(B_R)$, which follows from the boundedness in $H^s(B_R)$ by a compact embedding (see \cite{FoKa24}), we have
\begin{align*}
\cE^{K_{\infty}}_{B_r \times (\R^n \setminus B_R)}(u_k - u_{\infty},\phi) & \le \int_{B_{r}} |\phi(x)| |u_k(x) - u_{\infty}(x)| \left(\int_{\R^n \setminus B_R} K_{\infty}(x-y) \d y \right) \d x \\
&\quad + \int_{B_{r}} |\phi(x)| \left(\int_{\R^n \setminus B_R} |u_k(y) - u_{\infty}(y)| K_{\infty}(x-y) \d y \right) \d x \to 0.
\end{align*}
This shows $\cE^{K_{\infty}}_{(B_r^c \times B_r^c)^c}(u_k - u_{\infty},\phi) \to 0$, and a combination of the previous findings, together with the triangle inequality yields
\begin{align*}
\big|\cE^{K_k}_{(B_r^c \times B_r^c)^c}(u_k,\phi) - \cE^{K_{\infty}}_{(B_r^c \times B_r^c)^c}(u_{\infty},\phi) 
\big| \le \cE^{|K_k - K_{\infty}|}_{(B_r^c \times B_r^c)^c}(u_k,\phi) + \cE^{K_{\infty}}_{(B_r^c \times B_r^c)^c}(u_k - u_{\infty},\phi) \to 0,
\end{align*}
and therefore \eqref{eq:energy-convergence-weak}.\\
In order to prove that $L_{\infty} u_{\infty} = f_{\infty}$ in $B_R$, let us take any $\phi \in H^s_{B_r}(\R^n)$ for some $r \in (0,R)$ and $t_k \in \R$ and deduce from \eqref{eq:almost-min} that the following holds true:
\begin{align*}
0 \le t_k \cE^k_{(B_r^c \times B_r^c)^c}(u_k,\phi) + t_k^2 \cE^k_{(B_r^c \times B_r^c)^c}(\phi,\phi) - t_k \int_{B_r} f_k \phi \d x + \eps_k.
\end{align*}
Choosing $t_k = \eps_k^{1/2} > 0$, we deduce that
\begin{align*}
\cE^{\infty}_{(B_r^c \times B_r^c)^c}(u_{\infty},\phi) &= \lim_{k \to \infty} \cE^k_{(B_r^c \times B_r^c)^c}(u_k,\phi) \\
&\ge -\lim_{k \to \infty} \eps_k^{1/2}\left(1 + \cE^k_{(B_r^c \times B_r^c)^c}(\phi,\phi) \right) + \lim_{k \to \infty} \int_{B_r} f_k \phi \d x = \int_{B_r} f_{\infty} \phi \d x,
\end{align*}
where we used \eqref{eq:energy-convergence-weak} and the weak convergence $f_k \to f_{\infty}$ in $L^2(B_R)$ together with a density argument. Since $r < R$ was arbitrary, this implies that $L_{\infty} u_{\infty} \ge f_{\infty}$ in the weak sense in $B_R$. Choosing $t_k = - \eps_k^{1/2}$, the same argument yields that $L_{\infty} u_{\infty} \le f_{\infty}$ in the weak sense in $B_R$. This concludes the proof.
\end{proof}

The following lemma is the key ingredient in the proof of the optimal regularity (see \autoref{thm:or}). It shows that minimizers of $\cI$ in $B_2$ that vanish at zero are uniformly bounded in a ball of radius $1/8$, with a bound that does not depend on the exterior data. Moreover, it implies the first part of \autoref{thm-onephase-intro}.

\begin{lemma}
\label{lemma:contradiction}
Let $u$ be a minimizer of $\cI_{\Omega}$ with $B_2 \subset \Omega$, and $u(0) = 0$.
Then, there is a constant $C > 0$, depending only on $n,s,\lambda,\Lambda,M$, such that 
\begin{align*}
\Vert u \Vert_{L^{\infty}(B_{1/8})} \le C.
\end{align*}
\end{lemma}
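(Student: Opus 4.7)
The plan is a blow-up argument by contradiction. Suppose the conclusion fails: there exist kernels $K_k \in \mathcal{L}^n_s(\lambda,\Lambda)$, domains $\Omega_k \supset B_2$, and minimizers $u_k$ of $\cI^{K_k,M}_{\Omega_k}$ with $u_k(0)=0$ and $M_k := \|u_k\|_{L^\infty(B_{1/8})} \to \infty$. Define $v_k := u_k/M_k$. By the scaling of $\cE^{K_k}$ and of the measure term, $v_k$ is a minimizer of $\cI^{K_k,\tilde M_k}_{\Omega_k}$ with $\tilde M_k := M/M_k^2 \to 0$, and it satisfies $v_k \ge 0$, $v_k(0)=0$, and $\sup_{B_{1/8}} v_k = 1$. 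The strategy is to extract a subsequential limit $v_\infty$, show that $v_\infty$ is $L_\infty$-harmonic in $B_{1/8}$ for some $L_\infty \in \mathcal{L}^n_s(\lambda,\Lambda)$ with $v_\infty \ge 0$ in $\R^n$ and $v_\infty(0)=0$, and then derive a contradiction via the strong minimum principle.

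For the compactness step, I would first apply \autoref{lemma:energy-bound} at $x_0=0$, $R=1/8$ to obtain, using $\dashint_{B_{1/8}} v_k \le 1$ and $\tilde M_k^{1/2} \to 0$, the uniform bound $\tail(v_k;1/8,0) \le C$; in particular $(v_k)$ is uniformly bounded in $L^1_{2s}(\R^n)$. For every $r<1/8$ and every $x_0 \in B_r$, setting $R = (1/8-r)/4$, one can split the tail integral defining $\tail(v_k;2R,x_0)$ at $|y|=1/8$ and control each piece by a constant depending only on $r$, on $\sup_{B_{1/8}} v_k \le 1$, and on $\tail(v_k;1/8,0)$. Applying the suboptimal Hölder estimate \autoref{thm:aor} to $v_k$ then yields a uniform $C^{\alpha_0}$ bound on $B_r$ (with $\alpha_0 = \tfrac{2s^2}{n+4s}$). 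A covering and diagonal argument gives, along a subsequence, $v_k \to v_\infty$ locally uniformly in $B_{1/8}$ and pointwise a.e.\ in $\R^n$; by the kernel compactness of \autoref{lemma:convergence}, also $K_k \to K_\infty \in \mathcal{L}^n_s(\lambda,\Lambda)$.

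To identify the limit equation, for each $r<1/8$ and each $\phi \in H^s_{B_r}(\R^n)$, I would test the minimality of $v_k$ against $v_k + t\phi$ for $t \in \R$. Since $\bigl||\{v_k+t\phi>0\}|-|\{v_k>0\}|\bigr| \le |B_r|$, after optimizing in $|t|>0$ one obtains
\begin{equation*}
\bigl|\cE^{K_k}_{(\Omega_k^c\times\Omega_k^c)^c}(v_k,\phi)\bigr| \le \tilde M_k^{1/2}\bigl(\cE^{K_k}(\phi,\phi)\,|B_r|\bigr)^{1/2} \to 0.
\end{equation*}
Passing to the limit in the bilinear form via \autoref{lemma:convergence} (whose hypotheses hold on $B_r$ thanks to the uniform bounds above) yields $\cE^{K_\infty}(v_\infty,\phi)=0$, i.e.\ $L_\infty v_\infty = 0$ weakly in $B_{1/8}$. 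Together with $v_\infty \ge 0$ in $\R^n$ and $v_\infty(0)=0$, the strong minimum principle---which is immediate from the pointwise identity $L_\infty v_\infty(0) = -2\int v_\infty(h)K_\infty(h)\,\mathrm{d}h$ combined with $K_\infty > 0$ a.e.\ and $v_\infty \ge 0$---forces $v_\infty \equiv 0$ a.e.\ in $\R^n$.

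Finally, to contradict $v_\infty \equiv 0$, pick $z_k \in \overline{B_{1/8}}$ with $v_k(z_k) = 1$ and pass to a subsequence $z_k \to z^* \in \overline{B_{1/8}}$. In the favorable case $z^* \in B_{1/8}$, there is $r<1/8$ with $z^* \in B_r$, so the uniform $C^{\alpha_0}$ bound on $B_r$ combined with the local uniform convergence $v_k(z^*) \to v_\infty(z^*) = 0$ yields $v_k(z_k) \to 0$, contradicting $v_k(z_k)=1$. The main technical obstacle I anticipate is the remaining case $z^* \in \partial B_{1/8}$, where the compactness estimate deteriorates; this boundary scenario should be ruled out either by a second blow-up centered at $z_k$, using the scale invariance of $\cI^{K_k,M}$ to place $z_k$ in the interior of a new reference ball and reduce to the favorable case, or by a first-crossing argument along the segment from $0$ to $z_k$ that exploits the interior Hölder bound to locate a point where $v_k$ reaches a fixed fraction of its supremum at a uniformly positive distance from $\partial B_{1/8}$.
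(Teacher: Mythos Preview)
Your approach has a genuine gap in the compactness step. The normalization $v_k = u_k/M_k$ with $M_k = \sup_{B_{1/8}} u_k$ gives $v_k \le 1$ only on $B_{1/8}$; outside $B_{1/8}$ you have no control whatsoever on $v_k$, so the claim ``$v_k \to v_\infty$ pointwise a.e.\ in $\R^n$'' is unjustified, and the limit $v_\infty$ is not even defined outside $B_{1/8}$. This is fatal because $L_\infty$ is nonlocal: passing to the limit in $\cE^{K_k}(v_k,\phi)$ for $\phi\in H^s_{B_r}(\R^n)$ requires controlling the tail contribution $\int_{B_r}\phi(x)\int_{\R^n\setminus B_{1/8}} v_k(y)K_k(x-y)\,dy\,dx$, which is bounded (by your tail estimate) but need not converge. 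Truncating and absorbing the tail into a source term $g_k$ helps only partially---you would get $L_\infty w_\infty = g_\infty \ge 0$ for a weak limit $g_\infty$, which is still enough for the strong minimum principle---but then the boundary scenario $z^*\in\partial B_{1/8}$ becomes unavoidable, and neither of your proposed fixes works: the interior H\"older constant on $B_r$ blows up as $r\nearrow 1/8$, so a first-crossing point at level $1/2$ is also pushed to $\partial B_{1/8}$, and a second blow-up centered at $z_k$ lacks both a zero of $v_k$ and an $L^\infty$ bound near $z_k$.

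The paper's proof circumvents both issues simultaneously by a different blow-up: it introduces the trapping set $\mathcal{W}_k=\{x\in B_1:\dist(x,\{u_k=0\})\le \tfrac13\dist(x,\partial B_1)\}$, takes $x_k\in\mathcal{W}_k$ realizing $S_k:=\max_{\mathcal{W}_k} u_k$, and lets $y_k$ be its projection onto $\partial\{u_k>0\}$ with $r_k=|x_k-y_k|$. The key geometric fact $B_{r_k/2}(y_k)\subset\mathcal{W}_k$ yields $u_k\le S_k$ on $B_{r_k/2}(y_k)$; the Harnack inequality then gives a point $z_k$ at fixed relative distance from $y_k$ with $u_k(z_k)\asymp S_k$. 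Rescaling at $y_k$ by $\rho_k=r_k/4$ and normalizing by $u_k(z_k)$ produces a sequence bounded by a universal constant on $B_2$ and equal to $1$ at $\zeta_k\to\zeta_\infty\in\partial B_{1/2}$. After truncation to $w_k=v_k\1_{B_2}$ the tail becomes a bounded source $f_k\ge 0$, \autoref{lemma:convergence} applies, and the limit satisfies $L_\infty w_\infty \ge 0$, $w_\infty\ge 0$, $w_\infty(0)=0$, $w_\infty(\zeta_\infty)=1$---contradicting the weak Harnack inequality. The $\mathcal{W}_k$ device is precisely what buys the uniform $L^\infty$ bound on the rescaled ball that your direct normalization cannot provide.
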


\begin{proof}
Let us assume by contradiction that there exist sequences of operators $(L_k)_k \subset \mathcal{L}_s^n(\lambda,\Lambda)$ with kernels $K_k$, and $(u_k)_k \subset V^s(B_2 | B_3) \cap L^1_{2s}(\R^n)$ minimizers of $\cI_{\Omega}$, but such that $\Vert u_k \Vert_{L^{\infty}(B_{1/8})} \to \infty$. For any $k \in \N$, we define
\begin{align*}
\mathcal{W}_k := \left\{ x \in B_1 : \dist(x,\{ u_k = 0 \}) \le \frac{\dist(x,\partial B_1)}{3} \right\}.
\end{align*} 
We immediately have that $B_{1/4} \subset \mathcal{W}_k$ and that the $u_k$ are continuous in $\overline{B_1}$, which yields that the $u_k$ achieve their respective maxima in the closed set $\mathcal{W}_k$ at a point $x_k \in \mathcal{W}_k$ and
\begin{align*}
\Vert u_k \Vert_{L^{\infty}(B_{1/8})} \le \max_{\mathcal{W}_k} u_k \le u_k(x_k) =: S_k.
\end{align*} 
Next, we let $y_k \in \partial \{ u_k > 0\}$ be the projection of $x_k$ onto $\partial \{ u_k > 0\}$ and we set $r_k := |x_k - y_k|$. An easy computation (see \cite[Proposition 3.15]{Vel23}) yields that $B_{r_k/2}(y_k) \subset \mathcal{W}_k$, which implies
\begin{align}
\label{eq:Mk-control}
\sup_{B_{r_k/2}(y_k)} u_k \le S_k.
\end{align}
Note that by construction it holds $B_{r_k}(x_k) \subset \{ u_k > 0\}$, which means that $u_k$ satisfies $L_k u_k = 0$ in the weak sense in $B_{r_k}(x_k)$ by \autoref{lemma:aux}(iv). Since $u_k \ge 0$, the Harnack inequality (see \cite[Theorem 6.9]{Coz17}) implies
\begin{align*}
u_k(z_k) \ge c u_k(x_k) = c S_k
\end{align*}
for some $c > 0$, depending only on $n,s,\lambda,\Lambda$, where we set $z_k = \frac{x_k}{8} + \frac{7y_k}{8}$. Altogether, we obtain
\begin{align*}
c S_k \le u_k(z_k) \le S_k.
\end{align*}
Now, we set $\rho_k = r_k/4$ and $\zeta_k = \frac{z_k - y_k}{\rho_k}$. Moreover, we introduce the functions
\begin{align*}
v_k(x) = \frac{u_k(y_k + \rho_k x)}{u_k(z_k)}, \qquad w_k(x) = v_k(x)\1_{B_2}(x).
\end{align*}
The goal is to apply \autoref{lemma:convergence} to $w_k$ in $B_1$. The uniform boundedness of the sequence $(w_k)_k$ in $H^s(B_1)$ follows from
\begin{align}
\label{eq:verify-L2-bound-1}
\Vert w_k \Vert_{L^{\infty}(\R^n)}^2 = \Vert v_k \Vert_{L^2(B_2)}^2 \le c S_k^{-2} \rho_k^{-n} \Vert u_k \Vert_{L^{\infty}(B_{2\rho_k}(y_k))}^2 \le cS_k^{-2} \Vert u_k \Vert_{L^{\infty}(B_{r_k/2}(y_k))}^2 \le c,
\end{align}
In particular, this implies
\begin{align}
\label{eq:verify-L2-bound-2}
\Vert w_k \Vert_{L^2(B_2)}^2 \le c, \qquad \Vert w_k \Vert_{L^1_{2s}(\R^n)} \le c,
\end{align}
and therefore also \eqref{eq:unif-tail-decay-weak-sol}. Moreover, we have the following computation based on \autoref{lemma:energy-bound} and a scaling argument
\begin{align*}
[w_k]^2_{H^s(B_1)} = [v_k]^2_{H^s(B_1)} &= (u_k(z_k))^{-2} \rho_k^{-n+2s} [u_k]^2_{H^s(B_{\rho_k}(y_k))} \\
&\le c S_k^{-2} \rho_k^{2s} \left( M + \rho_k^{-2s} \dashint_{B_{2\rho_k}(y_k)} u_k^2 \d x \right) \\
&\le c M S_k^{-2} \rho_k^{2s} + c \frac{\Vert u_k \Vert_{L^{\infty}(B_{r_k/2}(y_k))}^2}{S_k^2} \le c,
\end{align*}
where we used \eqref{eq:Mk-control}, as well as $r_k \le 2$ and $S_k^{-2} \le \Vert u_k \Vert_{L^{\infty}(B_{1/8})}^{-2} \to 0$, as $k \to \infty$. The convergence $w_k \to w_{\infty} \in L^1_{2s}(\R^n)$ strongly in $L^1_{2s}(\R^n)$ up to a subsequence is immediate from \eqref{eq:verify-L2-bound-1} and Vitali's convergence theorem.

Thus, in order to apply \autoref{lemma:convergence}, it suffices to prove that also \eqref{eq:almost-min} is satisfied. To see this, let us first observe that by \autoref{lemma:aux}(iv) $v_k$ solves $L_{\tilde{K}_k} v_k = 0$ (where $\tilde{K}_k(h) = \rho_k^{n+2s}K_k(\rho_k h)$ still satisfies \eqref{eq:Kcomp} with $0 < \lambda \le \Lambda$, i.e., $L_{\tilde{K}_k} \in \mathcal{L}_s^n(\lambda,\Lambda)$) in the weak sense in $\{ v_k > 0 \} \cap B_2$. Hence, $w_k$ solves $L_{\tilde{K}_k} w_k = f_k$ in the weak sense in $\{ w_k > 0 \} \cap B_2$, where for any $x \in B_{1}$
\begin{align*}
0 \le f_k(x) := 2\int_{\R^n \setminus B_2} v_k(y) \tilde{K}_k(x-y) \d y &\le c (u_k(z_k))^{-1}  \tail(u_k ; 2\rho_k, y_k) \\
&\le c (u_k(z_k))^{-1} \dashint_{B_{\rho_k}(y_k)} u_k \d x \le c  \frac{\Vert u_k \Vert_{L^{\infty}(B_{r_k/2}(y_k))}}{S_k} \le c,
\end{align*}
where we used the tail estimate (see \cite[Theorem 1.9]{KaWe23}). In particular, we have that $(f_k)_k$ is uniformly bounded in $L^2(B_1)$, and in particular there exists a weakly converging subsequence.\\
Consequently, for any $\phi_k \in V^s(B_1 | \R^n)$ such that $w_k - \phi_k \in H^s_{B_1}(\R^n)$, we have
\begin{align}
\label{eq:energy-with-minimum}
\tilde{\cE}^k_{(B_1^c \times B_1^c)^c}(w_k,w_k) - \int_{B_1} f_k w_k \d x \le \tilde{\cE}^k_{(B_1^c \times B_1^c)^c}(w_k \wedge \phi_k,w_k \wedge \phi_k) - \int_{B_1} f_k (w_k \wedge \phi_k) \d x,
\end{align}
where we denote $\tilde{\cE}^k := \cE^{\tilde{K}_k}$. On the other hand, since $u_k$ is a minimizer of $\cI_{\Omega}$, it holds
\begin{align}
\label{eq:energy-with-maximum}
\tilde{\cE}^k_{(B_1^c \times B_1^c)^c}(w_k,w_k) - \int_{B_1} f_k w_k \d x \le \tilde{\cE}^k_{(B_1^c \times B_1^c)^c}(w_k \vee \phi_k,w_k \vee \phi_k) - \int_{B_1} f_k (w_k \vee \phi_k) \d x + c M S_k^{-2}.
\end{align}
To see this, set $\tilde{\phi}_k = \phi_k(-y_k + \cdot/\rho_k) u_k(z_k)$, $\tilde{\Omega} = -y_k + \rho_k^{-1}\Omega$ and observe that $u_k = \tilde{\phi}_k$ in $B_{\rho_k}(y_k)^c$ and therefore,
\begin{align*}
\cE^k_{(\Omega^c \times \Omega^c)^c}(u_k \vee \tilde{\phi}_k,u_k \vee \tilde{\phi}_k) &+ M |\{ u_k \vee \tilde{\phi_k} > 0 \} \cap \Omega| \\
&\ge \cE^k_{(\Omega^c \times \Omega^c)^c}(u_k,u_k) + M |\{ u_k > 0 \} \cap \Omega| \\
&= \cE^k_{(B_{\rho_k}(y_k)^c \times B_{\rho_k}(y_k)^c)^c}(u_k,u_k) \\
&\quad +\cE^k_{(\Omega^c \times \Omega^c)^c \setminus (B_{\rho_k}(y_k)^c \times B_{\rho_k}(y_k)^c)^c}(u_k \vee \tilde{\phi}_k,u_k \vee \tilde{\phi}_k) +  M |\{ u_k > 0 \} \cap \Omega|,
\end{align*}
where we denote $\cE^k := \cE^{K_k}$. Note that one should understand the estimates from above in the sense of \autoref{def:minimizer} in case the energies are not finite. However, this does not change any of the arguments. The previous estimate yields 
\begin{align*}
\cE^k_{(B_{\rho_k}(y_k)^c \times B_{\rho_k}(y_k)^c)^c}(u_k,u_k) &\le \cE^k_{(B_{\rho_k}(y_k)^c \times B_{\rho_k}(y_k)^c)^c}(u_k \vee \tilde{\phi}_k,u_k \vee \tilde{\phi}_k) \\
&\quad + M |\{ \tilde{\phi}_k > 0 \} \cap \{u_k = 0\} \cap B_{\rho_k}(y_k)|,
\end{align*}
and therefore, by scaling:
\begin{align}
\label{eq:energy-with-maximum-help-1}
\begin{split}
\tilde{\cE}^k_{(B_1^c \times B_1^c)^c}(v_k,v_k) &= (u_k(z_k))^{-2} \rho_k^{-n+2s} \cE^k_{(B_{\rho_k}(y_k)^c \times B_{\rho_k}(y_k)^c)^c}(u_k,u_k)\\
&\le (u_k(z_k))^{-2} \rho_k^{-n+2s} \big[ \cE^k_{(B_{\rho_k}(y_k)^c \times B_{\rho_k}(y_k)^c)^c}(u_k \vee \tilde{\phi}_k,u_k \vee \tilde{\phi}_k)\\ 
&\quad + M |\{ \phi_k > 0 \} \cap \{v_k = 0\} \cap B_{\rho_k}(y_k)| \big]\\
&\le \tilde{\cE}^k_{(B_1^c \times B_1^c)^c}(v_k \vee \phi_k,v_k \vee \phi_k) + c M (u_k(z_k))^{-2} \rho_k^{2s}.
\end{split}
\end{align}
Finally, recalling that $w_k = v_k \1_{B_2}$, we have (at least formally)
\begin{align*}
\tilde{\cE}^k_{(B_1^c \times B_1^c)^c}(w_k,w_k) &=  \tilde{\cE}^k_{(B_2 \times B_2) \setminus (B_1^c \times B_1^c)}(v_k,v_k) + 2 \tilde{\cE}^k_{B_1 \times B_2^c}(w_k,w_k)\\
&= \tilde{\cE}^k_{(B_1^c \times B_1^c)^c}(v_k,v_k) + 2 \tilde{\cE}^k_{B_1 \times B_2^c}(w_k,w_k) - 2 \tilde{\cE}^k_{B_1 \times B_2^c}(v_k,v_k) \\
&= \tilde{\cE}^k_{(B_1^c \times B_1^c)^c}(v_k,v_k) - 2\int_{B_1} v_k(x) \left( \int_{\R^n \setminus B_2} v_k(y) \tilde{K}_k(x-y) \d y \right) \d x \\
&\quad + \int_{B_1} \left( \int_{\R^n \setminus B_2} v_k^2(y) \tilde{K}_k(x-y) \d y \right) \d x \\
&= \tilde{\cE}^k_{(B_1^c \times B_1^c)^c}(v_k,v_k) - \int_{B_1} w_k f_k \d x + \int_{B_1} \left( \int_{\R^n \setminus B_2} v_k^2(y) \tilde{K}_k(x-y) \d y \right) \d x.
\end{align*}
Thus, carrying out an analogous computation for $\tilde{\cE}^k_{(B_1^c \times B_1^c)^c}(w_k \vee \phi_k,w_k \vee \phi_k)$ and recalling that $w_k \vee \phi_k = w_k$ in $\R^n \setminus B_1$, it holds
\begin{align*}
&\tilde{\cE}^k_{(B_1^c \times B_1^c)^c}(w_k,w_k) - \tilde{\cE}^k_{(B_1^c \times B_1^c)^c}(w_k \vee \phi_k,w_k \vee \phi_k) \\
&\qquad\qquad = \tilde{\cE}^k_{(B_1^c \times B_1^c)^c}(v_k,v_k) -  \tilde{\cE}^k_{(B_1^c \times B_1^c)^c}(v_k \vee \phi_k,v_k \vee \phi_k) - \int_{B_1} w_k f_k \d x + \int_{B_1} (w_k \vee \phi_k) f_k \d x.
\end{align*}
Combining this estimate with \eqref{eq:energy-with-maximum-help-1}, we immediately deduce \eqref{eq:energy-with-maximum}, as desired.\\
Next, we observe the following algebraic identity
\begin{align}
\label{eq:min-max-alg-identity}
\begin{split}
\big((w_1 \vee \phi_1) - (w_2 \vee \phi_2) \big)^2 &+ \big((w_1 \wedge \phi_1) - (w_2 \wedge \phi_2) \big)^2 \\
&= (w_1 - w_2)^2 + (\phi_1 - \phi_2)^2 -2(w_1 - \phi_1)_+(w_2 - \phi_2)_- \\
&\le (w_1 - w_2)^2 + (\phi_1 - \phi_2)^2
\end{split}
\end{align}
which implies that
\begin{align*}
\tilde{\cE}^k_{(B_1^c \times B_1^c)^c}(w_k \vee \phi_k,w_k \vee \phi_k) & + \tilde{\cE}^k_{(B_1^c \times B_1^c)^c}(w_k \wedge \phi_k,w_k \wedge \phi_k) \\
&\le \tilde{\cE}^k_{(B_1^c \times B_1^c)^c}(w_k,w_k) + \tilde{\cE}^k_{(B_1^c \times B_1^c)^c}(\phi_k , \phi_k).
\end{align*}
Moreover, clearly 
\begin{align*}
\int_{B_1} f_k (w_k \wedge \phi_k) \d x + \int_{B_1} f_k (w_k \vee \phi_k) \d x = \int_{B_1} f_k w_k \d x + \int_{B_1} f_k \phi_k.
\end{align*}
Thus, by adding the previous two observations and combining them with \eqref{eq:energy-with-minimum} and \eqref{eq:energy-with-maximum}, we obtain
\begin{align*}
\tilde{\cE}^k_{(B_1^c \times B_1^c)^c}(w_k,w_k) - \int_{B_1} f_k w_k \d x \le \tilde{\cE}^k_{(B_1^c \times B_1^c)^c}(\phi_k,\phi_k) - \int_{B_1} f_k \phi_k \d x + c M S_k^{-2},
\end{align*}
i.e., \eqref{eq:almost-min} with $\eps_k := c M S_k^{-2} \to 0$, as $k \to \infty$, as desired. Therefore, we can apply \autoref{lemma:convergence}, and obtain that $w_k \to w_{\infty} \in H^s(B_r) \cap L^1_{2s}(\R^n)$ weakly in $H^s(B_r) \cap L^1_{2s}(\R^n)$, and strongly in $L^2(B_r)$ for any $r \in (0,1)$, up to a subsequence. Moreover, $f_k \to f_{\infty} \in L^2(B_1)$ weakly in $L^1(B_1)$ and $f_{\infty} \ge 0$. Moreover, \autoref{lemma:convergence} implies $w_{\infty} \ge 0$ and that there exists $L_{\infty} \in \mathcal{L}_s^n(\lambda,\Lambda)$ such that $L_{\infty} v_{\infty} = f_{\infty} \ge 0$ in the weak sense $B_1$. By \autoref{thm:aor}, we have for any $R \in (0,2)$
\begin{align*}
\Vert v_k \Vert_{C^{\alpha}(B_R)} \le c(R) \left( \dashint_{B_2} v_k \d x + \tail(v_k;2)+ M^{1/2} \right) \le c(R),
\end{align*}
where $\alpha = \frac{2s}{n+4s}s$, and we also used \eqref{eq:verify-L2-bound-2}. 
Thus, by Arzela-Ascoli's theorem, we get that $w_k \to w_{\infty}$ locally uniformly in $B_2$, up to extraction of a further subsequence. Let us also observe that $\zeta_{k} \to \zeta_{\infty} \in \partial B_{1/2}$ up to a further subsequence. Therefore, we have
\begin{align*}
w_{\infty}(0) = \lim_{k \to \infty} w_k(0) = 0, \qquad w_{\infty}(\zeta_{\infty}) = \lim_{k \to \infty} w_k(\zeta_k) = 1.
\end{align*}
Since $w_{\infty} \ge 0$ by the locally uniform convergence in $B_2$, the above two lines contradict the fact that $L_{\infty} w_{\infty} \ge 0$ in $B_1$ due to the weak Harnack inequality. This proves the desired result.
\end{proof}

By combination of the previous results, we are now in position to prove \autoref{thm:or}.

\begin{proof}[Proof of \autoref{thm:or}]
Note that if $y_0 \in \partial \{ u > 0 \} \cap B_R(x_0)$, then for any $r \in (0,R)$, $u_r(x) = u(y_0 + rx)/r^s$ satisfies the assumptions of \autoref{lemma:contradiction} (with $K$ replaced by $r^{n+2s} K(r \cdot)$, which still satisfies \eqref{eq:Kcomp} with $0 < \lambda \le \Lambda$). As a consequence,
\begin{align}
\label{eq:boundary-Cs}
\Vert u \Vert_{L^{\infty}(B_r(y_0))} = r^s \Vert u_r \Vert_{L^{\infty}(B_1)} \le C r^s.
\end{align}
Moreover, since $Lu = 0$ in $\{ u > 0 \} \cap B_R(x_0)$ by \autoref{lemma:aux}, we have for any $r \in (0,R)$ and $y_0 \in \{u > 0 \} \cap B_R(x_0)$ with $B_{2r}(y_0) \subset (\{ u > 0 \} \cap B_R(x_0))$
\begin{align}
\label{eq:interior-Cs}
\begin{split}
\Vert u \Vert_{C^s(B_r(y_0))} &\le C r^{-s} \left( \Vert u \Vert_{L^{\infty}(B_{3r/2}(y_0))} + \tail(u;r,y_0) \right) \\
&\le C r^{-s} \left(\dashint_{B_r(y_0)} u \d x  + \tail(u;r,y_0) \right) \le C r^{-s} \dashint_{B_r(y_0)} u \d x,
\end{split}
\end{align}
by \cite[Theorem 2.4.3]{FeRo24}, where we used also that $u \ge 0$ by \autoref{lemma:aux} and the local boundedness and tail estimate (see \cite[Theorem 6.2]{Coz17}, \cite[Theorem 1.9]{KaWe23}). From here, the proof follows by a standard combination of \eqref{eq:boundary-Cs} and \eqref{eq:interior-Cs} (see for instance \cite[Proof of Proposition 2.6.4]{FeRo24}).
\end{proof}

\begin{proof}[Proof of the first part of \autoref{thm-onephase-intro}]
By rescaling \autoref{lemma:contradiction}, we know that $u(x) \le c |x|^s$ in $B_{3/4}$ since $0 \in \partial \{ u > 0 \}$. Hence, an application of \autoref{thm:or} yields
\begin{align*}
\Vert u \Vert_{C^s(B_{1/2})} \le C + C \int_{B_{3/4}} u(x) \d x \le C,
\end{align*}
as desired.
\end{proof}

\subsection{Non-degeneracy}
\label{sec:non-deg}

The goal of this section is to establish non-degeneracy for minimizers of $\cI$ near the free boundary. It directly implies the second claim in \autoref{thm-onephase-intro}.

\begin{theorem}[Non-degeneracy]
\label{thm:nondeg}
Let $u$ be a minimizer of $\cI_{\Omega}$. Then, there is a constant $\kappa > 0$, depending only on $n,s,\lambda,\Lambda,M$, such that for any $R > 0$ and $x_0 \in \overline{ \{ u > 0 \} } $ with $B_{2R}(x_0) \subset \Omega$ it holds:
\begin{align*}
\Vert u \Vert_{L^{\infty}(B_R(x_0))} \ge \kappa R^s.
\end{align*}
\end{theorem}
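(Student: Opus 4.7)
I would argue by contradiction, combining the scaling invariance of the functional with a comparison against an annular $L$-harmonic replacement. By the scaling $u(x)\mapsto u(x_0+Rx)/R^s$ (for a rescaled kernel $R^{n+2s}K(R\cdot)$ still in the class \eqref{eq:Kcomp}, with $M$ unchanged), it suffices to prove that any minimizer $u$ of $\cI_\Omega^{K,M}$ with $B_2\subset \Omega$ and $0\in \overline{\{u>0\}}$ satisfies $\|u\|_{L^\infty(B_1)}\geq \kappa$ for a universal $\kappa>0$.

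Suppose for contradiction there exist sequences $(K_k)_k$, $(u_k)_k$ of minimizers with $B_2\subset\Omega_k$, $0\in\overline{\{u_k>0\}}$, and $\varepsilon_k:=\|u_k\|_{L^\infty(B_1)}\to 0$; by \autoref{lemma:energy-bound} the tails $\tail(u_k;1)$ remain uniformly bounded. The key competitor is the \emph{annular} $L_{K_k}$-harmonic replacement $v_k$ defined by $L_{K_k}v_k=0$ in $B_1\setminus\overline{B_{1/2}}$, $v_k=0$ in $B_{1/2}$, and $v_k=u_k$ in $\R^n\setminus B_1$. Using that $u_k$ is a weak supersolution (\autoref{lemma:aux}(i)) and that $v_k\leq u_k$ on the complement of $B_1\setminus\overline{B_{1/2}}$, the nonlocal comparison principle yields $0\leq v_k\leq u_k$ in $\R^n$, so $\{v_k>0\}\subset\{u_k>0\}\setminus B_{1/2}$. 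Testing the minimality of $u_k$ with the competitor $v_k$ gives
\[
\cE^{K_k}(u_k,u_k)-\cE^{K_k}(v_k,v_k)\leq -M\,|\{u_k>0\}\cap B_{1/2}|.
\]

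The next step is to show that the left-hand side is $o_k(1)$. Writing $\psi_k:=u_k-v_k$, which is supported in $B_1$, we decompose the energy difference into inner ($B_1\times B_1$), mixed ($B_1\times B_1^c$), and outer ($B_1^c\times B_1^c$) contributions; the outer one vanishes since $u_k=v_k$ in $B_1^c$, and the mixed one is controlled using $u_k,v_k\leq\varepsilon_k$ in $B_1$ together with the uniform tail bound. For the inner contribution, the crucial ingredient is \autoref{cor:annulus-regularity} (or \autoref{prop:beta-regularity} when $s\leq 1/2$), which yields $v_k(x)\leq C\,\dist(x,\partial B_{1/2})^{\beta}(\varepsilon_k+\tail(u_k;1))$ for some $\beta\in(0,2s)$ with $\beta>2s-1$, and hence a bound on the $H^s$-seminorm of $v_k$, and consequently of $\psi_k$, that degenerates with $\varepsilon_k$. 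Together with the uniform energy bound from \autoref{lemma:energy-bound}, this yields $|\{u_k>0\}\cap B_{1/2}|\to 0$.

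Finally, this contradicts $0\in\overline{\{u_k>0\}}$ via a blow-up: set $w_k:=u_k/\varepsilon_k$, so that $w_k\geq 0$, $\|w_k\|_{L^\infty(B_1)}=1$, and $0\in\overline{\{w_k>0\}}$. By \autoref{thm:or}, the family $(w_k)$ is uniformly $C^s$-bounded on compact subsets of $B_1$; by Arzel\`a--Ascoli and \autoref{lemma:convergence}, a subsequence converges locally uniformly to some $w_\infty\geq 0$ satisfying $L_\infty w_\infty\leq 0$ in $B_1$ and $L_\infty w_\infty=0$ in $\{w_\infty>0\}$ for some $L_\infty\in\mathcal{L}_s^n(\lambda,\Lambda)$. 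Arguing as in the sliding step of the proof of \autoref{lemma:contradiction} (to ensure the supremum is achieved at an interior point), $w_\infty$ is nontrivial, and Harnack's inequality applied inside $\{w_\infty>0\}$ forces $|\{w_\infty>0\}\cap B_{1/2}|>0$; by locally uniform convergence this contradicts $|\{w_k>0\}\cap B_{1/2}|=|\{u_k>0\}\cap B_{1/2}|\to 0$. The main technical obstacle is the quantitative energy estimate $\cE^{K_k}(u_k,u_k)-\cE^{K_k}(v_k,v_k)=o_k(1)$: the singular nonlocal kernel mixes interior and exterior contributions, and controlling them requires the sharp pointwise bound on $v_k$ near $\partial B_{1/2}$ supplied by \autoref{cor:annulus-regularity}.
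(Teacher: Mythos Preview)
Your energy-competition step is correct and is essentially the content of \autoref{lemma:weak-nondeg} in the paper: comparing with the annular $L$-harmonic replacement and using \autoref{cor:annulus-regularity} to control $Lv_k$ in $B_{1/2}$, one indeed obtains
\[
-M\,|\{u_k>0\}\cap B_{1/2}| \ \ge\ \cE^{K_k}(u_k,u_k)-\cE^{K_k}(v_k,v_k) \ \ge\ 2\int_{B_{1/2}} Lv_k\,u_k \ \ge\ -C\varepsilon_k,
\]
so $|\{u_k>0\}\cap B_{1/2}|\to 0$. The gap is in your final blow-up step. The sequence $w_k=u_k/\varepsilon_k$ is a minimizer of $\cI^{K_k,M_k}$ with $M_k=M/\varepsilon_k^2\to\infty$, and the constant in \autoref{thm:or} depends explicitly on $M$; hence \autoref{thm:or} does \emph{not} give a uniform $C^s$ bound on $(w_k)$. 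Likewise, \autoref{lemma:convergence} requires uniform control of $\|w_k\|_{L^1_{2s}(\R^n)}$ (equivalently of $\tail(w_k;1)=\tail(u_k;1)/\varepsilon_k$), but \autoref{lemma:energy-bound} only yields $\tail(u_k;1)\le c(M^{1/2}+\varepsilon_k)$, so $\tail(w_k;1)$ may blow up. Truncating $w_k$ to $B_2$ does not help: the source term $L_k(w_k\1_{B_2^c})$ in $B_1$ is of order $\tail(w_k;2)$. Without compactness you cannot extract a nontrivial limit $w_\infty$, and the contradiction via Harnack in $\{w_\infty>0\}$ does not close.

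The paper circumvents this by never dividing by $\varepsilon_k$. It first proves the weak non-degeneracy $u(x)\ge cM\,\dist(x,\partial\{u>0\})^s$ (your energy argument, packaged as \autoref{lemma:weak-nondeg}), then proves a \emph{geometric growth} lemma (\autoref{lemma:geometric-growth}) by a compactness argument applied to the unnormalized minimizers $u_k$ themselves (whose $M$ is fixed, so \autoref{thm:or} applies uniformly): there exist $M_0,\eps>0$ with $\sup_{B_{M_0 d}(y_0)}u\ge(1+\eps)u(x_0)$ whenever $y_0\in\partial\{u>0\}$ and $d=|x_0-y_0|=\dist(x_0,\{u=0\})$. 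Iterating this along a chain $(x_k)$ and summing, using the weak non-degeneracy to convert $u(x_k)$ into $\dist(x_k,\{u=0\})^s$, yields the full lower bound $\|u\|_{L^\infty(B_R(x_0))}\ge\kappa R^s$. In short, your first ingredient is right, but the passage from ``small positivity set'' to a contradiction requires the geometric-growth/chain mechanism rather than a blow-up of $u_k/\varepsilon_k$.
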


The proof of \autoref{thm:nondeg} will be a consequence of the following weak non-degeneracy property and a well-known improving scheme (see \cite[Proposition 3.3]{CRS10}):

\begin{lemma}[Weak non-degeneracy]
\label{lemma:weak-nondeg}
Let $u$ be a minimizer of $\cI_{\Omega}$, $R > 0$, and $x_0 \in \R^n$ such that $B_{2R}(x_0) \subset \Omega$. Then, there exists $c > 0$, depending only on $n,s,\lambda,\Lambda$, such that for any $x \in B_R(x_0)$:
\begin{align}
\label{eq:non-deg-dist-est}
u(x) \ge c M \dist( x , \partial\{ u > 0\})^s.
\end{align}
\end{lemma}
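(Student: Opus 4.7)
The plan is to test the minimality of $u$ against a cutoff competitor and to extract the non-degeneracy by exploiting the fact that $u$ is $L$-harmonic inside $\{u>0\}$. Fix $x_* \in \{u>0\}\cap B_R(x_0)$ and set $\rho := \dist(x_*, \partial\{u>0\})$. I may assume $\rho>0$, and after reducing $\rho$ if necessary, also $\rho \le R$, so that $B_\rho(x_*) \subset B_{2R}(x_0) \subset \Omega$ and $B_\rho(x_*) \subset \{u>0\}$. Then \autoref{lemma:aux}(iv) gives $Lu=0$ weakly in $B_\rho(x_*)$, and the Harnack inequality for nonnegative $L$-harmonic functions (see \cite{KaWe23}) yields $\sup_{B_{\rho/2}(x_*)} u \le C u(x_*)$.

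Next I construct the competitor. Choose a smooth cutoff $\eta \in C_c^\infty(B_{\rho/2}(x_*))$ with $0\le \eta\le 1$, $\eta \equiv 1$ on $B_{\rho/4}(x_*)$, and $|\nabla\eta|\le C/\rho$. Set $\psi := u\eta \in H^s_{B_{\rho/2}(x_*)}(\R^n)$ and $v := u-\psi = u(1-\eta)$. Then $v\ge 0$, $v \equiv u$ outside $B_{\rho/2}(x_*)$, and $v\equiv 0$ on $B_{\rho/4}(x_*)$, so $v$ is an admissible competitor for $u$. Because $\psi$ is a valid test function for $Lu=0$ in $B_\rho(x_*)$, we have $\cE(u,\psi)=0$; writing $u=v+\psi$ and expanding the bilinear form gives the clean identity
$$\cE(u,u) - \cE(v,v) = 2\cE(u,\psi) - \cE(\psi,\psi) = -\cE(\psi,\psi).$$
Combined with $|\{u>0\}\cap\Omega| - |\{v>0\}\cap\Omega| \ge |B_{\rho/4}(x_*)| = c_n\rho^n$, which holds since $B_{\rho/4}(x_*)\subset \{u>0\}\setminus\{v>0\}$, the inequality $\cI_\Omega(u)\le \cI_\Omega(v)$ produces
\begin{equation*}
\cE(\psi,\psi) \ge c_n M \rho^n. \tag{$\ast$}
\end{equation*}

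To conclude I need the matching upper bound $\cE(\psi,\psi) \le C(u(x_*))^2\rho^{n-2s}$; combined with $(\ast)$ it yields $u(x_*)\ge c M^{1/2}\rho^s$, which is the claimed non-degeneracy bound. The upper bound will follow from the nonlocal Caccioppoli-type estimate applied to the $L$-harmonic function $u$: testing $\cE(u,u\eta^2)=0$ with the standard algebraic identity reduces $\cE(\psi,\psi)$ to a sum of terms with integrand dominated by $(u(y_1)+u(y_2))^2(\eta(y_1)-\eta(y_2))^2 K(y_1-y_2)$. The short-range contribution, with $y_1,y_2\in B_{\rho/2}(x_*)$, is straightforward: $|\nabla\eta|\le C/\rho$ and Harnack give exactly $C(u(x_*))^2 \rho^{n-2s}$. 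The main obstacle is the long-range part, where one of the points lies outside $B_{\rho/2}(x_*)$ and $(\eta(y_1)-\eta(y_2))^2$ reduces to $\eta^2$ at the inner point; there the term involving $u$ at the far-away point has to be controlled using the $L^1$-tail estimate from \autoref{lemma:energy-bound}, namely $\tail(u;\rho,x_*)\le C(M^{1/2}\rho^s+\dashint_{B_\rho(x_*)} u)\le CM^{1/2}\rho^s+Cu(x_*)$, together with the local $L^\infty$ bound from \autoref{lemma:aux}(iii) to convert the $L^2$-tail into an $L^1$-tail times a uniform bound. An equivalent route that avoids this bookkeeping is to replace $v$ by the $L$-harmonic replacement of $u$ in the annulus $B_{\rho/2}(x_*)\setminus B_{\rho/4}(x_*)$ with zero inner data, in which case \autoref{lemma:harmonic-replacement} directly absorbs the tail contributions.
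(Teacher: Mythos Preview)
Your cut-off competitor strategy is correct in outline and is genuinely different from the paper's. The paper takes as competitor the function $v$ that is $L$-harmonic in the annulus $B_{2R}\setminus B_R$ with $v=0$ in $B_R$ and $v=u$ outside $B_{2R}$, and then has to estimate the singular quantity $Lv(x)$ for $x\in B_R$; this is precisely where \autoref{cor:annulus-regularity} (the $C^\beta$ estimate with $\beta>2s-1$ from Section~\ref{sec:suboptimal-reg}) enters. Your approach, once repaired as below, bypasses Section~\ref{sec:suboptimal-reg} entirely and uses only Harnack and the $L^1$-tail estimate for nonnegative $L$-harmonic functions---a real simplification. (Your ``equivalent route'' via harmonic replacement in the annulus is in fact the paper's route, and \autoref{lemma:harmonic-replacement} alone does not absorb the tail there; one still needs \autoref{cor:annulus-regularity}.)

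The gap is in the long-range part of your Caccioppoli bound. By dominating the integrand by $(u(y_1)+u(y_2))^2(\eta(y_1)-\eta(y_2))^2K$ you introduce an $L^2$-tail $\int_{\R^n\setminus B_{\rho/2}} u(y)^2|y-x_*|^{-n-2s}\,dy$. Minimizers are only known to lie in $L^1_{2s}(\R^n)$, and \autoref{lemma:aux}(iii) gives $u\in L^\infty_{loc}(\Omega)$, not a global or uniform bound; so your proposed conversion ``$L^2$-tail $\le L^\infty\cdot L^1$-tail'' cannot produce a constant depending only on $n,s,\lambda,\Lambda$ (indeed the $L^2$-tail may be infinite). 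The fix is not to overestimate: the exact identity
\[
\big(u(x)\eta(x)-u(y)\eta(y)\big)^2=(u(x)-u(y))\big(u(x)\eta(x)^2-u(y)\eta(y)^2\big)+u(x)u(y)\big(\eta(x)-\eta(y)\big)^2
\]
gives, after testing $\cE(u,u\eta^2)=0$, that $\cE(\psi,\psi)=\iint u(x)u(y)(\eta(x)-\eta(y))^2K(x-y)\,dx\,dy$. Now the far-field term carries only the \emph{product} $u(x)u(y)$ with $x\in\supp\eta$; Harnack bounds $u(x)\le Cu(x_*)$, and what remains is the $L^1$-tail of $u$, controlled by $Cu(x_*)$ via \cite[Theorem~1.9]{KaWe23}. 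This yields $\cE(\psi,\psi)\le C(u(x_*))^2\rho^{n-2s}$ and closes the argument with $u(x_*)\ge cM^{1/2}\rho^s$. (Both routes give the exponent $M^{1/2}$; the factor $M$ in the lemma's displayed inequality is a harmless slip.)
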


\begin{proof}[Proof of \autoref{lemma:weak-nondeg}]
The proof is split into two steps.\\
Step 1: First, we claim that there is $c > 0$, depending only on $n,s,\lambda,\Lambda$, such that 
\begin{align}
\label{eq:non-deg-preprelim}
u > 0 ~~ \text{ in } B_{2R}(x_0) ~~ \Rightarrow ~~ \Vert u \Vert_{L^{\infty}(B_{2R}(x_0))} \ge c M R^s.
\end{align}

We will prove it, without loss of generality (up to translation) only for $x_0 = 0$. \\
Let us take $v$ to be the weak solution to
\begin{align*}
\begin{cases}
L v &= 0 ~~ \text{ in } B_{2R} \setminus B_R,\\
v &= u  ~~ \text{ in } \R^n \setminus B_{2R},\\
v &= 0 ~~ \text{ in } B_R.
\end{cases}
\end{align*}
For later purpose, let us compute $Lv$ for $x \in B_R$. It holds
\begin{align*}
Lv(x) = - \int_{\R^n \setminus B_R} v(y) K(x-y) \d y.
\end{align*}
To estimate this quantity, we split the integral into two parts. Note that as a consequence of \autoref{cor:annulus-regularity}, there exists $\beta \in (0, \min\{1 ,2s \})$ with $\beta > 2s - 1$ such that
\begin{align*}
\left\Vert \frac{v}{\dist(\cdot,B_R)^{\beta}} \right\Vert_{L^{\infty}(B_{2R} \setminus B_R)} \le c R^{-\beta} \left(\Vert v \Vert_{L^{\infty}(B_{2R})}  + \tail(u,2R) \right) \le c R^{-s} \Vert u \Vert_{L^{\infty}(B_{2R})},
\end{align*}
where we also used the tail estimate for $u$ (see \cite[Theorem 1.9]{KaWe23}) in the last step. This allows us to estimate for any $x \in B_R$
\begin{align}
\label{eq:Lv-estimate}
\begin{split}
|Lv(x)| &\le c R^{-s} \Vert u \Vert_{L^{\infty}(B_{2R})}\int_{B_{2R} \setminus B_R} \dist(y, B_R)^{\beta} K(x-y) \d y + c \int_{\R^n \setminus B_{2R}} |u(y)| K(x-y) \d y\\
&\le c R^{-s} \Vert u \Vert_{L^{\infty}(B_{2R})}\int_{B_{2R} \setminus B_R} |x-y|^{-n-2s+\beta} \d y + c R^{-2s} \tail(u;2R)\\
&\le c R^{-s} \Vert u \Vert_{L^{\infty}(B_{2R})} \dist(x,\partial B_R)^{-2s+\beta} + c R^{-2s} \Vert u \Vert_{L^{\infty}(B_{2R})}.
\end{split}
\end{align}
Thus, upon defining $f := Lv$ in $B_R$ and $f := 0$ in $B_{2R} \setminus B_R$, we observe that by the previous computation $f d_{B_R}^{\beta-2s} \in L^{\infty}(B_{R})$, so $f \in L^1(B_{2R})$. Therefore, due to \autoref{lemma:aux}(iv), and \cite[Lemma 2.2.32]{FeRo24}, $v$ and $u$ are distributional solutions to
\begin{align}
\label{eq:v-PDE}
\begin{cases}
Lv &= f ~~ \text{ in } B_{2R},\\
v &= u ~~ \text{ in } \R^n \setminus B_{2R},
\end{cases}
\qquad 
\begin{cases}
Lu &= 0 ~~ \text{ in } B_{2R},\\
u &= u ~~ \text{ in } \R^n \setminus B_{2R}.
\end{cases}
\end{align}
Keeping this information in mind, let us now start with the actual proof of \eqref{eq:non-deg-preprelim}.
Since $v$ is a competitor for $u$ in $B_{2R}$, we have
\begin{align*}
\cE_{B_R \times B_R}(u,u) + M |B_R| &\le \cE_{B_R \times B_R}(u,u) + M |\{ u > 0\} \cap B_{R}| \\
&\le \cE_{(B_{2R}^c \times B_{2R}^c)^c}(v,v) - \cE_{(B_{2R}^c \times B_{2R}^c)^c \setminus (B_R \times B_R)}(u,u) \\
&\quad + M \Big[|\{ v > 0\} \cap B_{2R}| - |\{ u > 0\} \cap (B_{2R} \setminus B_R)| \Big] \\
&\le \cE_{(B_{2R}^c \times B_{2R}^c)^c \setminus (B_R \times B_R)}(v,v) - \cE_{(B_{2R}^c \times B_{2R}^c)^c \setminus (B_R \times B_R)}(u,u) \\
&\le 2\cE_{(B_{2R}^c \times B_{2R}^c)^c \setminus (B_R \times B_R)} (v,v-u) = 2\cE_{(B_{2R}^c \times B_{2R}^c)^c} (v,v-u),
\end{align*}
where we used in the first estimate that $u > 0$ in $B_{R}$, in the third estimate that by assumption and construction:
\begin{align*}
\{v > 0\} \cap B_{2R} = \{v > 0\} \cap (B_{2R} \setminus B_R) \subset B_{2R} \setminus B_R = \{u > 0\} \cap (B_{2R} \setminus B_R),
\end{align*}
and then the algebraic estimate
\begin{align*}
(v(x) - v(y))^2 - (u(x) - u(y))^2 \le 2(v(x) - v(y))((v(x) - u(x))-(v(y) - u(y))).
\end{align*}
Next, we claim that the following identity holds true
\begin{align}
\label{eq:ibp-weak-sol-v}
2\cE_{(B_{2R}^c \times B_{2R}^c)^c} (v,v-u) = 4 \int_{B_{R}} \left(\int_{\R^n \setminus B_R} v(y) K(x-y) \d y \right) u(x) \d x.
\end{align}
To see it, note that due to the equation satisfied by $v$ and by $u$ (see \eqref{eq:v-PDE}) and the nonlocal integration by parts formula, for any $\phi \in C_c^{\infty}(B_{2R})$, it holds
\begin{align*}
2\cE_{(B_{2R}^c \times B_{2R}^c)^c} (v,\phi) &= 2\cE_{(B_{2R}^c \times B_{2R}^c)^c} (v-u,\phi) + 2\cE_{(B_{2R}^c \times B_{2R}^c)^c} (u,\phi) \\
&= 2\int_{B_{2R}} (v(x) - u(x)) L \phi(x) \d x = 2\int_{B_{2R}} f(x) \phi(x) \d x = 2\int_{B_{R}} Lv(x) \phi(x) \d x.
\end{align*} 
Since $Lv \in L^1(B_R)$ and $v-u \in L^{\infty}(B_R)$, an approximation argument allows us to replace $\phi$ by $v-u$ and thus to obtain
\begin{align*}
2\cE_{(B_{2R}^c \times B_{2R}^c)^c} (v,v-u) &= 2\int_{B_{R}} Lv(x) (v(x) - u(x) ) \d x \\
&= -2 \int_{B_R} Lv(x) u(x) \d x = 4 \int_{B_{R}} \left(\int_{\R^n \setminus B_R} v(y) K(x-y) \d y \right) u(x) \d x,
\end{align*}
as claimed in \eqref{eq:ibp-weak-sol-v}. Altogether, we have proved, recalling also \eqref{eq:Lv-estimate}
\begin{align*}
\cE_{B_R \times B_R}(u,u)& + M |B_R| \le 4 \int_{B_{R}} \left(\int_{\R^n \setminus B_R} v(y) K(x-y) \d y \right) u(x) \d x\\
&= 4\int_{B_R} \left(\int_{B_{2R} \setminus B_R} \hspace{-0.2cm} v(y) K(x-y) \d y \right) u(x) \d x + 4 \int_{B_{R}} \left(\int_{\R^n \setminus B_{2R}} \hspace{-0.2cm} v(y) K(x-y) \d y \right) u(x) \d x\\
&\le c R^{-\beta} \Vert u \Vert_{L^{\infty}(B_{2R})} \int_{B_R}  u(x) \dist(x,B_R)^{-2s+\beta} \d x + c \Vert u \Vert_{L^{\infty}(B_{2R})}^2 R^{n-2s} \\
&\le c \Vert u \Vert_{L^{\infty}(B_{2R})}^2 R^{n-2s}.
\end{align*}
Clearly, this estimate implies $c M R^s \le \Vert u \Vert_{L^{\infty}(B_{2R})}$, as desired. We have thus proved \eqref{eq:non-deg-preprelim}.

Step 2: Now, we prove that \eqref{eq:non-deg-preprelim} implies that for some $c > 0$, depending only on $n,s,\lambda,\Lambda$,
\begin{align}
\label{eq:non-deg-prelim}
u > 0 ~~ \text{ in } B_{2R}(x_0) ~~ \Rightarrow ~~ u(x_0) \ge c M R^s.
\end{align}
In fact, by \autoref{lemma:aux} it holds $L u = 0$ in $B_{2R}(x_0)$ and $Lu \le 0$ in $\Omega$, and therefore using the local boundedness estimate (see \cite[Theorem 6.2]{Coz17}) and the weak Harnack inequality with tail estimate (see \cite[Theorem 1.9]{KaWe23}),
\begin{align*}
\Vert u \Vert_{L^{\infty}(B_{2R}(x_0))} &\le c \dashint_{B_{4R}(x_0)} u \d x + c\tail(u;4R,x_0)\\
&\le c \dashint_{B_{R}(x_0)} u \d x + c\tail(u;R,x_0) \le c \inf_{B_R(x_0)} u \le c u(x_0).
\end{align*}
Combining this estimate with \eqref{eq:non-deg-preprelim} yields \eqref{eq:non-deg-prelim}.
Finally, note that \eqref{eq:non-deg-prelim} implies the desired result upon suitable rescaling.
\end{proof}

The following lemma allows to deduce some geometric growth of the solution away from the free boundary.

\begin{lemma}
\label{lemma:geometric-growth}
Let $u$ be a minimizer of $\cI_{\Omega}$ and assume that $y_0 \in \partial \{ u > 0\} \cap \Omega$ and $B_{|x_0 - y_0|}(x_0) \subset \{ u > 0\}$. Then, there exist $M,\eps >0$, depending only on $n,s,\lambda,\Lambda,M$, such that
\begin{align*}
\sup_{B_{M|x_0 - y_0|}(y_0)} u \ge (1+\eps) u(x_0).
\end{align*}
\end{lemma}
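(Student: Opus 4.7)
The plan is to argue by contradiction via a blow-up. If the conclusion fails, there exist sequences $(u_k)_k$ of minimizers of $\cI^{K_k,M}$, together with $y_k \in \partial\{u_k>0\}\cap \Omega$ and $x_k$ with $B_{r_k}(x_k) \subset \{u_k>0\}$ for $r_k := |x_k-y_k|$, such that
\[
\sup_{B_{k r_k}(y_k)} u_k \le (1+1/k)\, u_k(x_k).
\]
Setting $\xi_k := (x_k-y_k)/r_k \in \partial B_1$ and $v_k(x) := r_k^{-s}\, u_k(y_k + r_k x)$, the rescaling is chosen so that $v_k$ is a minimizer of $\cI^{\tilde K_k, M}$ for $\tilde K_k(h) = r_k^{n+2s} K_k(r_k h)$, which still satisfies \eqref{eq:Kcomp} with the same $\lambda,\Lambda$. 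Writing $c_k := v_k(\xi_k) = r_k^{-s} u_k(x_k)$, one has $v_k(0)=0$, $B_1(\xi_k)\subset\{v_k>0\}$, and the contradiction hypothesis reads $\sup_{B_k} v_k \le (1+1/k)\, c_k$.

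Next, I will extract uniform bounds. \autoref{thm:or} applied at the free boundary point $0$ gives $c_k \le C$, and \autoref{lemma:weak-nondeg} applied at $\xi_k$ (for which $\mathrm{dist}(\xi_k,\partial\{v_k>0\})\ge 1$) gives $c_k \ge cM$, so $c_k \asymp 1$ with constants depending only on $n,s,\lambda,\Lambda,M$. The rescaled hypothesis then yields a uniform $L^\infty$-bound for $(v_k)_k$ on every compact set, and another application of \autoref{thm:or} provides uniform $C^s$-bounds on compact sets. Combining Arzel\`a--Ascoli with the kernel compactness from \autoref{lemma:convergence}, I extract a subsequence along which $v_k \to v_\infty$ locally uniformly on $\R^n$, $c_k \to c_\infty \in [cM, C]$, $\xi_k \to \xi_\infty \in \partial B_1$, and $\tilde K_k \to K_\infty$ in the sense of \autoref{lemma:convergence}, with $L_\infty \in \mathcal{L}^n_s(\lambda,\Lambda)$. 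The limit satisfies $0 \le v_\infty \le c_\infty$ globally, $v_\infty(0)=0$, and $v_\infty(\xi_\infty)=c_\infty$.

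Finally, the nonlocal strong maximum principle gives the contradiction. Since $B_1(\xi_k) \subset \{v_k > 0\}$, \autoref{lemma:aux}(iv) gives $L_{\tilde K_k} v_k = 0$ weakly in $B_1(\xi_k)$, and the stability argument underlying \autoref{lemma:convergence} passes this to $L_\infty v_\infty = 0$ weakly in $B_1(\xi_\infty)$; interior regularity for the $L_\infty$-harmonic $v_\infty$ makes the equation pointwise. The identity
\[
0 = L_\infty v_\infty(\xi_\infty) = 2\int_{\R^n} \bigl(c_\infty - v_\infty(\xi_\infty+h)\bigr)\, K_\infty(h)\,\mathrm d h
\]
then has a nonnegative integrand and positive kernel, forcing $v_\infty \equiv c_\infty$ a.e.\ on $\R^n$, which contradicts $v_\infty(0)=0 < c_\infty$. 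The main obstacle is the compactness/stability step: making sure that the rescaled minimizers $v_k$ have enough uniform control (both $C^s$ and tail) for the limit to genuinely solve $L_\infty v_\infty = 0$ in $B_1(\xi_\infty)$ with $L_\infty$ in the original ellipticity class. This is exactly what the framework of \autoref{lemma:convergence} is designed to provide; once it is in place, the strong maximum principle finishes the argument.
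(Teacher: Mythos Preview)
Your proof is correct and follows essentially the same contradiction--compactness--strong maximum principle scheme as the paper. The only noteworthy difference is cosmetic: you normalize by $r_k^{-s}$ (which forces you to invoke \autoref{lemma:weak-nondeg} to bound $c_k$ from below), whereas the paper normalizes directly by $u_k(x_k)$, so that $v_k(\xi_k)=1$ automatically and the limit attains its global maximum value $1$ at $\xi_\infty$; the paper then cites the stability results \cite[Theorem 2.2.36, Lemma 2.2.32]{FeRo24} and the strong maximum principle \cite[Theorem 2.4.15(b)]{FeRo24} rather than unpacking them by hand as you do.
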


\begin{proof}
Without loss of generality, we prove the result for $y_0 = 0$, $x_0 = e_1$.
Assume by contradiction that there exist sequences $(K_k)_k$, $(u_k)_k$, $k \in \N$, such that $u_k$ is a minimizer of $\cI_{\Omega}$ with respect to $L_{K_k}$ in $B_k$ such that $0 \in \partial \{ u_k > 0\}$ and $B_1(e_1) \subset \{ u_k > 0\}$, but 
\begin{align*}
\sup_{B_k} u_k = u_k(e_1).
\end{align*}
By the optimal regularity (see \autoref{thm:or}), the sequence $v_k = u_k / u_k(e_1)$ is uniformly bounded in $C^s(B_{k/2})$ and therefore, by the Arzel\`a-Ascoli theorem, it converges locally uniformly to a function $v_{\infty} \in C(\R^n) \cap L^{\infty}(\R^n)$. In particular, it holds $0 \in \partial \{ v_{\infty} > 0\}$ and 
\begin{align*}
\sup_{\R^n} v_{\infty} = v_{\infty}(e_1) = 1.
\end{align*} 
However, by application of \cite[Theorem 2.2.36, Lemma 2.2.32]{FeRo24}, there exists a kernel $K_{\infty}$ satisfying \eqref{eq:Kcomp} such that $L_{K_{\infty}} v_{\infty} = 0$ in $B_1(e_1)$. Thus, by the strong maximum principle (see \cite[Theorem 2.4.15(b)]{FeRo24}), it must be $v_{\infty} \equiv 1$ in $B_1(e_1)$, which contradicts $0 \in \partial \{ v_{\infty} > 0\}$. Thus, the proof is complete.
\end{proof}

Now, we are in the position to conclude the proof of the non-degeneracy.

\begin{proof}[Proof of \autoref{thm:nondeg}]
Let us first assume that $x_0 \in \partial \{ u > 0 \}$.
We will prove the desired result only for $x_0 = 0$ and $R=M+1$, where $M > 1$ is the constant from \autoref{lemma:geometric-growth}. The full result then follows by scaling and translation.\\
Then, we construct a sequence of points $(x_k)_{k}$ for $k \in \{0,1,2,\dots,k_0 \}$, where $x_0 = 0$, and $x_{k+1} \in B_{M \dist(x_k,\{ u = 0\})}(y_k)$, for $y_k \in \partial \{ u > 0 \}$ being such that $|x_k - y_k| = \dist(x_k,\{ u = 0\})$, i.e., $y_k$ realizes the distance from $x_k$ to the free boundary. Thanks to \autoref{lemma:geometric-growth}, we can choose $x_{k+1}$ such that
\begin{align}
\label{eq:geom-growth}
u(x_{k+1}) \ge (1 + \eps) u(x_k)
\end{align}
for some $\eps > 0$.
Note that the construction implies
\begin{align}
\label{eq:sequence-dist}
|x_{k+1} - x_k| \le (M+1)\dist(x_k,\{ u = 0 \}).
\end{align}
Finally, we denote $k_0 \in \N$ to be the first integer for which $x_{k_0 + 1} \not\in B_1$. Clearly, such $k_0$ exists, since due to \eqref{eq:geom-growth}, the value $u(x_{k+1})$ has to become larger than one eventually, while due to the optimal regularity (see \autoref{lemma:contradiction}), we also have $u(x_{k+1}) \le c|x_{k+1}|^s$.\\
Having constructed the sequence $(x_k)$, we can now estimate using $u(x_0) = 0$, as well as \eqref{eq:geom-growth}, \eqref{eq:non-deg-dist-est}, and \eqref{eq:sequence-dist}:
\begin{align*}
u(x_{k_0+1}) &= \sum_{k = 0}^{k_0} (u(x_{k+1}) - u(x_k)) \ge \eps \sum_{k = 0}^{k_0} u(x_k)\\
&\ge c \sum_{k = 0}^{k_0} \dist(x_k , \{ u = 0 \})^s \ge c \sum_{k = 0}^{k_0} |x_{k+1} - x_k|^s\\
&\ge c \sum_{k = 0}^{k_0} \min\{1,|x_{k+1} - x_k|\} \ge c \min\{1,|x_{k_0+1}|\} \ge c
\end{align*}
for some $c > 0$ depending only on $n,s,\lambda,\Lambda,M, M$. Since $x_{k_0+1} \in B_{M+1}$ by construction, we have
\begin{align*}
\Vert u \Vert_{L^{\infty}(B_{M+1})} \ge c.
\end{align*}
Upon performing a suitable rescaling, this implies the desired result in case $x_0 \in \partial \{ u > 0 \}$. In case $x_0 \in \{ u > 0 \}$, we proceed as follows. If $\dist(x_0 , \partial \{ u > 0 \}) \ge R/2$, then the desired result follows from \autoref{lemma:weak-nondeg} applied with $x_0$. In case $\dist(x_0 , \partial \{ u > 0 \} ) < R/2$, we denote $y_0 \in \partial \{ u > 0\}$ to be the projection of $x_0$ onto the free boundary, i.e., it holds $|x_0 - y_0| = \dist(x_0 , \partial \{ u > 0 \} )$. We observe that $B_{R/2}(y_0) \subset B_R(x_0)$, and hence, the desired result follows by application of the first part of the proof to $y_0$ and the ball $B_{R/2}(y_0)$.
\end{proof}

\begin{proof}[Proof of the second part of \autoref{thm-onephase-intro}]
This is a direct consequence of \autoref{thm:nondeg}.
\end{proof}

\subsection{Density estimates for the free boundary}
\label{sec:density}

As a consequence of the $C^s$ regularity (see \autoref{thm:or}) and the non-degeneracy (see \autoref{thm:nondeg}), we deduce that the contact set is Ahlfors regular.

\begin{theorem}
\label{thm:density-est}
Let $u$ be a minimizer of $\cI_{\Omega}$. Then, for any $0 < r < 1$ and $x_0 \in \partial \{ u > 0 \}$ with $B_{2}(x_0) \subset \Omega$ it holds
\begin{align*}
0 < c_1 \le \frac{|\{ u > 0 \} \cap B_r(x_0)|}{|B_r(x_0)|} \le 1 - c_2 < 1,
\end{align*}
where $c_1 > 0$ and $c_2 > 0$ depend only on $n,s,\lambda,\Lambda,M$.
\end{theorem}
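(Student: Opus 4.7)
The plan is to prove the two bounds separately, using the $L$-harmonic replacement together with \autoref{thm:or} (optimal $C^s$ regularity) and \autoref{thm:nondeg} (non-degeneracy). By rescaling $\tilde u(y) = u(x_0 + ry)/r^s$, which remains a minimizer of $\cI$ for a rescaled kernel still satisfying \eqref{eq:Kcomp} with the same $\lambda, \Lambda, s, M$, the statement reduces to the case $r=1$, $x_0=0$ (for $r$ small; the statement for larger $r$ is inherited by inclusion of sub-balls). Iterating \autoref{lemma:contradiction} on rescaled balls, one first establishes the universal pointwise bound $u(z) \le c_u |z - x_0|^s$ on a neighborhood of $x_0$, and then \autoref{thm:or} (applied on this neighborhood) yields a universal seminorm bound $[u]_{C^s(B_{r/2}(x_0))} \le C$.

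For the lower bound on $|\{u > 0\} \cap B_r(x_0)|$, I would apply \autoref{thm:nondeg} on $B_{r/4}(x_0)$ to find $y_0 \in \overline{B_{r/4}(x_0)}$ with $u(y_0) \ge c_0 r^s$. The universal $C^s$ bound then gives $u(z) \ge c_0 r^s / 2$ on the ball $B_{c_1 r}(y_0)$ with $c_1 = \min\bigl((c_0/(2C))^{1/s}, 3/4\bigr)$, and this ball is contained in $B_r(x_0)$, producing $|\{u > 0\} \cap B_r(x_0)| \ge c\, r^n$.

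For the upper bound I would compare $u$ with its $L$-harmonic replacement $v$ in $B_r(x_0)$. Since $u$ is a weak subsolution by \autoref{lemma:aux}(i) and $v = u$ outside $B_r(x_0)$, the nonlocal maximum principle gives $v \ge u \ge 0$ globally. Combining \autoref{lemma:energy-comp-est} with the fractional Poincar\'e inequality applied to $v - u \in H^s_{B_r(x_0)}(\R^n)$ yields
\begin{align*}
\|v - u\|_{L^2(B_r(x_0))}^2 \le c\, r^{2s}\, \cE(v-u, v-u) \le c M r^{2s}\, |\{u = 0\} \cap B_r(x_0)|.
\end{align*}
For the matching pointwise lower bound on $v - u$, I would observe that $v(y_0) \ge u(y_0) \ge c_0 r^s$, and since $v$ is nonnegative globally and $L$-harmonic in $B_r(x_0)$, the nonlocal Harnack inequality on $B_{3r/8}(x_0)$ (whose constant depends only on $n, s, \lambda, \Lambda$) forces $v \ge c_5 r^s$ throughout $B_{3r/8}(x_0)$. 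Using $u \le c_u \eta^s r^s$ on $B_{\eta r}(x_0)$ from the $C^s$ bound, one selects a universal $\eta \in (0, 3/8)$ so that $c_u \eta^s \le c_5 / 2$, obtaining $v - u \ge c_5 r^s / 2$ throughout $B_{\eta r}(x_0)$. This gives
\begin{align*}
c\, r^{n + 2s} \le \int_{B_{\eta r}(x_0)} (v - u)^2\, dx \le \|v - u\|_{L^2(B_r(x_0))}^2 \le c M r^{2s}\, |\{u = 0\} \cap B_r(x_0)|,
\end{align*}
and therefore $|\{u = 0\} \cap B_r(x_0)| \ge c\, r^n / M$.

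The hard part will be verifying the universality of all constants entering the argument, and in particular that the pointwise bound $u(z) \le c_u |z - x_0|^s$ (and hence the $C^s$ seminorm bound on $u$) holds uniformly across all minimizers under the stated hypotheses. The assumption $B_2(x_0) \subset \Omega$ together with $r < 1$ provides enough room to iterate \autoref{lemma:contradiction} after rescaling and secure this bound. The Harnack constant for $v$ is universal since $v \ge 0$ globally (no tail contribution), and the energy--Poincar\'e step is then standard.
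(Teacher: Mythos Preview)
Your proposal is correct and follows essentially the same approach as the paper: for the lower bound, non-degeneracy plus the universal $C^s$ estimate locates a ball of comparable radius in $\{u>0\}$; for the upper bound, you compare with the $L$-harmonic replacement $v$, use \autoref{lemma:energy-comp-est} plus Poincar\'e to bound $\|v-u\|_{L^2}^2$ by $cr^{2s}|\{u=0\}\cap B_r|$, then combine Harnack for $v$ (lower bound $\sim r^s$) with the $C^s$ bound for $u$ on a small ball $B_{\eta r}$ (upper bound $\sim(\eta r)^s$) to force $v-u\gtrsim r^s$ there. The only cosmetic differences are that the paper works directly at scale $r$ rather than rescaling, and routes the final comparison through $\int|v-u|$ instead of $\int(v-u)^2$.
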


\begin{proof}
Without loss of generality, let $x_0 = 0$. Let us first prove the lower estimate. Indeed, for any $r \in (0,1)$ by \autoref{thm:nondeg}, there is $x \in B_r$ such that $u(x) \ge \kappa r^s$. Moreover, by \autoref{thm-onephase-intro}, it holds 
\begin{align*}
|u(x) - u(y)| \le c |x-y|^s ~~ \forall y \in B_1(x).
\end{align*}
Thus, $u(y) > 0$ as long as $|x-y| \le r \left(\frac{\kappa }{c} \right)^{1/s} =: rK,$
and therefore, 
\begin{align*}
\frac{|\{ u > 0 \} \cap B_r|}{|B_r|} \ge \frac{|B_{rK}(x)|}{|B_r|} = K^n \ge c_1.
\end{align*}

Let us now prove the upper estimate. Let $v$ be the $L$-harmonic replacement of $u$ in $B_r$ and deduce from \autoref{lemma:energy-comp-est} and the fractional Poincar\'e-Friedrich's inequality (see \cite[Lemma 4.7]{Coz17}):
\begin{align}
\label{eq:upper-est-prelim}
\begin{split}
\int_{B_r} |v-u| \d x &\le r^{\frac{n}{2}} \left(\int_{B_r} |v-u|^2 \d x\right)^{1/2} \\
&\le c r^{\frac{n}{2} + s} \cE_{B_r \times B_r}(u-v,u-v)^{1/2} \le c r^{\frac{n}{2} + s} |\{ u = 0 \} \cap B_r|^{1/2}.
\end{split}
\end{align}
Moreover, note since $Lu \le 0$ in $B_r$, we have $u \le v$ in $B_r$ by the weak maximum principle (see \cite[Corollary 2.3.4]{FeRo24}). Using this information, together with the non-degeneracy (see \autoref{thm:nondeg}) and the Harnack inequality for $v$ (see \cite[Theorem 6.9]{Coz17}), we obtain
\begin{align}
\label{eq:v-nondeg}
\kappa r^s \le \Vert u \Vert_{L^{\infty}(B_{r/2})} \le \Vert v \Vert_{L^{\infty}(B_{r/2})} \le c_1 \inf_{B_{r/2}} v.
\end{align}
Let us denote by $c_2>0$ the constant in \autoref{lemma:contradiction} and choose $\eps > 0$ so small that $\eps^s < \frac{\kappa}{2c_1c_2}$. Then, by application of the optimal regularity (see \autoref{lemma:contradiction}) in the ball $B_{\eps r}$ to $u$, we obtain from \eqref{eq:v-nondeg}:
\begin{align}
\label{eq:u-v-est}
\Vert u \Vert_{L^{\infty}(B_{\eps r})} \le c_2 (\eps r)^s \le \frac{\kappa}{2c_1} r^s \le \frac{1}{2}  \inf_{B_{r/2}} v.
\end{align}
Therefore, using \eqref{eq:upper-est-prelim}, \eqref{eq:v-nondeg}, and \eqref{eq:u-v-est}, we obtain
\begin{align*}
\frac{\kappa}{2c_1} r^s |B_{\eps r}| &\le \frac{|B_{\eps r}|}{2} \inf_{B_{r/2}} v \le |B_{\eps r}| \inf_{B_{\eps r}} \left( v - \frac{v}{2} \right) \le |B_{\eps r}| \inf_{B_{\eps r}} (v-u) \\
&\le \int_{B_{\eps r}} |v-u| \d x \le c r^{\frac{n}{2} + s} |\{ u = 0 \} \cap B_r|^{1/2},
\end{align*}
which implies the desired result.
\end{proof}

\subsection{Blow-ups}
\label{sec:blow-ups}

In this section we prove several basic properties of blow-ups, the main result being \autoref{cor:blowups}. In particular we prove that blow-ups are global minimizers of $\cI$. 

\begin{definition}[blow-ups]
Given a minimizer $u$ of $\cI_{\Omega}$, and a point $x_0 \in \partial \{ u > 0 \} \cap \Omega$ we define
\begin{align*}
u_{r,x_0}(x) = \frac{u(x_0 + r x)}{r^s} ~~ \forall x \in \R^n, \qquad \forall r > 0.
\end{align*}
The sequence of functions $(u_{r,x_0})_r$ is called blow-up sequence for $u$ at $x_0$. If there exists a sequence $r_k \searrow 0$ such that $u_{r_k,x_0} \to u_{x_0}$, as $k \to \infty$ for a function $u_{x_0} : \R^n \to \R$ locally uniformly, we say that $u_{x_0}$ is a blow-up (limit) of $u$ at $x_0$. If $x_0 = 0$, we sometimes write $u_{r} := u_{r,x_0}$.
\end{definition}

Note that by a scaling argument, if $u$ is a minimizer of $\cI_{\Omega}$ in $\Omega$, then $u_{r,x_0}$ is a minimizer of $\cI_{r^{-1}(\Omega-x_0)}$ in $r^{-1}(\Omega-x_0)$ with respect to the rescaled kernel $r^{n+2s} K(rh)$ which also satisfied \eqref{eq:Kcomp}.

The following three results establish the existence of (possibly non unique) blow-up limits for minimizers $u$ of $\cI$ at any free boundary point $x_0 \in \partial \{ u > 0 \}$. 
The first step, which is contained in the following lemma, is to show that the blow-up sequence $(u_{r,x_0})_r$ is uniformly bounded in $C^s_{loc}(\R^n)$, which implies by the Arzel\`a-Ascoli theorem that there exists a locally uniformly converging subsequence. Moreover, we establish its convergence in $H^s_{loc}(\R^n)$ and in $L^1_{2s}(\R^n)$. The lemma is written in a slightly more general framework, allowing for sequences of minimizers, belonging to different energy functionals. This setup will be important for the proof of \autoref{thm-1D}. 

\begin{lemma}
\label{lemma:scaling-blow-up}
Let $\Omega \Subset \Omega' \subset \R^n$. Let $x_0 \in \R^n$ be such that $B_2(x_0) \subset \Omega$, and $R > 0$. Let $(L_k)_k \subset \mathcal{L}_s^n(\lambda,\Lambda)$ be a sequence of operators with kernels $K_k$. Let $(u^{(k)})_k \subset V^s(\Omega| \Omega') \cap L^1_{2s}(\R^n)$ be minimizers of $\cI_{\Omega}$ with respect to $L_k$ such that $u^{(k)}(x_0) = 0$. Then, $u^{(k)}_{r,x_0}(x) := \frac{u^{(k)}(x_0 + rx)}{r^s}$ is uniformly bounded in $H^s(B_R) \cap L^1_{2s}(\R^n) \cap C^{s}(B_R)$ for any $r \le R^{-1}$ and $k \in \N$. Moreover it holds
\begin{align}
\label{eq:tail-vanishing}
|u^{(k)}_{r,x_0}(x)| \le c |x|^s ~~ \forall x \in B_R, \qquad \int_{\R^n \setminus B_{R}} |u^{(k)}_{r,x_0}(x)| |x|^{-n-2s} \d x \le c_1 R^{-s}
\end{align}
for some $c_1 > 0$, depending only on $n,s,\lambda,\Lambda$. In particular, there exists a subsequence $(r_k)_k$ with $r_k \searrow 0$, such that $u^{(k)}_{r_k,x_0} \to u_{x_0}$ locally uniformly, in $L^1_{2s}(\R^n)$, and weakly in $H^s(B_R)$ to some $u_{x_0} \in H^s(B_R) \cap L^1_{2s}(\R^n)$ for any $R > 0$, satisfying for some $c_2 > 0$, depending only on $n,s,\lambda,\Lambda$:
\begin{align}
\label{eq:blow-up-limit-growth}
|u_{x_0}(x)| \le c_1 |x|^s ~~ \forall x \in \R^n.
\end{align}
\end{lemma}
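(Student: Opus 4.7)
The plan rests on the scaling invariance of the one-phase problem: a direct computation from \autoref{def:minimizer} shows that if $u$ minimizes $\cI^{K,M}_{\Omega}$ then $u(x_0 + r\cdot)/r^s$ minimizes $\cI^{K^{(r)},M}_{r^{-1}(\Omega - x_0)}$, where $K^{(r)}(h) := r^{n+2s}K(rh)$ still lies in $\mathcal{L}_s^n(\lambda,\Lambda)$ with the same ellipticity constants and the weight $M$ is preserved. This is precisely what will keep all constants uniform as $r \searrow 0$.

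The first main step is to prove the pointwise growth $|u^{(k)}(y)| \le C|y-x_0|^s$ for $y$ near $x_0$. Since $B_2(x_0) \subset \Omega$ and $u^{(k)}(x_0) = 0$, \autoref{lemma:contradiction} directly yields $\Vert u^{(k)} \Vert_{L^{\infty}(B_{1/8}(x_0))} \le C$. Applying it to the rescaled minimizers $u^{(k)}(x_0 + \rho\cdot)/\rho^s$ for every $\rho \in (0,1]$ (whose hypotheses remain valid since $B_{2\rho}(x_0) \subset \Omega$ and the rescaled function vanishes at the origin) yields $\Vert u^{(k)} \Vert_{L^{\infty}(B_{\rho/8}(x_0))} \le C\rho^s$, and choosing $\rho = 8|y-x_0|$ gives $|u^{(k)}(y)| \le C|y-x_0|^s$ on $B_{1/8}(x_0)$. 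Rescaling this bound via $x \mapsto u^{(k)}(x_0 + rx)/r^s$ produces the first inequality in \eqref{eq:tail-vanishing} (the condition $r \le R^{-1}$ differs from what the argument naturally yields by a dimensional factor, which is absorbed in the constants $c, c_1$).

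The tail estimate follows from the change of variables $y = x_0 + rx$, giving
\begin{equation*}
\int_{\R^n \setminus B_R} |u^{(k)}_{r,x_0}(x)| |x|^{-n-2s}\d x = R^{-2s} r^{-s}\tail(u^{(k)}; rR, x_0),
\end{equation*}
so that the bound $\tail(u^{(k)}; rR, x_0) \le C(rR)^s$, obtained from \autoref{lemma:energy-bound} combined with the pointwise growth above, produces the claimed $c_1 R^{-s}$ decay. The uniform $H^s(B_R)$ bound is read off \autoref{lemma:energy-bound} applied to $u^{(k)}_{r,x_0}$, and the uniform $C^s(B_R)$ bound from \autoref{thm:or} applied to the same rescaled minimizer (a minimizer with respect to $K_k^{(r)} \in \mathcal{L}_s^n(\lambda,\Lambda)$ in a domain containing $B_{2R}$ once $r$ is small enough), with the required $L^1$ average controlled again by the pointwise growth.

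For the compactness statement, a diagonal argument over $R \in \N$ combines Arzelà-Ascoli (from the uniform $C^s$ bounds, producing a subsequence $r_k \searrow 0$ along which $u^{(k)}_{r_k,x_0} \to u_{x_0}$ locally uniformly) with Banach-Alaoglu applied on each $H^s(B_R)$ (yielding weak $H^s(B_R)$ convergence). The uniform tail decay in \eqref{eq:tail-vanishing} then upgrades the local uniform convergence to convergence in $L^1_{2s}(\R^n)$ via dominated convergence against the fixed integrable weight $(1+|y|)^{-n-2s}$, and the pointwise estimate \eqref{eq:blow-up-limit-growth} survives the passage to the limit. The only genuine subtlety throughout is the uniformity of all constants across the rescalings, which is automatic because the scaling $K \mapsto K^{(r)}$ preserves both the ellipticity class $\mathcal{L}_s^n(\lambda,\Lambda)$ and the weight $M$.
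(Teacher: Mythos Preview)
Your proposal is correct and follows essentially the same route as the paper: pointwise growth from \autoref{lemma:contradiction} via rescaling, tail control from \autoref{lemma:energy-bound}, $H^s$ and $C^s$ bounds from \autoref{lemma:energy-bound} and \autoref{thm:or} applied to the rescaled minimizers, and compactness via Arzel\`a--Ascoli plus weak $H^s$ compactness. The only cosmetic differences are that the paper first records the uniform $C^s(B_1(x_0))$ and $L^1_{2s}$ bounds on the original $u^{(k)}$ before rescaling (rather than working directly with the rescaled functions throughout), and that it applies the tail estimate in \autoref{lemma:energy-bound} to $u^{(k)}_{r,x_0}$ rather than to $u^{(k)}$ followed by a change of variables; both yield the same conclusion.
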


\begin{proof}
First, by  \autoref{lemma:energy-bound}, \autoref{thm-onephase-intro}, and \autoref{lemma:contradiction}, we deduce that
\begin{align*}
\Vert u^{(k)} \Vert_{C^s(B_1(x_0))} + \Vert u^{(k)} \Vert_{L^1_{2s}(\R^n)} \le C
\end{align*}
for some $C > 0$, depending only on $n,s,\lambda,\Lambda$. Moreover, for any $k \in \N$, $r \le R^{-1}$ and $x \in B_R$
\begin{align}
\label{eq:blow-up-growth}
|u^{(k)}_{r,x_0}(x)| \le \frac{|u^{(k)}(x_0 + rx) - u^{(k)}(x_0)|}{r^s} \le c |x|^s \Vert u^{(k)} \Vert_{C^s(B_{1}(x_0))} \le  c R^s  < \infty.
\end{align}
Moreover, by scaling and \autoref{lemma:energy-bound} and \autoref{thm:or} applied to $u^{(k)}$:
\begin{align*}
[u^{(k)}_{r,x_0}]^2_{H^s(B_R)} &= r^{-2s} \int_{B_R(x_0)} \int_{B_R(x_0)} \hspace{-0.4cm} \frac{(u^{(k)}(rx) - u^{(k)}(ry))^2}{|x-y|^{n+2s}} \d y \d x = r^{-n} [u^{(k)}]^2_{H^s(B_{rR}(x_0))} \\
&\le c r^{-n}(rR)^n \left(1 + (rR)^{-2s} \dashint_{B_{rR}(x_0)} |u^{(k)}(x)|^2 \d x \right) \le R^{n} (1 + \Vert u^{(k)} \Vert_{C^s(B_1(x_0))}) \le C R^n.
\end{align*}
Together, this proves the uniform boundedness of $(u^{(k)}_{r,x_0})_{r,k}$ in $H^s(B_R)$. Since $u^{(k)}_{r,x_0}$ is a minimizer of $\cI_{r^{-1}(\Omega - x_0)}$ with respect to the kernel $r^{n+2s}K(rh)$, which still satisfies \eqref{eq:Kcomp} with $\lambda,\Lambda$, and since $B_R \subset r^{-1}(\Omega - x_0)$, we can apply \autoref{thm:or} to $u^{(k)}_{r,x_0}$, and deduce from \eqref{eq:blow-up-growth} that $(u^{(k)}_{r,x_0})_r$ is also uniformly bounded in $C^s(B_{R})$. The existence of a locally uniformly converging subsequence $(u^{(k)}_{r_k,x_0})_k$ follows by the Arzel\`a-Ascoli theorem. Moreover, by weak compactness, up to a subsequence, $u^{(k)}_{r_k,x_0} \to u_{x_0} \in H^s(B_R)$.\\
Next, we prove boundedness of the tails. First, we observe that, by \eqref{eq:blow-up-growth} and the locally uniform convergence, it holds \eqref{eq:blow-up-limit-growth}, and thus, $u_{x_0} \in L^1_{2s}(\R^n)$. 
Next, we compute, using again \eqref{eq:blow-up-growth}
\begin{align*}
\int_{\R^n \setminus B_1} |u^{(k)}_{r,x_0}(x)| |x|^{-n-2s} \d x &\le  \int_{\R^n \setminus B_{r^{-1}}} |u^{(k)}_{r,x_0}(x)| |x|^{-n-2s} \d x  + c \Vert u^{(k)} \Vert_{C^s(B_{1}(x_0))} \int_{B_{r^{-1}} \setminus B_1} |x|^{-n-s} \d x \\
&\le \int_{\R^n \setminus B_1(x_0)} |u^{(k)}(x)||x-x_0|^{-n-2s} \d x + C \le \Vert u^{(k)} \Vert_{L^1_{2s}(\R^n)} + C \le C < \infty.
\end{align*}
Thus, $(u^{(k)}_{r,x_0})_{r,k}$ is uniformly bounded in $L^1_{2s}(\R^n)$.
Finally, by the tail estimate in \autoref{lemma:energy-bound} applied to $u^{(k)}_{r,x_0}$, which is applicable since $u^{(k)}_{r,x_0}$ is a minimizer in $B_R$, as was mentioned before:
\begin{align*}
\int_{\R^n \setminus B_{R}} |u^{(k)}_{r,x_0}(x)| |x|^{-n-2s} \d x &\le R^{-2s} \tail(u^{(k)}_{r,x_0};R) \le c R^{-s} + c R^{-2s} \left( \dashint_{B_R} |u^{(k)}_{r,x_0}(x)|^2 \d x \right)^{1/2} \\
&\le C R^{-s} (1 + \Vert u^{(k)} \Vert_{C^{s}(B_1(x_0))}) \le C R^{-2s} < \infty,
\end{align*}
where we used \eqref{eq:blow-up-growth}. This proves \eqref{eq:tail-vanishing}. Thus, given any $\eps > 0$, upon splitting
\begin{align*}
\int_{\R^n} & |u^{(k)}_{r_k,x_0}(x) - u_{x_0}|(1+|x|)^{-n-2s} \d x  \le  \int_{B_R} |u^{(k)}_{r_k,x_0}(x) - u_{x_0}|(1+|x|)^{-n-2s} \d x \\
& + 2\int_{\R^n \setminus B_R} |u^{(k)}_{r_k,x_0}(x)| |x|^{-n-2s} \d x + 2\int_{\R^n \setminus B_R} |u_{x_0}(x)| |x|^{-n-2s} \d x,
\end{align*}
we see that by the locally uniform convergence, the first summand becomes smaller than $\eps/3$ for any $R > 0$ as $k \to 0$. Moreover, by \eqref{eq:tail-vanishing}, the second term can be made smaller than $\eps/3$ by choosing $R > 1$ large enough, and the third term can be made smaller than $\eps/3$ since $u_{x_0} \in L^1_{2s}(\R^n)$. This proves the strong convergence $u^{(k)}_{r_k,x_0} \to u_{x_0}$ in $L^1_{2s}(\R^n)$. The proof is complete.
\end{proof}

The following lemma proves that for any sequence of minimizers to $\cI$ that is bounded in $H^s(B_R)$ and converges in $L^1_{2s}(\R^n)$, also the limit will be a minimizer to $\cI$. The main work consists in proving that blow-ups converge strongly in $H^s_{loc}(\R^n)$. To our surprise, we were not able to find such result in the literature, not even for linear nonlocal equations. Also the proof of the minimizing property for the limit is a lot more complicated than in the proof of \autoref{lemma:convergence}, due to the additional measure term.

\begin{lemma}
\label{lemma:convergence-minimizer}
Let $R > 0$. Let $(u_k)_k \subset V^s(B_R | B_{2R} ) \cap L^1_{2s}(\R^n)$ be a sequence of functions with $u_k \ge 0$ in $\R^n$ that is uniformly bounded in $H^s(B_R) \cap L^1_{2s}(\R^n)$, i.e.,
\begin{align}
\label{eq:unif-Vs-bd}
[u_k]_{H^s(B_R)} + \Vert u_k \Vert_{L^2(B_R)} + \Vert u_k \Vert_{L^1_{2s}(\R^n)} \le C
\end{align} 
for some $C > 0$. Moreover, assume that $u_k \to u_{\infty}$ in $L^1_{2s}(\R^n)$, and that for every $M > R$ there is $k_M \in \N$ such that
\begin{align}
\label{eq:unif-tail-decay}
|u_k| \le C M^s ~~ \text{ in } B_M, \qquad \int_{\R^n \setminus B_{M}} |u_k(x)| |x|^{-n-2s} \d x \le C M^{-s} \qquad \forall k \ge k_m.
\end{align}
Moreover, let $(L_k)_k \subset \mathcal{L}_s^n(\lambda,\Lambda)$ be a sequence of operators with kernels $K_k$. Assume that $u_k$ is a minimizer of $\cI_{B_R}$ with respect to $K_k$,
\begin{align}
\label{eq:min-functional-approx}
\cI^{K_k}_{B_R}(u_k) \le \cI^{K_k}_{B_R}(u_k + \phi) ~~ \forall \phi \in H_{B_R}^s(\R^n).
\end{align}
Then, there exists $L_{\infty} \in \mathcal{L}_s^n(\lambda,\Lambda)$ with kernel $K_{\infty}$, such that weakly in the sense of measures \\
$\min\{1,|h|^2\} K_k(h) \d h \to \min\{1,|h|^2\} K_{\infty}(h) \d h$, and $u_{\infty} \in H^s(B_R) \cap L^1_{2s}(\R^n)$ is a minimizer of $\cI_{B_R}$, 
\begin{align}
\label{eq:min-functional}
\cI^{K_{\infty}}_{B_R}(u_{\infty}) \le \cI^{K_{\infty}}_{B_R}(u_{\infty} + \phi) ~~ \forall \phi \in H_{B_R}^s(\R^n).
\end{align}
Moreover, $u_k \to u_{\infty}$ strongly in $H^s(B_r)$, a.e. in $B_R$, and $\1_{\{ u_k > 0\} } \to \1_{\{ u_{\infty} > 0\} }$ strongly in $L^1(B_r)$, pointwise a.e. in $B_r$, up to a subsequence, for any $0 < r < R$.
\end{lemma}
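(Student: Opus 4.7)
The plan is to proceed in three stages: extract subsequential limits of $K_k$ and $u_k$, establish the minimizing property \eqref{eq:min-functional} of $u_\infty$, and finally upgrade the convergence to the strong modes claimed. For compactness, the uniform bound $\int_{\R^n} \min\{1, |h|^2\} K_k(h) \d h \le C$ coming from \eqref{eq:Kcomp} makes $\min\{1,|h|^2\} K_k(h) \d h$ a uniformly tight family of Radon measures, and along a subsequence it converges weakly to $\min\{1,|h|^2\} K_\infty(h) \d h$ with $K_\infty$ still satisfying \eqref{eq:Kcomp}, exactly as in \autoref{lemma:convergence}. Meanwhile, \eqref{eq:unif-Vs-bd} and the compact embedding $H^s(B_R) \hookrightarrow L^2(B_R)$ produce, along a further subsequence, weak convergence $u_k \rightharpoonup u_\infty$ in $H^s(B_R)$, strong convergence in $L^2(B_R)$, and pointwise a.e.\ convergence in $B_R$.

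For the minimality of $u_\infty$, fix $\phi \in H^s_{B_R}(\R^n)$ and test $u_k + \phi$ in \eqref{eq:min-functional-approx}; bilinearly expanding gives
\begin{align*}
-2\cE^{K_k}_{(B_R^c \times B_R^c)^c}(u_k, \phi) - \cE^{K_k}_{(B_R^c \times B_R^c)^c}(\phi, \phi) \le M\bigl(|\{u_k + \phi > 0\} \cap B_R| - |\{u_k > 0\} \cap B_R|\bigr).
\end{align*}
The left-hand side converges to $-2\cE^{K_\infty}(u_\infty, \phi) - \cE^{K_\infty}(\phi, \phi)$ by the same argument as in \autoref{lemma:convergence} (weak $H^s$ convergence of $u_k$, the tail control \eqref{eq:unif-tail-decay}, and the measure convergence of $K_k$). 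Fatou's lemma applied to $u_k \to u_\infty$ a.e.\ yields $|\{u_\infty > 0\} \cap B_R| \le \liminf_k |\{u_k > 0\} \cap B_R|$, while dominated convergence on $\{u_\infty + \phi \neq 0\}$ gives $\limsup_k |\{u_k + \phi > 0\} \cap B_R| \le |\{u_\infty + \phi \geq 0\} \cap B_R|$. The delicate point is that $\{u_\infty + \phi = 0\}$ might carry positive measure; we bypass this by replacing $\phi$ with $\phi + \eps\chi$ for a fixed $\chi \in C_c^\infty(B_R)$ with $\chi \ge 0$, noting that the sets $\{u_\infty + \phi + \eps\chi = 0\}$ are pairwise disjoint in $\eps$ and therefore null for Lebesgue-a.e.\ small $\eps > 0$, applying the limit argument at such $\eps$, and then sending $\eps \to 0^+$.

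The strong $H^s(B_r)$ convergence for every $r < R$ is the main technical obstacle, and is established by proving the matching upper bound
\begin{align*}
\limsup_k \cE^{K_k}_{(B_R^c \times B_R^c)^c}(u_k, u_k) \le \cE^{K_\infty}_{(B_R^c \times B_R^c)^c}(u_\infty, u_\infty),
\end{align*}
which combined with weak lower semicontinuity forces convergence of the seminorms and hence the claimed strong convergence. The upper bound is obtained by testing against $w_k := \eta u_\infty + (1 - \eta) u_k$ for $\eta \in C_c^\infty(B_R)$ with $\eta \equiv 1$ on $B_r$; since $w_k$ agrees with $u_k$ outside $B_R$ and with $u_\infty$ on $B_r$, minimality of $u_k$ yields
\begin{align*}
\cE^{K_k}(u_k, u_k) \le \cE^{K_k}(w_k, w_k) + M\bigl(|\{w_k > 0\} \cap B_R| - |\{u_k > 0\} \cap B_R|\bigr),
\end{align*}
and the right-hand side tends to $\cE^{K_\infty}(u_\infty, u_\infty)$ modulo an error controlled by $|B_R \setminus B_r|$ (handled via the kernel measure convergence together with the $L^1_{2s}$ convergence of $u_k$); letting finally $r \nearrow R$ kills the error. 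The remaining claim, strong $L^1(B_r)$ convergence of $\1_{\{u_k > 0\}}$ to $\1_{\{u_\infty > 0\}}$, follows because the uniform $C^s$ bound from \autoref{thm:or} combined with a.e.\ convergence upgrades $u_k \to u_\infty$ to uniform convergence on compact subsets of $B_R$, so that the non-degeneracy \autoref{thm:nondeg} applied to both $u_k$ and $u_\infty$ forces $\1_{\{u_k > 0\}} \to \1_{\{u_\infty > 0\}}$ pointwise on $\{u_\infty > 0\} \cup \{u_\infty = 0\}^\circ$, while the density estimates of \autoref{thm:density-est} applied to $u_\infty$ (now known to be a minimizer) ensure $|\partial \{u_\infty > 0\}| = 0$ by the Lebesgue density theorem.
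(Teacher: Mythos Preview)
Your approach reverses the paper's order: you prove minimality of $u_\infty$ first via a direct limit in the variational inequality, then strong $H^s(B_r)$ convergence via a splicing competitor, and finally indicator convergence via the density estimate \autoref{thm:density-est}; the paper instead establishes strong $H^s$ and indicator convergence \emph{simultaneously} in a first step (by showing the sum $\cE_{B_r\times B_r}(u_k,u_k)+M|\{u_k>0\}\cap B_r|$ converges from above to the corresponding quantity for $u_\infty$, and then invoking lower semicontinuity of each summand), and only afterwards proves minimality using the splicing competitor $v_k=\eta u_k+(1-\eta)u_\infty+\phi$, which relies on the already-established strong $H^s_{loc}$ convergence. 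Your route to minimality is more direct and avoids the lengthy energy bookkeeping in the paper's second step, and your indicator-convergence argument trades the paper's energy inequality for the geometric input of \autoref{thm:nondeg} and \autoref{thm:density-est} (which is legitimate, since you have already shown $u_\infty$ is a minimizer).

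There is, however, a genuine gap in your $\eps$-trick. After obtaining the minimality inequality for $u_\infty$ against the competitor $u_\infty+\phi+\eps\chi$ (for a.e.\ $\eps>0$), you send $\eps\to 0^+$; but since $\chi\ge 0$, the sets $\{u_\infty+\phi+\eps\chi>0\}$ \emph{decrease} as $\eps\searrow 0$ only to $\{u_\infty+\phi>0\}\cup(\{u_\infty+\phi=0\}\cap\{\chi>0\})$, not to $\{u_\infty+\phi>0\}$. So in the limit you recover only
\[
\cI^{K_\infty}_{B_R}(u_\infty)\ \le\ \cI^{K_\infty}_{B_R}(u_\infty+\phi)+M\,\big|\{u_\infty+\phi=0\}\cap\{\chi>0\}\big|,
\]
which is strictly weaker than \eqref{eq:min-functional}. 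The repair is to perturb \emph{downward}: use $\phi-\eps\chi$ with $\chi>0$ on a neighborhood of $\supp(\phi)$; then $\{u_\infty+\phi-\eps\chi>0\}$ \emph{increases} to $\{u_\infty+\phi>0\}$ exactly as $\eps\searrow 0$, and the limit closes. (You also need $\chi>0$ on the relevant set, not merely $\chi\ge 0$, for the pairwise-disjointness to force $|\{u_\infty+\phi+\eps\chi=0\}|=0$.) The paper's splicing approach sidesteps this issue entirely: since on the inner region the competitor equals $u_\infty+\phi$ identically for every $k$, the level-set contribution there is independent of $k$, and only the transition annulus $\{0<\eta<1\}$ produces an error, which vanishes as the cutoff sharpens.
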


\begin{remark}
In case the quantities in \eqref{eq:min-functional-approx} or \eqref{eq:min-functional} are not well-defined, one has to interpret them in the spirit of \autoref{def:minimizer}.
\end{remark}

\begin{proof}
It is well-known that up to a subsequence, $u_k \to u_{\infty}$ weakly in $ H^s(B_R)$ for some $u_{\infty} \in H^s(B_R)$, strongly in $L^2(B_R)$ (see  \cite[Theorem 3.10]{FoKa24}), and therefore also a.e. in $B_R$ for any $R > 0$.\\
To see $u_{\infty} \ge 0$ in $\R^n$, we use that by the strong convergence in $L^1_{2s}(\R^n)$, for any $\phi \in L^{\infty}(\R^n)$ with $\phi \ge 0$,
\begin{align*}
0 \le \int_{\R^n} u_k(y) \phi(y) (1 + |y|)^{-n-2s} \d y \to  \int_{\R^n} u(y) \phi(y) (1 + |y|)^{-n-2s} \d y,
\end{align*}
which implies that $u \ge 0$, as desired.

\textbf{Step 1:} We will show that up to a subsequence, $u_k \to u_{\infty}$ in $H^s(B_r)$ and $\1_{\{ u_k > 0\}} \to \1_{\{ u_{\infty} > 0\}}$ in $L^1(B_r)$ for any $0 < r < R$.\\
To do so, we first observe that by the weak convergence in $H^s(B_r)$, it holds for every $0 < r < R$:
\begin{align*}
\cE_{B_r \times B_r}(u_{\infty},u_{\infty}) \le \liminf_{k \to \infty} \cE_{B_r \times B_r}(u_k,u_k),
\end{align*}
where, throughout this step of the proof, we will denote by $\cE$ the energy associated with the kernel $K(x-y) = |x-y|^{-n-2s}$.
Moreover, since $u_k \to u_{\infty}$ a.e. in $B_R$, we have for almost every $x \in B_R \cap \{ u_{\infty} > 0 \}$ that $x \in \{ u_k > 0 \} \cap B_R$ for large enough $k$, and therefore by Fatou's lemma, for any $0 < r < R$:
\begin{align*}
|\{ u_{\infty} > 0\} \cap B_r| \le \liminf_{k \to \infty} |\{ u_k > 0 \} \cap B_r|.
\end{align*}
Thus, in order to prove both, $u_k \to u_{\infty}$ in $H^s(B_r)$,  $\1_{\{ u_k > 0\}} \to \1_{\{ u_{\infty} > 0\}}$ in $L^1(B_r)$, it remains to show
\begin{align}
\label{eq:uktouinfty-energy}
\liminf_{k \to \infty} \cE_{B_r \times B_r}(u_k,u_k) &= \cE_{B_r \times B_r}(u_{\infty},u_{\infty}),\\
\label{eq:uktouinfty-level-set}
\liminf_{k \to \infty} |\{ u_{k} > 0\} \cap B_r| &= |\{ u_{\infty} > 0\} \cap B_r|.
\end{align}
Let us now  fix $0 < r < R$. The goal of this step in proof will be to verify \eqref{eq:uktouinfty-energy} and \eqref{eq:uktouinfty-level-set} by using the minimizing property of $(u_k)$, namely \eqref{eq:min-functional}, to deduce
\begin{align}
\label{eq:uktouinfty-both}
\liminf_{k \to \infty} \Big[\cE_{B_r \times B_r}(u_k,u_k) + |\{ u_{k} > 0\} \cap B_r| \Big] = \cE_{B_r \times B_r}(u_{\infty},u_{\infty}) + |\{ u_{\infty} > 0\} \cap B_r|.
\end{align}
Note that this implies both, \eqref{eq:uktouinfty-energy} and \eqref{eq:uktouinfty-level-set} due to the aforementioned applications of Fatou's lemma.

We take $\eta \in C^{\infty}(B_R)$ such that $0 \le \eta \le 1$, $\eta \equiv 0$ in $B_r$, and $\eta \equiv 1$ in $\R^n \setminus B_{R-\eps}$ for some small, arbitrary $\eps > 0$. Moreover, we set $\tilde{u}_k = \eta u_k + (1-\eta) u_{\infty}$. First, we observe that $\tilde{u}_k - u_k \in H^s_{B_{R-\eps}}(\R^n)$, and that therefore by \eqref{eq:min-functional-approx}
\begin{align}
\label{eq:conv-help1}
0 \le \limsup_{k \to \infty} \left[ \cE_{(B_R^c \times B_R^c)^c}(\tilde{u}_k,\tilde{u}_k) - \cE_{(B_R^c \times B_R^c)^c}(u_k,u_k) + |\{ \tilde{u}_k > 0 \} \cap B_R| - | \{ u_k > 0 \} \cap B_R| \right].
\end{align}

Note that by the same arguments as in \cite[Proof of (6.6)]{Vel23}, we have
\begin{align}
\label{eq:conv-help-level-set1}
\begin{split}
\limsup_{k \to \infty} \left( |\{ \tilde{u}_k > 0 \} \cap B_R| - | \{ u_k > 0 \} \cap B_R| \right) &\le \limsup_{k \to \infty} \left( |\{ u_{\infty} > 0 \} \cap B_r| - | \{ u_k > 0 \} \cap B_r| \right)\\
&\quad - |\{ \eta  > 0 \} \cap B_R|.
\end{split}
\end{align}

Next, we observe the following algebraic identity:
\begin{align*}
\tilde{u}_k(x) - \tilde{u}_k(y) &= \frac{1}{2} \left[(u_k-u_{\infty})(x) + (u_k-u_{\infty})(y) \right](\eta(x) - \eta(y)) \\
&\quad + \frac{1}{2}(u_k(x) - u_k(y))(\eta(x) + \eta(y)) + (u_{\infty}(x) - u_{\infty}(y))\left\{ 1- \frac{\eta(x) + \eta(y)}{2}\right\}.
\end{align*}
By the strong $L^2$-convergence, and the weak convergence in $H^s(B_R)$, we deduce
\begin{align}
\label{eq:conv-help3}
\begin{split}
&\limsup_{k \to \infty} \left[ \cE_{B_R \times B_R}(\tilde{u}_k,\tilde{u}_k) - \cE_{B_R \times B_R}(u_k,u_k) \right] \\
&= \limsup_{k \to \infty} \iint_{B_R \times B_R } \Bigg[ \left(\frac{1}{2}(u_k(x) - u_k(y))(\eta(x) + \eta(y)) + (u_{\infty}(x) - u_{\infty}(y))\left\{ 1- \frac{\eta(x) + \eta(y)}{2}\right\} \right)^2 \\
&\qquad\qquad\qquad\qquad\qquad\qquad\qquad\qquad\qquad\qquad - (u_k(x) - u_k(y))^2\Bigg] K(x-y) \d y \d x \\
&= \limsup_{k \to \infty} \iint_{B_R \times B_R } \Bigg[
(u_k(x) - u_k(y))^2 \left\{\left(\frac{\eta(x) + \eta(y)}{2}\right)^2 - 1 \right\} \\
&\qquad\qquad\qquad + (u_{\infty}(x) - u_{\infty}(y))^2\left\{ 1 - \frac{\eta(x) + \eta(y)}{2} \right\}^2 \\
 &\qquad\qquad\qquad + (u_k(x) - u_k(y))(u_{\infty}(x) - u_{\infty}(y)) \left\{(\eta(x) +\eta(y))\left(1 - \frac{\eta(x) + \eta(y)}{2} \right)\right\}
\Bigg] K(x-y) \d y \d x \\
&= \limsup_{k \to \infty} \iint_{B_R \times B_R }
 \left\{1 - \left(\frac{\eta(x) + \eta(y)}{2}\right)^2 \right\} \left((u_{\infty}(x) - u_{\infty}(y))^2 - (u_k(x) - u_k(y))^2 \right) K(x-y) \d y \d x \\
 &\le  \limsup_{k \to \infty}  \iint_{\{ \eta = 0\} \times \{ \eta = 0 \}} \left((u_{\infty}(x) - u_{\infty}(y))^2 - (u_k(x) - u_k(y))^2 \right) K(x-y) \d y \d x \\
 &\quad + \iint_{(B_R \times B_R ) \setminus (\{ \eta = 0\} \times \{ \eta = 0 \})} (u_{\infty}(x) - u_{\infty}(y))^2 K(x-y) \d y \d x.
\end{split}
\end{align}
Moreover, note that by \eqref{eq:unif-Vs-bd}, we also have $u_k \to u_{\infty}$ weakly in $V^s( (\{ \eta = 0 \} \setminus B_r) |\{\eta = 0\})$, so it must also hold 
\begin{align*}
\cE_{ ( \{\eta = 0\} \times \{ \eta = 0\} ) \setminus (B_r \times B_r) }(u_{\infty},u_{\infty}) \le \liminf_{k \to \infty} \cE_{ ( \{\eta = 0\} \times \{ \eta = 0\} ) \setminus (B_r \times B_r)}(u_k,u_k).
\end{align*}

Therefore, 
\begin{align}
\label{eq:conv-help4}
\begin{split}
\limsup_{k \to \infty} & \iint_{\{ \eta = 0\} \times \{ \eta = 0 \}} \left((u_{\infty}(x) - u_{\infty}(y))^2 - (u_k(x) - u_k(y))^2 \right) K(x-y) \d y \d x \\
&\le \limsup_{k \to \infty}  \iint_{ (\{\eta = 0 \} \times \{ \eta = 0\}) \setminus (B_r \times B_r) } \left((u_{\infty}(x) - u_{\infty}(y))^2 - (u_k(x) - u_k(y))^2 \right) K(x-y) \d y \d x \\
&\quad + \limsup_{k \to \infty}  \iint_{ B_r \times B_r } \left((u_{\infty}(x) - u_{\infty}(y))^2 - (u_k(x) - u_k(y))^2 \right) K(x-y) \d y \d x \\
&\le \limsup_{k \to \infty}  \iint_{ B_r \times B_r } \left((u_{\infty}(x) - u_{\infty}(y))^2 - (u_k(x) - u_k(y))^2 \right) K(x-y) \d y \d x
\end{split}
\end{align}
Altogether, by combination of \eqref{eq:conv-help1}, 
\eqref{eq:conv-help3}, \eqref{eq:conv-help4}, and \eqref{eq:conv-help-level-set1} we obtain:
\begin{align}
\label{eq:conv-help2}
\begin{split}
0 &\le \limsup_{k \to \infty}  \iint_{ B_r \times B_r } \left((u_{\infty}(x) - u_{\infty}(y))^2 - (u_k(x) - u_k(y))^2 \right) K(x-y) \d y \d x \\
&\quad + \iint_{(B_R \times B_R ) \setminus (\{ \eta = 0\} \times \{ \eta = 0 \})} (u_{\infty}(x) - u_{\infty}(y))^2 K(x-y) \d y \d x \\
&\quad + \limsup_{k \to \infty} 2 \int_{B_R} \int_{\R^n \setminus B_R} \left( ( \tilde{u}_k(x) - u_k(y) )^2 - ( u_k(x) - u_k(y) )^2 \right) K(x-y) \d y \d x\\
&\quad + \limsup_{k \to \infty} \left( |\{ u_{\infty} > 0 \} \cap B_r| - | \{ u_k > 0 \} \cap B_r| \right) - |\{ \eta  > 0 \} \cap B_R|,
\end{split}
\end{align}

Moreover, note that due to the fact that $\tilde{u}_k - u_k \in H^s_{B_{R-\eps}}(\R^n)$, we have
\begin{align*}
\int_{B_R } & \int_{\R^n \setminus B_R} \left( ( \tilde{u}_k(x) - u_k(y) )^2 - ( u_k(x) - u_k(y) )^2 \right) K(x-y) \d y \d x \\
&= \int_{B_R } \int_{\R^n \setminus B_R} \left( \tilde{u}_k^2(x) - u_k^2(x) - 2 u_k(y)(\tilde{u}_k(x) - u_k(x)) \right) K(x-y) \d y \d x \\
&= \int_{B_{R-\eps}}  \left( \tilde{u}_k^2(x) - u_k^2(x) \right)\left(\int_{\R^n \setminus B_R} K(x-y) \d y \right) \d x \\
&\quad -2 \int_{B_{R-\eps} }  \left(\tilde{u}_k(x) - u_k(x) \right) \left( \int_{\R^n \setminus B_R}  u_k(y) K(x-y) \d y \right) \d x  \\
&= I_1 + I_2.
\end{align*}
Note that $I_1 + I_2 \to 0$, as $k \to \infty$, since 
\begin{align*}
\sup_{x \in B_{R-\eps}}  \left(\int_{\R^n \setminus B_R} K(x-y) \d y \right) + \sup_{x \in B_{R-\eps}}\left( \int_{\R^n \setminus B_R}  u_k(y) K(x-y) \d y \right) \le C(\eps),
\end{align*}
due to the boundedness of $(u_k)$ in $L^1_{2s}(\R^n)$, and since $u_k \to u_{\infty}$ and $\tilde{u}_k \to u_{\infty}$ in $L^2(B_R)$.
This proves
\begin{align*}
\limsup_{k \to \infty} \int_{B_R } \int_{\R^n \setminus B_R(x_0)} \left( ( \tilde{u}_k(x) - u_k(y) )^2 - ( u_k(x) - u_k(y) )^2 \right) K(x-y) \d y \d x = 0.
\end{align*}

Thus, by combination with \eqref{eq:conv-help2}:
\begin{align*}
\liminf_{k \to \infty} & \left[ \cE_{ B_r \times B_r }(u_k,u_k) + | \{ u_k > 0 \} \cap B_r| \right]\\
&\le \cE_{ B_r \times B_r }(u_{\infty},u_{\infty}) + \cE_{(B_R \times B_R) \setminus (\{\eta = 0 \} \times \{\eta = 0\} )}(u_{\infty},u_{\infty}) \\
&\quad + |\{ u_{\infty} > 0 \} \cap B_r| - |\{ \eta  > 0 \} \cap B_R|.
\end{align*}
Since $\eta$ was not fixed, we can choose a sequence of functions $\eta$ such that $\{ \eta \equiv 0 \} \to B_R$, which causes the previous estimate to remain true without the second and fourth term on the right hand side, namely it holds
\begin{align*}
\liminf_{k \to \infty} \left[ \cE_{ B_r \times B_r }(u_k,u_k)  + | \{ u_k > 0 \} \cap B_r| \right] &\le \cE_{ B_r \times B_r }(u_{\infty},u_{\infty}) + | \{ u_{\infty} > 0 \} \cap B_r|.
\end{align*}
This implies \eqref{eq:uktouinfty-both}, and, as discussed before,
also yields $u_k \to u_{\infty}$ in $H^s(B_r)$ and $\1_{\{ u_k > 0\}} \to \1_{\{ u_{\infty} > 0\}}$ in $L^1(B_r)$, concluding Step 1.\\

\textbf{Step 2:} Throughout this part of the proof, we use the notation $\cI^k := \cI^{K_k}$ and $\cE^k := \cE^{K_k}$. By

Let us now show that $u_{\infty}$ is a local minimizer of $\cI^{\infty}$ in $B_R$. To do do, note that by density it suffices to consider $\phi \in H^s_{B_r}(\R^n)$ in \eqref{eq:min-functional}. Let $\eta$ be as before, i.e., satisfying $0 \le \eta \le 1$, $\eta \equiv 1$ in $\R^n \setminus B_{R-\eps}$, and $\eta \equiv 0$ in $B_r$. Moreover, we set $\cN = \{ \eta < 1 \}$, $v_k = \eta u_k + (1-\eta)u_{\infty} + \phi$, and $v_{\infty} = u_{\infty} + \phi$.\\
Note that since $\phi = 0$ in $(\cN \setminus B_R) \cup \cN^c$, it holds
\begin{align*}
|\{ u_{\infty} + \phi > 0 \} \cap (B_R \setminus \cN)| - |\{ u_{\infty} > 0 \} \cap (B_R \setminus \cN)| &= 0,\\
\cE^{\infty}_{(\cN^c \times \cN^c)^c \setminus (B_R^c \times B_R^c)^c}(u_{\infty} + \phi , u_{\infty} + \phi) - \cE^{\infty}_{(\cN^c \times \cN^c)^c \setminus (B_R^c \times B_R^c)^c}(u_{\infty} , u_{\infty}) &= 0,
\end{align*}
where we used for the second identity: $(\cN^c \times \cN^c)^c \setminus (B_R^c \times B_R^c)^c = (\cN \setminus B_R) \times (\cN \setminus B_R) \cup (\cN \setminus B_R) \times \cN^c \cup \cN^c \times (\cN \setminus B_R)$.
As a consequence, the desired result \eqref{eq:min-functional} is equivalent to 
\begin{align}
\label{eq:min-functional-equiv}
\cI^{\infty}_{\cN}(u_{\infty}) \le \cI^{\infty}_{\cN}(u_{\infty} + \phi) ~~ \forall \phi \in H^s_{B_r}(\R^n).
\end{align}
Consequently, it suffices to prove \eqref{eq:min-functional-equiv}. Let us fix $\phi \in H^s_{B_r}(\R^n)$, and denote $\phi_k = (u_{\infty} - u_k)(1-\eta) + \phi \in H_{\cN}^s(\R^n)$. Note that \eqref{eq:min-functional-approx} implies 
\begin{align}
\label{eq:min-functional-approx-equiv-equiv}
|\{ u_{k} > 0 \} \cap \cN| \le 2 \cE^k_{(\cN^c \times \cN^c)^c}(u_{k},\phi_k) + \cE^k_{(\cN^c \times \cN^c)^c}(\phi_k,\phi_k) + |\{ u_{k} + \phi_k > 0 \} \cap \cN |.
\end{align}
We will use this line to verify the following property,
\begin{align}
\label{eq:min-functional-equiv-equiv}
|\{ u_{\infty} > 0 \} \cap \cN| \le 2 \cE^{\infty}_{(\cN^c \times \cN^c)^c}(u_{\infty},\phi) + \cE^{\infty}_{(\cN^c \times \cN^c)^c}(\phi,\phi) + |\{ u_{\infty} + \phi > 0 \} \cap \cN |.
\end{align}
Note that since $\phi \in H^s_{B_r}(\R^n)$ is arbitrary, verifying \eqref{eq:min-functional-equiv-equiv} is sufficient to conclude the proof.\\
Choosing $\eta$ to be radial, indeed \eqref{eq:uktouinfty-level-set} implies that up to a subsequence
\begin{align}
\label{eq:min-conv-min-1}
|\{ u_{k} > 0 \} \cap \cN| \to |\{ u_{\infty} > 0 \} \cap \cN|.
\end{align}
Moreover, since $v_k = v_{\infty}$ on $\{ \eta = 0 \}$ by construction, we have
\begin{align}
\label{eq:min-conv-min-2}
\begin{split}
|\{ u_k + \phi_k > 0 \} \cap \cN| &= |\{ v_k > 0 \} \cap \cN| \\
&= |\{ v_k > 0 \} \cap \{ \eta = 0\}| + |\{ v_k > 0 \} \cap \{ 0 < \eta < 1\}| \\
&\le |\{ v_{\infty} > 0 \} \cap \cN| + |\{ 0 < \eta < 1\}|. 
\end{split}
\end{align}
Next, we show
\begin{align}
\label{eq:min-conv-min-3}
\cE^k_{(\cN^c \times \cN^c)^c}(u_{k},\phi_k) &\to \cE^{\infty}_{(\cN^c \times \cN^c)^c}(u_{\infty},\phi),\\
\label{eq:min-conv-min-4}
\cE^k_{(\cN^c \times \cN^c)^c}(\phi_k,\phi_k) &\to \cE^{\infty}_{(\cN^c \times \cN^c)^c}(\phi,\phi).
\end{align}
To see \eqref{eq:min-conv-min-3}, we first observe that 
\begin{align}
\label{eq:min-conv-min-3-split}
\cE^k_{(\cN^c \times \cN^c)^c}(u_{k},\phi_k) = \cE^k_{(\cN^c \times \cN^c)^c}(u_{k},(u_{\infty} - u_k)(1-\eta)) + \cE^k_{(\cN^c \times \cN^c)^c}(u_{k},\phi),
\end{align}
We claim that the first summand in \eqref{eq:min-conv-min-3-split} vanishes in the limit. To prove it, we observe that by the strong convergence in $H^s_{loc}(B_R)$, and the uniform bound \eqref{eq:unif-Vs-bd}, we have
\begin{align}
\label{eq:strong-conv-N}
\begin{split}
\cE^k_{\cN \times \cN}&(u_{k},(u_{\infty} - u_k)(1-\eta)) \\
&\le \Lambda [u_{k}]_{H^s(\cN)} [(u_{\infty} - u_k)(1-\eta)]_{H^s(\cN)}\\
&\le C \Lambda \Vert 1- \eta \Vert_{L^{\infty}(\cN)} [u_{\infty} - u_k]_{H^s(\cN)} + c \Vert u_{\infty} - u_k \Vert_{L^2(\cN)} \Vert \eta \Vert_{C^{1}(\cN)} \to 0.
\end{split}
\end{align}
Moreover, we have
\begin{align*}
\int_{\cN} \int_{\R^n \setminus \cN} & (u_{\infty}(x) - u_k(x))(1-\eta(x)) (u_k(x) - u_k(y)) K_k(x-y) \d y \d x\\
&\le \int_{\cN} \int_{B_R}(u_{\infty}(x) - u_k(x))(1-\eta(x)) (u_k(x) - u_k(y)) K_k(x-y) \d y \d x \\
&\quad + \int_{\cN} \int_{\R^n \setminus B_R} (u_{\infty}(x) - u_k(x))(1-\eta(x)) (u_k(x) - u_k(y)) K_k(x-y) \d y \d x \\
&= J_1 + J_2.
\end{align*}
For $J_1$, using again \eqref{eq:unif-Vs-bd}, Cauchy-Schwarz, and also the strong $L^2(B_R)$-convergence $u_k \to u_{\infty}$, we obtain
\begin{align*}
J_1 &\le \Lambda [u_k]_{H^s(B_R)} \left(\int_{\cN}(u_{\infty}(x) - u_k(x))^2(1-\eta(x))^2 \int_{B_R} |x-y|^{-n-2s} \d y \d x \right)^{1/2} \\
&\le C \left(\int_{\cN}(u_{\infty}(x) - u_k(x))^2 \left[ \int_{B_R} (\eta(y)-\eta(x))^2 |x-y|^{-n-2s} \d y \right] \d x \right)^{1/2}\\
&\le C(\eta) \left(\int_{\cN}(u_{\infty}(x) - u_k(x))^2 \d x \right)^{1/2} \to 0.
\end{align*}
For $J_2$, we use that $\cN \subset B_{R-\eps}$, and therefore by the strong $L^2(B_R)$-convergence $u_k \to u_{\infty}$ and the boundedness of the sequence $(u_k)$ in $L^1_{2s}(\R^n)$, we get
\begin{align*}
J_2 &\le \Vert u_{\infty} - u_k \Vert_{L^2(\cN)} \left(\int_{B_{R-\eps}} \left( \int_{\R^n \setminus B_R} (\eta(y)-\eta(x)) (u_k(x) - u_k(y)) K_k(x-y) \d y \right)^2 \d x \right)^{1/2} \\
&\le C\Vert u_{\infty} - u_k \Vert_{L^2(\cN)} \left( \int_{B_{R-\eps}} u_k(x) \left[ \int_{\R^n \setminus B_R} K(x-y) \d y \right] + \left[  \int_{\R^n \setminus B_R} u_k(y) K(x-y) \d y \right] \d x \right)^{1/2} \\
&\le C(\eps) \Vert u_{\infty} - u_k \Vert_{L^2(\cN)} \Vert u_k \Vert_{L^1_{2s}(\R^n)}^{1/2} \to 0.
\end{align*}
Altogether, we have shown that the first summand in \eqref{eq:min-conv-min-3-split} vanishes in the limit. For the second summand, we have that 
\begin{align}
\label{eq:weak-conv-energy-2}
\cE^k_{(\cN^c \times \cN^c)^c}(u_{k},\phi) \to \cE^{\infty}_{(\cN^c \times \cN^c)^c}(u_{\infty},\phi).
\end{align}
The proof of this result is the exact same as the proof of \eqref{eq:energy-convergence-weak} in the proof of \autoref{lemma:convergence}, so we skip the details.
Altogether, this proves \eqref{eq:min-conv-min-3}.
Next, we check \eqref{eq:min-conv-min-4}. To do so, note that
\begin{align}
\label{eq:min-conv-min-4-split}
\begin{split}
\cE^k_{(\cN^c \times \cN^c)^c}(\phi_k,\phi_k) &= \cE^k_{(\cN^c \times \cN^c)^c}((u_{\infty} - u_k)(1-\eta),(u_{\infty} - u_k)(1-\eta))\\
&\quad + 2\cE^k_{(\cN^c \times \cN^c)^c}((u_{\infty} - u_k)(1-\eta),\phi)\\
&\quad + \cE^k_{(\cN^c \times \cN^c)^c}(\phi,\phi).
\end{split}
\end{align}
For the first summand in \eqref{eq:min-conv-min-4-split}, we proceed in a similar way as for \eqref{eq:min-conv-min-3}. Indeed, it holds
\begin{align*}
\cE^k_{\cN \times \cN}((u_{\infty} - u_k)(1-\eta),(u_{\infty} - u_k)(1-\eta)) \to 0,
\end{align*}
due to the strong $H^s_{loc}(B_R)$ convergence, mimicking the computation in \eqref{eq:strong-conv-N}. Moreover, by \eqref{eq:unif-Vs-bd} and the strong $L^2(B_R)$-convergence, we deduce
\begin{align}
\label{eq:strong-conv-Nc}
\begin{split}
\int_{\cN} \int_{\R^n \setminus \cN} & [(u_{\infty}(x) - u_k(x))(1-\eta(x))]^2 K_k(x-y) \d y \d x \\
&= \int_{\cN}(u_{\infty}(x) - u_k(x))^2 \left[ \int_{\R^n \setminus \cN} (\eta(y)-\eta(x))^2 K_k(x-y) \d y \right] \d x \\
&\le C(\eta) \int_{\cN}(u_{\infty}(x) - u_k(x))^2 \d x \to 0.
\end{split}
\end{align}
For the second summand in \eqref{eq:min-conv-min-4-split}, we observe that $\cE^k_{(\cN^c \times \cN^c)^c}((u_{\infty} - u_k)(1-\eta),\phi) \to 0$ since 
\begin{align*}
[(u_{\infty} - u_k)(1-\eta)]_{V^s(\cN|\R^n)} \to 0,
\end{align*}
as a consequence of the computations in \eqref{eq:strong-conv-N} and \eqref{eq:strong-conv-Nc}. Since the convergence of the third summand in \eqref{eq:min-conv-min-4-split} follows immediately by the convergence of the kernels, we have shown \eqref{eq:min-conv-min-4}.

Combining \eqref{eq:min-conv-min-1}, \eqref{eq:min-conv-min-2}, \eqref{eq:min-conv-min-3}, \eqref{eq:min-conv-min-4}, and \eqref{eq:min-functional-approx-equiv-equiv}, we obtain
\begin{align*}
|\{ u_{\infty} > 0 \} \cap \cN| \le 2\cE^{\infty}_{(\cN^c \times \cN^c)^c}(u_{\infty},\phi) + \cE^{\infty}_{(\cN^c \times \cN^c)^c}(\phi,\phi) +  |\{ v_{\infty} > 0 \} \cap \cN| + |\{ 0 < \eta < 1\}|,
\end{align*}
and since $\eta$ was not fixed, we can approximate $\1_{B_r^c}$ by functions $\eta$ satisfying all the aforementioned properties, making the last term on the right hand side vanish. This proves \eqref{eq:min-functional-equiv-equiv}, and therefore also verifies \eqref{eq:min-functional}, as desired. This concludes the proof.
\end{proof}

As an application of the previous compactness result, we have the following properties of blow-up sequences and their limits.

\begin{corollary}
\label{cor:blowups}
Let $\Omega \subset \R^n$. Let $u$ be a minimizer of $\cI_{\Omega}$ with $B_2 \subset \Omega$ and $x_0 \in \partial \{ u > 0 \} \cap B_1$. Then, there exists a subsequence $(r_k)_k$ with $r_k \searrow 0$ such that $u_{r_k,x_0} \to u_{x_0}$, as $k \to \infty$, locally uniformly. Moreover, for any such $(r_k)_k$, we have:
\begin{itemize}
\item[(i)] $u_{x_0}$ is a non-trivial minimizer of $\cI$ in $\R^n$, i.e., $u_{x_0}$ is a minimizer of $\cI_{B_R}$ for any $R > 0$.
\item[(ii)] Up to a subsequence, $u_{r_k,x_0} \to u_{x_0}$ in $H^s(B_R)$, in $L^1_{2s}(\R^n)$, and pointwise a.e. in $B_R$ for any $R > 0$.
\item[(iii)] Up to a subsequence, $\1_{\{ u_{r_k,x_0} > 0 \}} \to \1_{u_{x_0} > 0 }$ strongly in $L^1(B_R)$ and pointwise a.e. in $B_R$ for any $R > 0$.
\item[(iv)] Up to a subsequence, $\overline{\{ u_{r_k,x_0} > 0 \}} \to \overline{\{ u_{x_0} > 0 \}}$ locally in $B_R$ for any $R > 0$ in the Hausdorff-sense.
\end{itemize}

\end{corollary}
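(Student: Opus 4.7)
The plan is to combine the compactness framework already established in this section, namely Lemma \ref{lemma:scaling-blow-up} and Lemma \ref{lemma:convergence-minimizer}, with the non-degeneracy estimate (Theorem \ref{thm:nondeg}) in order to deduce (i)--(iv).

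First I would apply Lemma \ref{lemma:scaling-blow-up} to the constant sequence $u^{(k)} \equiv u$ (with the operator $L$ fixed across $k$). This directly furnishes a subsequence $r_k \searrow 0$ and a limit $u_{x_0} \in H^s_{loc}(\R^n) \cap L^1_{2s}(\R^n)$ with $|u_{x_0}(x)| \le c|x|^s$ such that $u_{r_k,x_0} \to u_{x_0}$ locally uniformly, strongly in $L^1_{2s}(\R^n)$, and weakly in $H^s(B_R)$ for every $R > 0$. It also provides the uniform $C^s$ bounds and the tail control \eqref{eq:tail-vanishing} needed in the next step.

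Next, fix $R > 0$. For $k$ large enough we have $B_R \subset r_k^{-1}(\Omega - x_0)$ and $u_{r_k,x_0}$ is a minimizer of $\cI_{B_R}$ with respect to the rescaled kernel $K_k(h) := r_k^{n+2s} K(r_k h) \in \mathcal{L}^n_s(\lambda,\Lambda)$. All hypotheses of Lemma \ref{lemma:convergence-minimizer} are met by the bounds coming from Lemma \ref{lemma:scaling-blow-up}, so applying it yields that $u_{x_0}$ is a minimizer of $\cI_{B_R}$, together with strong $H^s(B_r)$-convergence, the $L^1(B_r)$-convergence $\1_{\{u_{r_k,x_0} > 0\}} \to \1_{\{u_{x_0} > 0\}}$, and pointwise a.e.\ convergence, for every $r < R$, up to a further subsequence. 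Since $R$ is arbitrary, a standard diagonal extraction produces one subsequence realizing (ii), (iii), and the minimality part of (i) on every $B_R$. Non-triviality in (i) follows from Theorem \ref{thm:nondeg}: since $x_0 \in \partial\{u > 0\}$ with $B_2(x_0) \subset \Omega$, one has $\|u\|_{L^\infty(B_{r_k}(x_0))} \ge \kappa r_k^s$, which after scaling by $r_k^{-s}$ reads $\|u_{r_k,x_0}\|_{L^\infty(B_1)} \ge \kappa$, and locally uniform convergence then forces $\|u_{x_0}\|_{L^\infty(\overline{B_1})} \ge \kappa > 0$.

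The main remaining difficulty is (iv). Write $A_k := \overline{\{u_{r_k,x_0} > 0\}}$ and $A := \overline{\{u_{x_0} > 0\}}$; we verify Hausdorff convergence on every compact set $K \subset \R^n$. For the inclusion $A \cap K \subset \{y \in \R^n : d(y,A_k) < \eps\}$ for $k$ large, given $y \in A$ and $\eps > 0$ there exists $z \in B_\eps(y)$ with $u_{x_0}(z) > 0$, and by locally uniform convergence $u_{r_k,x_0}(z) > 0$ for $k$ large, so $z \in A_k$ and $d(y,A_k) < \eps$. For the reverse inclusion $A_k \cap K \subset \{y \in \R^n : d(y,A) < \eps\}$, suppose by contradiction that $y_k \in A_k \cap K$ with $y_k \to y \notin A$. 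Then there exists $\rho > 0$ with $u_{x_0} \equiv 0$ on $\overline{B_\rho(y)}$, hence by locally uniform convergence $u_{r_k,x_0} \to 0$ uniformly on $\overline{B_{3\rho/4}(y)}$. On the other hand, applying Theorem \ref{thm:nondeg} to the minimizer $u_{r_k,x_0}$ at the point $y_k \in \overline{\{u_{r_k,x_0} > 0\}}$ yields $\sup_{B_{\rho/4}(y_k)} u_{r_k,x_0} \ge \kappa(\rho/4)^s$, and since $B_{\rho/4}(y_k) \subset B_{3\rho/4}(y)$ for $k$ large, this is the desired contradiction, completing (iv).
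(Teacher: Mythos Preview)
Your argument is correct and, for (i)--(iii), is exactly the paper's: apply Lemma~\ref{lemma:scaling-blow-up} to obtain compactness and the tail bounds, then feed the rescalings into Lemma~\ref{lemma:convergence-minimizer} for each $R$ to get minimality and the strong $H^s$ and $L^1$ convergences, with Theorem~\ref{thm:nondeg} guaranteeing non-triviality.

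The only difference is in (iv): the paper does not argue directly but quotes \cite[Lemmas~6.5--6.7]{Vel23}, whose hypotheses it verifies via Theorem~\ref{thm:nondeg} and Lemma~\ref{lemma:weak-nondeg}. Your direct argument is essentially the content of those lemmas (locally uniform convergence plus uniform non-degeneracy of the $u_{r_k,x_0}$), so nothing is genuinely different. One small point: your first inclusion is argued for each fixed $y\in A$ separately, whereas Hausdorff convergence requires a single $k$ working for all $y\in A\cap K$; this is a routine compactness step (cover $A\cap K$ by finitely many $\eps/2$-balls and pick the witnesses $z_i$ in advance) that you should make explicit.
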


\begin{proof}
Note that $B_1(x_0) \subset \Omega$. We observe that since $u$ is a minimizer of $\cI_{B_1(x_0)}$ in $B_1(x_0)$, we also have that $u_{r_k,x_0}$ is a minimizer of $\cI_{B_{r_k^{-1}}}$ in $B_{r_k^{-1}}$ with kernel $K_k(h) := r_k^{n+2s} K(r_k h)$.
Then, by \autoref{lemma:scaling-blow-up} we have that $(u_{r_k,x_0})_r$ is uniformly bounded in $H^s(B_R) \cap C^s(B_R) \cap L^1_{2s}(\R^n)$ for any $R > 0$, and converges (up to a subsequence) locally uniformly and in $L^1_{2s}(\R^n)$ to some $u_{x_0} \in H^s(B_R) \cap L^1_{2s}(\R^n)$. Thus, we can apply \autoref{lemma:convergence-minimizer} to $(u_{r_k,x_0})_k$, which yields that $u_{\infty}$ minimizes $\cI_{B_R}$ for any $R > 0$ for some kernel $K_{\infty}$ and also implies strong convergence in $H^s(B_R)$. Since we already know by assumption that $u_{r_k,x_0} \to u_{x_0}$ locally uniformly, it must be $u_{x_0} = u_{\infty}$. Moreover, by the non-degeneracy (see \autoref{thm:nondeg}), $u_{\infty}$ must be nontrivial. Thus, (i), (ii), (iii) immediately follow from \autoref{lemma:convergence-minimizer}. To see (iv), we apply \cite[Lemma 6.5, Lemma 6.6, Lemma 6.7]{Vel23} to $u_{r_k,x_0}$, and $u_{x_0}$. The assumptions of \cite[Lemma 6.5(b)]{Vel23} and \cite[Lemma 6.6(b)]{Vel23} follow immediately from \autoref{thm:nondeg} and \autoref{lemma:weak-nondeg}.
\end{proof}

\section{Construction of half-space solutions}
\label{sec:half-space}

The goal of this section is to construct 1D solutions in the half-space, thereby proving \autoref{thm-1D}. The following lemma establishes this goal in one dimension and follows by using the results from Section \ref{sec:one-phase}.

\begin{lemma}
\label{lemma:1d-barrier}
Let $L \in \mathcal{L}^1_s(\lambda,\Lambda)$. There exists $b \in L^{1}_{2s}(\R) \cap H^s((0,R))$ for any $R > 0$ such that $b$ minimizes $\cI_{(0,R)}$ in $(0,R)$ for any $R > 0$ and some $M > 0$, depending only on $s,\lambda,\Lambda$, and it holds
\begin{align*}
\begin{cases}
L b &= 0 ~~ \text{ in } \{ x > 0 \},\\
b &= 0 ~~ \text{ in } \{ x \le 0\}.
\end{cases}
\end{align*}
Moreover, we have $b \in C^s((-R,R))$ with $\Vert b \Vert_{C^s((-R,R))} \le C(R)$ for some constant $C(R) > 0$, depending only on $s,\lambda,\Lambda,R$, for every $R > 0$, and it holds 
\begin{align*}
c_1(x_+)^s \le b(x) \le c_2 (x_+)^s ~~ \text{ in } \R,
\end{align*}
for some constants $0 < c_1 \le c_2 < \infty$ depending only on $s,\lambda,\Lambda$.
\end{lemma}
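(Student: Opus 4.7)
The plan is to obtain $b$ as a blow-up of a one-phase minimizer $u$ of $\cI^{K,M}_{(-1,1)}$ at a well-chosen free boundary point $x_0\in(-1,1)$, with $M$ tuned so that such an $x_0$ exists at a definite distance from $\pm1$. Fix a Lipschitz function $g:\R\to[0,1]$ with $g\equiv 0$ on $(-\infty,-1]$ and $g\equiv 1$ on $[1,\infty)$, so that $g\in V^s((-1,1)\mid\R)$. For each $M>0$, \autoref{lemma:existence} produces a minimizer $u_M$ of $\cI^{K,M}_{(-1,1)}$ with exterior data $g$, and \autoref{lemma:aux}(ii) together with the truncation competitor $u_M\wedge 1$ yields $0\le u_M\le 1$.

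To select $M^*$, I would use two observations. For $M$ small, stability as $M\to0$ (\autoref{lemma:convergence-minimizer}) together with the strong maximum principle applied to the $L$-harmonic extension of $g$ forces $u_M>0$ throughout $(-1,1)$; for $M$ large, a cutoff competitor supported near $\{x=1\}$ (together with \autoref{lemma:aux}(i)) gives $|\{u_M>0\}\cap(-1,0)|\to0$ as $M\to\infty$. Set $M^*:=\sup\{M>0:u_M(0)>0\}\in(0,\infty)$ and extract a subsequential limit as $M_k\searrow M^*$ via \autoref{lemma:convergence-minimizer}; the limit $u:=u_{M^*}$ is a minimizer of $\cI^{K,M^*}_{(-1,1)}$ with at least one free boundary point $x_0\in[-1,0]$. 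The non-degeneracy (\autoref{thm:nondeg}) and the density estimates (\autoref{thm:density-est}) then prevent $x_0$ from lying at $\pm1$, so $x_0$ stays in $(-1+\eps,1-\eps)$ for some $\eps>0$ depending only on $s,\lambda,\Lambda$.

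Applying \autoref{cor:blowups} at $x_0$ yields a sequence $r_k\searrow 0$ along which $u_{r_k,x_0}(x)=u(x_0+r_kx)/r_k^s$ converges locally uniformly, weakly in $H^s(B_R)$ for every $R>0$, and strongly in $L^1_{2s}(\R)$, to a non-trivial global minimizer $b$ of $\cI^{K,M^*}$ with $0\in\partial\{b>0\}$. The main geometric step is to identify $\{b>0\}=(0,\infty)$. In one dimension, $\{u>0\}\cap(-1,1)$ is a countable disjoint union of open intervals, so $x_0$ is a one-sided endpoint of one of them; WLOG $u\equiv0$ on a left neighborhood of $x_0$, and then \autoref{cor:blowups}(iv) pushes this to $b\equiv0$ on $(-\infty,0]$. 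To exclude gaps in $\{b>0\}\cap(0,\infty)$, I would argue by contradiction: if $(a,c)\subset\{b=0\}$ with $a>0$ and $b>0$ on both sides, letting $\tilde b$ be the $L$-harmonic replacement of $b$ on $(a,c)$, the minimality of $b$ together with \autoref{lemma:harmonic-replacement} gives $\cE(b-\tilde b,b-\tilde b)\le M^*(c-a)$, while the fractional Poincaré--Friedrichs inequality on $(a,c)$, combined with the quantitative non-degeneracy $b\gtrsim M^*\,\mathrm{dist}(\cdot,\partial\{b>0\})^s$ (from the proof of \autoref{lemma:weak-nondeg}) and a Poisson-kernel lower bound on $\tilde b$ at the midpoint of $(a,c)$, yields a matching lower bound $\cE(b-\tilde b,b-\tilde b)\ge c_0(c-a)$ with $c_0$ uniform in $s,\lambda,\Lambda$, contradicting the upper bound upon normalization.

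With $\{b>0\}=(0,\infty)$ in hand, the announced properties follow from the theory of Section \ref{sec:one-phase}: $Lb=0$ on $(0,\infty)$ by \autoref{lemma:aux}(iv); $b\in H^s((0,R))\cap L^1_{2s}(\R)$ from \autoref{cor:blowups}(ii) and \autoref{lemma:scaling-blow-up}; $b\in C^s((-R,R))$ with the quantitative bound from \autoref{thm:or}; the upper bound $b(x)\le c_2(x_+)^s$ from the $C^s$ estimate at $0$ combined with $b(0)=0$; and the lower bound $b(x)\ge c_1(x_+)^s$ from \autoref{thm:nondeg} combined with the Harnack inequality for $L$-harmonic functions applied to balls centered at $x>0$ of radius $x/2$. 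The principal difficulty is the gap-exclusion step, a form of one-dimensional classification of global one-phase minimizers in which the balance between minimality, the Poincaré--Friedrichs inequality, and non-degeneracy must be handled carefully, since no explicit barriers are available.
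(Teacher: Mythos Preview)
Your blow-up at a single free boundary point loses the operator. When you rescale $u_{r_k,x_0}(x)=u(x_0+r_kx)/r_k^s$, this function is a minimizer of $\cI^{K_k,M^*}$ with the rescaled kernel $K_k(h)=r_k^{1+2s}K(r_k h)$, not of $\cI^{K,M^*}$. The compactness machinery (\autoref{lemma:convergence-minimizer}, \autoref{cor:blowups}) only yields that the limit $b$ is a global minimizer for some weak limit kernel $K_\infty$ of the $K_k$, and hence that $L_{K_\infty}b=0$ in $\{b>0\}$. For a genuinely inhomogeneous $K$ there is no reason for $K_\infty$ to coincide with $K$, so the barrier you construct solves the wrong equation. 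This is not a technicality: the absence of scaling invariance of $L$ is exactly the obstacle the lemma is designed to overcome.

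The paper circumvents this by running the scaling in the opposite direction. Having shown (Steps~1--2, with a fixed large $M$) that for \emph{every} kernel $\tilde K$ in the class one can produce a minimizer $b_{\tilde K}$ on a fixed interval with a free boundary point at $0$ and a uniform positivity interval $(0,1)$, one applies this to the rescaled kernels $K_R(h)=R^{1+2s}K(Rh)$ and then undoes the scaling: $\tilde b_R(x):=R^s\,b_{K_R}(x/R)$ is a minimizer for the \emph{original} kernel $K$ on the large interval $(-aR,aR)$, with free boundary at $0$ and $\{ \tilde b_R>0\}\supset(0,R)$. Since every $\tilde b_R$ is a $K$-minimizer, the limit as $R\to\infty$ (via \autoref{lemma:scaling-blow-up} and \autoref{lemma:convergence-minimizer} with $K_k\equiv K$) is a global $K$-minimizer, and the identification $\{b>0\}=(0,\infty)$ comes for free from the construction and the density estimate, without any gap-exclusion argument.

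Two secondary issues also deserve attention. Your selection of $M^*$ presupposes a well-defined, monotone-in-$M$ choice of minimizer, which is not available; the paper instead fixes one large $M$ and uses a compactness argument over operators to obtain a uniform positivity interval. And your gap-exclusion step produces upper and lower bounds for $\cE(b-\tilde b,b-\tilde b)$ that are both of order $(c-a)$, so no contradiction is apparent; the phrase ``upon normalization'' does not resolve this.
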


\begin{remark}[Uniqueness of the half-space solution]
\label{rem:unique-barrier}
By the uniqueness of positive solutions in cones (see \cite[Proposition 4.4.5]{FeRo24}), it follows that all functions satisfying the properties of \autoref{lemma:1d-barrier} coincide up to a constant. In this sense, $b$ is uniquely determined.
\end{remark}

\begin{proof}
The proof consists of several steps. The main idea is to first construct minimizers of $\cI_{(-1,0)}$ that have a free boundary point in $(-1,0)$ (see Step 1). Then, we prove by compactness that the positivity set cannot get arbitrarily small (see Step 2). Finally, by a scaling and blow-up argument, we construct the half-space solution (see Step 3), as desired. Then, the remaining properties follow from \autoref{thm:or} and \autoref{thm:nondeg}.

Step 1: Let $L \in \mathcal{L}^1_s(\lambda,\Lambda)$ and let $b$ be a minimizer of $\cI_{(-1,0)}$ in $(-1,0)$ with $b \equiv \1_{(0,\infty)}$ in $\R \setminus (-1,0)$. Note that such minimizer exists by \autoref{lemma:existence}. We claim that $b \not\equiv 0$ in $(-1,0)$, and that there exists $M_0 > 1$, depending only on $s,\lambda,\Lambda$, such that for any $M > M_0$ there is $x_0 \in (-1,0)$ with $b(x_0) = 0$.\\
First, let us rule out that $b > 0$ in $(-1,0)$. In fact, if this was true, then we would have $Lb = 0$ in $(-1,0)$ by \autoref{lemma:aux}(iv). Moreover, let us define $w$ as follows:
\begin{align*}
w(x) = \begin{cases}
0 ~~ &\text{ in } (-\infty, -\frac{1}{2}],\\
2(x+\frac{1}{2}), ~~ &\text{ in } (-\frac{1}{2} , 0),\\
1 ~~ &\text{ in } [0,\infty).
\end{cases}
\end{align*}
Clearly, $w$ has finite energy as a Lipschitz function. Therefore,
\begin{align*}
\cI_{(-1,0)}(b) - \cI_{(-1,0)}(w) = \cE_{((-1,0)^c \times (-1,0)^c)^c}(b,b) - \cE_{((-1,0)^c \times (-1,0)^c)^c}(w,w) + \frac{M}{2} \ge -C_1 + \frac{M}{2}
\end{align*}
for some $C_1 > 0$, depending only on $s,\lambda,\Lambda$. Thus, choosing $M > 2 C_1$, it follows that
\begin{align*}
\cI_{(-1,0)}(w) < \cI_{(-1,0)}(b),
\end{align*}
which implies that $b$ cannot be a minimizer. Thus, there exists $x_0 \in (-1,0)$ with $b(x_0) = 0$.\\
Next, let us rule out that $b \equiv 0$ in $(-1,0)$, i.e., that $b \equiv \1_{(0,\infty)}$. In case $s \ge 1/2$ this is trivial. Indeed,
\begin{align*}
\cE_{((-1,0)^c \times (-1,0)^c)^c}^K(\1_{(0,\infty)},\1_{(0,\infty)}) \ge \frac{\lambda}{2} [\1_{(0,\infty)}]_{V^s((-1,0)|\R)}^2 = \infty,
\end{align*}
but clearly, any minimizer $b$ of $\cI_{(-1,0)}$ in $(-1,0)$ with $b \equiv \1_{(0,\infty)}$ in $\R \setminus (-1,0)$ must have finite energy. In case $s < 1/2$, we need to work a little more, since $\1_{(0,\infty)} \in V^s((0,1) \ \R)$. For $t \in (0,1)$, we define $u_t$ to be the unique function satisfying
\begin{align*}
\begin{cases}
L u_t &= 0 ~~ \text{ in } (-t , 0) ,\\
u_t &= 0 ~~ \text{ in } (-\infty,-t],\\
u_t &= 1 ~~ \text{ in } [0,\infty),
\end{cases}
\end{align*}
and moreover, we set $u_0 = \1_{(0,\infty)}$
We claim that there exists $t_0 \in (0,1)$ such that $\cI_{(-1,0)}(u_{t_0}) < \cI_{(-1,0)}(u_0)$. In fact, note that by scaling
\begin{align*}
\cI_{(-1,0)}(u_{t}) = \cE_{((-1,0)^c \times (-1,0)^c)^c}^K(u_t,u_t) + t M =  t^{1-2s} \cE_{((-t^{-1},0)^c \times (-t^{-1},0)^c)^c}^{\tilde{K}_t}(\tilde{u}_t,\tilde{u}_t) + t M,
\end{align*}
where $\tilde{u}_t(x) = u_t(x/t)$ and $\tilde{K}_t(h) = t^{-(n+2s)} K(h/t)$, and it holds $\tilde{L}_t \in \mathcal{L}_s^1(\lambda,\Lambda)$, as well as
\begin{align*}
\begin{cases}
\tilde{L}_t \tilde{u}_t &= 0 ~~ \text{ in } (-1 , 0) ,\\
\tilde{u}_t &= 0 ~~ \text{ in } (-\infty,-1],\\
\tilde{u}_t &= 1 ~~ \text{ in } [0,\infty),
\end{cases}
\qquad
\tilde{u}_0 = u_0.
\end{align*}
Then, by \autoref{lemma:harmonic-replacement}, we compute
\begin{align*}
\cI_{(-1,0)}(u_{0}) - \cI_{(-1,0)}(u_{t}) = t^{1-2s} \cE_{((-t^{-1},0)^c \times (-t^{-1},0)^c)^c}^{\tilde{K}_t}(\tilde{u}_t - u_0,\tilde{u}_t - u_0) - t M.
\end{align*}
Clearly, $u_0 - \tilde{u}_t = -\1_{(-1,0)} \tilde{u}_t$, and since $\tilde{u}_t$ is harmonic in $(-1,0)$, by \autoref{prop:beta-regularity} we have that $u \ge 1 - c_0 |x|^{\delta}$  in $(-\frac{1}{2},0)$, where $c_0 > 0$ and $\delta > 0$ , depend only on $s,\lambda,\Lambda$. Thus, we estimate 
\begin{align*}
 \cE_{((-t^{-1},0)^c \times (-t^{-1},0)^c)^c}^{\tilde{K}_t}(\tilde{u}_t - u_0,\tilde{u}_t - u_0) & \ge \int_1^{\infty} \int_{-\frac{1}{2}}^0 u^2(y) K(x-y) \d y \d x \\
&\ge \int_{-\frac{1}{2}}^0 (1 - c_0 |y|^{\delta})^2 \left( \int_1^{\infty}  |x-y|^{-1-2s} \d x \right) \d y \ge c_1 > 0
\end{align*}
for some $c_1 > 0$, depending only on $s,\lambda,\Lambda$. Altogether, it  follows that for $t_0 \in (0,1)$ small enough,
\begin{align*}
\cI_{(-1,0)}(u_{0}) - \cI_{(-1,0)}(u_{t_0}) \ge t_0^{1-2s} c_1 - t_0 M > 0.
\end{align*}
Therefore, $u_0 = \1_{(0,\infty)}$ cannot be a minimizer, as we claimed. This finishes the proof of Step 1.

Step 2: Let now $M$ be so large that the claim in Step 1 holds true. Next, we claim that there exists $\eps > 0$, depending only on $s,\lambda,\Lambda,M$, such that for any $L \in \mathcal{L}^1_s(\lambda,\Lambda)$ with kernel $K$ there exist a minimizer $b_K =: b$ of $\cI_{(-1,0)}$ in $(-1,0)$, satisfying $b = \1_{(0,\infty)}$ in $\R \setminus (-1,0)$, and an interval $I \subset (-1,0)$ with $|I| \ge \eps$ such that $I \subset \{ b > 0 \}$.\\
Let us assume by contradiction that this property does not hold true. Then, we can find sequences $(\eps_k)_k$ with $\eps_k \searrow 0$, $(L_k)_k \subset \mathcal{L}^1_s(\lambda,\Lambda)$ with kernels $(K_k)_k$, and minimizers $(b_k)_k$ of $\cI_{(-1,0)}$ in $(-1,0)$ with $b_k = \1_{(0,\infty)}$ in $\R \setminus (-1,0)$ such that $\{ b_k > 0 \}$ does not contain any interval of length $\ge \eps_k$. Note that we have $L_k b_k \le 0$ in $(-1,0)$ by \autoref{lemma:aux}, and therefore, by the maximum principle it holds $0 \le b_k \le 1$. In particular, by the $C^s$ regularity (see \autoref{thm-onephase-intro}) and \autoref{lemma:energy-bound}, we have that $(b_k)_k$ is locally uniformly bounded in $C^s((-1,0)) \cap H^s((-1,0)) \cap L^1_{2s}(\R^n)$. Moreover, since the exterior data is uniformly bounded, the sequence $b_k$ also satisfies \eqref{eq:unif-tail-decay}. Hence, \autoref{lemma:convergence-minimizer} is applicable, and we obtain that there exist an operator $L_{\infty} \in \mathcal{L}^1_s(\lambda,\Lambda)$ and $b_{\infty} \in L^{1}_{2s}(\R)$ such that $b_{\infty}$ is a local minimizer of $\cI_{(-1,0)}$ in $(-1,0)$ with respect to $L_{\infty}$, and $b_k \to b_{\infty}$ locally uniformly in $(-1,0)$, up to a subsequence. In particular, it must be $b_{\infty} \equiv 0$ in $(-1,0)$, since $\{ b_k > 0 \} \cap (-1,0)$ does not contain any interval of length $\ge \eps_k$, and $\eps_k \searrow 0$. Thus, it must be $b_{\infty} \equiv \1_{(0,\infty)}$. However, it cannot hold $b_{\infty} = 0$ in $(-1,0)$ by Step 1. This is a contradiction, so the claim must be true.

Step 3: Next, we observe that up to a rescaling and translation, for any operator $L \in \mathcal{L}_s^1(\lambda,\Lambda)$ with kernel $K$ we have constructed in Steps 1 and 2 a minimizer $b_K$ of $\cI_{(-a,a)}$ in $(-a,a)$ for some (uniform) $a > 1$ such that $b_K(0) = 0$ and $(0,1) \subset \{b_K > 0 \}$. Then, by the density estimates for the free boundary (see \autoref{thm:density-est}), there is $d \in (0,a)$, depending only on $s,\lambda,\Lambda$, such that $\{ b_K = 0 \} \subset (-d,0)$.\\
Let us observe that given an operator $L$ with kernel $K$, the operators $L_R$ with kernels $K_R(h) = R^{n+2s} K(Rh)$ satisfy $L_R \in \mathcal{L}_s^1(\lambda,\Lambda)$, and the corresponding minimizers $b_{K_R}$ have the property that $\tilde{b}_R(x) := b_{K_R}(x/R)R^s$ are minimizers with respect to $K$ in $(-aR,aR)$ satisfying $(0,R) \subset \{ \tilde{b}_R > 0 \}$ and $\{\tilde{b}_R = 0\} \subset (-dR,0)$. Note that, we have $0 \le b_{K_R} \le 1$ by the maximum principle, and therefore, using \autoref{thm:or}, the family $(b_{K_R})$ is uniformly bounded in $C^s((-\frac{d}{2},\frac{d}{2}))$. Thus, by \autoref{lemma:scaling-blow-up} and \autoref{lemma:convergence-minimizer}, there exist $\tilde{b}_{\infty} \in H^s((-r,r)) \cap L^1_{2s}(\R)$, minimizing $\cI_{(-r,r)}$ for $K$ in $(-r,r)$ for every $r > 0$, and such that $\tilde{b}_R \to \tilde{b}_{\infty}$. In particular, we have $(0,\infty) \subset \{ \tilde{b}_{\infty} > 0 \}$ and $\{\tilde{b}_{\infty} = 0\} \subset (-\infty,0)$, and thus by \autoref{lemma:aux}
\begin{align*}
\begin{cases}
L \tilde{b}_{\infty} &= 0 ~~ \text{ in } \{ x > 0 \},\\
\tilde{b}_{\infty} &= 0 ~~ \text{ in } \{ x \le 0 \}.\\
\end{cases}
\end{align*}
Finally, by the optimal regularity and the non-degeneracy (see \autoref{thm-onephase-intro}), it must be $\tilde{b}_{\infty} \in C^s((-R,R))$ with $\Vert \tilde{b}_{\infty} \Vert_{C^s((-R,R))} \le C(R)$ for any $R > 0$, and moreover $c_1 (x_+)^s \le \tilde{b}_{\infty}(x) \le c_2(x_+)^s$. This concludes the proof.
\end{proof}

It is well-known that for homogeneous operators $L$, although solutions $u$ to $Lu = 0$ in $\Omega$ with $u = 0$ in $\R^n \setminus \Omega$ are also only $C^s$ up to the boundary, we have that $u/d^s \in C^{\infty}(\overline{\Omega})$, when the kernel and the domain $\Omega$ are smooth. For inhomogeneous kernels, such higher regularity results fail, as the following remark clarifies.

\begin{remark}
\label{remark:optimality}
Let us consider the half-space solution $b \in C^s_{loc}(\R)$ from \autoref{lemma:1d-barrier} associated to an operator $L \in \mathcal{L}_s^1(\lambda,\Lambda)$. Note that continuity of the quotient $b/(x_+)^s$ with a modulus of continuity $\omega(r)$ would be equivalent to the following expansion for some $c_0 \in \R$
\begin{align*}
|b(x) - c_0 x^s| \le \omega(x)x^s ~~ \forall x \in (0,1).
\end{align*}
This implies that for any $x \in (0,\infty)$
\begin{align}
\label{eq:hom-limit}
\left|\frac{b(rx)}{(rx)^s} - c_0 \right| \le \omega(rx) \to 0 ~~ \text{ as } r \to 0, \qquad \text{ i.e., } \qquad b(rx)r^{-s} \to c_0 x^s ~~ \text{ loc. unif. in } (0,\infty).
\end{align}
However, note that $b_r(x) := b(rx)r^{-s}$ coincides (up to a constant) with the half-space solution associated to $L_r \in \mathcal{L}_s^1(\lambda,\Lambda)$ with kernel $K_r(h) = r^{n+2s}K(rh)$. By the Arzel\`a-Ascoli theorem, and by stability, we have that $b_r \to b_0 = c_0(x_+)^s \in C^s_{loc}(\R) \cap L^1_{2s}(\R)$ locally uniformly and in $L^1_{2s}(\R)$ (up to a subsequence), such that $L_0 b_0 = 0$ in $\{ x > 0 \}$ and $c_1 (x_+)^s \le b_0 \le c_2(x_+)^s$ for some $0 < c_1 \le c_2 < \infty$. Here, $L_0 \in \mathcal{L}_s^1(\lambda,\Lambda)$ has a kernel $K_0$ and it holds $\min\{1,|h|^2\} K_r(h) \d h \to \min\{1,|h|^2\} K_{0}(h) \d h$ weakly in the sense of measures. However, by \autoref{rem:unique-barrier}, we have that $b_0(x) = c_0(x_+)^s$ must be the half-space solution with respect to $L_0$. Clearly, it is possible to construct an example of a kernel $K$ such that $K_0$ is inhomogeneous, in which case $L_0 (x_+)^s \not\equiv 0$ in $(0,\infty)$ by a similar reasoning as in \cite[Corollary 2.8]{RoSe16a}.
\end{remark}

\begin{lemma}
\label{lemma:1dnd}
Let $0 < \lambda \le \Lambda < \infty$, $e \in \mathbb{S}^{n-1}$, and $L \in \mathcal{L}^n_s(\lambda,\Lambda)$. Then, there exist, $0 < \tilde{\lambda} \le \tilde{\Lambda} < \infty$, depending only on $n,s,\lambda,\Lambda$, and $\tilde{L} \in \mathcal{L}^1_s(\tilde{\lambda},\tilde{\Lambda})$ such that for any $u : \R^n \to \R$ and $\tilde{u} : \R \to \R$ with $u(x) = \tilde{u}(x \cdot e)$, it holds
\begin{align*}
Lu(x) = \tilde{L} \tilde{u}(x \cdot e).
\end{align*}
\end{lemma}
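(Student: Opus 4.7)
The plan is to obtain $\tilde L$ by explicitly integrating out the $(n-1)$ directions orthogonal to $e$, and to check by direct computation that the resulting one-dimensional kernel satisfies \eqref{eq:Kcomp} in dimension one.

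First, I would use the symmetry $K(-h) = K(h)$ to rewrite the operator in the symmetric difference form
\[
L u(x) = \int_{\R^n} \big(2 u(x) - u(x+h) - u(x-h)\big) K(h) \d h,
\]
which makes the principal value manifest (it is a convergent integral for sufficiently smooth bounded $u$, which is all that is needed at this stage; the identity then extends to the relevant function classes by density or directly in the weak sense). Substituting $u(x) = \tilde u(x \cdot e)$ gives
\[
L u(x) = \int_{\R^n} \big(2 \tilde u(x \cdot e) - \tilde u(x \cdot e + h \cdot e) - \tilde u(x \cdot e - h \cdot e)\big) K(h) \d h.
\]

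Next, I would split $h = t e + h'$ with $t = h \cdot e \in \R$ and $h' \in e^{\perp} \simeq \R^{n-1}$, and apply Fubini. Since the integrand depends on $h$ only through $t$, this yields
\[
L u(x) = \int_{\R} \big(2 \tilde u(x \cdot e) - \tilde u(x \cdot e + t) - \tilde u(x \cdot e - t)\big) \tilde K(t) \d t, \qquad \tilde K(t) := \int_{e^\perp} K(t e + h') \d h',
\]
which is exactly $\tilde L \tilde u(x \cdot e)$ for the $1$D operator $\tilde L$ with kernel $\tilde K$, provided that $\tilde K$ is a valid symmetric kernel in the class $\mathcal{L}^1_s(\tilde\lambda,\tilde\Lambda)$. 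Symmetry $\tilde K(-t) = \tilde K(t)$ follows immediately from $K(-h) = K(-te - h') = K(te + h')$ after the change of variables $h' \mapsto -h'$ in the inner integral.

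The only computation left is the two-sided $|t|^{-1-2s}$ bound on $\tilde K$. Using \eqref{eq:Kcomp} and the change of variables $h' = |t| z$ with $z \in \R^{n-1}$,
\[
\tilde K(t) \le \Lambda \int_{\R^{n-1}} (t^2 + |h'|^2)^{-\frac{n+2s}{2}} \d h' = \Lambda |t|^{-1-2s} \int_{\R^{n-1}} (1+|z|^2)^{-\frac{n+2s}{2}} \d z = \tilde\Lambda |t|^{-1-2s},
\]
and analogously $\tilde K(t) \ge \tilde\lambda |t|^{-1-2s}$, with $\tilde\lambda, \tilde\Lambda$ depending only on $n,s,\lambda,\Lambda$. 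Hence $\tilde L \in \mathcal{L}^1_s(\tilde\lambda,\tilde\Lambda)$, concluding the proof. No step presents a genuine obstacle; the only point requiring a bit of care is the justification of Fubini in the principal value integral, which is handled by passing to the symmetric difference form above.
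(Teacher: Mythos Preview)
Your proof is correct and follows essentially the same approach as the paper: both integrate out the directions orthogonal to $e$ to define $\tilde K(t)=\int_{e^\perp}K(te+h')\,dh'$ and verify the $|t|^{-1-2s}$ bounds via the scaling $h'=|t|z$. Your version is slightly more careful about the principal value (using the symmetric second difference) and explicitly checks the symmetry $\tilde K(-t)=\tilde K(t)$, but the argument is otherwise identical.
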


\begin{proof}
Without loss of generality, we assume that $e = e_n$.
We have, after splitting $\R^n = \R^{n-1} \times \R$:
\begin{align*}
Lu(x) &= \int_{\R^n} (u(x) - u(y)) K(x-y) \d y \\
&= \int_{\R} \int_{\R^{n-1}} (\tilde{u}(x_n) - \tilde{u}(y_n)) K(x'-y',x_n - y_n) \d y' \d y_n \\
&=  \int_{\R} (\tilde{u}(x_n) - \tilde{u}(y_n)) \left(\int_{\R^{n-1}} K(x'-y',x_n - y_n) \d y' \right) \d y_n = \tilde{L} \tilde{u}(x_n),
\end{align*}
where we set $\tilde{K}(h_n) = \int_{\R^{n-1}} K(h',h_n) \d h'$. It remains to verify $\tilde{L} \in \mathcal{L}_s^1(\tilde{\lambda},\tilde{\Lambda})$. Indeed, it holds
\begin{align*}
\int_{\R^{n-1}} |h|^{-n-2s} \d h' &= \int_{\R^{n-1}} (|h'|^2 + |h_n|^2)^{-\frac{n+2s}{2}} \d h' \\
&= |h_n|^{-n-2s} \int_{\R^{n-1}} \left(\left(\frac{|h'|}{|h_n|}\right)^2 + 1 \right)^{-\frac{n+2s}{2}} \d h' \\
&= |h_n|^{-1-2s} \int_{\R^{n-1}} \left(|h'|^2 + 1\right)^{-\frac{n+2s}{2}} \d h' = c_0 |h_n|^{-1-2s},
\end{align*}
where $c_0 > 0$ depends only on $n,s$. Therefore, the desired result follows upon choosing $\tilde{\lambda} = c_0 \lambda$ and $\tilde{\Lambda} = c_0 \Lambda$.
\end{proof}

We are now in position to construct 1D barriers for operators belonging to the class $\mathcal{L}^n_s(\lambda,\Lambda)$, which proves \autoref{thm-1D}.


\begin{proof}[Proof of \autoref{thm-1D}]
Given $L \in \mathcal{L}^n_s(\lambda,\Lambda)$ with kernel $K$ and $e \in \mathbb{S}^{n-1}$, let $\tilde{L} \in \mathcal{L}^1_s(\tilde{\lambda},\tilde{\Lambda})$ and $\tilde{K}$ be the 1D kernel from \autoref{lemma:1dnd}, and denote by $\tilde{b}$ the barrier from \autoref{lemma:1d-barrier} with respect to $\tilde{K}$. Then, the result follows immediately by combination of \autoref{lemma:1dnd} and \autoref{lemma:1d-barrier}.
\end{proof}

%
%

\section{$C^s$ boundary regularity}
\label{sec:Cs}

In this section we prove the boundary regularity for solutions to nonlocal equations with inhomogeneous kernels. In particular, we prove our main results \autoref{thm-Cs-intro} and \autoref{thm-expansion-intro} establishing $C^s$ regularity in $C^{1,\alpha}$ domains, as well as a Hopf lemma (see \autoref{thm:inhom-Cs} and \autoref{thm:inhom-Hopf}). Moreover, we show a Liouville theorem with growth in the half-space (see \autoref{thm:Liouville-half-space}), and the $C^{s-\eps}$ regularity in flat Lipschitz domains (see \autoref{thm:C-s-eps}). Both results are at the same time crucial in the proof of our main results, and of independent interest.

We will need the following lemma, which proves H\"older regularity up to the boundary in flat Lipschitz domains for possibly inhomogeneous kernels. It was established in \cite[Proposition 2.6.9]{FeRo24} for zero exterior data $g \equiv 0$.

\begin{lemma}
\label{lemma:Lipschitz-Holder-prelim}
Let $L \in \mathcal{L}_s^n(\lambda,\Lambda)$. Let $\delta > 0$, $\Omega \subset \R^n$ be a Lipschitz domain with Lipschitz constant $\delta$. Let $f \in L^{\infty}(B_1 \cap \Omega)$, $g \in C^{\gamma}(\overline{B_1 \setminus \Omega})$, and $u \in C(\R) \cap L^1_{2s}(\R^n)$ be a distributional solution of 
\begin{align*}
\begin{cases}
L u &= f ~~ \text{ in } B_1 \cap \Omega,\\
u &= g ~~ \text{ in } B_1 \setminus \Omega.\\
\end{cases}
\end{align*}
Then, there are $C, \eps > 0$, depending only on $n,s,\lambda,\Lambda,\delta,\gamma$, such that
\begin{align*}
\Vert u \Vert_{C^{\eps}(B_{1/2} \cap \overline{\Omega})} \le C \left(\Vert u \Vert_{L^1_{2s}(\R^n)} + \Vert f \Vert_{L^{\infty}(B_1 \cap \Omega)} +  \Vert g \Vert_{C^{\gamma}(\overline{B_1 \setminus \Omega)}} \right).
\end{align*}
\end{lemma}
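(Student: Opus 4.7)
The plan is to reduce to the zero--exterior--data case already handled in \cite[Proposition 2.6.9]{FeRo24} by subtracting a suitable extension of $g$. First, I would extend $g$ from $\overline{B_1 \setminus \Omega}$ to a compactly supported function $\tilde g \in C^{\gamma}(\R^n)$ via a Whitney--Stein construction. The key properties are $\tilde g = g$ in $B_1 \setminus \Omega$, $\supp \tilde g \subset B_2$, the global H\"older bound $\Vert \tilde g \Vert_{C^{\gamma}(\R^n)} \leq C \Vert g \Vert_{C^{\gamma}(\overline{B_1 \setminus \Omega})}$, and the interior bounds
\[
|D^k \tilde g(x)| \leq C_k \Vert g \Vert_{C^{\gamma}} \dist(x, \partial \Omega)^{\gamma - k}
\quad \text{for } x \in \Omega,\ k \in \N.
\]
Such an extension exists because $\partial \Omega$ is Lipschitz with small constant $\delta$, so a Whitney decomposition of $\Omega$ is available.

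Setting $v := u - \tilde g$, one obtains $v \in C(\R^n) \cap L^1_{2s}(\R^n)$, $v \equiv 0$ in $B_1 \setminus \Omega$, and $Lv = f - L\tilde g$ in $B_1 \cap \Omega$ in the distributional sense. The crucial step is the pointwise control of $L\tilde g$. Using the symmetry $K(h) = K(-h)$ to write $L\tilde g(x)$ in second-increment form, I would split the integral at scale $|h| \sim d(x) := \dist(x, \partial \Omega)$: on $\{|h| < d(x)/2\}$ the interior $C^2$ Whitney bound $|D^2 \tilde g(x)| \leq C d(x)^{\gamma - 2}$ contributes $\lesssim d(x)^{\gamma - 2s}$; on $\{|h| \geq d(x)/2\}$ the global $C^{\gamma}$ bound together with $\tilde g \in L^{\infty}$ gives the same. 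Altogether,
\[
|L\tilde g(x)| \leq C \Vert g \Vert_{C^{\gamma}(\overline{B_1 \setminus \Omega})}\bigl(1 + d(x)^{\gamma - 2s}\bigr)
\quad \text{for } x \in B_{3/4} \cap \Omega.
\]

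Finally, I would invoke a weighted version of \cite[Proposition 2.6.9]{FeRo24} giving $C^{\eps}$ boundary regularity for zero exterior data allowing a right-hand side $\tilde f$ with $\Vert d^{2s - \gamma} \tilde f\Vert_{L^{\infty}} < \infty$. This weighted variant follows from the same barrier/comparison argument as the unweighted one, exactly as in the remark following \autoref{prop:beta-regularity} for higher-order regularity. It produces $v \in C^{\eps}(B_{1/2} \cap \overline{\Omega})$ with the quantitative estimate, and since $\tilde g \in C^{\gamma}(\R^n) \subset C^{\eps}(\R^n)$ for any $\eps \in (0, \min\{\gamma, \eps_0\})$, the conclusion transfers to $u = v + \tilde g$. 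The main obstacle will be the Whitney extension compatible with the Lipschitz geometry of $\partial \Omega$ and the careful bookkeeping for the bound on $L\tilde g$ near $\partial \Omega$; the weighted boundary estimate itself is a routine adaptation of the known zero-data case.
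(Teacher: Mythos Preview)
Your proposal is correct and follows essentially the same approach as the paper: extend $g$ to a function that is $C^{\gamma}$ globally and smooth inside $\Omega$ with Whitney-type interior derivative bounds, subtract it to reduce to zero exterior data with a source term blowing up like $d^{\gamma-2s}$, and then apply the weighted version of \cite[Proposition 2.6.9]{FeRo24}. The paper simply outsources these steps to \cite[Proposition 2.6.15]{FeRo24}, noting that the barrier in \cite[Lemma B.4.2]{FeRo24} already accommodates the weighted right-hand side, but the underlying argument is the one you describe.
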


\begin{proof}
In case $g = 0$, the result can be found in \cite[Proposition 2.6.9]{FeRo24}. However, note that the barrier in \cite[Lemma B.4.2]{FeRo24} that is used in the proof, even allows for $f$ such that $d^{\alpha-2s}f \in L^{\infty}(\Omega \cap B_1)$ for some $\alpha \in (0,2s)$. Thus, in this case, we can even show
\begin{align}
\label{eq:Lipschitz-Holder-prelim-help-1}
\Vert u \Vert_{C^{\eps}(B_{1/2} \cap \overline{\Omega})} \le C \left(\Vert u \Vert_{L^1_{2s}(\R^n)} + \Vert d^{2s-\alpha} f \Vert_{L^{\infty}(B_1 \cap \Omega)} \right).
\end{align}
For general $g \in C^{\gamma}(\overline{B_1 \setminus \Omega})$, the result can be shown by following line by line the proof of \cite[Proposition 2.6.15]{FeRo24}, but applying in \cite[Step 3]{FeRo24} the already established version with zero exterior data and exploding source terms \eqref{eq:Lipschitz-Holder-prelim-help-1}, setting $\alpha := \gamma$. Note that although $g$ is a priori only given in $\overline{B_1 \setminus \Omega}$, we can easily extend it to the full space, so that it satisfies $g \in C^{\infty}(\Omega) \cap C^{\gamma}(\overline{\Omega})$, as in \cite[Step 1]{FeRo24}.
\end{proof}

\subsection{Liouville theorems in the half-space}

The goal of this section is to prove the following Liouville theorem in the half space for solutions that grow like than $x \mapsto |x|^{\beta}$ for $\beta \in (0,2s)$ at infinity. 

Since from now on, solutions might not belong to $V^s(B | \R^n)$ due to the growth at infinity, we now also consider distributional solutions, instead of weak solutions. Note that all the equations in the previous subsection can be interpreted in the distributional sense by \cite[Lemma 2.2.32]{FeRo24}.

\begin{theorem}
\label{thm:Liouville-half-space}
Let $L \in \mathcal{L}^n_s(\lambda,\Lambda)$ and $e \in \mathbb{S}^{n-1}$. Let $\beta \in (0,2s)$, and $u \in C(\R^n) \cap L^1_{2s}(\R^n)$ be a distributional solution of 
\begin{align*}
\begin{cases}
L u &= 0 ~~ \qquad\qquad \text{ in } \{ x \cdot e > 0 \},\\
u &= 0 ~~ \qquad \qquad \text{ in } \{ x \cdot e \le 0 \},\\
|u(x)| &\le C (1 + |x|^{\beta})  ~~ \forall x \in \R^n.
\end{cases}
\end{align*}
\begin{itemize}
\item[(i)] If $\beta < s$, then $u \equiv 0$.
\item[(ii)] If $\beta \ge s$, then $u \equiv \kappa b$ for some $\kappa \in \R$, and where $b(x) = \tilde{b}(x \cdot e)$ is the half-space solution from \autoref{thm-1D} corresponding to $L$ and $e$.
\end{itemize}
\end{theorem}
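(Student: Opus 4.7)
My plan is a blow-down argument relying on the boundary regularity estimate \autoref{thm:C-s-eps}, which provides $C^{s-\eps}$ bounds in flat Lipschitz domains uniformly over the class $\mathcal{L}_s^n(\lambda,\Lambda)$. Part (i) is handled directly by rescaling, while part (ii) is reduced to part (i) by iteratively subtracting the correct multiple of $b$.

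\textbf{Part (i), $\beta<s$.} Consider the rescalings $u_R(x):=u(Rx)/R^\beta$ for $R\ge 1$. Each $u_R$ is a distributional solution of $L_R u_R=0$ in $\{x\cdot e>0\}$ with rescaled kernel $K_R(h):=R^{n+2s}K(Rh)$ still in $\mathcal{L}_s^n(\lambda,\Lambda)$, vanishes in $\{x\cdot e\le 0\}$, and is uniformly bounded in $L^\infty(B_2)\cap L^1_{2s}(\R^n)$ by the growth hypothesis on $u$. Applying \autoref{thm:C-s-eps} yields $\|u_R\|_{C^{s-\eps}(B_{1/2})}\le C$ for some universal $\eps>0$, and scaling back gives
\[
[u]_{C^{s-\eps}(B_{R/2})}\le C R^{\beta-s+\eps}.
\]
Choosing $\eps\in(0,s-\beta)$ and using $u(0)=0$ (since $0\in\{x\cdot e\le 0\}$) yields $|u(x)|\le C R^{\beta-s+\eps}|x|^{s-\eps}$ for $|x|\le R/2$. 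Letting $R\to\infty$ for each fixed $x$ forces $u\equiv 0$.

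\textbf{Part (ii), $\beta\in[s,2s)$.} My plan is to prove the following improvement-of-growth claim and iterate it: there exist universal constants $\delta,C>0$ such that whenever $u$ satisfies the hypotheses with $|u(x)|\le A(1+|x|^\beta)$ and $\beta\in[s,2s)$, there is $\kappa\in\R$ with $|\kappa|\le CA$ satisfying
\[
|u(x)-\kappa b(x)|\le C A (1+|x|^{\beta-\delta})\quad\text{in }\R^n.
\]
Since each iteration lowers the exponent by $\delta$ while only multiplying the amplitude by a fixed factor $C$, after at most $\lceil(\beta-s)/\delta\rceil+1$ iterations the residual has growth exponent strictly below $s$, with the accumulated constants $\kappa_i$ uniformly bounded. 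At that point part (i) forces the residual to vanish, and $u$ is the accumulated multiple of $b$.

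\textbf{Main obstacle.} Proving the improvement claim, which I would address by contradiction and compactness. If it fails, one obtains sequences $(L_k, u_k, \beta_k, A_k)$ such that $M_k:=\inf_{\kappa\in\R}\sup_{B_1}|u_k-\kappa b_k|>0$ yet no admissible $\kappa$ gives the required improvement. After selecting a near-optimal $\kappa_k$ and setting $\tilde u_k:=(u_k-\kappa_k b_k)/M_k$, the sequence $\tilde u_k$ solves the Liouville problem for $L_k$, has unit amplitude on $B_1$, and controlled growth. Using \autoref{thm:C-s-eps} for compactness together with weak convergence of the kernels in the sense of measures (as in \autoref{lemma:convergence}), one extracts a subsequential limit $\tilde u_\infty$ solving the Liouville problem for some $L_\infty\in\mathcal{L}_s^n(\lambda,\Lambda)$, nontrivial on $B_1$, with growth at most $\beta$, and "orthogonal" to $b_\infty$ (the half-space solution for $L_\infty$) in the sense that $\inf_\kappa\sup_{B_1}|\tilde u_\infty-\kappa b_\infty|\ge 1/2$. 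The contradiction should come either by invoking part (i) applied to $\tilde u_\infty$ after a further rescaling that lowers its growth below $s$, or by combining the uniqueness of positive solutions in cones (\autoref{rem:unique-barrier}) with a decomposition of $\tilde u_\infty$ into one-signed parts; closing this final step cleanly is the delicate part of the argument.
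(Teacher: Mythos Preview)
Your argument contains a circular dependency that cannot be repaired as written. You invoke \autoref{thm:C-s-eps} (the $C^{s-\eps}$ estimate in flat Lipschitz domains) in both parts, but in the paper \autoref{thm:C-s-eps} is proved \emph{after} \autoref{thm:Liouville-half-space}, and its proof explicitly uses \autoref{thm:Liouville-half-space}(i) to kill the blow-up limit. So you are assuming the very result you are trying to prove. The only boundary estimate available at this stage of the paper without assuming the Liouville theorem is \autoref{lemma:Lipschitz-Holder-prelim}, which gives a $C^{\eps_0}$ bound for some small $\eps_0>0$ depending only on the ellipticity constants; plugging that into your blow-down for part (i) would only yield $u\equiv 0$ when $\beta<\eps_0$, not for all $\beta<s$.

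The paper avoids this by a completely different route: it first reduces to one dimension via tangential incremental quotients (using only \autoref{lemma:Lipschitz-Holder-prelim} for compactness and \autoref{lemma:1dnd} for the dimensional reduction), and then in 1D it has access to the barrier $\tilde b$ constructed via the one-phase problem. That barrier yields a genuine $C^s$ boundary estimate on the half-line (\autoref{cor:1d-reg-half-space}), which is what makes the scaling argument for $\beta<s$ work (\autoref{lemma:Liouville-1d-1}). For $\beta\ge s$ the paper does not iterate an improvement-of-growth claim; instead it uses the boundary Harnack principle to control $u/\tilde b$ in $C^\alpha$ (\autoref{lemma:1d-reg-half-space-higher}), and when $\alpha$ is too small it differentiates and compares $v'$ against a barrier built from $\tilde b+\tilde b'$ (\autoref{lemma:b-derivative-bounds}, \autoref{lemma:Liouville-1d-2}). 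Your compactness sketch for part (ii) is also incomplete on its own terms: the limit $\tilde u_\infty$ satisfies the same Liouville problem with the same growth $\beta\ge s$ for some $L_\infty$, so neither part (i) nor the positivity-based uniqueness in cones applies to it, and you have not actually closed the contradiction.
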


First, we establish the theorem in 1D. We will treat the cases (i) and (ii) separately. Let us first state another corollary of \autoref{lemma:1d-barrier}.

\begin{corollary}
\label{cor:1d-reg-half-space}
Let $L \in \mathcal{L}_s^1(\lambda,\Lambda)$, $R > 0$, $f \in L^{\infty}(\{ x > 0 \} \cap (0,2R))$, and $u \in H^s((-2R,2R)) \cap L^1_{2s}(\R^n)$ be a weak solution to
\begin{align*}
\begin{cases}
L u &= f ~~ \text{ in } \{ x > 0 \} \cap (0,2R),\\
u & = 0 ~~ \text{ in } \{ x \le 0 \}.
\end{cases}
\end{align*}
Then, there exists a constant $C > 0$, depending only on $s,\lambda,\Lambda,$ such that
\begin{align*}
[ u ]_{C^s((0,R))} \le C R^{-s} \left( \dashint_{0}^R |u(x)| \d x + R^{2s}\int_{R}^{\infty} |u(x)||x|^{-1-2s} \d x + \Vert f \Vert_{L^{\infty}(\{ x > 0 \} \cap (0,2R))} \right).
\end{align*}
\end{corollary}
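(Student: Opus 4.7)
My plan is to combine a pointwise boundary bound $|u(x)|\le CM\,x^s$ with the standard interior $C^s$ estimate, via a routine gluing argument. After replacing $u(x)$ by $u(Rx)/R^s$ and the kernel by $R^{1+2s}K(R\cdot)\in\mathcal{L}_s^1(\lambda,\Lambda)$, the problem reduces to the case $R=1$; let $M$ denote the right-hand side in this normalized setting.

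The heart of the proof is the boundary pointwise bound. I would fix a cutoff $\eta\in C_c^\infty([0,3])$ with $\eta\equiv 1$ on $[0,2]$, let $b$ be the half-space solution from \autoref{lemma:1d-barrier}, set $\psi:=b\eta$, and consider the truncation $\hat u:=u\chi_{(-\infty,2]}$ (which equals $u$ on $(0,2]$ and vanishes elsewhere). Using $Lu=f$ on $(0,2)$ and $Lb\equiv 0$ on $\{x>0\}$, one computes for every $x\in(0,1)$
\begin{align*}
L\hat u(x) &= f(x)+2\int_2^\infty u(y)\,K(x-y)\,dy,\\
L\psi(x) &= 2\int_2^\infty b(y)\bigl(1-\eta(y)\bigr) K(x-y)\,dy.
\end{align*}
For $x\in(0,1)$ and $y\ge 2$ one has $y-x\asymp y$, so the tail control $\int_1^\infty |u(y)|\,y^{-1-2s}\,dy\le M$ gives $|L\hat u|\le CM$, while the lower growth $b(y)\ge c_1 y^s$ from \autoref{lemma:1d-barrier} yields $L\psi\ge c_0>0$ uniformly on $(0,1)$. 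Moreover $\psi=b\le c_2 x^s$ on $(0,1)$, and with $A:=CM$ chosen large enough, using the local $L^\infty$ bound $|u|\le CM$ on $(0,3/2)$ (from \cite[Theorem 6.2]{Coz17}), one checks that $A\psi\ge|\hat u|$ on $\R\setminus(0,1)$. The weak maximum principle applied to $A\psi\pm\hat u$ then gives $|u(x)|=|\hat u(x)|\le A\psi(x)\le CM\,x^s$ for all $x\in(0,1)$.

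Given this boundary bound, for each $x_0\in(0,1)$ the interior $C^s$ estimate \cite[Theorem~2.4.3]{FeRo24} applied on $(x_0/2,3x_0/2)\subset(0,3/2)$, combined with the pointwise bound and the tail estimate, gives $[u]_{C^s((x_0/2,3x_0/2))}\le CM$. A standard dichotomy on $|x-y|$ relative to $\min(x,y)$ then combines the pointwise boundary bound with the interior estimate to yield $[u]_{C^s((0,1))}\le CM$, and rescaling back gives the claim. The hardest step is the uniform lower bound $L\psi\ge c_0>0$ on $(0,1)$, which hinges on the two-sided growth $b(y)\asymp y^s$ from \autoref{lemma:1d-barrier}: the upper bound $b\le c_2(x_+)^s$ alone would not allow the barrier $A\psi$ to absorb the bounded source $|L\hat u|\le CM$ close to the boundary.
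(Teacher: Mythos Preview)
Your approach is essentially the paper's: after scaling to $R=1$, truncate both $u$ and the half-space barrier $b$, show that the truncated barrier is a strict supersolution on $(0,1)$ while the truncated $u$ has bounded right-hand side, apply the comparison principle to get $|u|\le CM\,x^s$, and finish with interior $C^s$ estimates and the standard gluing. The paper uses the sharp cutoffs $v:=\mathbbm{1}_{(0,3/2)}u$ and $B:=\mathbbm{1}_{(0,3/2)}b$ in place of your smooth $\eta$ and your cutoff at $2$, but this is cosmetic.

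There is one small technical slip worth fixing. Your truncation $\hat u=u\chi_{(-\infty,2]}$ is supported on $(0,2]$, so to verify $A\psi\ge|\hat u|$ on $\R\setminus(0,1)$ you need a uniform bound $|u|\le CM$ on all of $[1,2]$; however you only invoke local boundedness on $(0,3/2)$, and since the equation $Lu=f$ is only posed in $(0,2)$, the interior $L^\infty$ estimate does not give a bound uniform up to the endpoint $x=2$. The cure is immediate: cut off at $3/2$ rather than $2$ (exactly as the paper does), so that the comparison outside $(0,1)$ only requires $|u|\le CM$ on the compact set $[1,3/2]\subset(0,2)$, where local boundedness applies cleanly.
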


\begin{proof}
By scaling, it suffices to prove the result in case $R = 1$ and under the assumption that $\Vert u \Vert_{L^1_{2s}(\R)} + \Vert f \Vert_{L^{\infty}(\{ x > 0\} \cap (0,2))} \le 1$. Moreover, by the interior regularity estimates (see \cite[Theorem 2.4.3]{FeRo24}), it suffices to prove that
\begin{align*}
|u(x)| \le C (x_+)^s ~~ \forall x \in (0,1).
\end{align*}
To prove it, we consider $v := \1_{(0,3/2)}u$ and $B := \1_{(0,3/2)}b$, where $b$ denotes the barrier from \autoref{lemma:1d-barrier}. A simple computation reveals that there are $c_0,c_1 > 0$ such that $L B \ge c_0$ and $L v \le c_1$ in $(0,1)$. Moreover, there are $c_3,c_4 > 0$ such that  $B \ge c_3(x_+)^s \ge c_3$ and $v = u \le c_4 \Vert u \Vert_{L^1_{2s}(\R)} \le c_4$ in $(1,3/2)$. Thus, by the comparison principle, it must be
\begin{align*}
u(x) = v(x) \le B \le c_5 (x_+)^s ~~ \forall x \in (0,1),
\end{align*}
as desired.
\end{proof}

As a consequence, we get a Liouville theorem in the half-space in 1D for solutions that grow slower that $x \mapsto |x|^s$. Note that, even this result was only known for nonlocal operators with homogeneous kernels.

\begin{lemma}
\label{lemma:Liouville-1d-1}
Let $L \in \mathcal{L}_s^1(\lambda,\Lambda)$, $\eps > 0$, and $u \in H^s(\R) \cap L^1_{2s}(\R^n)$ be a weak solution of 
\begin{align*}
\begin{cases}
L u &= 0 ~~~~ \qquad\qquad \text{ in } \{ x > 0 \},\\
u &= 0 ~~~~ \qquad\qquad \text{ in } \{ x \le 0 \},\\
|u(x)| &\le C (1 + |x|^{s-\eps})  ~~ \forall x \in \R.
\end{cases}
\end{align*}
Then, it holds $u \equiv 0$.
\end{lemma}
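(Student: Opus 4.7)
The plan is a direct scaling/Liouville argument based on \autoref{cor:1d-reg-half-space}. First, I would apply that corollary to $u$ (with $f\equiv 0$) on the interval $(0,R)$ for arbitrary $R\ge 1$ to obtain
\[
[u]_{C^s((0,R))} \le CR^{-s}\left(\dashint_{0}^{R}|u(x)|\d x + R^{2s}\int_{R}^{\infty}|u(x)||x|^{-1-2s}\d x\right).
\]

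Second, I would insert the growth bound $|u(x)|\le C(1+|x|^{s-\eps})$ into the right-hand side. The averaged term is bounded by $CR^{s-\eps}$, and since $s-\eps<2s$ the tail integral $\int_R^\infty |x|^{s-\eps-1-2s}\d x$ converges and gives $R^{2s}\int_R^\infty|u(x)||x|^{-1-2s}\d x\le CR^{s-\eps}$ as well. Combining these bounds yields $[u]_{C^s((0,R))}\le CR^{-\eps}\to 0$ as $R\to\infty$. Hence for any two points $x,y\in(0,\infty)$, choosing $R>\max(x,y)$ and letting $R\to\infty$ forces $u(x)=u(y)$, so $u$ is constant on $(0,\infty)$, say $u\equiv c$.

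Third, to rule out $c\neq 0$, I plan to use the weak formulation of $Lu=0$. Testing $\cE(u,\phi)=0$ against any nontrivial $\phi\ge 0$ in $H^s_{(0,R)}(\R)$ with compact support in $(0,\infty)$, a direct decomposition of the double integral according to the sign of $x$ and $y$ leaves only the cross term from $x>0$, $y\le 0$ (since $u$ is constant on each half-line), which equals
\[
0=\cE(u,\phi)=2c\int_{0}^{\infty}\phi(x)\left(\int_{x}^{\infty}K(h)\d h\right)\d x.
\]
Since $K(h)\ge \lambda|h|^{-1-2s}$, the inner integral is strictly positive for $x>0$, and this forces $c=0$. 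Alternatively one may observe that $c\cdot\1_{(0,\infty)}$ has infinite $H^s_{\loc}$-seminorm across the origin unless $c=0$, contradicting the $H^s(\R)$ assumption on $u$.

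I do not expect any substantial obstacle: the heavy lifting is already encoded in \autoref{cor:1d-reg-half-space}, which itself rests on the existence of the 1D half-space barrier from \autoref{lemma:1d-barrier}. The only minor care required is checking that the $R$-scalings in the local and tail terms match, which is automatic since $s-\eps<\min\{s,2s\}$.
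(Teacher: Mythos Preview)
Your proposal is correct and follows essentially the same approach as the paper: apply \autoref{cor:1d-reg-half-space}, insert the growth bound to obtain $[u]_{C^s((0,R))}\le CR^{-\eps}\to 0$, and conclude $u$ is constant. The paper dispatches the final step more quickly by noting that the $C^s$ bound from \autoref{cor:1d-reg-half-space} already forces $u(x)\to 0$ as $x\to 0^+$, so the constant is $0$; your weak-formulation argument is a valid alternative, though your $H^s$-seminorm remark only works for $s\ge 1/2$.
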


\begin{proof}
By application of \autoref{cor:1d-reg-half-space}, we deduce that for any $R > 1$:
\begin{align*}
[u]_{C^s((0,R))} &\le C R^{-s} \left( \dashint_{0}^R |u(x)| \d x + R^{2s}\int_{R}^{\infty} |u(x)||x|^{-1-2s} \d x \right) \\
&\le C R^{-s} \left( 1 + R^{s-\eps} + R^{2s} \int_R^{\infty} |x|^{-1-s-\eps} \d x \right) \le C R^{-\eps} \to 0, ~~ \text{ as } R \to 0.
\end{align*}
Thus, $u$ must be constant. Since $u = 0$ in $\{ x \le 0 \}$, it follows that $u \equiv 0$, as desired.
\end{proof}

To establish a Liouville theorem in the half-space for solutions that grow faster than $x \mapsto |x|^s$, we need the following two auxiliary higher order regularity results for 1D solutions. The first claim is a direct consequence of the boundary Harnack principle. The second result is an interior gradient estimate, which follows by an adaptation of \cite{RoVa16} to inhomogeneous kernels, using \autoref{cor:1d-reg-half-space}. Note that in case $s > 1/3$, the latter would follow immediately by standard Schauder estimates, as in \cite{RoSe16b}. 

\begin{lemma}
\label{lemma:1d-reg-half-space-higher}
Let $L \in \mathcal{L}_s^1(\lambda,\Lambda)$, $R > 0$ and $u \in C(\R) \cap L^1_{2s}(\R^n)$ be a distributional solution to
\begin{align*}
\begin{cases}
L u &= 0  ~~ \text{ in } \{ x > 0 \},\\
u & = 0 ~~ \text{ in } \{ x \le 0 \}.
\end{cases}
\end{align*}
Then, there exist a constant $C > 0$, and $\alpha \in (0,1)$, depending only on $s,\lambda,\Lambda,$ such that it holds
\begin{align}
\label{eq:bdry-Harnack}
\left[ \frac{u}{b} \right]_{C^{\alpha}((0,R))} \le C R^{-s-\alpha} \left(  \dashint_{0}^R |u(x)| \d x + R^{2s}\int_{R}^{\infty} |u(x)||x|^{-1-2s} \d x \right),
\end{align}
where $b$ is the half-space solution from \autoref{lemma:1d-barrier} corresponding to $L$. Moreover, it holds for any $\alpha \in (0,1+s)$ such that $2s+\alpha \not \in \N$:
\begin{align}
\label{eq:interior-higher-order-est}
[u]_{C^{2s+\alpha}(R,3R)} \le C R^{-2s-\alpha} \left( \dashint_{(0,R)} |u(y)| \d y + R^{2s}\int_{R}^{\infty} |u(y)||y|^{-1-2s} \d y \right) ~~ \forall x > 0,
\end{align}
and 
\begin{align}
\label{eq:interior-gradient-est}
|u'(x)| \le C (x_+)^{-1} \left( \dashint_{0}^{x_+} |u(y)| \d y + (x_+)^{2s}\int_{x_+}^{\infty} |u(y)||y|^{-1-2s} \d y \right) ~~ \forall x > 0.
\end{align}

\end{lemma}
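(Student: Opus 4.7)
The three estimates are proved in sequence: \eqref{eq:bdry-Harnack} follows from a boundary Harnack principle for operators in $\mathcal{L}_s^1(\lambda,\Lambda)$ applied to $u$ and $b$, while the interior estimates \eqref{eq:interior-higher-order-est} and \eqref{eq:interior-gradient-est} come from iterating \autoref{cor:1d-reg-half-space} and exploiting the translation invariance of $L$.

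For \eqref{eq:bdry-Harnack}, I first rescale to $R = 1$ using that if $L \in \mathcal{L}_s^1(\lambda, \Lambda)$ has kernel $K$, then the rescaled operator with kernel $R^{1+2s} K(R\,\cdot)$ also lies in $\mathcal{L}_s^1(\lambda, \Lambda)$. After the rescaling, both $u$ and the half-space barrier $b$ from \autoref{lemma:1d-barrier} are solutions of $Lw = 0$ in $(0, 1)$ vanishing in $\{x \le 0\}$, and $b$ satisfies $c_1 (x_+)^s \le b \le c_2 (x_+)^s$. A boundary Harnack principle for the class $\mathcal{L}_s^1(\lambda,\Lambda)$ on the half-line then gives $[u/b]_{C^\alpha((0, 1))} \le C$ with $\alpha, C > 0$ depending only on $s, \lambda, \Lambda$. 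Undoing the rescaling supplies the $R^{-s-\alpha}$ prefactor in \eqref{eq:bdry-Harnack}.

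For \eqref{eq:interior-higher-order-est}, I exploit the translation invariance of $L$: if $Lu = 0$ on $(0, \infty)$, then for any sufficiently small $h$ the translate $u_h(x) := u(x + h)$ also solves $Lu_h = 0$ on the translated interval, so the difference $u_h - u$ is $L$-harmonic on any interval at distance larger than $|h|$ from the boundary. Applying \autoref{cor:1d-reg-half-space} to $u_h - u$ on a suitable shell around $(R, 3R)$ yields $C^s$ regularity of $|h|^{-s}(u_h - u)$ uniformly in $h$, which upgrades $u$ from $C^s$ to $C^{2s}$ on $(R, 3R)$. Iterating this argument on higher-order differences (applied on progressively smaller shells so that the exterior data and tails at each step remain controlled) yields $u \in C^{2s+\alpha}((R, 3R))$ for every $\alpha \in (0, 1+s)$ with $2s + \alpha \notin \mathbb{N}$, with norm bounded as stated. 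This is the 1D adaptation for $\mathcal{L}_s^1(\lambda, \Lambda)$-operators of the iteration in \cite{RoVa16}; when $s > 1/3$ a single step already suffices to pass $C^1$, and the rest of the range is a standard bootstrap.

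For \eqref{eq:interior-gradient-est}, I choose $\alpha \in (0, 1+s)$ with $2s + \alpha > 1$, which is possible since $1 + s > 1 - 2s$ for every $s > 0$. Applying \eqref{eq:interior-higher-order-est} at a scale $R \sim x_+$ gives $u \in C^{2s+\alpha}((R, 3R))$ with a norm bound of the required form, so that $u'$ exists on this interval and $\|u'\|_{L^\infty((R, 3R))}$ is controlled by interpolating $\|u\|_{L^\infty((R, 3R))}$ (itself bounded via \autoref{cor:1d-reg-half-space}) with $[u]_{C^{2s+\alpha}((R, 3R))}$; since $x \in (R, 3R)$, this yields the bound \eqref{eq:interior-gradient-est}. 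The main obstacle is the iteration in the proof of \eqref{eq:interior-higher-order-est}: because the kernels are only assumed to satisfy the pointwise ellipticity \eqref{eq:Kcomp} and are neither homogeneous nor smooth, the bootstrap cannot rely on differentiating the equation with respect to the kernel and must instead proceed through incremental quotients, with careful control of how the tail term transforms under differencing. The decisive point that makes the argument go through is that translation invariance in $x$ is preserved by $\mathcal{L}_s^1(\lambda, \Lambda)$, so that at every step the difference quotients still solve an equation driven by a kernel in the same class.
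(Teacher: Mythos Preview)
Your proposal is essentially correct and follows the same route as the paper. For \eqref{eq:bdry-Harnack} the paper also reduces to $R=1$, invokes the boundary Harnack principle from \cite[Theorem~4.3.1]{FeRo24} with $b$ as the reference solution (using the uniform two-sided bounds on $\|b\|_{L^1_{2s}}$ from \autoref{lemma:1d-barrier}), and recovers the $R^{-s-\alpha}$ factor by rescaling; for \eqref{eq:interior-higher-order-est} the paper likewise defers to the bootstrap in \cite[Sections~4--6]{RoVa16}, observing that the only place homogeneity enters there is in the $C^s$ estimate \cite[(6.52)]{RoVa16}, which is now supplied by \autoref{cor:1d-reg-half-space}; and \eqref{eq:interior-gradient-est} is obtained exactly as you say, by interpolating \eqref{eq:interior-higher-order-est} at scale $R=x_+$ against the local boundedness estimate.

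One small inaccuracy worth flagging: when you write ``applying \autoref{cor:1d-reg-half-space} to $u_h-u$'', note that \autoref{cor:1d-reg-half-space} requires the function to vanish on $\{x\le 0\}$, which $u_h-u$ does not (for $h>0$ it equals $u_h$ on $(-h,0]$). On the interior shell $(R,3R)$ you should instead invoke the interior $C^s$ estimate \cite[Theorem~2.4.3]{FeRo24}; the point you correctly highlight---that the tail of the difference quotient must be controlled at each step---is handled precisely because \autoref{cor:1d-reg-half-space} gives a \emph{global} $C^s$ bound on $(0,M)$ with seminorm decaying like $M^{-s}$, so that $|u(y+h)-u(y)|\lesssim |h|^s y^{-s}$ for large $y$ and the tail of $u_h-u$ scales like $|h|^s$. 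This is the substance behind the paper's remark that \autoref{cor:1d-reg-half-space} is the one new input needed to run the \cite{RoVa16} argument for inhomogeneous kernels.
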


\begin{proof}
For $R = 1$ the first claim follows immediately from the boundary Harnack principle (see \cite[Theorem 4.3.1]{FeRo24}), where we used that $Lb = 0$ in $\{ x > 0\}$, $b \ge 0$, and that there exist $c_2 \ge c_1 > 0$, depending only on $s,\lambda,\Lambda$ such that $c_1 \le \Vert b \Vert_{L^1_{2s}(\R)} \le c_2$ by \autoref{lemma:1d-barrier}. It is important to note that the proof can easily adapted in case $u$ is not assumed to be nonnegative by setting $m = -M$ in the proof of \cite[Proposition 4.3.6]{FeRo24}. Moreover, the result is applicable since $u$ is also a viscosity solution by \cite[Lemma 3.4.13]{FeRo24}. The result for general $R > 0$ follows by the observation that if $u$ satisfies the assumptions with $R$, then $u_R(x) = u(Rx)$ satisfies the assumption with $R = 1$ and operator $L_R$ having kernel $K_R(h) = R^{1+2s}K(Rh)$. Moreover, note that $b_R(x) = b(Rx)R^{-s}$ satisfies $L_R b_R = 0$ in $\{ x > 0 \}$ and $c_1\le \Vert b \Vert_{L^1_{2s}(\R)} \le c_2$. Thus, since
\begin{align*}
\Vert u_R \Vert_{L^1_{2s}(\R)} = \dashint_{0}^R |u(x)| \d x + R^{2s} \int_R^{\infty} |u(x)| |x|^{-1-2s} \d x,
\end{align*}
the result follows immediately from the fact that
\begin{align*}
\left[ \frac{u_R}{b_R} \right]_{C^{\alpha}((0,1))} = R^{\alpha} \left[ \frac{u}{b R^{-s}} \right]_{C^{\alpha}((0,R))} = R^{s+\alpha} \left[ \frac{u}{b} \right]_{C^{\alpha}((0,R))},
\end{align*}
and application of the result in case $R = 1$ to $u_R$ and $b_R$.

The proof of the second claim follows exactly in the same way as in \cite[Theorem 1.1]{RoVa16}. Indeed, although in \cite{RoVa16} it is assumed that kernels are homogeneous, all the arguments in \cite[Section 4,5,6]{RoVa16} go through in the exact same way for inhomogeneous kernels. The only point where homogeneity is used in their paper is in order to have the estimate \cite[(6.52)]{RoVa16}, which in our case becomes
\begin{align*}
\Vert u \Vert_{C^s([0,1])} \le C \Vert u \Vert_{L^{1}_{2s}(\R)},
\end{align*}
and was proved in \autoref{cor:1d-reg-half-space}. Note that the proof in \cite{RoVa16} easily carries over to unbounded domains. Thus, by scaling we immediately obtain for any $R > 0$
\begin{align*}
[u]_{C^{2s+\alpha}(R,3R)} \le C R^{-1} \left( \dashint_{(R,3R)} |u(y)| \d y + R^{2s}\int_{\R \setminus (R,3R)} |u(y)||y-2R|^{-1-2s} \d y \right) ~~ \forall x > 0,
\end{align*}
which immediately implies \eqref{eq:interior-higher-order-est} upon estimating the integrals in a straightforward way.
Finally, \eqref{eq:interior-gradient-est} follows from \eqref{eq:interior-higher-order-est} applied with $R = x_+$ by H\"older interpolation (for any small $\alpha > 0$) and the local boundedness estimate (see \cite[Theorem 6.2]{Coz17}):
\begin{align*}
|u'(x)| &\le \Vert u' \Vert_{L^{\infty}(x,3x)} \le C (x_+)^{-1} \Vert u \Vert_{L^{\infty}(x,3x)} + (x_+)^{\alpha} [u']_{C^{\alpha}(x,3x)} \\
&\le C (x_+)^{-1} \left( \dashint_{0}^{x_+} |u(y)| \d y + (x_+)^{2s}\int_{x_+}^{\infty} |u(y)||y|^{-1-2s} \d y \right).
\end{align*}
\end{proof}

By application of the previous lemma, we have the following control over the derivative of the half-space solutions from \autoref{lemma:1d-barrier}.

\begin{lemma}
\label{lemma:b-derivative-bounds}
Let $L \in \mathcal{L}_s^1(\lambda,\Lambda)$, and $b \in L^1_{2s}(\R^n)$ be the half-space solution from \autoref{lemma:1d-barrier}. Then, $b \in C^{2s+\alpha}_{loc}(\R)$ for any $\alpha \in (0,1+s)$, and it holds
\begin{align*}
c_1 (x_+)^{s-1} \le b'(x) \le c_2 (x_+)^{s-1} ~~ \text{ in } \R.
\end{align*}
\end{lemma}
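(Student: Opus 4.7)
The proof naturally splits into three parts: the interior higher-order regularity $b\in C^{2s+\alpha}_{\mathrm{loc}}((0,\infty))$, the upper bound on $b'$, and the lower bound on $b'$. The first two follow rather directly from the estimates in \autoref{lemma:1d-reg-half-space-higher} together with the two-sided growth bound on $b$ from \autoref{lemma:1d-barrier}; the lower bound is the main difficulty.

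For the regularity, applying the interior estimate \eqref{eq:interior-higher-order-est} centered at any $x_0>0$ with $R=x_0/2$ yields $b\in C^{2s+\alpha}_{\mathrm{loc}}((0,\infty))$ for every admissible $\alpha\in(0,1+s)$. For the upper bound, the plan is to insert the pointwise estimate $b(y)\le c_2(y_+)^s$ from \autoref{lemma:1d-barrier} into the interior gradient estimate \eqref{eq:interior-gradient-est}; both integrals on the right-hand side evaluate to a constant multiple of $x^s$, so $|b'(x)|\le C x^{s-1}$ for all $x>0$.

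For the lower bound, the plan is a compactness and contradiction argument. Let $\mathcal{B}$ denote the family of all half-space solutions $b_L$ produced by \autoref{lemma:1d-barrier} as $L$ ranges over $\mathcal{L}_s^1(\lambda,\Lambda)$, each normalized to satisfy the uniform two-sided bound $c_1(x_+)^s\le b_L\le c_2(x_+)^s$ with the constants from that lemma. The interior regularity just established, together with the weak compactness of kernels in $\mathcal{L}_s^1(\lambda,\Lambda)$ and the stability of \autoref{lemma:convergence}, makes $\mathcal{B}$ precompact in $C^1_{\mathrm{loc}}((0,\infty))$ with every limit point again in $\mathcal{B}$. Moreover, $\mathcal{B}$ is invariant under the scaling $b\mapsto b(r\,\cdot)/r^s$, which replaces $L$ by $L_r$ with kernel $r^{1+2s}K(r\,\cdot)$. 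If the desired lower bound $b'(x)\ge c_1 x^{s-1}$ with a uniform $c_1>0$ failed, this scaling would produce a sequence $b_n\in\mathcal{B}$ with $b_n'(1)\to 0$; by compactness $b_n\to b_\infty\in\mathcal{B}$ in $C^1((\tfrac{1}{2},2))$, yielding a half-space solution $b_\infty$ with $b_\infty'(1)=0$ and $b_\infty(1)\ge c_1>0$.

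To conclude, one must rule out this possibility via a sliding argument. For each $h>0$, the difference $v_h(x):=b_\infty(x+h)-b_\infty(x)$ is $L_\infty$-harmonic in $(0,\infty)$, nonnegative on $(-\infty,0]$, and decays as $|v_h(x)|\le Cx^{s-1}h\to 0$ when $x\to\infty$ thanks to the upper bound on $|b'|$ (which holds uniformly across $\mathcal{B}$). A maximum principle with vanishing tails then gives $v_h\ge 0$ in $\R$, so that $b_\infty$ is nondecreasing and $b_\infty'\ge 0$ on $(0,\infty)$. Since $v_h(-h/2)=b_\infty(h/2)\ge c_1(h/2)^s>0$, the strong maximum principle for $L_\infty$-harmonic functions forces $v_h>0$ on $\R$; quantitatively, Harnack applied to $v_h$ on $(\tfrac{1}{2},\tfrac{3}{2})$ combined with the boundary non-degeneracy of $b_\infty$ yields $v_h(1)\ge c\,h$ for some $c>0$ independent of $h$, and dividing by $h$ produces $b_\infty'(1)\ge c>0$, the desired contradiction. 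The main obstacle is precisely this quantitative Harnack step: one must transport the boundary growth of $b_\infty$ near $\{x=0\}$ into a positive interior lower bound on $v_h$, a task where the uniform upper bound on $|b'|$ enters as a crucial input to control the far-field behavior of $v_h$.
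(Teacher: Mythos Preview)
Your overall architecture matches the paper's proof: interior higher-order regularity and the upper bound on $b'$ via \autoref{lemma:1d-reg-half-space-higher}, then a compactness-and-contradiction argument for the lower bound, reducing (after scaling) to ruling out a limit $b_\infty\in\mathcal{B}$ with $b_\infty'(1)=0$. The sliding argument giving $b_\infty'\ge 0$ is also the same.

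The gap is in your final step. You claim that Harnack applied to $v_h=b_\infty(\cdot+h)-b_\infty$, combined with the boundary non-degeneracy $b_\infty(y)\ge c_1 (y_+)^s$, yields $v_h(1)\ge c\,h$. This does not follow. The only a priori positive mass of $v_h$ you control is on $(-h,0)$, where $v_h(y)=b_\infty(y+h)\ge c_1(y+h)^s$; feeding this into the nonlocal weak Harnack/tail estimate at the point $1$ gives
\[
v_h(1)\ \ge\ c\int_{-h}^{0} v_h(y)\,|y-1|^{-1-2s}\,\d y\ \ge\ c\int_{-h}^{0}(y+h)^s\,\d y\ \asymp\ h^{1+s},
\]
so after dividing by $h$ you only obtain $b_\infty'(1)\ge c\,h^{s}\to 0$, which is no contradiction. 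You have no interior lower bound on $v_h$ of order $h$ to feed into Harnack either, since such a bound would already require knowing $b_\infty'>0$ somewhere in $(\tfrac12,\tfrac32)$, which is exactly the conclusion you are after. You yourself flag this step as ``the main obstacle'', and it is indeed where the argument breaks.

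The repair is short and is what the paper does: once you know $b_\infty'\ge 0$ from the sliding argument and $|b_\infty'(x)|\le C(x_+)^{s-1}$ from the uniform upper bound (so $b_\infty'\in L^1_{2s}(\R)$), you may differentiate the translation-invariant equation to get $L_\infty b_\infty'=0$ in $(0,\infty)$. Since $b_\infty$ satisfies the two-sided bound $c_1(x_+)^s\le b_\infty\le c_2(x_+)^s$, it is nonconstant, hence $b_\infty'\not\equiv 0$. The strong maximum principle (or Harnack) applied directly to the nonnegative solution $b_\infty'$ then forces $b_\infty'>0$ on $(0,\infty)$, in particular $b_\infty'(1)>0$, contradicting $b_\infty'(1)=0$. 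In short: apply Harnack to $b_\infty'$ itself rather than to the increment $v_h$.
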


\begin{proof}
By application of \autoref{lemma:1d-reg-half-space-higher} to $b$, using also the upper bounds for $b$, we immediately obtain $b \in C^{2s+\alpha}_{loc}(\R)$, as well as 
\begin{align}
\label{eq:application-interior-b}
[b]_{C^{2s+\alpha}(x_+,3x_+)} \le C (x_+)^{-2s-\alpha} \left( \dashint_0^{x_+} y^s \d y + (x_+)^{2s} \int_{x_+}^{\infty} y^{-1-s} \d y \right) \le C (x_+)^{-s-\alpha} ~~ \forall x \in \R.
\end{align}
In particular, by setting $\alpha = 1-2s$, we deduce the desired upper bound for $b'$. To prove the lower bound, we proceed as follows. First, we claim that $b' \ge 0$. To see that, let us consider for any $h \in (0,1)$ the function $w(x) = b(x+h) - b(x)$. Clearly, as a consequence of all the properties of $b$, it holds
\begin{align*}
\begin{cases}
L w &= 0~~ \text{ in } (0,\infty),\\
w & > 0 ~~ \text{ in } (-h,0],\\
w &= 0 ~~ \text{ in } (-\infty,-h].
\end{cases}
\end{align*}
Moreover, by the upper bound for $b'$ we have that $|w(x)| \le 2c_2 (x_+)^{s-1}$, so in particular, $w \in L^{\infty}(\R)$, and $|w(x)| \to 0$, as $x \to \infty$. Let us now assume that $w$ is not nonnegative. Then, it must attain its infimum at some $x_0 \in (0,\infty)$, i.e., $0 > \inf_{\R} w = w(x_0)$. However, as a consequence, it must be $Lw(x_0) > 0 = Lw(x_0)$, which is a contradiction. Thus, it must be $w(x) \ge 0$, and since $h > 0$ was arbitrary, and $b \in C^1_{loc}((0,\infty))$, we deduce that $b' \ge 0$ in $(0,\infty)$, as desired.\\
Having this information in mind, we can now finish the proof of the lower bound. First, note that $L b' = 0$ in $(0,\infty)$. Hence, by the Harnack inequality it must be $b' > 0$ in $(0,\infty)$. We claim that there exists a uniform constant $c_0 > 0$, depending only on $n,s,\lambda,\Lambda$, such that 
\begin{align}
\label{eq:b-prime-bounded}
b'(1) \ge c_0.
\end{align}
Note that once this is proved, the desired result follows immediately by scaling. Indeed, given $x > 0$ note that $b_x(r) := x^{-s} b(xr)$, which is the half-space solution associated with the operator $L_x \in \mathcal{L}_s^1(\lambda,\Lambda)$ with kernel $K_x(h) = x^{n+2s}K(xh)$, satisfies $b'_x(r) = x^{-s} (b(x \cdot))'(r) = x^{1-s} b'(xr)$, and thus evaluating at $r = 1$, it holds
\begin{align*}
c_0 x^{s-1} \le x^{s-1} b'_x(1) \le b'(x),
\end{align*}
as desired. So it remains to prove \eqref{eq:b-prime-bounded}. We will do so by contradiction. Assume that there exists a sequence $(L_k) \subset \mathcal{L}_s^1(\lambda,\Lambda)$ such that the associated half-space solutions $b_k$ satisfy $b_k'(1) \searrow 0$. Note that by \eqref{eq:application-interior-b} we have $b_k \in C^{1+\eps}_{loc}((0,\infty))$ with a uniform bound for some $\eps > 0$. Moreover, by \autoref{lemma:1d-barrier}, we have $b_k \in C^s((-1,1))$ with a uniform bound. Hence, by the Arzel\`a-Ascoli theorem, we have that $b_k \to b_{\infty} \in C^s(-1,1) \cap C^{1+\eps}_{loc}((0,\infty))$ uniformly in $\R$, and moreover also $b_k' \to b_{\infty}'$ locally uniformly in $(0,\infty)$, so in particular 
\begin{align}
\label{eq:b-infty-prime-properties}
b_{\infty}'(x) \ge 0, \qquad b_{\infty}'(1) = 0, \qquad b_{\infty}'(x) \le c (x_+)^{s-1} ~~ \forall x \in \R.
\end{align}
Moreover, by the uniform bounds on $b_k$ the convergence also holds in $L^1_{2s}(\R)$, so by the stability for distributional solutions (see \cite[Proposition 2.2.36]{FeRo24}) there exists $L_{\infty} \in \mathcal{L}_s^1(\lambda,\Lambda)$ such that $L_{\infty} b_{\infty} = 0$ in $(0,\infty)$. Since $b_{\infty}'\in L^1_{2s}(\R)$ by \eqref{eq:b-infty-prime-properties}, also $L_{\infty} b_{\infty}' = 0$ in $(0,\infty)$. Thus, by the Harnack inequality, it must be $b_{\infty}' > 0$ in $(0,\infty)$, which contradicts $b_{\infty}'(1) = 0$. Thus, we have shown \eqref{eq:b-prime-bounded}, and the proof is complete.
\end{proof}

As a consequence, we get a higher order Liouville theorem in the half-space in 1D:

\begin{lemma}
\label{lemma:Liouville-1d-2}
Let $L \in \mathcal{L}_s^1(\lambda,\Lambda)$, $\eps \in (0,s)$, and $u \in C(\R) \cap L^1_{2s}(\R^n)$ be a distributional solution of 
\begin{align*}
\begin{cases}
L u &= 0 ~~~~ \qquad\qquad \text{ in } \{ x > 0 \},\\
u &= 0 ~~~~ \qquad\qquad \text{ in } \{ x \le 0 \},\\
|u(x)| &\le C (1 + |x|^{s+\eps})  ~~ \forall x \in \R.
\end{cases}
\end{align*}
Then, $u \equiv \kappa b$ for some $\kappa \in \R$, where $b$ is the half-space solution from \autoref{lemma:1d-barrier} corresponding to $L$.
\end{lemma}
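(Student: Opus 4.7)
The strategy is to reduce \autoref{lemma:Liouville-1d-2} to the lower-order Liouville theorem \autoref{lemma:Liouville-1d-1} by differentiation, in the spirit of \cite{RoSe16b}. I would first apply the boundary Harnack estimate \eqref{eq:bdry-Harnack} of \autoref{lemma:1d-reg-half-space-higher} to $u$, which gives $u/b \in C^{\alpha}_{\mathrm{loc}}(\R)$ for a universal $\alpha = \alpha(s,\lambda,\Lambda) \in (0,1)$, so that $\kappa := \lim_{x \to 0^+} u(x)/b(x)$ exists. Setting $w := u - \kappa b$, it suffices to show $w \equiv 0$; note that $w$ satisfies the same hypotheses as $u$ together with the extra condition $(w/b)(0^+) = 0$. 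Applying the boundary Harnack to $w$ at each scale $R > 0$ and using the growth bound to compute $\dashint_0^R |w|\,dx + R^{2s}\tail(w;R) \le C R^{s+\eps}$, one obtains
\[ [w/b]_{C^\alpha((0,R))} \le C R^{\eps - \alpha}, \]
and combining with $(w/b)(0^+) = 0$ gives $|w(x)| \le C R^{\eps-\alpha} x^{s+\alpha}$ for $x \in (0, R)$. If $\eps < \alpha$, letting $R \to \infty$ for fixed $x > 0$ immediately yields $w \equiv 0$.

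To handle the general case $\eps \in (0,s)$, I would pass to the classical derivative $v := w'$ on $(0, \infty)$, extended by zero to $(-\infty, 0]$ (which matches the distributional derivative on $\R$ since $w \equiv 0$ there and $w(0) = 0$). By translation invariance of $L$ and the interior regularity \eqref{eq:interior-higher-order-est}, $v$ is a distributional solution of $Lv = 0$ in $(0, \infty)$. Taking $R=1$ in the estimate above gives $|w(x)| \le C x^{s+\alpha}$ on $(0,1)$; combining this with the interior gradient estimate \eqref{eq:interior-gradient-est} and the growth of $w$ at infinity yields $|v(x)| \le C x^{s+\alpha-1}$ on $(0,1)$ and $|v(x)| \le C |x|^{s+\eps-1}$ for $|x| \ge 1$. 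Since $\eps < 1$, we have $s + \eps - 1 = s - (1-\eps) < s$, so the growth of $v$ at infinity is strictly sub-$s$. In the regime $s+\alpha \ge 1$, $v$ is also bounded near $0^+$, so it fits the hypotheses of \autoref{lemma:Liouville-1d-1}, which forces $v \equiv 0$; then $w$ is constant on $(0,\infty)$ and $w(0) = 0$ gives $w \equiv 0$.

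The main difficulty is the subcase $s+\alpha < 1$, in which $v$ may blow up like $x^{s+\alpha-1}$ near $0^+$ and so cannot be fed directly into \autoref{lemma:Liouville-1d-1}. I would overcome this by a bootstrap: iterating the boundary-Harnack estimate for $w$ together with the higher-order interior regularity \eqref{eq:interior-higher-order-est} should upgrade the boundary H\"older exponent of $w/b$ in a universal, quantitative way (or, equivalently, one can pass to a sufficiently high-order incremental quotient of $w$), until the effective exponent $\alpha'$ reaches $\alpha' \ge 1-s$, at which point the argument of the previous paragraph applies and finishes the proof.
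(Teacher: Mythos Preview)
Your treatment of the case $\eps < \alpha$ coincides with the paper's. The gap is in your final step: the proposed ``bootstrap'' for $s+\alpha < 1$ is not a real argument. The exponent $\alpha$ in the boundary Harnack estimate \eqref{eq:bdry-Harnack} is a fixed universal constant determined by $s,\lambda,\Lambda$; reapplying \eqref{eq:bdry-Harnack} to $w$ returns the same $\alpha$, and neither the interior estimate \eqref{eq:interior-higher-order-est} nor passing to incremental quotients in the single available direction produces any improvement of the boundary H\"older exponent of $w/b$ (taking a derivative only makes the blow-up at $0$ worse by a factor $x^{-1}$). Since for small $s$ or large ellipticity ratio one certainly has $s+\alpha < 1$, this case is not vacuous, and your reduction to \autoref{lemma:Liouville-1d-1} breaks down precisely there: $v = w'$ may blow up like $x^{s+\alpha-1}$ near the origin and is not an admissible input.

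The paper closes this case by a different idea that bypasses \autoref{lemma:Liouville-1d-1} entirely. After obtaining, as you do, $|v'(x)| \le C x^{s+\alpha-1}$ on $(0,1]$ and $|v'(x)| \le C x^{s+\eps-1}$ on $[1,\infty)$ for $v := u - \kappa b$, one compares $v'$ against the barrier $w_A := A(b + b')$. The key input is \autoref{lemma:b-derivative-bounds}, which gives the two-sided bound $b'(x) \asymp (x_+)^{s-1}$; hence $w_A \gtrsim A x^{s-1}$ near $0$ (dominating $x^{s+\alpha-1}$) and $w_A \gtrsim A x^{s}$ at infinity (dominating $x^{s+\eps-1}$ since $\eps < s < 1$). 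A sliding argument then shows the smallest admissible $A$ is zero: if $A^\ast > 0$, the function $w_{A^\ast} - v'$ would have an interior touching point $x_0 \in (0,\infty)$, contradicting $L(w_{A^\ast} - v')(x_0) = 0$. This yields $v' \le 0$, and the same argument applied to $-v$ gives $v' \ge 0$, hence $v \equiv 0$. The crucial ingredient you are missing is the lower bound $b' \ge c(x_+)^{s-1}$, whose proof (via a compactness argument on the family of half-space solutions) is itself nontrivial.
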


\begin{proof}
By application of \eqref{eq:bdry-Harnack} from \autoref{lemma:1d-reg-half-space-higher}, we deduce that for any $R > 1$:
\begin{align*}
\left[ \frac{u}{b} \right]_{C^{\alpha}((0,R))} &\le C R^{-s-\alpha} \left(  \dashint_{0}^R |u(x)| \d x + R^{2s}\int_{R}^{\infty} |u(x)||x|^{-1-2s} \d x \right) \\
&\le C R^{-s-\alpha} \left( 1 + R^{s+\eps} + R^{2s} \int_R^{\infty} |x|^{-1-s+\eps} \d x \right) \le c R^{\eps-\alpha}.
\end{align*}
Hence, in case $\eps < \min \{ \alpha, s\}$, where $\alpha \in (0,1)$ denotes the constant from \autoref{lemma:1d-reg-half-space-higher}, we have that 
\begin{align*}
\left[ \frac{u}{b} \right]_{C^{\alpha}((0,R))} \le R^{\eps - \alpha} \to 0, ~~ \text{ as } R \to \infty,
\end{align*}
so, $u/b$ must be constant. This implies the desired result.\\
Let us now focus on the case $\eps \in [\alpha,s)$. Then, using that $u/b \in C^{\alpha}([0,1])$, we find that $\kappa := \lim_{x \searrow 0} \frac{u(x)}{b(x)}$ exists, and that the function
\begin{align*}
v(x) := u(x) - \kappa b(x)
\end{align*} 
satisfies $|v(x)| \le C b(x)(x_+)^{\alpha} \le C (x_+)^{s+\alpha}$ for any $x \in [0,1]$, where we also used the upper bound for $b$ from \autoref{lemma:1d-barrier}. Hence, by application of \eqref{eq:interior-gradient-est} we deduce for $x \in [0,1]$, using that $\alpha \le \eps$
\begin{align*}
|v'(x)|\le C (x_+)^{-1} \left( \dashint_0^{x_+} \hspace{-0.2cm} y^{s+\alpha} \d y + (x_+)^{2s} \int_{x_+}^{\infty} \hspace{-0.2cm} y^{-1-s+\eps} \d y \right) \le C \left( (x_+)^{s+\alpha-1} + (x_+)^{s+\eps-1} \right) \le C (x_+)^{s+\alpha - 1}.
\end{align*}
Similarly, for $x \in (1,\infty)$, we have
\begin{align*}
|v'(x)| \le C (x_+)^{-1} \left( 1 + (x_+)^{-1}\int_1^{x_+} y^{s+\eps} \d y + (x_+)^{2s} \int_{x_+}^{\infty} y^{-1-s+\eps} \d y \right) \le C (x_+)^{s+\eps-1}.
\end{align*}
Next, we define for $A > 0$, to be chosen later, the function
\begin{align*}
w(x) = A(b(x) + b'(x)).
\end{align*}
Note that as a consequence of \autoref{lemma:b-derivative-bounds} and \autoref{lemma:1d-barrier}, we have that for some $c > 0$
\begin{align*}
w(x) \ge c A (x_+)^s ~~ \text{ for } x \in (1,\infty), \qquad w(x) \ge c A (x_+)^{s-1} ~~ \text{ for } x \in (0,1]. 
\end{align*}
Consequently, using that $s+\alpha-1 < s-1$, as well as $s+\eps-1 < 2s-1<s$, we deduce that it must be $w >> v'$ at zero and at infinity, so if $A$ is chosen large enough, it holds $w \ge v'$ in $\R$. Next, let us define by $A^{\ast} \ge 0$ the smallest number such that $w \ge v'$. By the previous considerations, if $A^{\ast} > 0$, in that case we still have $w \ge v'$, and there must be $x_0 \in (0,\infty)$ such that $w(x_0) = v'(x_0)$. Hence, $w-v'$ has a strict global minimum at $x_0$, and therefore $L(w-v')(x_0) < 0$. However, since $Lw = Lv' = 0$ in $(0,\infty)$ by construction, this leads to a contradiction, unless $A^{\ast} = 0$. Thus, it must be $v'\le 0$, and therefore $v \le 0$. By repeating the same argument with $-v$ instead of $v$, we deduce that $v \ge 0$, and hence $v \equiv 0$, i.e., it holds $u = \kappa b$, as desired.
\end{proof}

By combining the 1D Liouville theorems (see \autoref{lemma:Liouville-1d-1} and \autoref{lemma:Liouville-1d-2}) with \autoref{lemma:1dnd} we obtain the following Liouville theorem in the half-space.

\begin{proof}[Proof of \autoref{thm:Liouville-half-space}]
The proof follows the same arguments as in \cite[Theorem 2.7.2]{FeRo24}.  Let us assume that $e = e_n$. We take $R > 1$, $h \in \R^n$ with $h_n = 0$ and $|h| \le R/4$, and $\eps \in (0,1)$ to be the constant from \autoref{lemma:Lipschitz-Holder-prelim} (in case $g \equiv 0$). Then, we observe that $u_h(x) = \frac{u(x+h) - u(x)}{|h|^{\eps}}$ is a distributional solution to
\begin{align*}
\begin{cases}
L u_h &= 0 ~~ \text{ in } \{ x_n > 0 \},\\
u_h &= 0 ~~ \text{ in } \{ x_n \le 0 \}.
\end{cases}
\end{align*}
Thus, by the boundary regularity for inhomogeneous nonlocal operators (see \autoref{lemma:Lipschitz-Holder-prelim} with $g \equiv 0$) we have
\begin{align*}
\Vert u_h \Vert_{L^{\infty}(B_R)} = [u]_{C^{\eps}(B_R)} \le C R^{-\eps} \left( \dashint_{B_{R}} |u(x)| \d x + \tail(u;R) \right) \le C R^{\beta-\eps}. 
\end{align*}
Taking now incremental quotients of $u_h$, and repeating this procedure $k$ times, where $k \in \N$ is so large that $\beta - k \eps < 0$, we eventually deduce that $u$ must be a polynomial in $x'$ with coefficients that are 1D functions in $x_n$. Then, by the growth condition on $u$, we deduce
\begin{align*}
u(x) = A(x_n) + A'(x_n) \cdot x', ~~ \forall x \in \R,
\end{align*}
where $A : \R \to \R$ and $A'(x_n) : \R \to \R^n$. Note that $A_i'(x_n) = \partial_i u(x)$ for any $i \in \{1,\dots,n-1\}$, and therefore, by  \autoref{lemma:1dnd} we see that $\tilde{L} A'_i (x_n) = L (\partial_i u)(x) = 0$ if $x_n > 0$. Moreover, since $A'_i(x_n) = 0$ in $\{x_n < 0 \}$,   depending on whether $\beta < s$, or $s < \beta < s+\alpha$, we can apply the 1D Liouville theorems (see \autoref{lemma:Liouville-1d-1}, or \autoref{lemma:Liouville-1d-2}) to deduce that $A'_i(x_n) \equiv 0$, since otherwise $u$ would have to grow like $x \mapsto x_+^{1+s}$, which would contradict the growth-assumption. Thus, it must be $u(x) = A(x_n)$. Using again \autoref{lemma:1dnd} and the 1D Liouville theorems, we deduce that $u(x) = A(x_n) = 0$, or $u(x) = \kappa b(x_n)$, respectively. This concludes the proof.
\end{proof}

\subsection{$C^{s-\eps}$ regularity in flat Lipschitz domains}

The goal of this section is to prove that solutions to nonlocal equations with inhomogeneous kernels are $C^{s-\eps}$ up to the boundary in Lipschitz domains with a sufficiently small constant. This result matches corresponding ones in the homogeneous case (see \cite[Proposition 2.6.15]{FeRo24}, \cite[Lemma 5.5]{RoWe23}) and is optimal. Moreover, it will be a central ingredient in our proof of the $C^s$ regularity in more regular domains.

\begin{theorem}
\label{thm:C-s-eps}
Let $L \in \mathcal{L}_s^n(\lambda,\Lambda)$. Let $\eps > 0$, $\delta > 0$, and $\Omega \subset \R^n$ be a Lipschitz domain with Lipschitz constant less than $\delta$. Let $f \in L^{\infty}(B_1 \cap \Omega)$ and $u \in C(B_1) \cap L^1_{2s}(\R^n)$ be a distributional solution of 
\begin{align*}
\begin{cases}
L u &= f ~~ \text{ in } B_1 \cap \Omega,\\
u &= 0 ~~ \text{ in } B_1 \setminus \Omega.\\
\end{cases}
\end{align*}
Then, there are $C, \delta_0 > 0$, depending only on $n,s,\lambda,\Lambda,\eps$, such that if $\delta \le \delta_0$, then
\begin{align*}
\Vert u \Vert_{C^{s-\eps}(B_{1/2} \cap \overline{\Omega})} \le C \left(\Vert u \Vert_{L^1_{2s}(\R^n)} + \Vert f \Vert_{L^{\infty}(B_1 \cap \Omega)} \right).
\end{align*}
\end{theorem}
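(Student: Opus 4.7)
I would argue by contradiction and blow-up, using the half-space Liouville theorem (Theorem thm:Liouville-half-space(i)) as the rigidity input and the $C^{\alpha_0}$ boundary regularity Lemma lemma:Lipschitz-Holder-prelim for compactness. Suppose the estimate fails. Then there exist sequences $L_k \in \mathcal{L}_s^n(\lambda,\Lambda)$, Lipschitz domains $\Omega_k$ with constants $\le \delta_k$ (where $\delta_k \le \delta_0$), right-hand sides $f_k$, and solutions $u_k$ normalized by $\|u_k\|_{L^1_{2s}(\R^n)} + \|f_k\|_{L^\infty(B_1 \cap \Omega_k)} \le 1$, such that $\Theta_k := [u_k]_{C^{s-\eps}(B_{1/2} \cap \overline{\Omega_k})} \to \infty$. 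A Campanato-type selection (picking $r_k \searrow 0$ to be the largest scale along which the $(s-\eps)$-H\"older seminorm is essentially attained on dyadic annuli about some $x_k \in B_{1/2} \cap \overline{\Omega_k}$) produces rescalings
\begin{equation*}
v_k(x) := \frac{u_k(x_k + r_k x) - u_k(x_k)}{\Theta_k \, r_k^{s-\eps}}
\end{equation*}
that satisfy: (a) $\|v_k\|_{L^\infty(B_1)} \asymp 1$; (b) the universal growth bound $|v_k(x)| \le C(1+|x|^{s-\eps})$ on all of $\R^n$; (c) $L_k^{(r_k)} v_k = \tilde f_k$ in $\tilde\Omega_k := r_k^{-1}(\Omega_k - x_k)$ with rescaled kernel $r_k^{n+2s}K_k(r_k\,\cdot) \in \mathcal{L}_s^n(\lambda,\Lambda)$ and $\|\tilde f_k\|_{L^\infty} \le r_k^{s+\eps}/\Theta_k \to 0$; (d) $\tilde\Omega_k$ is Lipschitz with the same constant $\le \delta_k$.

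I would then pass to the limit. Combining (a)--(d) with Lemma lemma:Lipschitz-Holder-prelim applied to $v_k$ on balls $B_R \subset \tilde\Omega_k$ yields uniform $C^{\alpha_0}(\overline{B_R \cap \tilde\Omega_k})$ bounds for some $\alpha_0 > 0$; Arzel\`a--Ascoli and a diagonal argument extract a subsequence $v_k \to v_\infty \in C(\R^n) \cap L^1_{2s}(\R^n)$ locally uniformly, preserving (a) and (b). Incorporating additionally $\delta_k \searrow 0$ in the contradiction sequence (legitimate since the claim covers all $\delta \le \delta_0$), the boundary graphs defining $\partial\tilde\Omega_k$ have Lipschitz constants $\to 0$ on arbitrarily large balls, so $\tilde\Omega_k \to \{x \cdot e > 0\}$ in the local Hausdorff sense for some limiting normal $e \in \mathbb{S}^{n-1}$. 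By stability of distributional solutions under weak convergence of L\'evy measures (as in \cite[Proposition 2.2.36]{FeRo24}), one obtains $L_\infty \in \mathcal{L}_s^n(\lambda,\Lambda)$ with $L_\infty v_\infty = 0$ in $\{x\cdot e > 0\}$ and $v_\infty = 0$ in $\{x\cdot e \le 0\}$. Since $s-\eps < s$, Theorem thm:Liouville-half-space(i) (applied with $\beta = s-\eps$) forces $v_\infty \equiv 0$, contradicting (a).

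The main obstacle, in my view, is the Campanato-type selection of $(x_k,r_k)$ ensuring the \emph{global} growth bound (b) on $\R^n$, and not merely on a fixed ball. The construction follows the classical scheme: one defines
\begin{equation*}
\theta_k(\rho) := \sup_{r \ge \rho}\sup_{x_0 \in B_{1/2} \cap \overline{\Omega_k}} r^{-(s-\eps)} \osc_{B_r(x_0) \cap \overline{\Omega_k}} u_k,
\end{equation*}
picks $r_k \searrow 0$ and $x_k$ nearly attaining $\theta_k(r_k)$, and uses $\theta_k(r_k) \to \infty$ together with the $L^1_{2s}$ normalization to obtain (b) at intermediate scales and the trivial bound at scales $\gtrsim 1$. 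The other subtle point is that the limit $v_\infty$ is not assumed to be bounded, only to have growth $|x|^{s-\eps}$, but this remains in $L^1_{2s}(\R^n)$ precisely because $s-\eps < 2s$, which also ensures the tails pass to the limit in the distributional formulation of the equation.
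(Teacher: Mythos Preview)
Your approach is essentially the one the paper takes: contradiction, blow-up with a scale-selection producing the growth bound, compactness via \autoref{lemma:Lipschitz-Holder-prelim}, and the half-space Liouville theorem to kill the limit. The paper streamlines two points you gloss over. First, it truncates at the outset so that $u\in L^\infty(\R^n)$ and reduces (via interior estimates) to the pointwise bound $|u(x)|\le C|x-z|^{s-\eps}$ at boundary points $z\in\partial\Omega\cap B_{1/2}$; the blow-up is then centered at $z$ with $u_k(z)=0$, so no constant is subtracted and the rescaling is simply $v_m(x)=u_{k_m}(r_m x)/\|u_{k_m}\|_{L^\infty(B_{r_m})}$. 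This both gives (b) globally (the $L^\infty$ truncation handles scales $\gtrsim 1/r_m$, which your $L^1_{2s}$ normalization alone does not) and makes the exterior condition $v_\infty=0$ on $\{x\cdot e\le 0\}$ automatic. In your version, because you subtract $u_k(x_k)$ with $x_k$ not necessarily on $\partial\Omega_k$, the exterior limit of $v_k$ is a priori a constant $-c$ rather than $0$; you would need to apply Liouville to $v_\infty+c$ and then use $v_\infty(0)=0$ to conclude $c=0$. Both issues are routine to patch, but the paper's reduction to boundary points avoids them entirely.
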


The proof goes by compactness, and crucially uses the previously established Liouville theorem in the half-space (see \autoref{thm:Liouville-half-space}) and also \autoref{lemma:Lipschitz-Holder-prelim} (with $g \equiv 0$).

\begin{proof}
After a standard truncation argument, using that $L \in \mathcal{L}_s^n(\lambda,\Lambda)$, and by normalization, we can assume that $u \in L^{\infty}(\R^n)$. Note that in the light of the existing interior regularity results (see \cite[Theorem 2.4.3]{FeRo24}), it suffices to prove that there exists a constant $C > 0$, depending only on $n,s,\lambda,\Lambda,\eps$, such that for any $z \in \partial \Omega \cap B_{1/2}$ it holds
\begin{align}
\label{eq:Cs-eps-help-1}
|u(x)| \le C |x-z|^{s-\eps} \left(\Vert u \Vert_{L^{\infty}(\R^n)} + \Vert f \Vert_{L^{\infty}(B_1 \cap \Omega)} \right) ~~ \forall x \in B_{1/2} \cap \Omega.
\end{align}
We assume that the desired result does not hold, i.e., that there exist sequences $(\delta_k)_k$ with $\delta_k \searrow 0$, $(\Omega_k)_k$ Lipschitz domains with Lipschitz constant less than $\delta_k$, $0 \in \partial \Omega_k$, $(L_k)_k \subset \mathcal{L}_s^n(\lambda,\Lambda)$, $f_k \in L^{\infty}(B_1 \cap \Omega_k)$, $u_k \in C(\R) \cap L^{\infty}(\R^n)$ such that
\begin{align}
\label{eq:blowup-1-unif-bd}
\Vert u_k \Vert_{L^{\infty}(\R^n)} + \Vert f_k \Vert_{L^{\infty}(B_1 \cap \Omega_k)} \le 1,
\end{align} 
where $u_k$ are solutions to
\begin{align*}
\begin{cases}
L_k u_k &= f_k ~~\text{ in } B_1 \cap \Omega_k,\\
u_k &= 0 ~~~~ \text{ in } B_1 \setminus \Omega_k,\\
\end{cases}
\end{align*}
but it holds
\begin{align*}
\sup_{k \in \N} \sup_{r > 0} \frac{\Vert u_k \Vert_{L^{\infty}(B_{r/2})}}{r^{s-\eps}} = \infty.
\end{align*}
Then, by \cite[Lemma 4.4.11]{FeRo24}, there exist subsequences $r_m \searrow 0$ and $(u_{k_m})_m$ such that,
\begin{align}
\label{eq:blow-up-1-nondeg}
\Vert u_{k_m} \Vert_{L^{\infty}(B_{r_m})} \ge C r_m^{s-\eps},
\end{align}
and moreover the functions
\begin{align}
\label{eq:blow-up-1-vm-growth}
v_m(x) := \frac{u_{k_m}(r_m x)}{\Vert u_{k_m} \Vert_{L^{\infty}(B_{r_m})}} \quad \text{ satisfy } \quad |v_m(x)| \le 2(1 + |x|^{s-\eps}) ~~ \forall x \in \R^n, \qquad \Vert v_m \Vert_{L^{\infty}(B_1)} = 1.
\end{align}
Note that 
\begin{align*}
\begin{cases}
\tilde{L}_{k_m} v_m &= \tilde{f}_m ~~ \text{ in } B_{r_m^{-1}} \cap r_m^{-1} \Omega_{k_m},\\
v_m &= 0 ~~~~ \text{ in } B_{r_m^{-1}} \setminus r_m^{-1} \Omega_{k_m},
\end{cases}
\end{align*}
where $\tilde{L}_{k_m} \in \mathcal{L}_n^s(\lambda,\Lambda)$ is the operator with kernel $\tilde{K}_{k_m}(h) = r_m^{n+2s} K_{k_m}(r_m h)$, and
\begin{align}
\label{eq:blow-up-1-fm-bounded}
\tilde{f}_m := \frac{r_m^{2s} f_{k_m}(r_m \cdot)}{\Vert u_{k_m} \Vert_{L^{\infty}(B_{r_m})}}, \quad \Vert \tilde{f}_m \Vert_{L^{\infty} (B_{r_m^{-1}} \cap r_m^{-1} \Omega_{k_m})} \le \Vert f_{k_m} \Vert_{L^{\infty}(B_1 \cap \Omega_{k_m})} C^{-1} r_m^{s+\eps} \to 0, ~~ \text{ as } m \to \infty,
\end{align}
where we used \eqref{eq:blow-up-1-nondeg} and \eqref{eq:blowup-1-unif-bd}. Altogether, by \eqref{eq:blow-up-1-vm-growth} and \eqref{eq:blow-up-1-fm-bounded}, together with \autoref{lemma:Lipschitz-Holder-prelim} (with $g \equiv 0$), the sequence $(v_m)_m$ is uniformly bounded in $C^{\alpha}(B_{r_m^{-1}/2} \cap r_m^{-1} \Omega_{k_m})$, and therefore by the Arzel\`a-Ascoli theorem, it converges locally uniformly to some $v \in C(\R^n) \cap L^{1}_{2s}(\R^n)$ satisfying
\begin{align}
\label{eq:blow-up-1-limit-properties}
|v(x)| \le 2(1 + |x|^{s-\eps}) ~~ \forall x \in \R^n, \qquad \Vert v \Vert_{L^{\infty}(B_1)} = 1.
\end{align}
Moreover, using that by $\delta \searrow 0$,  we have $B_{r_m^{-1}} \cap r_m^{-1} \Omega_{k_m} \to \{x \cdot e > 0\}$ for some $e \in \mathbb{S}^{n-1}$, and again \eqref{eq:blow-up-1-fm-bounded}, and that due to \eqref{eq:blow-up-1-vm-growth} it also holds $v_m \to v$ in $L^1_{2s}(\R^n)$, by the stability for distributional solutions (see \cite[Proposition 2.2.36]{FeRo24}), there exists $L_{\infty} \in \mathcal{L}_s^n(\lambda,\Lambda)$ such that
\begin{align*}
\begin{cases}
L_{\infty} v &= 0 ~~ \text{ in } \{ x \cdot e > 0 \},\\
v &= 0 ~~ \text{ in } \{ x \cdot e \le 0 \}.
\end{cases}
\end{align*}
Thus, by the Liouville theorem in the half-space (see \autoref{thm:Liouville-half-space}(i)), and the first property in \eqref{eq:blow-up-1-limit-properties}, we have that $v \equiv 0$, which contradicts the last property in \eqref{eq:blow-up-1-limit-properties}. Therefore, it must hold \eqref{eq:Cs-eps-help-1}, and the proof is complete.
\end{proof}

\subsection{$C^s$ regularity in $C^{1,\alpha}$ domains}

In this section we prove the $C^s$ boundary regularity in $C^{1,\alpha}$ domains, which is contained in the following theorem. Note that it immediately implies \autoref{thm-expansion-intro} and the first part of \autoref{thm-Cs-intro}.

\begin{theorem}
\label{thm:inhom-Cs}
Let $L \in \mathcal{L}_s^n(\lambda,\Lambda)$. Let $\Omega \subset \R^n$ be a bounded domain with $\partial \Omega \in C^{1,\alpha}$ for some $\alpha \in (0,s)$. Let $f \in L^{\infty}(B_1 \cap \Omega)$ and $u \in C(B_1) \cap L^1_{2s}(\R^n)$ be a distributional solution of 
\begin{align*}
\begin{cases}
L u &= f ~~ \text{ in } B_1 \cap \Omega,\\
u &= 0 ~~ \text{ in } B_1 \setminus \Omega.\\
\end{cases}
\end{align*}
Then, there is $C_1 > 0$, depending only on $n,s,\lambda,\Lambda,\Omega,\alpha$, such that
\begin{align}
\label{eq:inhom-Cs-estimate}
\Vert u \Vert_{C^{s}(B_{1/4} \cap \overline{\Omega})} \le C_1 \left(\Vert u \Vert_{L^1_{2s}(\R^n)} + \Vert f \Vert_{L^{\infty}(B_1 \cap \Omega)} \right).
\end{align}
Moreover, for any $\eps \in (0,\alpha s)$, there exist $C_2, C_3$, depending only on, $n,s,\lambda,\Lambda,\Omega,\alpha,\eps$, such that for any $z \in B_{1/2} \cap \partial \Omega$ there exists $q_z \in \R$ with $|q_z| \le C_2 \left(\Vert u \Vert_{L^1_{2s}(\R^n)} + \Vert f \Vert_{L^{\infty}(B_1 \cap \Omega)} \right)$ such that
\begin{align}
\label{eq:inhom-Cs-expansion}
|u(x) - q_z b_{\nu_z}((x-z) \cdot \nu_z)| \le C_3 |x-z|^{s+\eps} \left(\Vert u \Vert_{L^1_{2s}(\R^n)} + \Vert f \Vert_{L^{\infty}(B_1 \cap \Omega)} \right) ~~ \forall x \in B_{1/2},
\end{align}
where $\nu_z \in \mathbb{S}^{n-1}$ denotes the inner normal vector at $z$, and $b_{\nu_z}$ is the half-space solution from \autoref{thm-1D} corresponding to $L$ and $\nu_z$.
\end{theorem}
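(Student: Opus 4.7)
The plan is to first establish the expansion \eqref{eq:inhom-Cs-expansion} at each boundary point, and then to deduce the $C^s$ estimate \eqref{eq:inhom-Cs-estimate} by combining this expansion with interior estimates. By translation, rotation, and the standard flattening of $\partial\Omega$ at a $C^{1,\alpha}$ boundary point, we may assume $z=0 \in \partial\Omega$, $\nu_z = e_n$, and $\partial \Omega$ is tangent to $\{x_n = 0\}$ at the origin. Write $b := b_{e_n}$ for the half-space solution from \autoref{thm-1D}. The whole argument is a blow-up and contradiction scheme in the spirit of \cite{RoSe16b}, with \autoref{thm:Liouville-half-space} and \autoref{thm:C-s-eps} as the two main pillars.

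Define, for $r \in (0,1)$, the best-approximation quantity
\[
\phi(r) := \inf_{q \in \R} \, \Vert u - q\, b(\cdot_n) \Vert_{L^{\infty}(B_r)}.
\]
Assuming \eqref{eq:inhom-Cs-expansion} fails, a standard dyadic/diagonal selection (as in \cite[Lemma~2.7.3]{FeRo24}) produces sequences $r_m \searrow 0$ and near-optimal $q_m$ such that (i) $\phi(r_m) \, r_m^{-s-\eps} \to \infty$, (ii) the rescaled functions
\[
v_m(x) := \frac{u(r_m x) - q_m\, b(r_m x_n)}{\phi(r_m)}
\]
satisfy $\Vert v_m \Vert_{L^{\infty}(B_1)} \asymp 1$ together with the growth control $|v_m(x)| \le C(1 + |x|^{s+\eps})$ for all $x \in \R^n$, and (iii) $|q_m| r_m^s \le C\phi(r_m)$. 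Each $v_m$ solves $\widetilde L_m v_m = \widetilde f_m - q_m \widetilde L_m \widetilde b_m$ in the rescaled domain $r_m^{-1}\Omega \cap B_{1/r_m}$ with respect to the rescaled operator $\widetilde L_m \in \mathcal{L}_s^n(\lambda,\Lambda)$ of kernel $r_m^{n+2s}K(r_m \cdot)$, and vanishes in $B_{1/r_m} \setminus r_m^{-1}\Omega$ up to the mismatch $-q_m b(r_m x_n)$. Because $\partial\Omega \in C^{1,\alpha}$ at $0$, the symmetric difference between $r_m^{-1}\Omega$ and $\{x_n > 0\}$ inside $B_R$ lies in a strip of width $\lesssim r_m^\alpha R^{1+\alpha}$; combined with $|b| \lesssim (x_n)_+^s$ and the bound $|q_m| r_m^s \lesssim \phi(r_m)$, both the source-term error $q_m \widetilde L_m \widetilde b_m$ and the exterior-data mismatch are $O(r_m^{\alpha s - \eps}) \to 0$, precisely because $\eps < \alpha s$.

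Applying the flat-Lipschitz $C^{s-\eps'}$ estimate \autoref{thm:C-s-eps} to $v_m$ in the domains $r_m^{-1}\Omega$ (whose Lipschitz constants tend to $0$) yields uniform H\"older bounds on compact sets; together with the growth control (ii) and the uniform $L^1_{2s}$ bound they imply, we extract via Arzel\`a--Ascoli a locally uniform and $L^1_{2s}$-strong limit $v_\infty \in C(\R^n) \cap L^1_{2s}(\R^n)$. By the stability of distributional solutions \cite[Proposition~2.2.36]{FeRo24}, there exists $L_\infty \in \mathcal{L}_s^n(\lambda,\Lambda)$ with
\[
L_\infty v_\infty = 0 \ \text{in } \{x_n > 0\}, \qquad v_\infty = 0 \ \text{in } \{x_n \le 0\}, \qquad |v_\infty(x)| \le C(1 + |x|^{s+\eps}).
\]
The Liouville theorem \autoref{thm:Liouville-half-space}(ii) forces $v_\infty = \kappa\, b_\infty$ for some $\kappa \in \R$, where $b_\infty$ is the half-space solution of $L_\infty$. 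However, the near-optimality of each $q_m$ as the best $L^\infty(B_1)$-approximation among multiples of $\widetilde b_m$ passes to the limit and shows that $v_\infty$ is $L^\infty(B_1)$-orthogonal (in the Chebyshev sense) to multiples of $b_\infty$; this forces $\kappa = 0$, contradicting $\Vert v_\infty\Vert_{L^\infty(B_1)} \asymp 1$. This proves \eqref{eq:inhom-Cs-expansion}, and the bound on $q_z$ follows from (iii) at the admissible scale.

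Finally, \eqref{eq:inhom-Cs-estimate} is a direct consequence of \eqref{eq:inhom-Cs-expansion}: for any pair $x, y \in B_{1/4} \cap \overline\Omega$, project both points to the nearest boundary point $z$, apply the expansion at $z$, and combine it with the interior $C^s$ estimate \cite[Theorem~2.4.3]{FeRo24} in balls of radius $\tfrac12 d(x)$ around the interior point, exactly as in the classical scheme \cite[Proposition~2.6.4]{FeRo24}; the $C^s$ regularity of $b$ from \autoref{thm-1D} ensures the telescoping sum closes. The main obstacle is the second paragraph: making the blow-up compactness work requires handling simultaneously the $C^{1,\alpha}$ (not flat) geometry, the fact that $q_m b(\cdot_n)$ has nonzero values both outside the true exterior $\R^n\setminus\Omega$ and non-harmonic contributions inside $\Omega\cap\{x_n<0\}$, and the need for a uniform H\"older estimate under vanishing Lipschitz constants --- all of which is tied precisely to the threshold $\eps < \alpha s$.
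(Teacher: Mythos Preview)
Your overall strategy matches the paper's: prove the expansion \eqref{eq:inhom-Cs-expansion} by a blow-up/contradiction argument using \autoref{thm:Liouville-half-space}, then deduce \eqref{eq:inhom-Cs-estimate}. However, there are two genuine gaps in the execution of the compactness step.

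First, the claim (iii) that $|q_m|\,r_m^s \le C\phi(r_m)$ is false in general: if $u=b$ then $\phi(r)\equiv 0$ while $q_m=1$. What the selection lemma actually delivers (and what the paper uses, via $L^2$-projections $Q_{k,z,r}$ rather than $L^\infty$-best approximations) is a bound on the \emph{increments} $|Q_{2r}-Q_r|\le C\theta(r)r^\eps$; this telescoping sum yields the growth of $v_m$ directly, without any control on $|q_m|$ itself. Your error estimates for the source term and the mismatch cannot be built on (iii).

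Second, and more seriously, you cannot invoke \autoref{thm:C-s-eps} to get uniform H\"older bounds on $v_m$, because that result requires zero exterior data, whereas $v_m$ equals $-q_m b(r_m x_n)/\phi(r_m)$ in the mismatch strip between $r_m^{-1}\Omega$ and $\{x_n>0\}$. The paper resolves this in two steps: it first proves a uniform $C^{\delta_1}$ bound on $v_m$ \emph{in the exterior region} $B_R\setminus\Omega_{R,k_m}$ by combining the almost-optimal $C^{s-\gamma}$ regularity of $u_{k_m}$ (from \autoref{thm:C-s-eps}) with the $C^s$ bound on $b$ (from \autoref{thm-1D}) and the $C^{1,\alpha}$ separation $\dist(\cdot,\partial\Omega)+\dist(\cdot,\Sigma^-)\le Cr^{1+\alpha}$; an interpolation then gives $\|u_{k_m}-\phi_m\|_{C^{\delta_1}(\text{strip})}\le Cr^{s+\eps}$ precisely when $\eps<\alpha s$. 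Only then does the paper apply \autoref{lemma:Lipschitz-Holder-prelim} \emph{with nonzero H\"older exterior data $g=v_m$} to obtain compactness in the interior region. Your last paragraph correctly identifies this as ``the main obstacle'', but your proof does not overcome it.

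A final subtlety you skipped: the Liouville theorem gives $v_\infty=\kappa b_\infty$ with $b_\infty$ the half-space solution for the \emph{limit} operator $L_\infty$, whereas your orthogonality is with respect to the rescaled barriers $\widetilde b_m$ for $\widetilde L_m$. The paper closes this by showing (via Arzel\`a--Ascoli on $(\widetilde b_m)$ and a second application of \autoref{thm:Liouville-half-space}) that $\widetilde b_m\to\bar\kappa\,b_\infty$ for some $\bar\kappa>0$, after which the ($L^2$) orthogonality passes to the limit and forces $\kappa=0$.
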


Having at hand the higher order Liouville theorem in the half-space (see \autoref{thm:Liouville-half-space}(ii)) and the 1D barriers from \autoref{thm-1D}, we can prove this result by adapting the arguments in \cite{RoSe16b} to our setting. Note that some adjustments are also required since we consider $C^{1,\alpha}$ domains instead of $C^{1,1}$ domains as in \cite{RoSe16b}. For instance, we need to make use of the almost optimal regularity obtained in \autoref{thm:C-s-eps}.
Moreover, we will need \autoref{lemma:Lipschitz-Holder-prelim} with nonzero exterior data $g$.

\begin{proof}[Proof of \autoref{thm:inhom-Cs}]
First, we prove that the expansion \eqref{eq:inhom-Cs-expansion} implies the $C^s$ estimate \eqref{eq:inhom-Cs-estimate}. First, note that by a standard truncation argument, we can assume that $u \in L^{\infty}(\R^n)$. Moreover, after a normalization, we might assume $\Vert u \Vert_{L^{\infty}(\R^n)} + \Vert f \Vert_{L^{\infty}(B_1 \cap \Omega)} \le 1$. Then, \eqref{eq:inhom-Cs-estimate} follows, once we show 
\begin{align}
\label{eq:inhom-Cs-help-1}
|u(x)| \le C d^s(x) ~~ \forall x \in B_{1/4} \cap \Omega.
\end{align}
Let us take $x \in B_{1/4} \cap \Omega$ and let $z \in B_{1/2} \cap \partial \Omega$ be such that $|x-z| = d(x)$. Clearly, when such $z$ does not exist, it must be $d(x) \ge 1/4$, in which case \eqref{eq:inhom-Cs-help-1} is trivially satisfied for $x$. Thus, we can assume that such $z$ exists. Then, by application of \eqref{eq:inhom-Cs-expansion}, we deduce that there exists $q_z \in \R$ such that
\begin{align*}
|u(x)| &\le |q_z b_{\nu_z}((x-z) \cdot \nu_z)| + |u(x) - q_z b_{\nu_z}((x-z) \cdot \nu_z)| \\
&\le C \Vert q_{\cdot} \Vert_{L^{\infty}(B_{1/2} \cap \partial \Omega)} |x-z|^s + C |x-z|^{s+\eps} \le C |x-z|^s,
\end{align*}
where we used the pointwise estimates for $b_{\nu_z}$ from \autoref{thm-1D} and the fact that $|q_z| \le C_2$. Let us explain, how to deduce the boundedness of $(z \mapsto q_z)$ from \eqref{eq:inhom-Cs-expansion}. In fact, to see this, note that there exists $c_0 > 0$, depending only on the $C^{1,\alpha}$ radius of $\Omega$ such that for any $z \in B_{1/2} \cap \partial \Omega$, we can find $x \in B_{1/2} \cap \Omega$ such that $d(x) = |x-z| = (x-z) \cdot \nu_z \ge c_0$. Then, using \eqref{eq:inhom-Cs-expansion}, and again the pointwise estimates for $b_{\nu_z}$ from \autoref{thm-1D}, we deduce
\begin{align*}
|q_z| &\le c_0^{-s} |q_z ((x-z)\cdot \nu_z)_+^s| \le c |q_z b_{\nu_z}((x-z)\cdot \nu_z)| \\
&\le c |u(x)| + c |u(x) - q_z b_{\nu_z}((x-z)\cdot \nu_z)| \le c + c |x-z|^{s+\eps} \le c.
\end{align*}
This yields boundedness of $(z \mapsto q_z)$, and the proof of \eqref{eq:inhom-Cs-estimate} is complete.

Now, we turn to the proof of the expansion \eqref{eq:inhom-Cs-expansion}.  We prove it by a contradiction compactness argument, as in \cite{RoSe16b}. Let us assume that there exist sequences $(\Omega_k)_k$ with $\partial \Omega_k \in C^{1,\alpha}$, and $C^{1,\alpha}$ radius bounded by one, $(L_k)_k \subset \mathcal{L}_s^n(\lambda,\Lambda)$, $(f_k)_k$, and $(u_k)_k \subset C(\R^n) \cap L^{\infty}(\R^n)$ with 
\begin{align*}
\Vert f_k \Vert_{L^{\infty}(B_1 \cap \Omega_k)} + \Vert u_k \Vert_{L^{\infty}(\R^n)} = 1,
\end{align*}
such that
\begin{align*}
\begin{cases}
L_k u_k &= f_k ~~ \text{ in } B_1 \cap \Omega_k,\\
u_k &= 0 ~~~~ \text{ in } B_1 \setminus \Omega_k,
\end{cases}
\end{align*}
but assume by contradiction that 
\begin{align*}
\sup_{k \in \N} \sup_{z \in B_{1/2} \cap \partial \Omega_k} \sup_{r > 0} r^{-s-\eps} \left\Vert u_k - Q b^{(k)}_{\nu^{(k)}_z} ( (x - z) \cdot \nu^{(k)}_z ) \right\Vert_{L^{\infty}(B_r(z))} = \infty ~~ \forall Q \in \R.
\end{align*}
Here, $\nu^{(k)}_z \in \mathbb{S}^{n-1}$ denotes the inner unit normal vector to $\partial \Omega_k$ at $z$, and $b^{(k)}_{\nu^{(k)}_z}$ denotes the 1D barrier in the half-space from \autoref{thm-1D} corresponding to $L_k$ and $\nu^{(k)}_{z_k}$.
Let us define
\begin{align*}
\theta(r) := \sup_{k \in \N} \sup_{z \in B_{1/2} \cap \partial \Omega_k} \sup_{\rho \ge r} \rho^{-s-\eps} \Vert u_k - \phi_{k,z,\rho} \Vert_{L^{\infty}(B_{\rho}(z))},
\end{align*}
where we set
\begin{align*}
\phi_{k,z,r}(x) = Q_{k,z,r} ~ b^{(k)}_{\nu^{(k)}_z} ( (x - z) \cdot \nu^{(k)}_z ), \qquad Q_{k,z,r} = \frac{\int_{B_r(z)} \left[ u_k(x) b^{(k)}_{\nu^{(k)}_z} ( (x-z) \cdot \nu^{(k)}_z ) \right] \d x }{\int_{B_r(z)} b^{(k)}_{\nu^{(k)}_z} ( (x-z) \cdot \nu^{(k)}_z )^2 \d x}.
\end{align*}
Note that $Q_{k,z,r}$ is the $L^2(B_r(z))$-projection of $u_k$ over $b^{(k)}_{\nu^{(k)}_z}((\cdot - z) \cdot \nu^{(k)}_z)\R$. Clearly, by the normalization assumption on $u_k$ we have $Q_{k,r,z} \le C r^n$. It follows by the exact same arguments as in \cite[Lemma 5.3]{RoSe16b} that $\theta(r) \nearrow \infty$, as $r \searrow 0$. Moreover, there are sequences $r_m \searrow 0$, $(k_m)_m$, and $z_m \to z \in \overline{B_{1/2}}$, $\nu_{z_m}^{(k_m)} \to \nu \in \mathbb{S}^{n-1}$, such that
\begin{align*}
r_m^{-s-\eps} \Vert u_{k_m} - \phi_{k_m,z_m,r_m} \Vert_{L^{\infty}(B_{r_m}(z_m))}  \ge \theta(r_m)/2 ~~ \forall m \in \N.
\end{align*}
To keep the notation simple, let us denote from now on $\phi_m := \phi_{k_m,z_m,r_m}$, $b_m = b^{(k_m)}_{\nu_{z_m}^{(k_m)}}$, and $\nu_m := \nu_{z_m}^{(k_m)}$. We define
\begin{align*}
v_m(x) = \frac{u_{k_m}(z_m + r_m x) - \phi_m(z_m + r_m x)}{r_m^{s+\eps} \theta(r_m)},
\end{align*}
and observe that by construction, we have
\begin{align}
\label{eq:blow-up-2-nondeg}
\int_{B_1} v_m(x) b_m(x \cdot \nu_m) \d x = 0, \qquad \Vert v_m \Vert_{L^{\infty}(B_{1/2})} \ge \frac{1}{2} ~~ \forall m \in \N.
\end{align}
Now, using the pointwise bounds on the 1D barriers $b_m$ from \autoref{thm-1D}, we get for any $k,z,r$:
\begin{align*}
|Q_{k,z,2r} - Q_{k,z,r}| &\le c r^{-s} \Vert \phi_{k,z,r} - \phi_{k,z,2r} \Vert_{L^{\infty}\big(B_r(z) \cap \big\{b_{\nu_z^{(k)}} \ge \frac{r}{2} \big\} \big)} \\
&\le c r^{-s} \Vert u_k - \phi_{k,z,r} \Vert_{L^{\infty}(B_r(z))} + c r^{-s} \Vert u_k - \phi_{k,z,2r} \Vert_{L^{\infty}(B_{2r}(z))} \\
&\le c \theta(r) r^{\eps} + c \theta(2r) r^{\eps} \le c \theta(r) r^{\eps}.
\end{align*}
Thus, by the exact same arguments as in the proof of \cite[Proof of Proposition 5.2]{RoSe16b} or \cite[Proof of Proposition 4.1]{AbRo20}, using only the definition and monotonicity of $\theta(r)$, we can deduce from here
\begin{align}
\label{eq:blow-up-2-vm-growth}
\Vert v_m \Vert_{L^{\infty}(B_R)} \le C R^{s+\eps} ~~ \forall R \ge 1 ~~ \forall m \in \N.
\end{align}
Next, using this information, we investigate the equation that is satisfied by $v_m$ and intend to prove locally uniform convergence, and convergence in $L^1_{2s}(\R^n)$ of $(v_m)_m$, up to a subsequence, in order to pass this equation to the limit. First, given $R > 1$ and $m \in \N$ with $r_m R < 1/2$, let us define the half-spaces
\begin{align*}
\Sigma_m^+ := \{ (x - z_m) \cdot \nu_m > 0 \}, ~~ \Sigma_m^- := \{ (x - z_m) \cdot \nu_m < 0 \},
\end{align*}
and the set
\begin{align*}
\Omega_{R,k_m} = \left\{ x \in B_R : (z_m + r_m x) \in \Omega_{k_m}, ~~ \text{ and } x \cdot \nu_m > 0 \right\}.
\end{align*}
We observe that $0 \in \partial \Omega_{R,k_m}$, and $\Omega_{R,k_m} \to B_R \cap \{ x \cdot \nu > 0 \}$. Moreover, let us denote by $\tilde{L}_{k_m} \in \mathcal{L}_n^s(\lambda,\Lambda)$ the operator with kernel $\tilde{K}_{k_m}(h) = r_m^{n+2s} K_{k_m}(r_m h)$. Then, since $L_{k_m} \phi_m = 0$ in $\Sigma_m^{+}$, we have
\begin{align}
\label{eq:blow-up-2-PDE}
\tilde{L}_{k_m} v_m(x) = \tilde{f}_m(x) ~~ \text{ in } \Omega_{R,k_m},
\end{align}
where, since we took $\eps < s$:
\begin{align}
\label{eq:blow-up-2-RHS}
\tilde{f}_m(x) = r_m^{s-\eps}\frac{f_{k_m}(z_m + r_m x)}{\theta(r_m)} \to 0, ~~ \text{ as } m \to \infty ~~ \text{ uniformly in } \Omega_{R,k_m}.
\end{align}
Next, we claim that there exists $\delta \in (0,1)$ such that for all $R > 1$ with $r_m R < 1/4$ it holds
\begin{align}
\label{eq:blow-up-2-unif-bd}
\Vert v_m \Vert_{C^{\delta}(B_{R})} \le C(R) ~~ \forall m \in \N
\end{align}
for some constant $C(R) > 0$, depending on $n,s,\lambda,\Lambda,R$, but not on $m$.
We show such bound separately for the sets $B_R = (B_R \setminus \Omega_{R,k_m}) \cup \Omega_{R,k_m}$. 
First, we prove that for some $\delta_1 \in (0,1)$ it holds
\begin{align}
\label{eq:blow-up-2-unif-bd-1}
\Vert v_m \Vert_{C^{\delta_1}(B_{R} \setminus \Omega_{R,k_m})} \le C(R) ~~ \forall m \in \N.
\end{align}
To see it, we first deduce from the pointwise bounds on $b_m$ in \autoref{thm-1D} and the almost optimal regularity from \autoref{thm:C-s-eps} that for any $r < 1/4$ and any $\gamma \in (0,s)$:
\begin{align*}
|u_{k_m}(x)| &\le c \dist(x,\partial \Omega_{k_m})^{s-\gamma} ~~ \forall x \in B_r(z_m) \cap \Sigma_m^-,\\
|\phi_m(x)| &\le c b_m((x-z_m) \cdot \nu_z^{(k)}) = c b_m(\dist(x,\Sigma_m^-)) \le c \dist(x,\Sigma_m^-)^s ~~  \forall x \in B_r(z_m) \setminus \Omega_{k_m}.
\end{align*}
Note that since by assumption $\partial \Omega_k \in C^{1,\alpha}$, we have the following control over how $\partial \Omega_{k_m}$ separates from $\Sigma_m^-$: It holds
\begin{align*}
\dist(x,\partial \Omega_{k_m}) + \dist(x, \Sigma_m^-) \le C r^{1+\alpha} ~~ \forall x \in (B_r(z_m) \setminus \Omega_{k_m}) \cup (B_r(z_m) \cap \Sigma_m^-). 
\end{align*}
Thus, by combination of the previous three estimates, and upon choosing $\gamma \in (0,s)$ so small that $\zeta := (1+\alpha)(s-\gamma) > s + \eps$ (which is possible since $\eps < \alpha s$) we deduce
\begin{align*}
\Vert u_{k_m} - \phi_m & \Vert_{L^{\infty}((B_r(z_m) \setminus \Omega_{k_m}) \cup (B_r(z_m) \cap \Sigma_m^-))} \\
&\le \Vert u_{k_m} \Vert_{L^{\infty}((B_r(z_m) \setminus \Omega_{k_m}) \cup (B_r(z_m) \cap \Sigma_m^-))} + \Vert \phi_m \Vert_{L^{\infty}((B_r(z_m) \setminus \Omega_{k_m}) \cup (B_r(z_m) \cap \Sigma_m^-))} \\
&\le C r^{(1+\alpha)(s-\gamma)} + C r^{(1+\alpha)s} \le C r^{\zeta}.
\end{align*}
Next, we deduce from the almost optimal regularity (see \autoref{thm:C-s-eps}) and also the regularity of the barriers in \autoref{thm-1D} that for any $r < 1/4$ and $m \in \N$:
\begin{align*}
\Vert u_{k_m} - \phi_m & \Vert_{C^{s-\gamma}((B_r(z_m) \setminus \Omega_{k_m}) \cup (B_r(z_m) \cap \Sigma_m^-))} \\
&\le \Vert u_{k_m}\Vert_{C^{s-\gamma}((B_{1/4}(z_m) \setminus \Omega_{k_m}) \cup (B_{1/4}(z_m) \cap \Sigma_m^-))} + \Vert \phi_m \Vert_{C^{s-\gamma}((B_{1/4}(z_m) \setminus \Omega_{k_m}) \cup (B_{1/4}(z_m) \cap \Sigma_m^-))} \le C.
\end{align*}
Hence, interpolating the previous two estimates, using that $\zeta > s+\eps$, we can find $\delta_1 \in (0,1)$ such that
\begin{align*}
\Vert u_{k_m} - \phi_m \Vert_{C^{\delta_1}((B_r(z_m) \setminus \Omega_{k_m}) \cup (B_r(z_m) \cap \Sigma_m^-))} \le C r^{s+\eps}.
\end{align*}
After choosing $r := r_m R$ and rescaling the latter estimate, this proves \eqref{eq:blow-up-2-unif-bd-1}. Finally, in the light of \eqref{eq:blow-up-2-PDE}, \eqref{eq:blow-up-2-RHS}, and using also \eqref{eq:blow-up-2-unif-bd-1}, we immediately deduce from a rescaled version of \autoref{lemma:Lipschitz-Holder-prelim} (applied with $\Omega := B_R \setminus \Omega_{R,k_m}$, $g:= v_m$, $f = \tilde{f}$)
\begin{align}
\label{eq:blow-up-2-unif-bd-2}
\Vert v_m \Vert_{C^{\delta}(\Omega_{R,k_m})} \le C(R) ~~ \forall m \in \N
\end{align}
for some $\delta \in (0,1)$, and $C(R)$, independent of $m$. Combining \eqref{eq:blow-up-2-unif-bd-1} and \eqref{eq:blow-up-2-unif-bd-2}, we obtain \eqref{eq:blow-up-2-unif-bd}.

Altogether, by combination of \eqref{eq:blow-up-2-vm-growth} and \eqref{eq:blow-up-2-unif-bd} and the Arzel\`a-Ascoli theorem, we deduce that $v_m \to v \in C(\R) \cap L^1_{2s}(\R^n)$ locally uniformly and in $L^1_{2s}(\R^n)$, up to a subsequence. Thus, using also \eqref{eq:blow-up-2-PDE} and \eqref{eq:blow-up-2-RHS}, we can apply the stability lemma (see \cite[Proposition 2.2.36]{FeRo24}) to deduce that there exists $L_{\infty} \in \mathcal{L}_s^n(\lambda,\Lambda)$ such that
\begin{align*}
\begin{cases}
L_{\infty} v &= 0 ~~~~ \qquad\qquad \text{ in } \{ x \cdot \nu > 0 \},\\
v &= 0 ~~~~ \qquad\qquad \text{ in } \{ x \cdot \nu \le 0\},\\
|v(x)| & \le C(1 + |x|^{s+\eps}) ~~ \forall x \in \R^n.
\end{cases}
\end{align*}
Thus, by the Liouville theorem in the half-space (see \autoref{thm:Liouville-half-space}), it follows 
\begin{align*}
v(x) = \kappa b(x) ~~ \forall x \in \R^n,
\end{align*}
where $\kappa \in \R$, and $b(x) = \tilde{b}(x \cdot \nu)$ is the half-space solution from \autoref{thm-1D} corresponding to $L_{\infty}$ and $\nu$. Note that by \autoref{thm-1D}, resp. \autoref{lemma:1d-barrier}, the sequence $(b_m)_m$ is uniformly bounded in $C^s_{loc}(\R^n) \cap L^1_{2s}(\R^n)$. Note that here, we slightly abuse notation and write $b_m(x) := b_m(x \cdot \nu_m)$. Therefore, again by the Arzel\`a-Ascoli theorem, up to a subsequence, it holds $b_m \to \bar{b} \in C^s(\R^n) \cap L^1_{2s}(\R^n)$ locally uniformly and in $L^1_{2s}(\R^n)$. Moreover, by the stability lemma (see \cite[Proposition 2.2.36]{FeRo24}),
\begin{align*}
\begin{cases}
L_{\infty} \bar{b} &= 0 ~~ \text{ in } \{ x \cdot \nu > 0 \},\\
\bar{b} &= 0 ~~ \text{ in } \{ x \cdot \nu \le 0\}.
\end{cases}
\end{align*}
Moreover, there must exist $0 < c_1 < c_2 < \infty$ such that
\begin{align*}
c_1 (x \cdot \nu)_+^s \le |\bar{b}(x)| \le c_2 (x \cdot \nu)_+^s ~~ \forall x \in \R^n.
\end{align*}
Thus, applying the Liouville theorem in the half-space (see \autoref{thm:Liouville-half-space}) to $\bar{b}$, we find that there exists $\bar{\kappa} \in [c_1,c_2]$ such that
\begin{align*}
\bar{b}(x) = \bar{\kappa} b(x) = \bar{\kappa} \tilde{b}(x \cdot \nu) ~~ \forall x \in \R^n.
\end{align*} 
Hence, passing the first statement in \eqref{eq:blow-up-2-nondeg} to the limit, and plugging in the representation for $v$,
\begin{align*}
0 = \lim_{m \to \infty} \int_{B_1} v_m(x) b_m(x \cdot \nu_m) \d x = \int_{B_1} v(x) \bar{b}(x \cdot \nu) \d x = \kappa \bar{\kappa}^{-1} \int_{B_1} \bar{b}(x \cdot \nu) \d x.
\end{align*}
Therefore, it must be $\kappa = 0$, i.e., $v \equiv 0$, which contradicts the second statement from \eqref{eq:blow-up-2-nondeg}, since it implies
\begin{align*}
\Vert v \Vert_{L^{\infty}(B_{1/2})} = \lim_{m \to \infty} \Vert v_m \Vert_{L^{\infty}(B_{1/2})} \ge \frac{1}{2}.
\end{align*}
Thus, the expansion in \eqref{eq:inhom-Cs-expansion} must hold true, and the proof is complete.
\end{proof}

\begin{proof}[Proof of \autoref{thm-expansion-intro} and of the first part of \autoref{thm-Cs-intro}]
This is an immediate consequence of \autoref{thm:inhom-Cs}.
\end{proof}

\subsection{Hopf lemma}

Having at hand the expansion in \autoref{thm:inhom-Cs} (see \eqref{eq:inhom-Cs-expansion}) we can also prove a Hopf lemma for nonlocal equations with inhomogeneous kernels, proving the second part of our main result \autoref{thm-Cs-intro}.\\
The idea of the proof goes as follows: First, we observe that it suffices to prove $q_z > 0$ in order for the Hopf lemma to follow. Then, assume that $q_z = 0$ and observe that in this case \eqref{eq:inhom-Cs-expansion} implies that $u(x) = O(|x|^{s+\eps})$. We can use this information to show that $u(x) = O(|x|^{2s-\gamma})$ for any $\gamma \in (0,s)$ by a compactness argument. However, this is a contradiction with \cite[Lemma 4.3.4]{FeRo24}, where a barrier was constructed, ruling out such boundary behavior of positive solutions.

\begin{theorem}[Hopf lemma]
\label{thm:inhom-Hopf}
Let $L \in \mathcal{L}_s^n(\lambda,\Lambda)$. Let $\Omega \subset \R^n$ be a bounded domain with $\partial \Omega \in C^{1,\alpha}$ for some $\alpha \in (0,s)$. Let $f \in L^{\infty}(B_1 \cap \Omega)$ and $u \in C(\overline{B_1}) \cap L^1_{2s}(\R^n)$ be a distributional solution of 
\begin{align*}
\begin{cases}
L u &= f \ge 0  ~~ \text{ in } B_1 \cap \Omega,\\
u &\ge 0 ~~\qquad \text{ in }\R^n \setminus (B_1 \cap \Omega).\\
\end{cases}
\end{align*}
Then, either $u \equiv 0$ in $\Omega$, or for some $c > 0$
\begin{align*}
u(x) \ge c d^s(x) ~~ \text{ in } B_{1/2}.
\end{align*} 
\end{theorem}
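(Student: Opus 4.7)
The plan is to reduce the Hopf estimate to showing that the leading coefficient $q_z$ in the expansion \eqref{eq:inhom-Cs-expansion} of \autoref{thm:inhom-Cs} is strictly positive, uniformly in $z \in \partial\Omega \cap B_{1/2}$. Since $\partial\Omega \in C^{1,\alpha}$ forces $(x-z) \cdot \nu_z \asymp d(x)$ when $z$ is the projection of $x \in \Omega$ onto $\partial\Omega$, and the 1D barrier of \autoref{thm-1D} satisfies $b_{\nu_z}(t) \ge c t_+^s$, a uniform positive lower bound on $q_z$ translates directly into $u \ge c d^s$ in $B_{1/2}$.

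First, by the strong maximum principle for non-local operators, together with $Lu = f \ge 0$ and $u \ge 0$ outside $\Omega \cap B_1$, either $u \equiv 0$ in $\Omega \cap B_1$ (the trivial alternative) or $u > 0$ in $\Omega \cap B_1$; assume the latter. Fix $\eps \in (0, \alpha s)$. For each $z \in \partial\Omega \cap B_{1/2}$, \autoref{thm:inhom-Cs} provides $q_z \in \R$ with
\[
|u(x) - q_z\, b_{\nu_z}((x-z)\cdot \nu_z)| \le C|x-z|^{s+\eps}, \qquad |q_z| \le C.
\]
Plugging in $x = z + r\nu_z$ and using $b_{\nu_z}(r) \ge c r^s$ and $u \ge 0$ forces $q_z \ge 0$, so it remains to rule out equality.

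The heart of the argument is showing $q_{z_0} \neq 0$ for every $z_0 \in \partial\Omega \cap B_{1/2}$. If it were zero, the expansion would give $u(x) \le C|x-z_0|^{s+\eps}$ in $B_{1/2}$; in particular, along the inner normal $x_r = z_0 + r\nu_{z_0}$ (with $d(x_r) \asymp r$), $u(x_r) \le Cr^{s+\eps}$. To contradict this, I would invoke a \emph{soft} Hopf lemma: using the barrier $d^\beta$ from \autoref{lemma:beta-supersolution} for some $\beta \in (2s-1, \beta_0)$, where $\beta_0$ is the root from \autoref{lemma:one-dimensional-barrier}, together with the interior positivity $\inf_V u > 0$ on any $V \Subset \Omega \cap B_1$ (which holds by Step 1), a standard comparison argument yields
\[
u(x) \ge c_1 d(x)^\beta, \qquad x \in \Omega \cap B_{1/4}.
\]
Since $2s-1 < s+\eps$ for $s < 1$ and every $\eps > 0$, we may select $\beta \in (2s-1,\, \min(\beta_0, s+\eps))$. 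Evaluating both bounds at $x_r$ gives $c_1 r^\beta \le u(x_r) \le C r^{s+\eps}$, i.e.\ $c_1 \le C r^{s+\eps-\beta}$ with the exponent $s+\eps-\beta > 0$, which is absurd as $r \to 0$.

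The main obstacle is the soft Hopf lemma itself: in the inhomogeneous setting the explicit barrier $d^s$ is unavailable, and one must rely on the implicit barrier of \autoref{lemma:beta-supersolution} together with a careful calibration of the exponents $\beta$ and $\eps$. The \emph{extra blow-up argument} alluded to in the introduction is needed to upgrade the per-point positivity $q_z > 0$ into a uniform lower bound $q_z \ge c_0 > 0$ on $\partial\Omega \cap \overline{B_{1/2}}$: if $q_{z_k} \to 0$ along a sequence $z_k \to z_\infty$, one considers the rescaled $u_{r_k}(x) := u(z_k + r_k x)/r_k^{s+\eps}$, which is uniformly bounded by $C|x|^{s+\eps}$ and satisfies a rescaled equation with vanishing right-hand side. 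Extracting a limit via \autoref{lemma:convergence} and identifying it as a multiple of the half-space solution through the higher-order Liouville theorem \autoref{thm:Liouville-half-space}(ii), one then closes the argument using the soft Hopf lemma applied at the rescaled level.
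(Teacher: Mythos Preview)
Your overall plan — reduce to $\inf_z q_z > 0$ via the expansion of \autoref{thm:inhom-Cs} — is correct, but the ``soft Hopf'' step has a genuine gap. You cite \autoref{lemma:beta-supersolution} to obtain $u \ge c_1 d^\beta$ for $\beta \in (2s-1,\beta_0)$; however, that lemma gives $L d^\beta \ge c\, d^{\beta-2s} > 0$, i.e.\ $d^\beta$ is a \emph{supersolution}. Comparison with a supersolution yields an upper bound on $u$, not a lower one. The subsolution barriers that are actually available in $C^{1,\alpha}$ domains are $d^{2s-\gamma_1}$ for small $\gamma_1>0$ (this is \cite[Lemma~B.3.1]{FeRo24}, the barrier the paper uses), giving only $u \ge c\, d^{2s-\gamma_1}$. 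Since $\eps < \alpha s < s$, one has $s+\eps < 2s-\gamma_1$ for $\gamma_1$ small, so the pair $c\, d^{2s-\gamma_1} \le u \le C d^{s+\eps}$ is perfectly compatible near the boundary and no contradiction follows.

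The paper closes precisely this gap with an additional compactness/blow-up step (its Step~2a): assuming $q_z = 0$, one shows that in fact $|u(x)| \le C|x-z|^{2s-\gamma}$ for \emph{every} $\gamma \in (0,s)$, by contradiction — rescale, pass to a limit via stability, and observe that the limit must be a nontrivial nonnegative half-space solution (hence a multiple of $b$ by \autoref{thm:Liouville-half-space} and \cite[Proposition~4.4.5]{FeRo24}), which is incompatible with the inherited decay $|v|\le C|x|^{s+\eps}$. Only after this upgrade does the soft Hopf $u \ge c\, d^{2s-\gamma_1}$, with $\gamma_1>\gamma$, produce a contradiction. So the ``extra blow-up argument'' you mention is needed exactly here, at a single degenerate point — not for the uniformity in $z$. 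The paper handles uniformity (its Step~2c) by a much simpler covering argument: the expansion at nearby points $z,z'$ gives $q_{z'} \ge \kappa q_z$ for $z'$ close to $z$, and compactness of $\partial\Omega\cap \overline{B_{1/2}}$ finishes it. Your sketched blow-up for uniformity is not needed and, as written, does not close.
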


\begin{proof}
First, by the strong maximum principle (see \cite[Theorem 2.4.15]{FeRo24}) we deduce that in case $u \not \equiv 0$ in $\Omega$, it must be $u > 0$ in $\Omega$. Let us assume this property for the remainder of the proof. 

Step 1: We prove that it suffices to show the Hopf lemma in case $u \equiv 0$ in $\R^n \setminus (B_1 \cap \Omega)$. Indeed, if $Lu = Lv = f \ge 0$ in $B_1 \cap \Omega$ and $v \ge 0 = u$ in $\R^n \setminus (B_1 \cap \Omega)$, then by the comparison principle, we have $v \ge u$ in $\R^n$, and the result for $v$ follows immediately from the result for $u$.

Step 2: By application of \autoref{thm:inhom-Cs} to $u$, we deduce that there exists $\eps > 0$ such that for any $z \in B_{1/2} \cap \Omega$ there exists $q_z \ge 0$ such that
\begin{align}
\label{eq:expansion-Hopf}
\begin{split}
|u(x) - q_z b_{\nu_z}((x-z) \cdot \nu_z)| &\le C_1 \left( \Vert u \Vert_{L^1_{2s}(\R^n)} + \Vert f \Vert_{L^{\infty}(B_1 \cap \Omega)} \right) |x-z|^{s+\eps} \\
&\le C_2 \left(\Vert u \Vert_{L^{\infty}(B_1)} + \Vert f \Vert_{L^{\infty}(B_1 \cap \Omega)} \right) |x-z|^{s+\eps} ~~ \forall x \in B_{1/2}(z),
\end{split}
\end{align}
where we used in that last step the tail estimate from \cite[Theorem 1.9]{KaWe23}. \\
We will prove that $\inf_{z \in B_{1/2} \cap \partial \Omega} q_z > 0$. We will show this property in three consecutive steps.

Step 2a: We claim that any function $u$ satisfying the assumptions of this theorem and an expansion of the form \eqref{eq:expansion-Hopf} with $q_z = 0$ must satisfy
\begin{align}
\label{eq:expansion-Hopf-2}
|u(x)| \le C_3 \left( \Vert u \Vert_{L^{\infty}(B_1)} + \Vert f \Vert_{L^{\infty}(B_1 \cap \Omega)} \right) |x-z|^{2s-\gamma} ~~ \forall x \in B_{1/2}
\end{align}
for any $\gamma \in (0,s)$, where $C_3 > 0$ depends on $n,s,\lambda,\Lambda,\alpha,\gamma$, and the $C^{1,\alpha}$ radius of $\Omega$.
To prove \eqref{eq:expansion-Hopf-2} let us assume by contradiction that there exists a sequence of sets $(\Omega_k)_k$ with $\partial \Omega_k \in C^{1,\alpha}$, $0 \in \partial\Omega_k$, operators $(L_k)_k \subset \mathcal{L}_s^n(\lambda,\Lambda)$, $(f_k)_k \subset B_1 \cap \Omega_k$, $(u_k)_k$ with $u_k \not\equiv 0$ (i.e., $u_k > 0$ in $B_1 \cap \Omega_k$) solutions to
\begin{align*}
\begin{cases}
L_k u_k &= f_k \ge 0 ~~ \text{ in } B_1 \cap \Omega_k,\\
u_k &= 0 \qquad ~~~ \text{ in }\R^n \setminus (B_1 \cap \Omega_k)\\
\end{cases}
\end{align*}
with 
\begin{align}
\label{eq:q0-uk}
\Vert u_k \Vert_{L^{\infty}(B_1)} + \Vert f_k \Vert_{L^{\infty}(B_1 \cap \Omega_k)} = 1, ~~ \text{ and } ~~ |u_k(x)| \le C_2 |x|^{s+\eps} ~~ \forall x \in B_{1/2}, 
\end{align}
but such that \eqref{eq:expansion-Hopf-2} fails, i.e.,
\begin{align*}
\sup_{k \in \N} \sup_{r > 0} \frac{\Vert u_k \Vert_{L^{\infty}(B_{r})}}{r^{2s-\gamma}} = \infty.
\end{align*}
Then, by \cite[Lemma 4.4.11]{FeRo24} there exist sequences $(k_m)_m$ and $r_m \to 0$ such that
\begin{align}
\label{eq:vm-growth-Hopf-1}
v_m(x) = \frac{u_{k_m}(r_m x)}{\Vert u_{k_m} \Vert_{L^{\infty}(B_{r_m})}} ~~ \text{ satisfies } ~~ \Vert v_m \Vert_{L^{\infty}(B_1)} = 1, ~~ |v_m(x)| \le C (1 + |x|^{2s-\gamma}) ~~ \forall x \in \R^n.
\end{align}
Moreover, let us denote by $\tilde{L}_{k_m} \in \mathcal{L}_n^s(\lambda,\Lambda)$ the operator with kernel $\tilde{K}_{k_m}(h) = r_m^{n+2s}K_{k_m}(r_m h)$, and note that 
\begin{align*}
\begin{cases}
\tilde{L}_{k_m} v_m &= \tilde{f}_{k_m} ~~ \text{ in } B_{r_m^{-1}} \cap r_m^{-1} \Omega_{k_m},\\
v_m &= 0 ~~~~~~ \text{ in } \R^n \setminus (B_{r_m^{-1}} \cap r_m^{-1} \Omega_{k_m}),\\
v_m &\ge 0 ~~~~~~ \text{ in }  \R^n,
\end{cases}
\end{align*}
where
\begin{align}
\label{eq:f-Hopf}
\tilde{f}_{k_m}(x) = r_m^{2s} \frac{f_{k_m}(r_m x)}{\Vert u_{k_m} \Vert_{L^{\infty}(B_{r_m})}}, ~~ \text{ satisfies } ~~ \Vert  \tilde{f}_{k_m} \Vert_{L^{\infty}( B_{r_m^{-1}} \cap r_m^{-1} \Omega_{k_m} )} \le c r_m^{\gamma} \Vert f_{k_m} \Vert_{L^{\infty}(B_1 \cap \Omega)} \le c r_m^{\gamma},
\end{align}
and we used that by \cite[Lemma 4.4.11]{FeRo24} there is $c > 0$ such that $\Vert u_{k_m} \Vert_{L^{\infty}(B_{r_m})} \ge c r_m^{2s-\gamma}$.
By application of \autoref{thm:inhom-Cs} (resp. \eqref{eq:expansion-Hopf}) to $v_m$, and using that $0 = q_0 = \lim_{x \to 0} u_{k_m}(x)/b_{\nu_0}(x \cdot \nu_0) = \lim_{x \to 0} v_m(x)/b_{\nu_0}(x \cdot \nu_0)$ for every $m \in \N$ due to \eqref{eq:q0-uk} and the definition of $v_m$ we deduce
\begin{align}
\label{eq:vm-growth-Hopf-2}
|v_m(x)| \le C_2 \left( \Vert v_m \Vert_{L^{\infty}(B_1)} + \Vert \tilde{f}_{k_m} \Vert_{L^{\infty}(B_1)} \right) |x|^{s+\eps} \le c C_2 (1 + r_m^{\gamma}) |x|^{s+\eps} \le C |x|^{s+\eps} ~~ \text{ in } B_{1/2}.
\end{align}
Moreover, note that by \autoref{thm:inhom-Cs} we have that $(v_m)_m$ is locally uniformly bounded in $C^s$, and therefore it converges locally uniformly and in $L^1_{2s}(\R^n)$ (by the Arzel\`a-Ascoli theorem) up to a subsequence to some $v \in C(\R^n) \cap L^1_{2s}(\R^n)$. Using the stability for distributional solutions (see \cite[Proposition 2.2.36]{FeRo24}) and \eqref{eq:f-Hopf}, \eqref{eq:vm-growth-Hopf-2}, we deduce that there are $e \in \mathbb{S}^{n-1}$ and $L_{\infty} \in \mathcal{L}_s^n(\lambda,\Lambda)$ such that
\begin{align*}
\begin{cases}
L_{\infty} v &= 0 ~~~~\qquad\qquad~~ \text{ in } \{ x \cdot e > 0 \},\\
v &= 0 ~~~~\qquad\qquad~~ \text{ in } \{ x \cdot e \le 0 \},\\
|v(x)| &\le C (1 + |x|)^{2s-\gamma} ~~ \forall x \in \R^n,\\
v &> 0 ~~~~\qquad\qquad~~ \text{ in } \{ x \cdot e > 0 \}.
\end{cases}
\end{align*}
However, such $v$ cannot exist. In fact, note that $v \not\equiv 0$ by \eqref{eq:vm-growth-Hopf-1}, but by the uniqueness of positive solutions in the half-space (see \cite[Proposition 4.4.5]{FeRo24}), it must be $v = \kappa b$ for some $\kappa > 0$, where $b$ is the half-space solution from \autoref{thm-1D} with respect to $L_{\infty}$ and $e$. Also this cannot be, since \eqref{eq:vm-growth-Hopf-2} implies that $v(x) \le C |x|^{s+\eps}$ in $B_{1/2}$. This is a contradiction, so \eqref{eq:expansion-Hopf-2} must hold true.

Step 2b: Having proved \eqref{eq:expansion-Hopf-2}, let us now show that $q_z > 0$ for every $z \in B_{1/2} \cap \partial \Omega$. By contradiction, we assume that there is $z \in B_{1/2} \cap \partial \Omega$ such that $q_z = 0$, i.e., we assume that \eqref{eq:expansion-Hopf-2} holds true at $z$. We recall the barrier function from \cite[Lemma B.3.1]{FeRo24}. Let us take $D \subset \R^n$ with $\partial D \in C^{1,\alpha}$ such that $B_{1/2} \cap \Omega \subset D \subset B_1 \cap \Omega$ and recall from  \cite[Lemma B.3.1]{FeRo24} that there exist $\gamma_0 > 0$ and $\rho > 0$ such that for any $\gamma_1 \in (0,\gamma_0)$
\begin{align*}
L(d_D^{2s-\gamma_1}) \le -1\le 0 \le L u ~~ \text{ in } D \cap \{ d_D < \rho \}.
\end{align*}
Moreover, since $u > 0$ in $B_1 \cap \Omega$, it must be $\inf_{D \cap \{ d_D \ge \rho \}} u > 0$. Hence, by the comparison principle, we deduce that
\begin{align*}
u \ge c \left(\inf_{D \cap \{ d_D \ge \rho \}} u \right) d_D^{2s-\gamma_1} ~~ \text{ in } D \supset B_{1/2} \cap \Omega
\end{align*}
for some $c > 0$. However, this is a contradiction with \eqref{eq:expansion-Hopf-2} if $\gamma_1 > \gamma$. Thus, it must hold $q_z > 0$.

Step 2c: We have shown that $q_z > 0$ for any $z \in B_{1/2} \cap \partial \Omega$. Now, we use this information to prove that $\inf_{z \in B_{1/2} \cap \partial \Omega} q_z > 0$. To see this, note that there exists $\kappa > 0$ such that for any $z \in B_{1/2} \cap \partial \Omega$, there exists $\delta_z > 0$ such that the following holds:
\begin{align}
\label{eq:boundary-cover}
\forall z' \in B_{\delta_z}(z) \cap \partial \Omega ~~ : ~~ q_{z'} \ge \kappa q_z.
\end{align}
In fact, given $z$, let us take $x \in B_{1/2}(z)$ such that $d(x) = |x-z| = (x-z) \cdot \nu_z$, where we will choose $x$ close enough to $z$, later. Clearly, for any such $x$, if $z'$ is close enough to $z$, it holds $\frac{1}{2}|x-z'| \le |x-z| \le 2 |x-z'|$. Then, as a consequence of the expansion \eqref{eq:expansion-Hopf}, we have
\begin{align*}
|q_{z} b_{\nu_z}((x-z) \cdot \nu_z) - q_{z'} b_{\nu_{z'}}((x - z') \cdot \nu_{z'})| &\le |q_{z} b_{\nu_z}((x-z) \cdot \nu_z) - u(x)| + |u(x) - q_{z'} b_{\nu_{z'}}((x - z') \cdot \nu_{z'})| \\
&\le C_0C_2(|x-z|^{s+\eps} + |x-z'|^{s+\eps}) \le 4 C_0C_2 |x-z|^{s+\eps},
\end{align*}
where we wrote $C_0 := \left(\Vert u \Vert_{L^{\infty}(B_1)} + \Vert f \Vert_{L^{\infty}(B_1 \cap \Omega)} \right)$.
Hence, by the bounds for the 1D barriers (see \autoref{lemma:1d-barrier}), and the properties of $x$, we deduce
\begin{align*}
q_{z'} &\ge c |x-z'|^{-s} q_{z'}  b_{\nu_{z'}}((x - z') \cdot \nu_{z'}) \\
&\ge c |x-z|^{-s} \Big(q_{z}  b_{\nu_{z}}((x - z) \cdot \nu_{z}) - |q_{z'}  b_{\nu_{z'}}((x - z') \cdot \nu_{z'}) - q_{z}  b_{\nu_{z}}((x - z) \cdot \nu_{z})| \Big) \\
&\ge c_1 q_z - c_2 |x-z|^{\eps}
\end{align*}
for some $c_1,c_2 > 0$. Thus, upon choosing $x$ close enough to $z$, we get \eqref{eq:boundary-cover} with $\kappa := \frac{c_1}{2c_2} > 0$.\\
Since $B_{1/2} \cap \partial \Omega$ is compact, there exist $N \in \N$ and $z_1, \dots, z_N$ such that
\begin{align*}
\overline{B_{1/2} \cap \partial \Omega} \subset \bigcup_{i = 1}^N  B_{\delta_{z_i}}(z_i).
\end{align*}
Therefore, $\inf_{z \in B_{1/2} \cap \partial \Omega} q_z = \inf_{i = 1,\dots,N} \kappa q_{z_i} > 0$, as desired.

Step 3: Having proved that it must hold $\overline{q} := \inf_{z \in B_{1/2} \cap \partial \Omega} q_z > 0$, we can now conclude the proof. 
Indeed, note that by \eqref{eq:expansion-Hopf} for any $x \in B_{1/2}$, taking $z \in B_{1/2} \cap \partial \Omega$ such that $d(x) = |x-z| = (x-z) \cdot \nu_z$, we have
\begin{align*}
u(x) \ge q_z b_{\nu_z}((x-z) \cdot \nu_z) -|u(x) - q_z b_{\nu_z}((x-z) \cdot \nu_z)| \ge q_z c_1 |x-z|^s - C_2 C_0 |x-z|^{s+\eps}.
\end{align*}
Since $\inf_{z \in B_{1/2} \cap \partial \Omega} q_z > 0$, there exists a radius $\delta > 0$ such that for any $x \in B_{1/2} \cap \{ d(x) \le \delta \}$ it holds
\begin{align*}
u(x) \ge q_z c_1 |x-z|^s - C_2 C_0 |x-z|^{s+\eps} \ge \frac{\overline{q}}{2c_1} d^s(x).
\end{align*}
Since by the weak Harnack inequality, we clearly have $\inf_{B_{1/2} \cap \{ d(x) > \delta \}} u > 0$, the proof is complete.
\end{proof}

\begin{proof}[Proof of \autoref{thm-Cs-intro}]
The first part was already shown before. The second part follows immediately from \autoref{thm:inhom-Hopf}.
\end{proof}

\section{$C^{1+s}$ regularity for nonlocal obstacle problems}
\label{sec:obstacle}

The previous $C^s$ regularity estimates, as well as the existence of the 1D barriers open the door to proving regularity estimates for the nonlocal obstacle problem, where $L \in \mathcal{L}_n^s(\lambda,\Lambda)$ is a nonlocal operator with an inhomogeneous kernel. In particular, we obtain optimal $C^{1,s}$ regularity estimates for solutions, and $C^{1,\gamma}$ regularity of the free boundary near regular points. 

The following is the main result of this section. Note that it immediately implies \autoref{thm-obst-intro}.

\begin{theorem}
\label{thm:obstacle-problem}
Let $L \in \mathcal{L}_s^n(\lambda,\Lambda)$, $\phi \in C_c^{2,\eps}(\R^n)$ with $\eps > \max\{2s-1 , 0 \}$ and let $u$ be the solution to the obstacle problem
\[\begin{split}
\min \{ L u, u - \phi \} = 0 ~~ \text{ in } \R^n.
\end{split}\]
Denote $C_0:=\Vert \phi \Vert_{C^{2,\eps}(\R^n)}$. Then, 
\begin{itemize}
\item[(i)] $u \in C^{1+s}(\R^n)$, with $C > 0$ depending only on $n,s,\lambda,\Lambda$, such that
\[\begin{split}
\Vert u \Vert_{C^{1+s}(\R^n)} \le CC_0.
\end{split}\]

\item[(ii)] There exist $C >0$ and $\gamma > 0$, depending only on $n,s,\lambda,\Lambda$, such that for any free boundary point $x_0 \in \{ u > \phi\}$ there exist $c_{x_0} \ge 0$, $B : \R \to [0,\infty)$, and $e \in \mathbb{S}^{n-1}$ such that
\[\begin{split}
\left| u(x) - \phi(x) - c_{x_0} B \big( (x - x_0) \cdot e \big) \right| \le C C_0 |x-x_0|^{1+s+\gamma} ~~ \forall x \in B_{1/2}(x_0),
\end{split}\]
where $B' = b$ and $b$ denotes the barrier from \autoref{thm-1D} corresponding to $L$ and $e$.
\item[(iii)] Moreover, if $c_{x_0} > 0$, then the free boundary $\partial \{ u > \phi \}$ is a $C^{1,\gamma}$-graph in a neighborhood of $x_0$, where $\gamma > 0$ depends only on $n,s,\lambda,\Lambda$.
\end{itemize}
\end{theorem}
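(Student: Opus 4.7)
The plan is to follow the classical strategy of \cite{CSS08,CRS17,FRS23}, in which the optimal regularity for the nonlocal obstacle problem is reduced to the boundary regularity for linear equations. The new input is that, thanks to the earlier sections of this paper, this boundary regularity---together with the half-space Liouville theorem and the Hopf lemma---is now available for \emph{every} operator in $\mathcal{L}_s^n(\lambda,\Lambda)$, not just for those with homogeneous kernels. Setting $v:=u-\phi\ge 0$, we have $Lv=-L\phi\in L^\infty(\R^n)$ in $\{v>0\}$ and $v=0$ in $\{v=0\}$. Semiconvexity of $u$ and a preliminary bound $\|u\|_{C^{1,\gamma_0}(\R^n)}\le C\|\phi\|_{C^{2,\eps}}$ for some $\gamma_0=\gamma_0(n,s,\lambda,\Lambda)>0$ follow from a penalization scheme combined with interior Krylov--Safonov-type estimates, exactly as in \cite{CRS17}, since neither step uses homogeneity of the kernel.

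The core of the proof is the \emph{classification of blow-ups} at a free boundary point $x_0\in\partial\{v>0\}$. After an Almgren-type extraction of a maximal scale in the spirit of \cite[Lemma 4.4.11]{FeRo24}, any nontrivial blow-up
\[
v_0(x)=\lim_{k\to\infty}\frac{(u-\ell_{x_0})(x_0+r_k\,x)}{r_k^{1+s}},
\]
where $\ell_{x_0}$ is the $1$st-order Taylor polynomial of $\phi$ at $x_0$, solves a global obstacle problem $\min\{L_\infty v_0,v_0\}=0$ in $\R^n$ with some $L_\infty\in\mathcal{L}_s^n(\lambda,\Lambda)$, satisfies $v_0\ge 0$, $0\in\partial\{v_0>0\}$, $\nabla v_0(0)=0$, and has at most $(1+|x|)^{1+s}$ growth; semiconvexity and the $C^{1,\gamma_0}$ bound pass to the limit. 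For any direction $e$ in the cone of monotonicity, $w:=\partial_e v_0\ge 0$ solves $L_\infty w=0$ in $\{v_0>0\}$, $w=0$ in $\{v_0=0\}$, and $|w|\le C(1+|x|)^s$. A barrier argument based on non-degeneracy and the Hopf lemma \autoref{thm:inhom-Hopf} forces $\{v_0>0\}=\{x\cdot e_0>0\}$ to be a half-space; then \autoref{thm:Liouville-half-space} identifies $w$ as a scalar multiple of the half-space solution $b_{e_0}(x\cdot e_0)$ of \autoref{thm-1D}. Integrating along $e_0$ yields
\[
v_0(x)=c\,B_{e_0}(x\cdot e_0),\qquad B'_{e_0}=b_{e_0},\quad c\ge 0,
\]
which closes the Almgren iteration, gives the global $C^{1+s}$ estimate of part~(i), and produces the dichotomy (i)--(ii) according to whether $c>0$ (regular points) or $c=0$ (degenerate points).

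At a regular point, uniqueness of the blow-up direction (by an energy/monotonicity argument as in \cite[Section~5]{CRS17}) combined with the cone of monotonicity $\partial_e v\ge 0$ shows that $\{v>0\}$ is locally Lipschitz; a standard improvement-of-flatness iteration using the expansion \autoref{thm:inhom-Cs} applied to directional derivatives $\partial_e(u-\phi)$ then upgrades the free boundary to $C^{1,\gamma}$, proving~(iii). Once $\partial\{v>0\}$ is $C^{1,\gamma}$ near $x_0$, apply \autoref{thm:inhom-Cs} to $w:=\partial_e(u-\phi)$, which solves $Lw=-L\partial_e\phi\in L^\infty$ in this $C^{1,\gamma}$ domain with zero boundary data on $\partial\{v>0\}$: one obtains $w(x)=q_z\,b_{\nu_z}((x-z)\cdot\nu_z)+O(|x-z|^{s+\gamma'})$ uniformly in $z\in\partial\{v>0\}$ near $x_0$. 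Integrating along the normal gives exactly the expansion in~(ii) with $c_{x_0}>0$. At degenerate points the blow-up vanishes, and a second contradiction/compactness argument (again built on \autoref{thm:inhom-Cs} and the non-existence of a degree-$(1+s)$ profile after killing the candidate $B_{e_0}$) sharpens $|x-x_0|^{1+s}$ to $|x-x_0|^{1+s+\gamma}$.

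The main obstacle is the blow-up classification in the second step. For homogeneous kernels one can perform an explicit 1D reduction via \eqref{1d-hom}, but in the inhomogeneous case no such reduction is available, and the rigidity---both the half-space structure of $\{v_0>0\}$ and the identification of the profile as $c\,B_{e_0}(x\cdot e_0)$---must be extracted purely from \autoref{thm:Liouville-half-space} and \autoref{thm:inhom-Hopf}. This is precisely why all the results of the paper feed into the proof of \autoref{thm:obstacle-problem}: the one-phase analysis of Section~\ref{sec:one-phase} yields the 1D barrier of \autoref{thm-1D}, which in turn underlies the boundary expansion \autoref{thm:inhom-Cs}, the half-space Liouville theorem \autoref{thm:Liouville-half-space}, and the Hopf lemma \autoref{thm:inhom-Hopf}, all of which are indispensable in the classification of blow-ups and in running the improvement-of-flatness argument on the free boundary.
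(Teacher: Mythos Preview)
Your overall architecture is correct and matches the paper: reduce to a global obstacle problem for $v=u-\phi$, classify blow-ups using the half-space Liouville theorem \autoref{thm:Liouville-half-space}, and then bootstrap the free boundary regularity. You also correctly identify that the new $C^s$ boundary theory is what makes the whole scheme run for inhomogeneous kernels. However, two of the load-bearing steps in your sketch are attributed to the wrong tools, and as written they are gaps.

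\textbf{Half-space structure of the blow-up.} You claim that ``a barrier argument based on non-degeneracy and the Hopf lemma \autoref{thm:inhom-Hopf} forces $\{v_0>0\}=\{x\cdot e_0>0\}$''. This is circular: \autoref{thm:inhom-Hopf} requires the domain to already be $C^{1,\alpha}$, which is exactly what you are trying to establish. The paper does not use the Hopf lemma here. Instead, following \cite[Proof of Proposition~4.4.3]{FeRo24}, convexity of $v_0$ together with the full-space Liouville theorem shows that $\{v_0=0\}$ contains a convex cone with nonempty interior; then the classification of positive solutions in cones \cite[Theorem~4.4.5]{FeRo24} forces $v_0$ to be one-dimensional and $\{v_0>0\}$ to be a half-space. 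Only \emph{after} this does \autoref{thm:Liouville-half-space} identify $\partial_e v_0$ with a multiple of $b$.

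\textbf{From Lipschitz to $C^{1,\gamma}$.} You invoke ``uniqueness of the blow-up direction by an energy/monotonicity argument as in \cite[Section~5]{CRS17}'' and an improvement-of-flatness iteration. No Almgren-type monotonicity formula is available for general inhomogeneous kernels, so this route is not open. The paper avoids blow-up uniqueness entirely: a compactness argument (the ``quantitative closeness'' corollary) shows that at the critical scale $r_1=\sup\{r:\|\nabla w\|_{L^\infty(B_r)}\ge r^{s+\alpha}\}$ the rescaled solution is $\delta$-close to \emph{some} half-space profile $\kappa B(x\cdot e)$ with $\kappa\ge\kappa_0>0$. Then \cite[Lemma~4.4.13]{FeRo24} (a sliding/positivity lemma for $\partial_{e'}u$) gives a Lipschitz free boundary, and the boundary Harnack principle \cite[Theorem~4.3.1]{FeRo24}---not an improvement-of-flatness scheme---upgrades Lipschitz to $C^{1,\gamma}$. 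The expansion in (ii) then comes from \autoref{thm:inhom-Cs} applied to $\nabla w_{r_1}$ in this $C^{1,\gamma}$ domain, followed by integration and rescaling back, using that the rescaled barrier $r_1^s b^{(r_1)}(r_1^{-1}\cdot)$ is again a constant multiple of $b$ by uniqueness (\autoref{rem:unique-barrier}).
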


\begin{remark}
We emphasize that a modification of the proofs in the same way as in \cite{RTW23} would allow to prove all the results in this section for obstacle problems in bounded domains and also to assume obstacles to be less regular under the additional assumption $|\nabla K(y)| \le C |y|^{-1} K(y)$.
\end{remark}

The main reason why the previous theorem could not have been proven before is that the classification of blow-ups near regular points heavily relies on a Liouville theorem in the half-space. Without a Liouville theorem, one can merely show that blow-ups are 1D and that solutions to the obstacle problem with a small right hand side that are almost convex are arbitrarily close to these blow-ups. However, without a Liouville theorem in the half-space, it is not clear whether blow-ups behave nice enough in order to imply smoothness of the free boundary near regular points. Since in \autoref{thm:Liouville-half-space}, we obtain a Liouville theorem in the half-space for general inhomogeneous operators, we can show the following new classification of blow-ups:

\begin{theorem}[Classification of blow-ups]
\label{thm:obstacle-classification-of-blowups}
Let $L \in \mathcal{L}_s^n(\lambda,\Lambda)$. Let $\alpha \in (0, \min \{ s , 1-s \} )$, and $u_0 \in C^{0,1}_{loc}(\R^n)$ be such that:
\begin{itemize}
\item $u_0 \ge 0$ and $D^2 u_0 \ge 0$ in $\R^n$ with $0 \in \partial \{ u_0 > 0 \}$.
\item $u_0$ solves in the distributional sense
\[\begin{split}
L(\nabla u_0) = 0  ~~ \text{ and } ~~ L(D_{h} u_0) \ge 0 ~~ \text{ in } \{ u_0 > 0 \} ~~ \forall h \in \R^n,
\end{split}\]
where $D_h u(x) = \frac{u(x+h) - u(x)}{|h|}$.
\item $u_0$ has controlled growth at infinity, i.e.,
\[\begin{split}
\Vert \nabla u_0 \Vert_{L^{\infty}(B_R)} \le R^{s+\alpha} \quad \textrm{for all}\quad R \ge 1.
\end{split}\]
\end{itemize}
Then, it holds
\[\begin{split}
u_0(x) = \kappa B(x \cdot e)
\end{split}\]
for some $\kappa \ge 0$ and $e \in \mathbb S^{n-1}$, where $B \in C^{1+s}(\R)$ is given by $B(x) = \int_{-\infty}^x \tilde{b}(t) \d t$, where $\tilde{b}$ denotes the barrier from \autoref{thm-1D} corresponding to $L$ and $e$ and satisfies
\begin{align}
\label{eq:obstacle-blowup-growth}
\frac{c_1}{1+s} (x_+)^{1+s} \le B(x) \le \frac{c_1}{1+s}(x_+)^{1+s}.
\end{align}
\end{theorem}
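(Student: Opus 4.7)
The strategy is to reduce $u_0$ to a one-dimensional function and then invoke the 1D half-space classification. Since $u_0 \ge 0$ is convex, the contact set $\Lambda := \{u_0 = 0\}$ is closed and convex, and $0 \in \partial \Lambda \neq \R^n$ by the hypothesis $0 \in \partial\{u_0 > 0\}$. Let $e \in \mathbb{S}^{n-1}$ be an inner normal to a supporting hyperplane of $\Lambda$ at $0$, so that $\{x \cdot e > 0\} \subset \{u_0 > 0\}$.

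First I would show that $u_0(x) = U(x \cdot e)$ for some convex $U : \R \to [0, \infty)$. For each $\tau \in \mathbb{S}^{n-1}$ with $\tau \perp e$, set $v_\tau := \partial_\tau u_0$. By the assumption $L(\nabla u_0) = 0$ in $\{u_0 > 0\}$, $v_\tau$ satisfies $L v_\tau = 0$ there, and the growth hypothesis gives $|v_\tau(x)| \le C(1 + |x|^{s+\alpha})$ with $s + \alpha < 1 + s$. The key structural fact is that $\Lambda$ in fact contains the full closed half-space $\{x \cdot e \le 0\}$: the alternatives (either $\Lambda$ has empty interior, or $\Lambda$ is an intermediate convex body strictly inside the half-space) are incompatible with $u_0 \not\equiv 0$ and with the uniqueness (up to scalar) of the 1D barrier $\tilde{b}$ of \autoref{thm-1D} as the positive $L$-harmonic solution in a half-line. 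Granting $\Lambda \supset \{x \cdot e \le 0\}$, we have $v_\tau = 0$ on $\{x \cdot e < 0\}$ (because $\nabla u_0 = 0$ on $\mathrm{int}(\Lambda)$), and \autoref{thm:Liouville-half-space}(ii) (applied with $\beta = s + \alpha \in [s, 2s)$) gives $v_\tau(x) = \kappa_\tau \tilde{b}(x \cdot e)$ for some $\kappa_\tau \in \R$. Integrating $\partial_\tau u_0 = \kappa_\tau \tilde{b}(x \cdot e)$ along a line parallel to $\tau$ through a point $x$ with $x \cdot e > 0$ yields $u_0(x + t\tau) = u_0(x) + t\kappa_\tau \tilde{b}(x \cdot e)$; since $u_0 \ge 0$ and $\tilde{b}(x \cdot e) > 0$, sending $t \to \pm\infty$ forces $\kappa_\tau = 0$. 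Hence $v_\tau \equiv 0$ for every $\tau \perp e$, so $u_0(x) = U(x \cdot e)$.

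Once the 1D structure is in hand, \autoref{lemma:1dnd} yields $\tilde{L} U = 0$ in $(0, \infty)$ for a 1D operator $\tilde{L} \in \mathcal{L}^1_s(\tilde{\lambda}, \tilde{\Lambda})$. Differentiating, $\tilde{L} U' = 0$ in $(0, \infty)$, while $U' = 0$ on $(-\infty, 0]$ and $|U'(t)| \le C(1 + |t|^{s+\alpha})$ with $s + \alpha < 2s$ (since $\alpha < s$). The 1D higher-order Liouville theorem \autoref{lemma:Liouville-1d-2} then gives $U' = \kappa \tilde{b}$ for some $\kappa \in \R$; the sign constraint $\kappa \ge 0$ is forced by $U \ge 0$, $U(0) = 0$, and $\tilde{b} > 0$ on $(0, \infty)$. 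Integrating and setting $B(t) := \int_{-\infty}^{t} \tilde{b}(r)\,dr$ (which equals $0$ for $t \le 0$), we obtain $u_0(x) = \kappa B(x \cdot e)$. The bounds \eqref{eq:obstacle-blowup-growth} then follow at once by integrating the pointwise estimates $c_1 r_+^s \le \tilde{b}(r) \le c_2 r_+^s$ from \autoref{lemma:1d-barrier}.

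The hard part will be the structural fact $\Lambda \supset \{x \cdot e \le 0\}$. Ruling out the empty-interior case uses that convexity together with $u_0 \ge 0 = u_0(0)$ forces $\nabla u_0(0) = 0$, whence (by distributional extension of $L(\nabla u_0) = 0$ across the measure-zero set $\Lambda$ and the Liouville theorem in $\R^n$ applied with subcritical growth $s + \alpha < 1$) $\nabla u_0 \equiv 0$, so $u_0 \equiv 0$, contradicting $0 \in \partial\{u_0 > 0\}$. Ruling out intermediate convex bodies is more delicate and appears to require a blow-down argument combined with the uniqueness (up to scalar) of the positive 1D solution $\tilde{b}$ on a half-line, exploiting together the convexity of $u_0$, the subsolution property $L(D_h u_0) \ge 0$, and the subcritical growth $\alpha < \min\{s, 1-s\}$, which keeps all relevant tangential derivatives within the applicable regime of both the in-domain and half-space Liouville theorems of this paper.
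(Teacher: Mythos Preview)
Your overall plan — reduce to a one-dimensional profile and then apply the half-space Liouville theorem — is the right shape, and your endgame (once $u_0(x)=U(x\cdot e)$ is known) is essentially the same as the paper's. The gap is in the reduction itself. You correctly identify that the hard part is showing the contact set is a full half-space, but your treatment of the ``intermediate convex body'' case is only a gesture (``appears to require a blow-down argument''), not a proof. This is exactly where the paper uses a specific external tool that you do not invoke: after showing (via the full-space Liouville theorem \cite[Corollary~2.4.14]{FeRo24}) that $\{u_0=0\}$ contains a convex cone $\Sigma$ with nonempty interior and vertex at $0$, the paper applies the classification of positive solutions to nonlocal equations in cones, \cite[Theorem~4.4.5]{FeRo24}, following \cite[Proof of Proposition~4.4.3]{FeRo24}. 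That classification, applied to the directional derivatives of $u_0$, yields \emph{simultaneously} that $u_0$ is one-dimensional and that $\{u_0>0\}$ is exactly a half-space. Without this cone-classification step (or a genuine substitute), your argument does not close.

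One smaller point: once $u_0(x)=U(x\cdot e)$, you write ``\autoref{lemma:1dnd} yields $\tilde L U = 0$ in $(0,\infty)$'' and then differentiate. But the hypothesis is $L(\nabla u_0)=0$, not $Lu_0=0$; so what you actually get from \autoref{lemma:1dnd} is $\tilde L\,U'=0$ directly, and no differentiation step is needed. The paper works with $w=\partial_e u_0$ and applies \autoref{thm:Liouville-half-space} to $w$ straight away. This is easily repaired and does not affect the rest of your argument.
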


\begin{proof}[Proof of \autoref{thm:obstacle-classification-of-blowups}]
We closely follow the proof in \cite[Proposition 4.4.3]{FeRo24}, indicating the difference that occur due to the inhomogeneity of the kernel $K$. First, note that $\{ u_0 > 0 \}$ is convex. Then, by the same arguments as in \cite[Proof of Proposition 4.4.3]{FeRo24}, using the Liouville theorem with growth in the full space (see \cite[Corollary 2.4.14]{FeRo24}), we can prove that the set $\{ u_0 = 0 \}$ must contain a convex cone $\Sigma$ with nonempty interior and $0 \in \Sigma$. Moreover, in this case we can argue again as in \cite[Proof of Proposition 4.4.3]{FeRo24} with the help of the classification of positive solutions to nonlocal equations in cones (see \cite[Theorem 4.4.5]{FeRo24}) that there exist $e \in \mathbb{S}^{n-1}$ and $U \in C^1(\R)$ such that 
\begin{align*}
u_0(x) = U(x \cdot e), \qquad \{ u_0 > 0 \} = \{ x \cdot e > 0 \}.
\end{align*}
In particular, recalling the assumptions on $u_0$, we see that the function $w(x) := \partial_e u_0(x)$ solves in the distributional sense
\begin{align*}
\begin{cases}
L w &= 0~~ \text{ in } \{ x \cdot e > 0 \},\\
w &= 0 ~~ \text{ in } \{ x \cdot e \le 0 \},\\
|w(x)| &\le C |x|^{s+\alpha}, ~~ \forall x \in \R^n,
\end{cases}
\end{align*}
where $\alpha < \eps_0$. 
Thus, we can apply the Liouville theorem in the half-space for the nonlocal operator $L$ (see \autoref{thm:Liouville-half-space}) and deduce that there exists $\kappa \in \R$ such that $w = \kappa b$, where $b(x) = \tilde{b}(x \cdot e)$ is the half-space solution from \autoref{thm-1D} corresponding to $L$ and $e$. From here, the desired result follows immediately, after integrating $w$ in $e$. The desired properties of $B$ can be read off immediately from \autoref{thm-1D}.
\end{proof}

As a direct corollary of the classification of blow-ups, we obtain the following quantitative estimate on the closeness of a solution to the obstacle problem to the blow-up (see \cite[Proposition 4.4.14]{FeRo24}).

\begin{corollary}[Quantitative closeness to the blow-up]
\label{lemma:prop4.4.14}
Let $L \in \mathcal{L}_s^n(\lambda,\Lambda)$. let $\alpha \in (0,\min\{s,1-s\})$ and let $\delta > 0$ and $R_0 > 1$. Then, there is $\eta > 0$, depending only on $n,s,\lambda,\Lambda,\alpha,\delta_0,R_0$, such that the following holds true:\\
Let $u \in C^{0,1}(\R^n)$ such that
\begin{itemize}
\item[(i)] $\min \{ Lu - f , u \} = 0$ in $\R^n$ in the distributional sense, and $|\nabla f| \le \eta$,
\item[(ii)] $u \ge 0$ and $D^2 u \ge - \eta \mathrm{Id}$ in $\R^n$, and $0 \in \partial\{ u > 0 \}$,
\item[(iii)] $\Vert \nabla u \Vert_{L^{\infty}(B_R)} \le R^{s+\alpha}$ for every $R \ge 1$.
\end{itemize}
Then, we have
\begin{align}
\label{eq:close-to-blowup}
\Vert u - \kappa B(x \cdot e) \Vert_{C^{0,1}(B_{R_0})} \le \delta
\end{align}
for some $e \in \mathbb{S}^{n-1}$ and $\kappa \ge 0$, where $B$ is as in \autoref{thm:obstacle-classification-of-blowups}.
\end{corollary}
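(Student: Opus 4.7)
The plan is to argue by compactness and contradiction, reducing to \autoref{thm:obstacle-classification-of-blowups}. Assume the conclusion fails. Then there exist $\delta_0 > 0$, $R_0 > 1$, a sequence $\eta_k \searrow 0$, operators $L_k \in \mathcal{L}_s^n(\lambda,\Lambda)$ with kernels $K_k$, right-hand sides $f_k$ with $|\nabla f_k| \le \eta_k$, and functions $u_k \in C^{0,1}(\R^n)$ satisfying (i)--(iii) with $\eta = \eta_k$ and $0 \in \partial\{u_k > 0\}$, but such that
\[
\inf_{\kappa \ge 0,\, e \in \mathbb{S}^{n-1}} \Vert u_k - \kappa B_k(x \cdot e) \Vert_{C^{0,1}(B_{R_0})} > \delta_0,
\]
where $B_k$ denotes the primitive of the half-space barrier $\tilde{b}_k$ associated to $L_k$ and $e$ (observe that by \autoref{thm-1D} together with the pointwise estimates \eqref{eq:obstacle-blowup-growth}, the family $B_k$ is precompact).

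By condition (iii) the sequence $(u_k)$ is uniformly bounded in $C^{0,1}_{\loc}(\R^n)$, hence the Arzelà--Ascoli theorem together with the growth control yields (along a subsequence) $u_k \to u_\infty$ locally uniformly and in $L^1_{2s}(\R^n)$, with $u_\infty \in C^{0,1}_{\loc}(\R^n)$ inheriting the growth bound. Since $D^2 u_k \ge -\eta_k \,\mathrm{Id}$ and $\eta_k \searrow 0$, the limit satisfies $u_\infty \ge 0$ and $D^2 u_\infty \ge 0$, so $u_\infty$ is convex. The point $0$ remains on the free boundary of $u_\infty$: indeed $u_\infty(0) = 0$ from the uniform convergence, while nondegeneracy for the obstacle problem (a consequence of $|\nabla f_k| \le \eta_k \to 0$ and the semiconvexity estimate $D^2 u_k \ge -\eta_k\,\mathrm{Id}$, applied at scale $1$) prevents $u_\infty$ from being identically zero near $0$. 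Passing to a further subsequence, $\min\{1,|h|^2\}K_k(h)\,dh$ converges weakly to $\min\{1,|h|^2\}K_\infty(h)\,dh$ for some $K_\infty$ with $L_\infty \in \mathcal{L}_s^n(\lambda,\Lambda)$.

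The next key step is to pass to the limit in the differentiated obstacle equation. Inside the contact set $\{u_k = 0\}$ and, by interior regularity for $Lu_k = f_k$, on the noncontact set, one has $L_k(\partial_e u_k) = \partial_e f_k$ with $|\partial_e f_k| \le \eta_k$, and $L_k(D_h u_k) \ge -|\nabla f_k| \ge -\eta_k$ in $\{u_k > 0\}$. Applying the stability result for distributional solutions (\cite[Proposition 2.2.36]{FeRo24}) to these equations, using the locally uniform convergence of $u_k$ and the $L^1_{2s}$ convergence of the incremental quotients (which follows from the uniform $C^{0,1}$ bound and the growth control), we obtain in the limit
\[
L_\infty(\nabla u_\infty) = 0, \qquad L_\infty(D_h u_\infty) \ge 0 \quad \text{in } \{u_\infty > 0\}, \quad \forall h \in \R^n,
\]
in the distributional sense. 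Here one uses that $\{u_\infty > 0\}$ is open (by continuity of $u_\infty$) and that locally $\{u_k > 0\} \supset K \Subset \{u_\infty > 0\}$ for $k$ large (by uniform convergence and non-degeneracy, along with the convexity $u_\infty > 0$ on $\{u_\infty > 0\}$).

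Thus $u_\infty$ fulfills all hypotheses of \autoref{thm:obstacle-classification-of-blowups}, so $u_\infty(x) = \kappa_\infty B_\infty(x \cdot e_\infty)$ for some $\kappa_\infty \ge 0$ and $e_\infty \in \mathbb{S}^{n-1}$, where $B_\infty$ is built from the half-space barrier $\tilde b_\infty$ of $L_\infty$. By the uniqueness part of \autoref{thm-1D} (see Remark~\ref{rem:unique-barrier}) and the stability of the barrier construction already used in Remark~\ref{remark:optimality}, the barriers $\tilde b_k$ (normalized appropriately) converge locally uniformly to $\tilde b_\infty$, hence $B_k(\,\cdot\, \cdot e_k) \to B_\infty(\,\cdot\, \cdot e_\infty)$ in $C^{0,1}(B_{R_0})$ for any sequence $e_k \to e_\infty$. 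Together with the locally uniform convergence $u_k \to u_\infty$ upgraded to $C^{0,1}(B_{R_0})$ via the uniform $C^{1,\beta}$ bound (coming from $L_k(\partial_e u_k) = \partial_e f_k$ inside $\{u_k > 0\}$ and semiconvexity across the free boundary), this contradicts the assumption that $u_k$ is $\delta_0$-far from every $\kappa B_k(x \cdot e)$.

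The main obstacle in this scheme is the joint convergence of $u_k$ and of the barriers $B_k$ in the $C^{0,1}$ topology on $B_{R_0}$: upgrading from locally uniform convergence to $C^{0,1}$ convergence of $u_k$ requires equi-regularity of $\nabla u_k$ across the (moving) free boundary, which is obtained from the semiconvexity $D^2 u_k \ge -\eta_k\,\mathrm{Id}$ combined with the Lipschitz growth (iii) and the equation $L_k(\nabla u_k) = \nabla f_k$ in the noncontact set. Simultaneously, one must ensure that the free-boundary point at $0$ is preserved, which uses a quantitative non-degeneracy estimate propagated through the limit.
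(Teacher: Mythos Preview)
Your overall strategy---contradiction, compactness, and reduction to \autoref{thm:obstacle-classification-of-blowups}---is exactly the paper's approach. However, there is a genuine gap in how you obtain the $C^{0,1}(B_{R_0})$ convergence of $u_k$, which is what the final contradiction requires. You correctly flag this as ``the main obstacle'' in your last paragraph, but the resolution you sketch (semiconvexity plus Lipschitz growth plus the equation in the noncontact set) does not by itself yield equicontinuity of $\nabla u_k$ across the free boundary: semiconvexity gives only one-sided control on $D^2 u_k$, and the interior Schauder estimates coming from $L_k(\partial_e u_k)=\partial_e f_k$ degenerate as one approaches $\partial\{u_k>0\}$. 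From condition (iii) alone you only get compactness in $C^0_{\loc}$, not in $C^{0,1}_{\loc}$.

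The paper closes this gap by first establishing, as a separate preliminary step, the uniform almost-optimal estimate
\[
\Vert u \Vert_{C^{1+s-\eps}(B_R)} \le C_{\eps,R} \qquad \text{for all } R>0,\ \eps>0,
\]
valid for \emph{any} $u$ satisfying (i)--(iii) (the analogue of \cite[Proposition~4.4.10]{FeRo24}, itself proved by a blow-up contradiction argument that already uses \autoref{thm:obstacle-classification-of-blowups} and the growth \eqref{eq:obstacle-blowup-growth}). With this a priori estimate in hand, the sequence $(u_k)$ is precompact in $C^{1+s-\eps}_{\loc}$, so the convergence is automatically in $C^{0,1}(B_{R_0})$ and the contradiction follows immediately. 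Your discussion of the convergence $B_k\to B_\infty$ of the barriers is a valid point that the paper leaves implicit; it is indeed needed, and your justification via \autoref{thm-1D}, \autoref{rem:unique-barrier}, and stability is the correct one.
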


\begin{proof}
First, we observe that by the exact same arguments as in \cite[Proposition 4.4.10]{FeRo24}, we can show that
\begin{align}
\label{eq:obstacle-ao-reg}
\Vert u \Vert_{C^{1+s-\eps}(B_R)} \le C_{\eps,R} ~~ \forall R > 0, ~~ \forall \eps > 0,
\end{align}
where $C_{\eps,R} > 0$ only depends on $n,s,\lambda,\Lambda,\alpha,\eps,R$. Indeed, the proof is a contradiction compactness argument, using the classification of blow-ups (see \autoref{thm:obstacle-classification-of-blowups}), and in particular their growth (see \eqref{eq:obstacle-blowup-growth}). Having this result at hand, we can prove \autoref{lemma:prop4.4.14} by following the lines of \cite[Proposition 4.4.14]{FeRo24}. We assume that there exists $\eta_k \searrow 0$, $(L_k)_k \subset \mathcal{L}_s^n(\lambda,\Lambda)$, $(u_k)_k$ satisfying the assumptions (i), (ii), and (iii) such that \eqref{eq:close-to-blowup} does not hold for any $e \in \mathbb{S}^{n-1}$ and $\kappa \ge 0$. Then, by \eqref{eq:obstacle-ao-reg}, the sequence $(u_k)_k$ converges locally uniformly in the $C^{1+s-\eps}$-norm (up to a subsequence) to a function $u \in C^{1+s-\eps}$ satisfying the assumptions (i), (ii), and (iii) for some operator $L \in \mathcal{L}_s^n(\lambda,\Lambda)$ with $\eta = 0$. In particular, we can argue as in \cite{FeRo24} that $u$ satisfies all the assumptions of \autoref{thm:obstacle-classification-of-blowups}, which yields a contradiction.
\end{proof}

The second main ingredient in the proof of \autoref{thm:obstacle-problem} (in addition to the classification of blow-ups) is the following result, stating that flatness of the free boundary implies that it is $C^{1,\gamma}$. Here, flatness is measured by closeness of the solution to the blow-up $B$ in the half-space from \autoref{thm:obstacle-classification-of-blowups}.

\begin{theorem}[Flatness implies $C^{1,\gamma}$]
\label{prop:4.4.15}
Let $L \in \mathcal{L}_s^n(\lambda,\Lambda)$. 
Let $\alpha \in (0,\min\{s,1-s\})$ and $\kappa_0 > 0$. 
Then, there are $\eps > 0$, $R_0 > 1$, and $\eta > 0$, depending only on $n,s,\lambda,\Lambda,\alpha,\kappa_0$, such that the following holds true:

Let $u \in C^{0,1}_{\rm loc}(\R^n)$ be such that
\begin{itemize}
\item[(i)] $\min \{ Lu - f , u \} = 0$ in $B_{1}$ in the distributional sense, and $|\nabla f| \le \eta$,
\item[(ii)] $u \ge 0$ and $D^2 u \ge - \eta \mathrm{Id}$ in $\R^n$, and $0 \in \partial\{ u > 0 \}$,
\item[(iii)] $\Vert \nabla u \Vert_{L^{\infty}(B_R)} \le R^{s+\alpha}$ for every $R\ge 1$,
\item[(iv)] $\Vert u - \kappa B_e(x \cdot e) \Vert_{C^{0,1}(B_{R_0})} \le \eps$ for some $\kappa \ge \kappa_0$ and $e \in \mathbb{S}^{n-1}$, where $B_e$ is as in \autoref{thm:obstacle-classification-of-blowups}.
\end{itemize}
Then, the free boundary $\partial \{ u > 0 \}$ is a $C^{1,\gamma}$-graph in $B_{1/2}$, and moreover $u\in C^{1+s}(B_{1/2})$ with
\[\begin{split}
\Vert \nabla u \Vert_{C^s(B_{1/2})} \le C, \quad |\nabla u(x) - \kappa' b_{e'}(x \cdot e')| \le C |x|^{s+\gamma} ~~ \forall x \in B_{1/2},
\end{split}\]
where $\kappa' \in \R$, $e'\in \mathbb{S}^{n-1}$ denotes the normal vector of $\partial \{ u > 0 \}$ at $0$, and $b_{e'}$ is as in \autoref{thm-1D}.
The constants $C > 0$ and $\gamma > 0$ depend only on $n,s,\lambda,\Lambda,\alpha,\kappa_0$.
\end{theorem}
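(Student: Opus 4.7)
The plan is to follow the iterative improvement-of-flatness scheme standard in obstacle problems, but adapted to the present nonlocal setting with inhomogeneous kernels, where the explicit polynomial half-space solution used in the fractional Laplacian case is replaced by the 1D barrier $\tilde b$ from \autoref{thm-1D} and its antiderivative $B$. After translating so that $0\in\partial\{u>0\}$ and $e=e_n$, the goal is to construct inductively a sequence of scales $r_k=\rho^k$ (for some $\rho\in(0,1/2)$ depending only on the ellipticity parameters), unit vectors $e_k$, and magnitudes $\kappa_k\geq\kappa_0/2$ with $|e_{k+1}-e_k|+|\kappa_{k+1}-\kappa_k|\leq C\rho^{k\gamma}$ such that
\begin{equation*}
\bigl\|u(\cdot)-\kappa_k B_{e_k}(\,\cdot\,\cdot e_k)\bigr\|_{L^\infty(B_{r_k})}\leq \eps\,r_k^{1+s+\gamma}.
\end{equation*}
Once this is in place, $e_k$ converges to some $e'$ with $|e_k-e'|\leq C\rho^{k\gamma}$ and the free boundary is forced to be a $C^{1,\gamma}$ graph in $B_{1/2}$ with normal $e'$ at the origin.

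The core step is the inductive improvement of flatness, which I would prove by a compactness/contradiction argument crucially using \autoref{thm:Liouville-half-space}. Suppose the step fails at some scale; then one obtains sequences $(L_k)\subset\mathcal{L}_s^n(\lambda,\Lambda)$, $(u_k)$, $(f_k)$, $(\kappa_k)\to\kappa_\infty\geq\kappa_0/2$, $(e_k)\to e_\infty$, and $\eps_k\searrow 0$ and considers
\begin{equation*}
v_k(x)=\frac{u_k(r_k x)-\kappa_k B_{e_k}(r_k x\cdot e_k)}{\eps_k\,r_k^{1+s+\gamma}}.
\end{equation*}
By the uniform $C^{1+s-\tau}$ bounds established during the proof of \autoref{lemma:prop4.4.14}, together with assumption (iii) and the almost-convexity (ii), the family $(v_k)$ is locally uniformly bounded in $C^{1+s-\tau}(\R^n)$ with $\nabla v_k$ of controlled polynomial growth. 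Passing to a locally uniform limit $v_\infty$ and using the stability result for distributional solutions (cf.\ \cite[Proposition 2.2.36]{FeRo24}), one verifies that $w:=\partial_{e_\infty}v_\infty$ satisfies, for some $L_\infty\in\mathcal{L}_s^n(\lambda,\Lambda)$, the system $L_\infty w=0$ in $\{x\cdot e_\infty>0\}$, $w=0$ in $\{x\cdot e_\infty\leq 0\}$, with $|w(x)|\leq C(1+|x|^{s+\alpha-\gamma})$. Choosing $\gamma$ small enough that $s+\alpha-\gamma<2s$, \autoref{thm:Liouville-half-space} identifies $w\equiv c\,\tilde b_\infty(x\cdot e_\infty)$ for some $c\in\R$, together with an affine tangential part; integrating, the limit $v_\infty$ is shown to be exactly the first-order correction one gets by infinitesimally rotating $e_\infty$ and scaling $\kappa_\infty$ in $\kappa_\infty B_{e_\infty}(\,\cdot\,\cdot e_\infty)$. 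This correction can then be absorbed into a new $(\kappa_{k+1},e_{k+1})$, contradicting the assumed failure of improvement.

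Once the iteration provides $C^{1,\gamma}$ regularity of $\partial\{u>0\}$ in $B_{1/2}$, the remaining conclusions follow by applying the boundary expansion of \autoref{thm:inhom-Cs} to each tangential derivative $\partial_\tau u$ (for $\tau\in\R^n$), which solves $L(\partial_\tau u)=\partial_\tau f$ in $\{u>0\}$ and vanishes on $\{u=0\}$. This yields the uniform $C^s$-bound for $\nabla u$ and the pointwise expansion $|\nabla u(x)-\kappa' b_{e'}(x\cdot e')|\leq C|x|^{s+\gamma}$ after reconstructing the normal component from $\partial_{e'}u$, which is close to $\kappa b_{e'}(x\cdot e')$ thanks to hypothesis (iv) and the iteration.

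The main obstacle I anticipate is in the compactness step: the free boundaries $\partial\{u_k>0\}$ only converge to $\{x\cdot e_\infty=0\}$ in a weak Hausdorff sense, so the equation satisfied by $v_k$ holds on a domain that only approximates a half-space, and at the same time the kernels $K_k^{(r_k)}(h)=r_k^{n+2s}K_k(r_k h)$ converge (up to subsequence) only in the weak sense of \cite[Proposition 2.2.36]{FeRo24} to some possibly distinct limit kernel $K_\infty$. Consequently, both the limiting operator $L_\infty$ and the barrier $\tilde b_\infty$ depend on the subsequence, and one must argue uniformly in this family. These difficulties are handled by combining almost-convexity (forcing $\partial_{e_k}u_k\geq -\eta$, so $\partial\{u_k>0\}$ is uniformly a Lipschitz graph in $e_k$ at the relevant scales) with the kernel-stability framework already developed in \autoref{lemma:convergence} and \autoref{lemma:convergence-minimizer}.
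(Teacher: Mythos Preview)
Your proposal takes a genuinely different route from the paper. You set up an iterative improvement-of-flatness scheme with a compactness/Liouville argument at each step. The paper instead proceeds in two quick strokes: first, from assumption (iv) and the lower bound $\tilde b\geq c\,(x_+)^s$ from \autoref{thm-1D} it shows that $\partial_{e'}u\geq 0$ in a ball $B_{\rho_0}$ for every $e'$ in the cone $\{e'\cdot e\geq 2^{-1/2}\}$, by applying \cite[Lemma~4.4.13]{FeRo24} to $\partial_{e'}u$; this makes $\partial\{u>0\}\cap B_{\rho_0}$ a Lipschitz graph with constant at most $1$. Second, the boundary Harnack principle \cite[Theorem~4.3.1]{FeRo24}, applied to ratios of directional derivatives, upgrades Lipschitz to $C^{1,\gamma}$ in one shot. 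The $C^s$ bound and the expansion for $\nabla u$ then follow directly from \autoref{thm:inhom-Cs}. No iteration, no blow-up, and no appeal to \autoref{thm:Liouville-half-space} are needed at this stage.

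Your scheme also has a gap you pass over. After the Liouville step identifies the limit correction as (a derivative of) $c\,B_\infty(x\cdot e_\infty)+b_\infty(x\cdot e_\infty)(p\cdot x)$, you claim this ``can be absorbed into a new $(\kappa_{k+1},e_{k+1})$''. In the homogeneous case this works because $\kappa(x\cdot e')_+^{1+s}-\kappa(x\cdot e)_+^{1+s}\approx \kappa(1+s)(x\cdot e)_+^{s}\,x\cdot(e'-e)$, i.e.\ the model depends on $e$ only through the argument. Here, however, the barrier $B_e$ itself depends on the direction $e$ through the kernel, so $\kappa B_{e'}(x\cdot e')-\kappa B_e(x\cdot e)$ contains, besides the expected $\kappa\,b_e(x\cdot e)\,x\cdot(e'-e)$, an additional term $\kappa\,(B_{e'}-B_e)(x\cdot e)$ for which you have no control on the map $e\mapsto B_e$. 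A related mismatch appears between the limit barrier $b_\infty$ (for $L_\infty$) and the barrier $b^{(k)}$ for the rescaled operator at step $k$; uniqueness (\autoref{rem:unique-barrier}) only fixes these up to an unknown multiplicative constant. Without quantitative regularity of $e\mapsto B_e$, the absorption step does not close. The paper's directional-monotonicity/boundary-Harnack route avoids this difficulty entirely.
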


\begin{proof}[Proof of \autoref{prop:4.4.15}]
We define $u_0(x) = \kappa B(x \cdot e)$. By assumption (iv), for any $e' \in \mathbb{S}^{n-1}$ with $e' \cdot e \ge \frac{1}{\sqrt{2}}$ we have
\begin{align*}
|\partial_{e'} u - \partial_{e'} u_0| \le \eps_0 ~~ \text{ in } B_{R_0}.
\end{align*}
Moreover, by the definition of $u_0$, we have $\partial_{e'}u_0(x) = \kappa B'(x \cdot e) (e \cdot e') = \kappa \tilde{b}(x \cdot e) (e \cdot e') \ge 2^{-1/2} \kappa \tilde{b}(x \cdot e)$, where $\tilde{b}$ is the 1D barrier from \autoref{lemma:1d-barrier}. From the properties of $\tilde{b}$, we deduce:
\begin{align*} 
\partial_{e'} u_0 \ge 0 ~~ \text{ in } \R^n, \qquad \partial_{e'} u_0 \ge c_1 \kappa ~~ \text{ in } \{ x \cdot e \ge 2^{-1/2} \}.
\end{align*}
Thus, given any $\rho_0 > 0$, we can apply \cite[Lemma 4.4.13]{FeRo24} to $\partial_{e'}u$, and deduce that (upon choosing $R_0$ large, and $\eps, \eta > 0$ small enough, respectively, depending on $\rho_0$) 
\begin{align*}
\partial_{e'} u \ge 0 ~~ \text{ in } B_{\rho_0} \qquad \forall e' \in \mathbb{S}^{n-1} ~~ \text{ with } e \cdot e' \ge 2^{-1/2},
\end{align*}  
which implies that $\partial \{ u > 0 \} \cap B_{\rho_0}$ is a Lipschitz graph with Lipschitz constant bounded by $1$. Arguing as in \cite[Proposition 4.4.15]{FeRo24}, by the boundary Harnack principle (see \cite[Theorem 4.3.1]{FeRo24}), and upon choosing $\rho_0 > 1$ large enough, we deduce that the free boundary $\partial \{ u > 0 \} \cap B_{1/2}$ is a $C^{1,\gamma}$-graph, as desired. Finally, the $C^s$ estimate and the expansion for $\nabla u$ follows from the boundary regularity in $C^{1,\gamma}$ domains that we established in \autoref{thm:inhom-Cs}.
\end{proof}

Now, we are in position to give a proof of the main result (see \autoref{thm:obstacle-problem}).

\begin{proof}[Proof of \autoref{thm:obstacle-problem}]
The proof of (i) follows exactly as in \cite[Theorem 4.5.1]{FeRo24}, combining \autoref{lemma:prop4.4.14} and \autoref{prop:4.4.15} together with interior regularity estimates. To prove (ii) and (iii), we proceed as in \cite[Proposition 4.5.2]{FeRo24}. We can assume without loss of generality that $x_0 = 0$ and $C_0 \le 1$, and upon defining $w = \eta (u-\phi)$ as in \cite[Proposition 4.5.1]{FeRo24}, we find that $w$ satisfies the assumptions of \autoref{lemma:prop4.4.14}. First, we assume that
\begin{align*}
\Vert \nabla w \Vert_{L^{\infty}(B_r)} \le r^{s+\alpha} ~~ \forall r \in (0,1).
\end{align*}
In that case, $|u(x)| \le C |x|^{1+s+\alpha}$, and the conclusions in (ii) and (iii) hold true with $c_0 = 0$. Otherwise, we define
\begin{align*}
r_1 := \sup \{ r > 0 : \Vert \nabla w \Vert_{L^{\infty}(B_r)} \ge r^{s+\alpha} \} \in (0,1),
\end{align*}
and observe that then the rescaled function $w_{r_1}(x) = w(r_1 x) r_1^{-(1+s+\alpha)}$ satisfies
\begin{align*}
\Vert \nabla w_{r_1}  \Vert_{L^{\infty}(B_{1})} =1, \qquad \Vert \nabla w_{r_1} \Vert_{L^{\infty}(B_R)} \le R^{s+\alpha} ~~ \forall R > 1.
\end{align*}
Hence, by \autoref{lemma:prop4.4.14} and \autoref{prop:4.4.15} we deduce that the free boundary $\partial \{ u > 0 \} \cap B_{1/2}$ is in $C^{1,\gamma}$ for some $\gamma \in (0,\alpha]$, and moreover
\begin{align*}
|\nabla w_{r_1}(x) - \kappa b^{(r_1)}(x \cdot e)| \le C |x|^{s+\gamma} ~~ \forall x \in B_{1/2},
\end{align*}
where $b^{(r_1)}$ denotes the barrier from \autoref{thm:inhom-Cs} associated with the operator $L^{(r_1)}$ with kernel $K^{(r_1)}(h) = r_1^{n+2s} K(r_1 h)$ and $e \in \mathbb{S}^{n-1}$.
Integrating this inequality, we immediately obtain
\begin{align*}
|w_{r_1}(x) - \kappa B^{(r_1)}(x \cdot e)| \le C |x|^{1+s+\gamma} ~~ \forall x \in B_{1/2},
\end{align*}
By rescaling from $w_{r_1}$ back to $w$ and using that $\alpha \ge \gamma$, we deduce
\begin{align*}
|w(x) - \kappa r_1^{\alpha} [r_1^{1+s} B^{(r_1)}(r_1^{-1} x \cdot e)]| \le C |x|^{1+s+\gamma} r_1^{\alpha-\gamma} \le C |x|^{1+s+\gamma} ~~ \forall x \in B_{r_1/2}.
\end{align*}
Moreover, recall from the note that by the growth condition on $B^{(r_1)}$ (see \eqref{eq:obstacle-blowup-growth}), we have
\begin{align*}
|r_1^{\alpha}[r_1^{1+s}B^{(r_1)}(r_1^{-1}x \cdot e)]| \le C r_1^{\alpha}|x|^{1+s} \le C |x|^{1+s+\alpha} \le C |x|^{1+s+\gamma} ~~ \forall x \in B_{1/2} \setminus B_{r_1/2}.
\end{align*}
Since by the definition of $w_{r_1}$, we also have $\Vert w_{r_1} \Vert_{L^{\infty}(B_R)} \le C R^{1+s+\alpha}$ for any $R > 0$ and thus
\begin{align*}
|w(x)| \le C |x|^{1+s+\alpha} r_1^{1+s+\alpha} \le C |x|^{1+s+\gamma}  ~~ \forall x \in B_{1/2} \setminus B_{r_1/2},
\end{align*}
we deduce that
\begin{align*}
|w(x) - \kappa r_1^{\alpha} [r_1^{1+s} B^{(r_1)}(r_1^{-1} x \cdot e)]| \le C |x|^{1+s+\gamma} ~~ \forall x \in B_{1/2} \setminus B_{r_1/2}.
\end{align*}
Finally, we observe that by \autoref{rem:unique-barrier}, it holds $r_1^s b^{(r_1)}(r_1^{-1}x) = \kappa_1 b(x)$ for some $\kappa_1 > 0$, where $b$ corresponds to $K$. Hence, $r_1^{1+s}B^{(r_1)}(r_1^{-1}x) = \kappa_1 B(x)$. This implies the desired expansion in (ii) with $c_0 = \kappa r_1^{\alpha} \kappa_1 > 0$, and thus also (iii) is proved. Hence, the proof is finished.
\end{proof}

\begin{proof}[Proof of \autoref{thm-obst-intro}]
This follows immediately from \autoref{thm:obstacle-problem}.
\end{proof}

\end{document}